\documentclass[11pt, reqno]{amsart}

\usepackage{adjustbox}
\usepackage{amsmath, amsthm, amsfonts}
\usepackage[toc,page]{appendix}
\usepackage[english]{babel}
\usepackage{bbm}
\usepackage{booktabs}
\usepackage{enumitem} 
\usepackage{float}
\usepackage[T1]{fontenc}
\usepackage[margin=1in, marginparwidth=1.25cm, marginparsep=0.25cm]{geometry}
\usepackage{graphicx}
\usepackage{mathrsfs}  
\usepackage{mathtools}
\usepackage{microtype}
\usepackage{newtxtext} 
\usepackage{newtxmath} 
\usepackage{pdflscape}
\usepackage{relsize}
\usepackage{makecell}
\usepackage[normalem]{ulem}
\usepackage[dvipsnames]{xcolor}
\usepackage{tikz}
\usepackage{setspace}
\usetikzlibrary{patterns.meta}
\usepackage[colorlinks=true, linkcolor=black, citecolor=teal, urlcolor=teal]{hyperref}

\usepackage[ibrackets]{mismath}
\mleftright

\setlist[itemize]{label=\textbullet, leftmargin=15pt}

\theoremstyle{plain}
\newtheorem{theorem}{Theorem}[section]
\newtheorem{lemma}[theorem]{Lemma}
\newtheorem{proposition}[theorem]{Proposition}
\newtheorem{corollary}[theorem]{Corollary}

\theoremstyle{definition}
\newtheorem{definition}[theorem]{Definition}

\newtheorem{question}{Question}

\theoremstyle{remark} 
\newtheorem{remark}[theorem]{Remark}

\renewcommand{\R}{\mathbb{R}}

\renewcommand{\Z}{\mathbb{Z}}
\renewcommand{\N}{\mathbb{N}}
 
\newcommand{\T}{\mathbb{T}} 
\newcommand{\Prb}{\mathbb{P}}
\newcommand{\pat}{\partial}
\makeatletter

\newcommand{\deepuwave}{\bgroup \markoverwith{\lower8\p@\hbox{\sixly \char58}}\ULon}
\makeatother

\makeatletter
\newcommand{\doubleeq}[2]{%
  \par\vspace{\abovedisplayskip}\noindent
  \begin{minipage}{.48\textwidth}
    \def\maketag@@@##1{\hbox{\m@th\normalfont##1\,,}}%
    \begin{equation}
      #1
    \end{equation}
  \end{minipage}%
  \hfill
  \begin{minipage}{.48\textwidth}
    \begin{equation}
      #2
    \end{equation}
  \end{minipage}%
  \par\vspace{\belowdisplayskip}%
}
\makeatother

\makeatletter
\newcommand{\doubleeqflex}[4]{%
  \par\vspace{\abovedisplayskip}\noindent
  \begin{minipage}{#1\textwidth}
    \def\maketag@@@##1{\hbox{\m@th\normalfont##1,}}%
    \begin{equation}
      #3
    \end{equation}
  \end{minipage}%
  \hfill
  \begin{minipage}{#2\textwidth}
    \begin{equation}
      #4
    \end{equation}
  \end{minipage}%
  \par\vspace{\belowdisplayskip}%
}
\makeatother

\DeclareMathOperator{\supp}{supp}

\DeclareMathOperator{\Rea}{Re}

\numberwithin{equation}{section}
\counterwithin{table}{section}
\makeatletter
\def\l@subsection{\@tocline{2}{0pt}{2.5pc}{5pc}{}} 
\makeatother

\title{Well-posedness for the generalised fractional BBM equation}
\author[J.~Barratt]{Jackson Barratt}

\address{Jackson Barratt, School of Mathematics, Monash University, Clayton, VIC 3800, Australia}
\email{jackson.barratt1@gmail.com}

\begin{document}
\maketitle
\begin{abstract}
In this paper, we study a generalised family of fractional BBM equations on the $d$ dimensional torus $\T^d$ for $d\geq 1$. We establish low regularity local and global well-posedness results on the $H^s(\T^d)$ scale that improve upon the works of Bona and Chen in \cite{bona2009} and Kim and Kwak in \cite{kim2026}. We also prove endpoint global well-posedness for the cubic regularised Benjamin-Ono equation in $H^{1/2}(\T)$. Furthermore, we  obtain a well-posedness result for the generalised fractional BBM in the spaces $L^{k+1}(\T^d)$ where $k$ is order of the polynomial nonlinearity. As an application of this result, we show that for a certain range of values and for a suitable randomisation of the initial data, the model is almost surely globally well-posed in $H^s(\T^d)$ for $s\geq 0$. Lastly, we show that the family exhibits a strong form of ill-posedness in $H^s(\T^d)$ for $s<0$ in the form of an infinite loss of regularity at all initial datum.
\end{abstract}
\tableofcontents

\section{Introduction}
In this paper, we study the generalised fractional Benjamin-Bona-Mahony (BBM) equation on the $d$ dimensional torus, $\T^d \coloneqq [0,2\pi]^d$,
\begin{equation}\label{Gen-BOBBM-eq}
    \begin{cases}
        \pat_t u  + |D_x|^\beta\pat_t u+  \sum_{i=1}^d \pat_{x_i} \left(u + \mu \frac{1}{k}u^k\right) = 0 \\
        u(x,0) = u_0(x)
    \end{cases}
\end{equation}
where $u: \T^d \times [0,\infty) \to \R$,
\begin{align*}
    \quad x = (x_1,\dots,x_d)\in \T^d, \quad 1\leq \beta\leq 2, \quad k\geq 2, \quad \mu \in \{-1,1\}, \quad d\geq 1
\end{align*}
 and $|D_x|^\beta$ is a Fourier multiplier with symbol $|n|^\beta$.

This family contains the standard BBM equation (\eqref{Gen-BOBBM-eq}, $d=1$, $k=2$, $\beta =2$, $\mu =1$) that was first derived by Peregrine \cite{peregrine1966} and studied by Benjamin \textit{et al.} \cite{benjamin1972a} as a model for the propagation of long wavelength water waves with short amplitude. As opposed to another model of such waves, the Korteweg-de Vries (KdV) equation, the BBM model has both favourable physcial properties  due to it's bounded dispersion \cite{albert1991,benjamin1972a} and favourable analytic properties facilitated by the smoothing nonlinearity \cite{bonaSharpWellposednessResults2009,forlano2020,forlano2025}. This was first noted by Benjamin \textit{et al.} who, via a process known as \emph{regularisation}, derived the BBM equation from the KdV equation. Formally, the regularisation process involves substituting the first order approximation of travelling waves, $\pat_x u = -\pat_t u$, exchanging a derivative in space for one in time. The reason for this substitution as well as it's physical implications is discussed in depth in \cite{benjamin1972a,benjamin1967}.

 This regularisation process can also be applied to the fractional and higher-order nonlinearity generalisations of the KdV equation; which have physical relevance  \cite{micu2001,wazwaz2005} and are also of analytical interest \cite{molinet2025,johnson2013,cunha2025,klein2022}. In doing so, we encounter the generalised fractional BBM model (\eqref{Gen-BOBBM-eq}, $d=1$, $k=2$, $\beta =2$, $\mu =1$) which was first studied by Albert and Bona in \cite{albert1991} and is also known as the regularised long wave equation.

 Furthermore, the higher dimensional analogue of the BBM equation (\eqref{Gen-BOBBM-eq}, $d\geq 2$, $\beta =2$, $\mu =1$) was first introduced by Goldstein and Wichnoski in \cite{goldstein1980} where they found the equation retained both its bounded dispersion and smoothing nonlinearity in higher dimensions. The well-posedness of the equation was further studied in \cite{avrin1985,wang2024}. In the interest of furthering this avenue of study, we generalise the model of Goldstein and Wichnoski to \eqref{Gen-BOBBM-eq} and establish well-posedness results on both the Sobolev and Lebesgue scale. We also introduce a focusing/defocusing variable $\mu$ and show, rather suprisingly, that the focusing/defocusing nature of the nonlinearity has little impact on the global well-posedness theory.

 Fortunately, the desirable analytic properties of the BBM equation also persist in the generalised form \eqref{Gen-BOBBM-eq}. To see this, we factor the time derivative terms and invert the resulting spatial operator to obtain,
 \begin{align*}
    \pat_t u = - \phi_\beta(D_x)\left(u + \mu \frac{1}{k} u^k\right), \quad \phi_\beta(D_x) \coloneqq \left(1+|D_x|^\beta\right)^{-1}\sum_{i=1}^d \pat_{x_i}.
 \end{align*}
 Here, we witness the nonlinearity being smoothed by $\beta-1$ derivatives, which will aid our efforts in proving local and global well-posedness. While we direct our focus to $\T^d$, most of the results in this paper also hold in $\R^d$; with the only differences stemming from the defocusing case $\mu =-1$ and Regime 3 of Theorem \ref{Global:thm}. Here, the presented global theory relies on the finite measure of $\T^d$ and thus cannot be generalised to $\R^d$.

We will now move to discussing our results on the well/ill-posedness theory of \eqref{Gen-BOBBM-eq} and how they fit into the existing literature.

\subsection{Local theory}
Using the standard fixed point arguments we obtain the following:
\begin{theorem} Let $d\geq 1$, $1\leq\beta\leq 2$, $k\geq 2$ and $\mu \in \{-1,1\}$. Then \label{Local_theory:lwp_main}
    \eqref{Gen-BOBBM-eq} is locally well-posed in \(H^s(\mathbb{T}^d)\) for
    \begin{equation*}
        \begin{cases}
            s > \tfrac{d}{2}, & \beta = 1\\
            s \geq \max\left(d\left(\tfrac{1}{2} - \tfrac{1}{k}\right), \tfrac{d}{2} - \tfrac{\beta-1}{k-1}\right), & \beta > 1
        \end{cases}
    \end{equation*}
    with existence time \(T \sim \|u_0\|_{H^s}^{1-k}\).
\end{theorem}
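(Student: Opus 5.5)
The plan is a contraction argument for the Duhamel formulation in $C([0,T];H^s(\T^d))$. Writing $\phi_\beta(D_x)$ as in the introduction, the equation becomes
\begin{equation*}
u(t) = e^{-t\phi_\beta(D_x)}u_0 - \frac{\mu}{k}\int_0^t e^{-(t-t')\phi_\beta(D_x)}\phi_\beta(D_x)\big(u^k(t')\big)\,dt'.
\end{equation*}
The symbol of $\phi_\beta(D_x)$ is purely imaginary, namely $i\sum_j n_j/(1+|n|^\beta)$, so $e^{-t\phi_\beta(D_x)}$ is unitary on every $H^s$. The symbol is also bounded by $\langle n\rangle^{1-\beta}$ up to constants, so $\phi_\beta(D_x)\colon H^{\sigma}\to H^{\sigma+\beta-1}$ is bounded. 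Setting $\Gamma$ to be the right-hand side, it suffices to prove the multilinear estimate
\begin{equation*}
\|\phi_\beta(D_x)(u_1\cdots u_k)\|_{H^s}\lesssim \|\langle D_x\rangle^{s-(\beta-1)}(u_1\cdots u_k)\|_{L^2}\lesssim \prod_{j=1}^k\|u_j\|_{H^s}
\end{equation*}
in the stated range of $s$. Then $\|\Gamma u\|_{C_TH^s}\le \|u_0\|_{H^s}+CT\|u\|_{C_TH^s}^k$, and the difference estimate follows from the same bound after factoring $u^k-v^k$. Choosing $T\sim \|u_0\|_{H^s}^{1-k}$ makes $\Gamma$ a contraction on the ball of radius $2\|u_0\|_{H^s}$. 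Continuous dependence and uniqueness are standard consequences of the contraction.

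For $\beta=1$ there is no smoothing, and the multilinear estimate is just the algebra property of $H^s(\T^d)$ for $s>d/2$. For $\beta>1$ the task is a product estimate: $H^s\times\cdots\times H^s\to H^{s-(\beta-1)}$. When $s>d/2$ this again follows from the algebra property, so assume $s\le d/2$. I would split according to the sign of $s-(\beta-1)$.

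\textbf{Case $s\le \beta-1$.} The target regularity is nonpositive, so it suffices to have $H^s\cdot\ldots\cdot H^s\subset L^p$ with $L^p\subset H^{s-\beta+1}$. By duality and Sobolev embedding this requires $d(\frac1p-\frac12)\le \beta-1-s$. By Hölder and Sobolev, $\|u^k\|_{L^p}\le\|u\|_{L^{kp}}^k$ with $H^s\subset L^{kp}$ whenever $\frac1{kp}\ge\frac12-\frac sd$. Taking $\frac1p=k(\frac12-\frac sd)$, the condition becomes
\begin{equation*}
d\Big(\frac{k-1}{2}\Big)-ks\le \beta-1-s,\quad\text{i.e.}\quad s\ge \frac d2-\frac{\beta-1}{k-1}.
\end{equation*}
We also need $p\ge1$ for the embedding; since $L^p\subset H^{-\sigma}$ fails for $p<1$, this gives $\frac1p\le1$, i.e. $s\ge d(\frac12-\frac1k)$. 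These two conditions are exactly the stated thresholds.

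\textbf{Case $s>\beta-1$.} Here I would use the fractional Leibniz rule (Kato–Ponce on $\T^d$) to distribute $\langle D\rangle^{s-\beta+1}$ onto one factor. One factor is then placed in $H^{s-\beta+1}_{q}$ and the rest in $L^{r}$, with Sobolev embeddings from $H^s$. The exponent bookkeeping should reproduce the same condition $s\ge \frac d2-\frac{\beta-1}{k-1}$. Alternatively, one can argue on the Fourier side by reducing to the case where the largest frequency $|n_1|\gtrsim|n|$ carries the weight.

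\textbf{Main obstacle.} The delicate points are the endpoint cases, where the Sobolev embeddings are sharp but still valid on $\T^d$ provided one avoids $L^\infty$ (which requires $s<d/2$, as already assumed), and the $p=1$ endpoint. At $p=1$ one needs the embedding $L^1\subset H^{-\sigma}$, which holds only for $\sigma>d/2$, not $\sigma=d/2$. I expect this endpoint to be the hardest step. If it cannot be handled directly, I would first try to avoid it by using a dual formulation: test $u^k$ against $\langle D\rangle^{2s-2\beta+2}w$ and apply Hölder with all $k+1$ functions in Sobolev-embedded $L^{q}$ spaces. This symmetric $(k+1)$-linear Hölder argument should handle both the endpoint and the Leibniz case uniformly, yielding the threshold $\max\big(d(\frac12-\frac1k),\frac d2-\frac{\beta-1}{k-1}\big)$.
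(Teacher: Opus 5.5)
Your plan is the paper's proof. It runs a contraction for the Duhamel map in $C([0,T];H^s)$ resting on $\|\phi_\beta(D_x)(u_1\cdots u_k)\|_{H^s}\lesssim\prod_j\|u_j\|_{H^s}$. That estimate comes from the algebra property when $\beta=1$, and for $\beta>1$ from a split on the sign of $s-\beta+1$: duality, H\"older and Sobolev when $s\le\beta-1$, and the fractional Leibniz rule when $s>\beta-1$. Your exponent count in the first case reproduces both thresholds. In the Leibniz case you still owe one check: the $k-1$ undifferentiated factors must fit in $L^{(k-1)q}$ with $q\ge 2$, i.e.\ $(k-1)(\frac12-\frac sd)\le\frac12$. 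The paper gets this from $s>\beta-1$ combined with the main condition, which together force $s>\frac{d(k-1)}{2k}$.

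The endpoint you flag needs a sharper diagnosis than ``hardest step''. The case $p=1$ means $s=d(\frac12-\frac1k)$, and there
\[
\beta-1-s>\tfrac d2\iff\beta-1>\tfrac{d(k-1)}{k}\iff d\big(\tfrac12-\tfrac1k\big)>\tfrac d2-\tfrac{\beta-1}{k-1}.
\]
So whenever the first entry of the maximum is strictly larger, $L^1\hookrightarrow H^{s-\beta+1}$ holds and there is nothing to do. What remains is the single corner $\beta=1+\frac{d(k-1)}{k}$, $s=d(\frac12-\frac1k)$, where $s-\beta+1=-\frac d2$. Examples are $d=1$ with $\beta=2-\frac1k$, or $d=2$, $k=2$, $\beta=2$, $s=0$.

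At that corner the multilinear estimate is false. Take $\widehat{u_N}=N^{\frac dk-d}\mathbbm{1}_{\{|n|\le N\}}$. Then $\|u_N\|_{H^s}\sim 1$, while $\widehat{u_N^k}(n)\gtrsim 1$ for $|n|\le N$. Summing $\langle n\rangle^{2s}|\phi_\beta(n)|^2\sim\langle n\rangle^{-d}$ over a cone where $|\sum_j n_j|\gtrsim|n|$ gives $\|\phi_\beta(D_x)u_N^k\|_{H^s}\gtrsim(\log N)^{1/2}$. The coefficients are nonnegative and the symbol of $\phi_\beta$ is bounded, so this lower bound passes to the $k$-th Picard iterate for small $t$, as in the paper's ill-posedness section. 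Hence the flow map cannot be $C^k$ at the origin there, and no contraction argument can reach this point.

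Your $(k+1)$-linear fallback cannot help. It is just the paper's duality step, and at the corner it must place a test function from $H^{d/2}$ in $L^\infty$. In the range $s>\beta-1$ it does not even apply, since the test function then has negative regularity and cannot sit in a Lebesgue space. The paper's proof covers the corner only because it applies the Sobolev inequality at $q=\infty$.

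So the statement should exclude that corner, for instance by requiring $s>d(\frac12-\frac1k)$ when $\beta=1+\frac{d(k-1)}{k}$. Everywhere else your argument, like the paper's, closes.
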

Let $\mathcal{M}=\R$ or $\T$. The local well-posedness theory for the standard BBM equation in $H^s(\mathcal{M})$ was established by Bona and Tzvetkov \cite{bonaSharpWellposednessResults2009} for $s\geq 0$. The theory was then extended by the work of Kim and Kwak in \cite{kim2026} and Bona and Chen in \cite{bona2009}. Kim and Kwak proved local well-posedness of the generalised BBM equations (\eqref{Gen-BOBBM-eq} with $d=1,\beta=2, \mu =1, k\geq 2$) in $H^s(\mathcal{M})$ for $s\geq \tfrac{k-2}{2k}$. In \cite{bona2009}, Bona and Chen established local well-posedness for \eqref{Gen-BOBBM-eq} below the $H^s(\R)$ algebra threshold for
\begin{align*}
    \beta > \max\left(\tfrac{3}{2} - \tfrac{1}{2k},2 - \tfrac{1}{k} \right).
\end{align*}
 To do so, they use Young's inequality on the Fourier side to establish a multilinear estimate. To extend local well-posedness to all $\beta >1$ and bridge the gap between the results of Kim and Kwak and Bona and Chen, we instead make use of a case distinction in our associated product estimate based on the values of $\beta$ and $s$. The first case is where $\beta$ is large enough so that the smoothing from $\phi_\beta(D_x)$ is sufficient to conduct a duality argument. Then, in the second case where the smoothing is weaker, we can estimate the product directly via the fractional Leibniz rule. This allows for finer handling of the worst case high-low frequency interactions. A theme of case seperation dependant on the value of $\beta,k,d$ and $s$ (or as we will call them in our global aguments, \emph{regimes}) will continue throughout the paper.

 As mentioned, at no point during the proof of Theorem \ref{Local_theory:lwp_main} do we use an assumption of finite measure, meaning an identical result will hold in $\R^d$.

Remarking on the sharpness of these results, we discuss two approaches in the literature. In \cite{carvajal2014}, Carvajal and Panthee showed that the flow map $\Phi: H^s(\R)\to C([0,T]; H^s(\R))$ for \eqref{Gen-BOBBM-eq} is not $C^{k}$-differentiable in the sense of Fr\'echet at the origin when $s<\max(0,\tfrac{1}{2}- \tfrac{\beta-1}{k-1})$. Therefore, when
\begin{align*}
    \tfrac{1}{2} - \tfrac{\beta-1}{k-1} \geq \tfrac{1}{2}-\tfrac{1}{k} \iff \beta \leq 2-\tfrac{1}{k}
\end{align*}
and $d=1$, Theorem \ref{Local_theory:lwp_main} is sharp. Furthermore, Kim and Kwak established that when $d=1$, $k\geq 2$, $\beta =2$ and $\mu =1$, Theorem \ref{Local_theory:lwp_main} is sharp (see the proof of Theorem 1.1 in \cite{kim2026}). The argument primarily relies on the boundedness of the dispersion relation of the BBM equation, something that \eqref{Gen-BOBBM-eq} retains for any $1\leq \beta \leq 2$. The reason this approach does not cover the ill-posedness established by Carvajal and Panthee is that the argument specifically requires $s< 1/2 - 1/k$, missing a portion of the scale when $\beta \leq 2- 1/k$. Therefore, the method of Kim and Kwak can be easily adapted to show that Theorem \ref{Local_theory:lwp_main} is sharp for $d=1$, $k\geq 2$, $\mu \in \{-1,1\}$ and $2-1/k< \beta \leq 2$. Combining both of these approaches, we see Theorem \ref{Gen-BOBBM-eq} is sharp in the full range of $1< \beta\leq 2$ when $d=1$. As far as the author is aware, the sharpness of the local theory for $d\geq 2$ is still an open problem.

  Now that we understand the sharpness of Theorem \ref{Local_theory:lwp_main} in dimension 1, we may resolve the following open problem posed by Carvajal and Panthee in \cite{carvajal2014}.
\begin{question}[Carvajal and Panthee \cite{carvajal2014}]
Let $d,\mu =1$. For the following values of $s$, $k$ and $\beta$
\begin{itemize}
    \item \underline{Case 1:} $1\leq \beta \leq 1 + \tfrac{k-1}{2k}$ and $\tfrac{1}{2} - \tfrac{\beta -1}{k-1} \leq s \leq \tfrac{1}{2}$,
    \item \underline{Case 2:} $\tfrac{3}{2}-\tfrac{1}{2k} < \beta \leq 2-\tfrac{1}{k}$ and $\tfrac{1}{2} - \tfrac{\beta -1}{k-1} \leq s < \tfrac{k-1}{2k}$,
    \item \underline{Case 3:} $2- \tfrac{1}{k} < \beta \leq 2$ and $\tfrac{1}{2} - \tfrac{\beta -1}{k-1} \leq s < \tfrac{1}{2} - \tfrac{1}{k}$,
\end{itemize}
is \eqref{Gen-BOBBM-eq} well or ill-posed in $H^s$?
\end{question}
 Using Theorem \ref{Local_theory:lwp_main} we know that \eqref{Gen-BOBBM-eq} is locally well-posed in Case 1 and Case 2. Furthermore, by the sharpness discussed in the previous paragraphs, we see that in Case 3, the solution map for \eqref{Gen-BOBBM-eq} fails to be $C^k$ differentiable, thereby presenting mild ill-posedness.
\subsection{Global theory}
The conservation laws of \eqref{Gen-BOBBM-eq} will be key to our global theory. They are as follows (see Lemma \ref{Global:lemma:Conservervation}),
\begin{align}
         \mathscr{M}_d(u(t))&\coloneqq \int_{\T^d} u(t) \, dx = \mathscr{M}_d(u_0)\label{Conservation:mass}                                                \\
         \mathscr{E}_{\beta,d}(u(t)) &\coloneqq \int_{\T^d} u(t)^2 + u|D_x|^\beta u(t) \, dx = \mathscr{E}_{\beta,d}(u_0) \label{Conservation:H1_like}                      \\
         \mathscr{H}_{k,\mu, d}(u(t))&\coloneqq \int_{\T^d} \tfrac{1}{2}u(t)^2 + \tfrac{\mu}{k(k+1)}u(t)^{k+1}\, dx = \mathscr{H}_{k,\mu,d}(u_0)\label{Conservation:Hamiltonian}.
    \end{align}
For $1<\beta \leq 2$, one can easily obtain global well-posedness of \eqref{Gen-BOBBM-eq} in $H^{\beta/2}(\T^d)$ using the conservation of \eqref{Conservation:H1_like} (see Corollary~\ref{Global:supercritical:Global}). The low regularity theory of \eqref{Gen-BOBBM-eq} begins when $s<\tfrac{\beta}{2}$ and will make heavy use of the Hamilonian conservation \eqref{Conservation:Hamiltonian} which allows us to obtain uniform in time control over the $L^{k+1}(\T^d)$ norm of the solution to \eqref{Gen-BOBBM-eq} for initial data $u_0 \in L^{k+1}(\T^d)$ (See Lemma \ref{Global:critical:general:Lk1_control}). While this is to be expected for the focusing equation ($\mu =1$), the control can also be obtained in the defocusing case ($\mu =-1$) by using the finite measure of $\T$.
\subsubsection{Dimension 1}
 The following theorem is the main result of this paper.
\begin{theorem}\label{Global:thm}
    Let $d=1$, $\mu \in \{-1,1\}$, $k\geq 3$ be an odd integer and $\tfrac{k-2}{k+1} +1 < \beta \leq 2$ be a real number. Then, \eqref{Gen-BOBBM-eq} is globally well-posed in $H^s(\T)$ in the following regimes:
    \begin{itemize}
        \item \underline{Regime 1:} $2-\tfrac{2}{k+1} < \beta \leq 2$ and $\tfrac{1}{2} - \tfrac{1}{k+1} \leq s < \tfrac{\beta}{2}$
        \item \underline{Regime 2:} $k=3$, $\tfrac{5}{4} <\beta \leq \tfrac{3}{2}$ and $\tfrac{9}{8} - \tfrac{\beta}{2}< s <\tfrac{\beta}{2}$
        \item \underline{Regime 3:} $\tfrac{2k-2}{k+1} \leq \beta \leq 2- \tfrac{2}{k+1}$ and $s > \tfrac{1}{2}$.
    \end{itemize}
    Furthermore, we obtain a polynomial growth bound in time on the $H^s$ norm of the solution $u(t)$ for Regimes 1 and 2 and a super exponential bound in time for Regime 3.
\end{theorem}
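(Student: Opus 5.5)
We combine the local theory of Theorem \ref{Local_theory:lwp_main} with a priori control coming from the conservation laws \eqref{Conservation:mass}--\eqref{Conservation:Hamiltonian}. The restriction to odd $k$ is essential: $u^{k+1}$ is then nonnegative, so for $\mu=1$ the Hamiltonian \eqref{Conservation:Hamiltonian} bounds $\|u(t)\|_{L^{k+1}}^{k+1}$ uniformly in time. For $\mu=-1$ we would invoke Lemma \ref{Global:critical:general:Lk1_control}, which uses the finite measure of $\T$ to obtain the same uniform bound on $\|u(t)\|_{L^{k+1}}$ whenever $u_0\in L^{k+1}(\T)$. In Regimes 1 and 2 the initial data lie in $H^s\subset L^{k+1}$ by Sobolev embedding, since $s\ge \tfrac12-\tfrac1{k+1}$; in Regime 2 one checks $\tfrac98-\tfrac\beta2>\tfrac14$. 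So in all regimes the $L^{k+1}$ norm of the solution is bounded for all time. The task is then to upgrade this uniform $L^{k+1}$ bound to control of $\|u(t)\|_{H^s}$. Because the local existence time satisfies $T\sim\|u_0\|_{H^s}^{1-k}$, any bound of $\|u(t)\|_{H^s}$ that stays finite on bounded time intervals can be iterated to give global existence.

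\textbf{Regimes 1 and 2.} Here we would use a high--low frequency decomposition (Bourgain's method), or alternatively an I-method style argument with the smoothing operator $\phi_\beta(D_x)$. Split $u_0=P_{\le N}u_0+P_{>N}u_0$. The low part lies in $H^{\beta/2}$ with norm $\lesssim N^{\beta/2-s}$ and evolves globally by the $H^{\beta/2}$ theory (Corollary \ref{Global:supercritical:Global}). The high part is small in $H^s$ and evolves by the difference equation. The key step is a smoothing estimate for the nonlinear Duhamel part of the difference equation. On a short interval the Duhamel term
\[
\int_0^t \phi_\beta(D_x)\bigl[(v+w)^k-v^k\bigr]\,dt'
\]
should lie in $H^{\beta/2}$, gaining roughly $\beta-1$ derivatives. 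To prove this we estimate the product with Hölder's inequality, putting $k-1$ factors in $L^{k+1}$ (uniformly controlled) and using the fractional Leibniz rule. This is where the lower bound on $\beta$, namely $\beta>\tfrac{k-2}{k+1}+1$, enters: it allows the $L^{k+1}$ norms to absorb the nonlinearity while $\beta-1$ derivatives are gained. The increment of energy from this step is then $N^{-\gamma}$ for some $\gamma>0$. Iterating over $N^{\delta}$ short intervals, we choose $N=N(T)$ polynomially in $T$, which gives the polynomial growth bound. Regime 2, where $k=3$ and $\beta\le\tfrac32$, is the one with weaker smoothing and needs finer bookkeeping of the high--low interactions. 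There the threshold $\tfrac98-\tfrac\beta2$ would come from balancing the exponents.

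\textbf{Regime 3.} Here $s>\tfrac12$, so $H^s$ is an algebra and a Gronwall argument is available. Differentiating $\|u(t)\|_{H^s}^2$ along the flow gives
\[
\tfrac{d}{dt}\|u\|_{H^s}^2\lesssim \|u\|_{H^s}\,\|\phi_\beta(D_x)u^k\|_{H^s}.
\]
Since $\phi_\beta$ smooths by $\beta-1$ derivatives, the fractional Leibniz rule and Gagliardo--Nirenberg interpolation between $L^{k+1}$ (bounded) and $H^s$ should give
\[
\|u^k\|_{H^{s-(\beta-1)}}\lesssim \|u\|_{L^{k+1}}^{k-1-\theta}\,\|u\|_{H^s}^{1+\theta}.
\]
The condition $\beta\ge\tfrac{2k-2}{k+1}$ is what should force $\theta\le 0$, or at worst produce a logarithmic (Brezis--Gallouët type) loss, leading to a linear or $y\log y$ differential inequality. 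Integrating that inequality yields the super-exponential bound in time.

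The main obstacle is the Regime 1/2 smoothing estimate: showing that the nonlinear part of the high-frequency evolution gains enough regularity, with explicit negative powers of $N$, using only the $L^{k+1}$ control. Making the exponents close at the endpoint $s=\tfrac12-\tfrac1{k+1}$ is the delicate point. We would first attempt a direct duality argument as in the large-$\beta$ case of the local theory.
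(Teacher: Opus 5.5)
\textbf{Regime 1.} This part of your argument does not work, and Regime 1 carries most of the theorem. In your high--low scheme the low modes evolve by the full flow in $H^{\beta/2}$ and the high modes $w$ by the difference equation. The Duhamel term you need in $H^{\beta/2}$ then contains the pure high-frequency piece $\phi_\beta(D_x)w^k$.

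The propagator $S(t)$ is unitary with bounded symbol, so there is no dispersive smoothing, and $w$ is never better than $H^s\subset L^{k+1}$. Already at $\beta=2$ you would need $\|w^k\|_{L^2}=\|w\|_{L^{2k}}^k$, i.e.\ $s\ge\tfrac12-\tfrac1{2k}$. This is strictly above the endpoint $\tfrac12-\tfrac1{k+1}$. The norm you need is stronger than $H^s$, so the localisation $|n|>N$ gives no negative power of $N$, and your claimed gain $N^{-\gamma}$ has no source.

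The opposite (Bona--Tzvetkov) assignment, which the paper uses in Regime 2, avoids that term. However, as Remark~\ref{Regime_2:restriction_rem2} notes, its numerology closes only for $k<7$, and for the quintic only at $s>\tfrac12$.

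The missing idea is that Regime 1 needs no decomposition at all. The paper applies the energy identity of Lemma~\ref{global:critical:general:lem_energy} directly to $\|u\|_{H^s_\beta}$ and moves $\sigma$ derivatives onto the linear factor. It then uses H\"older with exponents $(k+1,\tfrac{k+1}{k-1},k+1)$ and the fractional Leibniz rule. Finally it interpolates both derivative-carrying factors between $H^s$ and the uniformly bounded $L^{k+1}$.

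The parameter $\sigma$ cancels in the total power $\alpha=\theta_1+\theta_3=\frac{2s-\beta+1}{s-\frac12+\frac1{k+1}}$. The condition $\alpha\le 2$ is exactly $\beta\ge 2-\tfrac{2}{k+1}$. This, not $\beta>\tfrac{k-2}{k+1}+1$ as you guessed, is the source of the Regime 1 threshold. For $s\le\beta-\tfrac{3(k+1)-2}{2(k+1)}$, the direct embedding $H^s\hookrightarrow W^{2s-\beta+1,k+1}$ gives $\tfrac{d}{dt}\|u\|_{H^s_\beta}^2\lesssim\|u\|_{H^s_\beta}$. Both cases give sublinear differential inequalities and hence polynomial growth.

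\textbf{Regime 3.} Your sketch does not close as written either. A bump-function scaling check shows that an estimate $\|u^k\|_{H^{s-\beta+1}}\lesssim\|u\|_{L^{k+1}}^{k-1-\theta}\|u\|_{H^s}^{1+\theta}$ with $\theta\le 0$ is possible only for $\beta\ge 2-\tfrac{2}{k+1}$; this is the paper's Case 1, with plain exponential growth. On the rest of Regime 3 you get $\theta>0$ and $y'\lesssim y^{1+\theta}$, which blows up in finite time.

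The paper proceeds in four steps:
\begin{enumerate}
\item It reduces to $\|u\|_{H^s}\|u\|_{L^{(k-1)/(\beta-1)}}^{k-1}$ via a Sobolev embedding into $F^s_{2/(2\beta-1),2}$. This space is quasi-Banach for $\beta>\tfrac32$, hence the need for Proposition~\ref{Frac_F_rule}.
\item It interpolates $L^{(k-1)/(\beta-1)}$ between $L^{k+1}$ and $L^\infty$.
\item It bounds $\|u\|_{L^\infty}$ by Lemma~\ref{lem:Brezis-Gallouet}, using the conserved energy $\mathscr{E}_{\beta,1}$ to control $\|u\|_{H^{1/2}}\le\|u\|_{H^{\beta/2}}$.
\item With $z=\log\|u\|_{H^s_\beta}$ this gives $z'\lesssim(1+z)^{\alpha/2}$, where $\alpha=k-1-(k+1)(\beta-1)$, and $\alpha\le2$ is precisely $\beta\ge\tfrac{2k-2}{k+1}$.
\end{enumerate}
Your sketch uses only the $L^{k+1}$ bound and never the $H^{\beta/2}$ conservation. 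Without it the logarithmic loss you hope for is not available.

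\textbf{Regime 2.} Your plan is in the right family, but the decisive estimates are not carried out. You would need the energy inequality for the low part $w$ in $H^{\beta/2}$, with $L^4$ control of $w=u-v$ inherited from the Hamiltonian bounds on $u$ and $v$. You would also need to derive the threshold $\tfrac98-\tfrac\beta2$ from the H\"older bookkeeping.
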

 To help summarise these results we include the below table.
\begin{table}[H]
    \centering
    \caption{Global Well-posedness of \eqref{Gen-BOBBM-eq}}
    \label{table}
    \renewcommand{\arraystretch}{1.1}
    \begin{tabular}{l cccc}
        \toprule
        & \multicolumn{4}{c}{Nonlinearity $k$} \\
        \cmidrule(lr){2-5}
        Smoothing $\beta$ & $k=2$  & $k=3$ & $k=5$ & $k$ odd \\
        \midrule
        $\beta=1$ & $s\geq \frac{1}{2}$ \cite{angulo2011,he2018} & {\boldmath $s\geq  \frac{1}{2}$} & Unknown & Unknown\\
        $1<\beta\leq \frac{k-2}{k+1}+1$    & $s\geq \frac{3}{2}-\beta$ \cite{wang2018}  & {\boldmath $s>\frac{1}{2}$}   & \textcolor{Mahogany}{$s\geq \frac{\beta}{2}$}  & \textcolor{Mahogany}{$s\geq \frac{\beta}{2}$} \\
        $\frac{k-2}{k+1}+1 < \beta \leq \frac{2k-2}{k+1}$    & $s\geq\frac{3}{2}-\beta$ \cite{wang2018}   & {\boldmath $s> \frac{1}{2}$} & \textcolor{Mahogany}{$s\geq \frac{\beta}{2}$} &  \textcolor{Mahogany}{$s\geq \frac{\beta}{2}$} \\
        $\frac{2k-2}{k+1} \leq \beta \leq 2-\frac{2}{k+1}$    & $s\geq \frac{3}{2}-\beta$ \cite{wang2018}  & {\boldmath $s> \frac{1}{2}$} & {\boldmath $s>\frac{1}{2}$} &  {\boldmath $s>\frac{1}{2}$} \\
        $2-\frac{2}{k+1} <\beta < 2$    & $s\geq 0 \vee \frac{3}{2}-\beta$ \cite{wang2018}   & {\boldmath $s\geq \frac{1}{4}$} & {\boldmath $s\geq  \frac{1}{3}$} &{\boldmath $s\geq \frac{1}{2} - \frac{1}{k+1}$} \\
        $\beta=2$ & $s\geq0$ \cite{bonaSharpWellposednessResults2009} & $s\geq \frac{1}{4}$ \cite{kim2026} & {\boldmath $s\geq  \frac{1}{3}$} & {\boldmath $s\geq \frac{1}{2} - \frac{1}{k+1}$}\\
        \bottomrule
    \end{tabular}
\end{table}
 The entries in \textcolor{Mahogany}{red} denote regimes where as far as the author is aware, there is no known low regularity theory. The `Unknown' entries are where well-posedness results at and above the energy are also unknown. Bolded entries are the contributions of this paper.  In the below tables we display the growth of the $H^s$ norms of the solutions in their respective regimes. We provide the equation numbers for each of the explicit bounds. 
\begin{table}[H]
    \centering
    \caption{$H^s$ norm growth of solution to \eqref{Gen-BOBBM-eq} for $k=3$}
    \resizebox{\columnwidth}{!}{
    \begin{tabular}{l cccc}
        \toprule
        & \multicolumn{4}{c}{Regime} \\
        \cmidrule(lr){2-5}
       Regularity $s$  & $\beta=1$ & $1< \beta \leq 5/4$ &  $5/4<\beta \leq 3/2$ & $3/2 < \beta \leq 2$  \\
        \midrule
        $1/4 \leq s \leq 9/8 -\beta/2$ & Unknown & Unknown & Unknown & {\boldmath $\mathcal{O}(t^{\bullet})$} (\ref{Regime_1:poly_post_gron}, \ref{Regime_1:other_growth_bound})\\
        $9/8 -\beta/2 < s \leq 1/2$ & Unkown & Unknown  & {\boldmath $\mathcal{O}(t^{\bullet})$} \eqref{Reg_2:growth_bound} &  {\boldmath $\mathcal{O}(t^{\bullet})$} \eqref{Regime_1:poly_post_gron}\\
        $s > \frac{1}{2}$ & {\boldmath $\mathcal{O}(\exp\exp(t))$} \eqref{Regime_3_growth_bound2} &{\boldmath $\mathcal{O}(\exp(t^{\bullet}))$} \eqref{Regime_3_growth_bound}   & {\boldmath $\mathcal{O}(t^{\bullet})$} \eqref{Reg_2:growth_bound}  & {\boldmath $\mathcal{O}(t^{\bullet})$} (\ref{Regime_1:poly_post_gron})\\
        \bottomrule
    \end{tabular}}
    \label{table2}
\end{table}
\begin{table}[H]
    \centering
    \caption{ $H^s$ norm growth of solution to \eqref{Gen-BOBBM-eq} for $k\geq 5$ odd}
    \resizebox{\columnwidth}{!}{
    \begin{tabular}{l cccc}
        \toprule
        & \multicolumn{4}{c}{Regime} \\
        \cmidrule(lr){2-5}
       Regularity $s$  & $\beta= \frac{2k-2}{k+1}$ & $\frac{2k-2}{k+1} < \beta < 2 - \frac{2}{k+1}$ & $ \beta = 2 - \frac{2}{k+1}$ &$2-\frac{2}{k+1} < \beta \leq 2$   \\
        \midrule
        $\frac{1}{2} - \frac{1}{k+1} \leq s \leq \frac{1}{2}$ & Unknown  & Unknown &  {\boldmath $\mathcal{O}(\exp(t))$} \eqref{Regime_1:exponential_post_gron} & {\boldmath $\mathcal{O}(t^{\bullet})$} (\ref{Regime_1:poly_post_gron}, \ref{Regime_1:other_growth_bound})\\
        $s > \frac{1}{2}$ & {\boldmath $\mathcal{O}(\exp\exp(t))$} \eqref{Regime_3_growth_bound2} & {\boldmath $\mathcal{O}(\exp(t^{\bullet}))$} \eqref{Regime_3_growth_bound} & {\boldmath $\mathcal{O}(\exp(t))$} \eqref{Regime_3:Plain_exp} & {\boldmath $\mathcal{O}(t^{\bullet})$} (\ref{Regime_1:poly_post_gron})\\
        \bottomrule
    \end{tabular}
    }
    \label{table3}
\end{table}
 
 Here, $t^\bullet$ denotes some finite power of $t$, often depending on $s,\beta, k$ and $\mu$.

  We note that in Regime 1 ($2-\tfrac{2}{k+1} < \beta \leq 2$) we obtain a quadratic growth bound for $\tfrac{1}{2}-\tfrac{1}{k+1}\leq s \leq \beta - \tfrac{3(k+1)-2}{2(k+1)}$, which is \eqref{Regime_1:other_growth_bound} and a more involved polynomial bound \eqref{Regime_1:poly_post_gron} where the exponent depends on $k,s$ and $\beta$. Distinguishing these two cases complicates the tables significantly, so we present them both as a generic polynomial growth. We now discuss the cited results in Table \ref{table} as well as our proofs in the context of their associated regimes.

\underline{Regime 1:}
Regime 1 includes the most well studied cases of \eqref{Gen-BOBBM-eq}, the BBM equation itself,  the cubic and quintic BBM as well as its higher order nonlinearities. The first wellposedness results on the BBM equation came from the original paper of Benjamin \textit{et al.} \cite{benjamin1972a} where they showed that the Cauchy problem for the BBM equation was globally well-posed in $H^k(\R)$ for all $k\in \N$ with $k\geq 1$. The study of the low regularity theory of the BBM equation was initiated by Bona and Tzvetkov, who proved global well-posedness in $H^s(\T)$ and $H^s(\R)$ for $s\geq 0$ using a high-low wavelength decomposition technique that we will discuss in detail in Regime 2. This method was then adapted by Kim and Kwak in \cite{kim2026} where they proved the following:
\begin{enumerate}
    \item The cubic BBM equation (\eqref{Gen-BOBBM-eq}, $\mu=1,d=1$, $\beta=2$, $k=3$) is globally well-posed in $H^s(\mathcal{M})$ for $s\geq \tfrac{1}{4}$.
    \item The quintic BBM equation  (\eqref{Gen-BOBBM-eq}, $\mu=1,d=1$, $\beta=2$, $k=5$) is globally well-posed in $H^s(\mathcal{M})$ for $s> \tfrac{1}{2}$.
    \item There is an exponential growth bound in time on the $H^s$ norm of the global solutions to the cubic and quintic BBM equations.
\end{enumerate}
These results were obtained using the high-low decomposition techniques used in the $k=2$ case. The lack of even power results is because the Hamiltonian, the only\footnote{While the exact amount of conservation laws is not known for the generalised BBM equation, it is shown in  \cite{duzhin1984}  that the standard BBM equation has precisely 3.} conservation law of \eqref{Gen-BOBBM-eq} below $s=1$, fails to control the $L^{k+1}$ norm for even $k$. In \cite[Remark 1.13]{kim2026} Kim and Kwak note that the method of obtaining their global result is also restricted to the cubic and quintic nonlinearity, the reason for which is discussed in Remark \ref{Regime_2:restriction_rem2}. While, Regime 1 of Theorem~\ref{Global:thm} makes no progress with the even power issues, it's novelty is threefold:
\begin{enumerate}
    \item We improve Kim and Kwak's result \cite{kim2026} for the quintic BBM equation from $s>\tfrac{1}{2}$ to $s\geq \tfrac{1}{3}$.
    \item We provide the first (as far as the author is aware) low regularity theory for all odd higher order powers and surrounding fractional cases permittited within the regime.
    \item We obtain polynomial growth bounds on the solution to all equations within this regime.
\end{enumerate}
On the fractional BBM side, Bona and Chen first proved global well-posedness for (\eqref{Gen-BOBBM-eq}, $\mu =1$, $d=1$, $1< \beta \leq 2$, $k=2$) in $H^s(\R)$ when $s \geq 1-\tfrac{\beta}{2}$ \cite{bona2009}. This was later improved by Wang and Zhang in \cite{wang2018} to $s\geq \tfrac{3}{2}-\beta$, closing the gap between the global theory and the sharp local theory established by Carvajal and Panthee  in \cite{carvajal2014}. In this and the following regimes, we extend the fractional theory to the higher order odd nonlinearities. As far as the author is aware, the higher order even powers remain an open problem.

To obtain our results in this regime, we use a physical side energy method approach that involves first deriving an energy identity for a Sobolev space $H^s_\beta$, which is adapated to the dispersion of \eqref{Gen-BOBBM-eq}. Then, we divide into cases and carefully split derivatives in order to use fractional product estimates and interpolation to obtain a differential inequality in terms of the $H^s_\beta$ norm of the solution.  Integrating this inequality, we derive a uniform growth bound on the $H^s_\beta$ norm . Finally, we use a multiplier estimate to transfer this uniform bound to the target $H^s(\T)$ space. We remark that the result holds almost identically for $\R$, except we require that $\mu =1$ as the $\mu =-1$ argument uses the finite measure of $\T$.

\underline{Regime 2:}
Here, we obtain global results only for $k=3$, due to the technical nature of this restriction, we remark on it after the proof of the global result in Remark \ref{Local:remark_verification}. In this regime, we adapt the high-low wavelength decomposition technique used in \cite{kim2026,bonaSharpWellposednessResults2009}. The technique involves splitting the initial data $u_0$ into  a sum of its low frequencies $w_0$ and its high frequencies $v_0$ \footnote{taken high enough so that that the local solution exists to the desired time $T$.}. We then take $v$ as the local solution of \eqref{Gen-BOBBM-eq} correspondig to the initial data $v_0\in H^s$. With this, we show that $w = u-v$ satisfies a difference equation upon which we prove a local and subsequently global result. What makes this technique effective is that $w_0$ is bounded in frequency, meaning it is smooth.  Therefore we may conduct our well-posedness proofs of the difference equation in $H^{\beta/2}$, where we may take advantage of an energy identity to derive a growth bound on the $H^{\beta/2}$ norm of $w$. After establishing the global result for $w$, we then patch $v$ and $w$ back together to form the global solution of the original equation, $u$.

\underline{Regime 3}
In this regime we adapt the method of Angulo \textit{et al.} in \cite{angulo2011} where they prove the global well-posedness of the regularised BO equation (\eqref{Gen-BOBBM-eq} $k=2$, $\beta=1$, $\mu =1$, $d=1$) in $H^s(\R)$ for $s>\tfrac{1}{2}$. To do this they use the Brezis-Gallouet inequality  (Lemma \ref{lem:Brezis-Gallouet}), a sharpening of the Sobolev embedding theorem that allows one to quantify the failure of the $H^{1/2}(\T)\hookrightarrow L^{\infty}(\T)$ embedding. This allows them (and us) to integrate a differential inequality that would otherwise diverge under standard estimates. Furthermore, in our estimates we encounter a $W^{s,p}$ space for $p<1$. To make this step rigorous, we work in the Triebel-Lizokin space $F^s_{p,2}(\T)$ for which we establish a fractional Leibniz rule in the appendix (Proposition \ref{Frac_F_rule}). To do so, we transfer a result of Naibo and Thomposon in \cite{naibo2019} from $\R$ to $\T$. Given $F^s_{p,2}(\T)$ is no longer Banach for $p<1$, this transferrence involves some non-standard tools, with our method being adapted from Triebel's localistation framework outlined in \cite[Chapter 3]{triebel1983}. For more background in the theory of function spaces, see the seminal works of Triebel in \cite{triebel1983,triebel1992,triebel2006,triebel2020}. We show that along with the energy method, this technique can be used to establish a global theory for lower values of $\beta$ and remark on the failure of a simpler technique in Remark \ref{Regime_3:tech_remark}. The proof of this regime relies on the finite measure of $\T$ and thus does not apply to $\R$.
\subsubsection{Endpoint well-posedness}
Specialising \eqref{Gen-BOBBM-eq} to $d=1$, $k=3$ and $\beta=1$ we encounter the \emph{cubic regularised BO equation}
\begin{align}
    \pat_t u + |D_x|\pat_t u + \mu \tfrac{1}{3}(u^3)=0.\label{Intro:cRBO}
\end{align}
So called, as (\eqref{Gen-BOBBM-eq}, $d=1$, $k=2$, $\beta=1$, $\mu =1$) is the regularisation of the Benjamin-Ono (BO) equation. Theorem~\ref{Local_theory:lwp_main} ensures that \eqref{Intro:cRBO} is locally well-posed in $H^s(\T)$ for $s>1/2$. It is known that for $s<1/2$, the solution map fails to $C^3$ differentiable in the sense of Fr\'echet \cite{carvajal2014}, ruling out fixed point methods for such $s$ values. However, it is not known if this result extends to $s=1/2$. On a practical level, the failure of the algebra property of $H^s(\T)$ for $s\leq 1/2$ makes this prospect unlikely. Therefore, to prove local well-posedness of \eqref{Intro:cRBO} in $H^{1/2}$ we take inspiration from the work of He and Mammeri in \cite{he2018} which is itself based on the compactness method pioneered by Yudovich in \cite{judovic1963}. After doing so, we may use the conservation of \eqref{Global:critical:general:case_two:sobolev} to establish the following global result:
\begin{theorem}\label{thm:global:endpoint}
    Let $\mu \in \{-1,1\}$. Then \eqref{Intro:cRBO} is globally well-posed in $H^{1/2}(\T)$.
\end{theorem}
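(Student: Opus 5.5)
The plan has two stages: local well-posedness in $H^{1/2}(\T)$ by a Yudovich-type compactness argument, following He and Mammeri, and then globalisation through the conserved energy. For $\beta=1$ the energy $\mathscr{E}_{1,1}(u)=\int u^2+u|D_x|u\,dx$ is equivalent to $\|u\|_{H^{1/2}}^2$. Hence any $H^{1/2}$ solution on $[0,T]$ satisfies $\|u(t)\|_{H^{1/2}}\lesssim \|u_0\|_{H^{1/2}}$ uniformly in time. If the local existence time depends only on $\|u_0\|_{H^{1/2}}$, iteration then gives global existence. So the real work is the local theory at the endpoint, where the algebra property of $H^{1/2}$ fails and Theorem~\ref{Local_theory:lwp_main} does not apply.

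\textbf{Existence.} Mollify the data, $u_{0,N}=P_{\le N}u_0$, and let $u_N$ be the smooth global solutions from Theorem~\ref{Local_theory:lwp_main} (with $s>1/2$) together with energy conservation. Energy conservation for $u_N$ gives a uniform bound $\sup_t\|u_N(t)\|_{H^{1/2}}\lesssim\|u_0\|_{H^{1/2}}$. The equation $\pat_t u_N=-\phi_1(D_x)(u_N+\tfrac{\mu}{3}u_N^3)$ has $\phi_1(D_x)$ of order $0$, and $H^{1/2}\hookrightarrow L^p$ for all $p<\infty$. Together these bound $\pat_t u_N$ uniformly in $L^\infty_tL^2_x$. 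Aubin--Lions then gives a subsequence converging in $C([0,T];H^{1/2-})$ and weak-$*$ in $L^\infty H^{1/2}$. The cubic term passes to the limit because $u_N^3\to u^3$ in $L^1_{t,x}$, so $u$ is a weak solution. Weak continuity in time, combined with energy conservation, will upgrade this to $u\in C([0,T];H^{1/2})$.

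\textbf{Uniqueness (main obstacle).} Let $w=u-v$ for two solutions with the same bounds. Test the difference equation against $w$ in $L^2$ to get
\[\tfrac{d}{dt}\|w\|_{L^2}^2\lesssim \|w\|_{L^2}^2+\big|\langle \phi_1(D_x)(w(u^2+uv+v^2)),w\rangle\big|\lesssim \|w\|_{L^2}\,\|w\,(u^2+uv+v^2)\|_{L^2}.\]
There is no $L^\infty$ control, so I would use Yudovich's trick. By Hölder, $\|w\,g\|_{L^2}\le \|w\|_{L^{2p'}}\|g\|_{L^{2p}}$ with $g=u^2+uv+v^2$. Since $\|g\|_{L^{2p}}\lesssim \|u\|_{L^{4p}}^2+\|v\|_{L^{4p}}^2$, the sharp Sobolev/Trudinger growth $\|f\|_{L^q(\T)}\lesssim \sqrt{q}\,\|f\|_{H^{1/2}}$ gives $\|g\|_{L^{2p}}\lesssim p\,\|u_0\|_{H^{1/2}}^2$. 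For the other factor, interpolate $\|w\|_{L^{2p'}}\le\|w\|_{L^2}^{1-1/p}\|w\|_{L^\infty\text{-like}}^{1/p}$, replacing $L^\infty$ by a large $L^q$ norm bounded via $H^{1/2}$. This yields $y'\lesssim p\,y^{1-1/(2p)}$ with $y=\|w\|_{L^2}^2$ and an $H^{1/2}$-dependent constant, which integrates to $y(t)\le (Ct)^{2p}$. Letting $p\to\infty$ forces $w\equiv0$ for $t<1/C$, and iterating covers $[0,T]$. The delicate points are the exact $\sqrt q$ constant and balancing the exponents so the Osgood-type ODE closes. Here I would follow He and Mammeri's $k=2$ argument, with the extra power absorbed by the same exponential-integrability bound.

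\textbf{Continuity and globalisation.} The same $L^2$ difference estimate gives Lipschitz dependence in $L^2$. Continuity of the data-to-solution map in $H^{1/2}$ then follows by a Bona--Smith argument: compare with mollified solutions and use conservation of $\mathscr{E}_{1,1}$ to control the high frequencies. The existence time depends only on $\|u_0\|_{H^{1/2}}$, which is conserved up to equivalence, so the solution extends to all $t\ge0$ for either sign of $\mu$; the sign never enters the $H^{1/2}$ energy.
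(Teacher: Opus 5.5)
Your overall architecture matches the paper's: compactness for existence, a Yudovich-type $L^2$ argument built on $\|f\|_{L^q}\lesssim\sqrt{q}\,\|f\|_{H^{1/2}}$ for uniqueness, conservation of $\mathscr{E}_{1,1}$ to upgrade weak to strong continuity, then iteration. The existence step, however, has a genuine gap.

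You take $u_N$ to be \emph{global} smooth solutions obtained from Theorem~\ref{Local_theory:lwp_main} with $s>1/2$ ``together with energy conservation''. For $\beta=1$ that theorem gives a lifespan $\sim\|u_{0,N}\|_{H^s}^{-2}$. Meanwhile $\mathscr{E}_{1,1}$ controls only $\|u_N\|_{H^{1/2}}$, which is exactly the endpoint and says nothing about $\|u_N(t)\|_{H^s}$ for $s>1/2$. The reason is that the energy estimate at level $s$ needs $\|u_N\|_{L^\infty}$, and $H^{1/2}(\T)$ does not embed into $L^\infty(\T)$. So nothing you state rules out blow-up of the approximants. Since $\|\Pi_{\le N}u_0\|_{H^s}\to\infty$ for $u_0\notin H^s$, their guaranteed existence times can shrink to $0$, leaving no common interval $[0,T]$ on which to run Aubin--Lions.

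What is missing is an a priori $H^s$ bound driven only by the $H^{1/2}$ energy. Combining $\frac{d}{dt}\|u\|_{H^s}^2\lesssim\|u\|_{L^\infty}^2\|u\|_{H^s}^2$ with the Brezis--Gallou\"et inequality (Lemma~\ref{lem:Brezis-Gallouet}) yields a logarithmic Gr\"onwall inequality and a double-exponential bound. This is Proposition~\ref{Regime_3:proposition_result} at $\beta=1$, $k=3$ (admissible since $\frac{2k-2}{k+1}=1$). It is precisely what the paper invokes in Lemma~\ref{LEM:Local:endpoint:subsequence} to obtain global $H^1$ approximants. Alternatively, you could bypass it with a Galerkin scheme $\pat_t u_N=-\phi_1(D_x)\Pi_{\le N}\big(u_N+\tfrac{\mu}{3}u_N^3\big)$, $u_N=\Pi_{\le N}u_N$. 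This finite-dimensional ODE still conserves $\mathscr{E}_{1,1}$ exactly, hence is global, and the rest of your compactness argument goes through.

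There are also some smaller points.
\begin{itemize}
\item Your uniqueness estimate legitimately simplifies the paper's. You bound $|\langle\phi_1(D_x)(wg),w\rangle|\le\|wg\|_{L^2}\|w\|_{L^2}$ using only the $L^2$-boundedness of $\phi_1(D_x)$, instead of splitting off the commutator of Lemma~\ref{Local:endpoint:commutator_estimate}. It closes once you interpolate $L^{2p'}$ between $L^2$ and $L^{p}$: $\|w\|_{L^{2p'}}\le\|w\|_{L^2}^{1-\frac{1}{p-2}}\|w\|_{L^p}^{\frac{1}{p-2}}$. Since $(\sqrt{p})^{1/(p-2)}$ is bounded, this gives $y'\le Cp\,y^{1-\frac{1}{2(p-2)}}$ with $C$ independent of $p$, whence $y(t)\le(Ct)^{2(p-2)}\to0$ for $t<1/C$.
\item The same estimate with different data gives only a Yudovich-type H\"older modulus in $L^2$, not Lipschitz dependence.
\item Energy equality for the weak limit is not automatic. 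Weak lower semicontinuity only gives $\mathscr{E}_{1,1}(u(t))\le\mathscr{E}_{1,1}(u_0)$; equality requires uniqueness plus time reversibility, as in Lemma~\ref{lem:Local:endpoint:continuity}.
\item With energy equality in hand, continuity of the data-to-solution map in $H^{1/2}$ follows from $L^2$ convergence, the uniform $H^{1/2}$ bound and convergence of $\mathscr{E}_{1,1}$ (weak convergence plus convergence of norms). No Bona--Smith step is needed.
\end{itemize}
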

In \cite{angulo2011}, Angulo \textit{et al.}  proved that the \emph{standard} regularised BO equation (\eqref{Gen-BOBBM-eq}, $\mu =1$, $d=1$, $\beta=1$, $k=2$) was globally well-posed in $H^s(\R)$ for $s>1/2$. Following this, He and Mammeri in \cite{he2018} showed that the standard regularised BO equation was globally well-posed in $H^{1/2}(\R)$, using the global solutions obtained by Angulo \textit{et al.} in $H^{1}$ to approximate the solution in $H^{1/2}$. Similarly, to approximate the $H^{1/2}$ solution for the \emph{cubic} regularised BO equation, we rely on the global well-posedness of \eqref{Intro:cRBO} in $H^1$ established by Regime 3 of Theorem \ref{Global:thm}.

The theory for the quintic and higher powers of the regularised BO equation remains unknown as the compactness method used in this paper is insufficient (see Remark \ref{Local:endpoint:failure_remark}).
\subsubsection{Dimension 2}
Here, we specialise to the two dimensional cubic BBM equation (\eqref{Gen-BOBBM-eq},  $d=2, k=3, \beta=2$, $\mu \in \{-1,1\}$)  which we write as the following Cauchy problem:
\begin{align}
    \begin{cases}
        \pat_t u + |D_x|^2 \pat_t u +\left(\pat_x + \pat_y\right)\left(u + \mu \tfrac{1}{3} u^3\right) =0\\
        u(x,0) = u_0(x)  
    \end{cases} x\in \T^2 \label{dim_2:cubic_BBM}.
\end{align}
Our local result, Theorem \ref{Local_theory:lwp_main} tells us that \eqref{dim_2:cubic_BBM} is locally well-posed in $H^s(\T^2)$ for $s\geq \tfrac{1}{2}$. With this in mind, we state the following result.
\begin{theorem}\label{THM:dim_2}
    \eqref{dim_2:cubic_BBM} is globally wellposed in $H^s$ for $s\geq \tfrac{1}{2}$. Moreover, we obtain an exponential growth bound on the $H^s$ norm of the solution for $\tfrac{1}{2} \leq s <2$.
\end{theorem}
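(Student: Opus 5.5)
We have $d=2$, $k=3$, $\beta=2$. Theorem~\ref{Local_theory:lwp_main} gives local well-posedness for $s\ge\max(2(\tfrac12-\tfrac13),\,1-\tfrac12)=\tfrac12$, with existence time $T\sim\|u_0\|_{H^s}^{-2}$. So it suffices to produce an a priori bound on $\|u(t)\|_{H^s}$ on every finite time interval and then iterate the local result.

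\textbf{Case $s\ge1$.} Since $\beta/2=1$, conservation of $\mathscr{E}_{2,2}$ in \eqref{Conservation:H1_like} controls $\|u(t)\|_{H^1}$ uniformly in time. For $s>1$ we would run a standard energy estimate in $H^s$ (or persistence of regularity). The equation reads $\pat_t u=-\phi_2(D_x)(u+\tfrac\mu3u^3)$, and $\phi_2(D_x)$ gains one derivative, so
\begin{align*}
\tfrac{d}{dt}\|u\|_{H^s}^2\lesssim \|u^3\|_{H^{s-1}}\|u\|_{H^s}.
\end{align*}
In dimension two with $\|u\|_{H^1}$ bounded, the fractional Leibniz rule and the Brezis--Gallouet type logarithmic control of $\|u\|_{L^\infty}$ (or simply $H^1\hookrightarrow L^p$ for all $p<\infty$ when $s<2$) give $\|u^3\|_{H^{s-1}}\lesssim C(\|u\|_{H^1})\|u\|_{H^s}$. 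Gronwall then yields exponential growth, which is the bound claimed for $s<2$. For $s\ge2$, $H^s$ is an algebra and the same estimate with a bootstrap gives global existence, possibly with a worse bound, consistent with the statement.

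\textbf{Case $\tfrac12\le s<1$: the main obstacle.} Here there is no conserved quantity at the level of $H^s$. The only lower conservation law is the Hamiltonian \eqref{Conservation:Hamiltonian}. By Lemma~\ref{Global:critical:general:Lk1_control} it gives uniform control of $\|u(t)\|_{L^4(\T^2)}$ for $u_0\in L^4$, and $H^{1/2}(\T^2)\hookrightarrow L^4(\T^2)$ holds exactly at $s=\tfrac12$ (in the defocusing case this uses the finite measure of $\T^2$). I would use an energy identity in an adapted space $H^s_2$, with norm $\|(1+|D|^2)^{1/2}\langle D\rangle^{s-1}u\|_{L^2}$, mirroring the Regime 1 argument. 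One finds
\begin{align*}
\tfrac{d}{dt}\|u\|_{H^{s-1}_{\ast}}^2\sim\int \langle D\rangle^{s-1}\nabla(u^3)\cdot\langle D\rangle^{s-1}u\,dx,
\end{align*}
up to lower-order terms, where $H^{s-1}_\ast$ denotes the smoothed norm that is equivalent to $\|u\|_{H^s}$. Moving derivatives, the task is to bound $\|u^3\|_{H^{2s-1}}$-type pairings. I would split $u^3=u\cdot u^2$, apply the fractional Leibniz rule with the exponent pairs $L^4$ for two factors and an $L^p$ Sobolev factor for the third, and use Gagliardo--Nirenberg interpolation between $L^4$ (bounded) and $H^s$. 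The goal is the differential inequality
\begin{align*}
\tfrac{d}{dt}\|u\|_{H^s}^2\lesssim C(\|u_0\|_{L^4})\,\|u\|_{H^s}^{2}.
\end{align*}
This needs the power of $\|u\|_{H^s}$ to be at most $2$. Checking this exponent count at $s=\tfrac12$ is the delicate step: the scaling is borderline, and one may need the endpoint $H^{1/2}\hookrightarrow L^4$ in a Kato--Ponce form with $L^4$ on the low-derivative factors. If this is only achieved with a logarithmic loss, a Brezis--Gallouet argument would still close the estimate with a double exponential bound instead.

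Finally, Gronwall gives $\|u(t)\|_{H^s}\le \|u_0\|_{H^s}e^{Ct}$. Combined with Theorem~\ref{Local_theory:lwp_main} and the dependence $T\sim\|u_0\|_{H^s}^{-2}$, the standard iteration extends the solution to all times, with continuous dependence inherited from the local theory.
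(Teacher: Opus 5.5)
Your treatment of $s\ge 1$ is fine: uniform $H^1$ control from $\mathscr{E}_{2,2}$, then the fractional Leibniz rule with $H^1(\T^2)\hookrightarrow L^p$ for $p<\infty$, then Gr\"onwall. On $1<s<2$ it is in fact the argument one needs, because the embedding $H^s(\T^2)\hookrightarrow W^{2s-1,2/s}(\T^2)$ that drives the paper's estimate requires $2s-1\le s$, i.e.\ $s\le 1$.

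\textbf{The gap is the range $\tfrac12\le s<1$.} This is the real content of the theorem. There you state the target $\tfrac{d}{dt}\|u\|_{H^s}^2\lesssim C\|u\|_{H^s}^2$ but never derive it, and the tools you name would not produce it.
\begin{itemize}
\item \emph{Scaling leaves no slack.} Under $u\mapsto\lambda^{1/2}u(\lambda\,\cdot)$ the norm $\|u\|_{L^4}$ is invariant. Both $\|u\|_{\dot H^s}^2$ and the pairing $\langle\langle D_x\rangle^{2s-2}u,(\pat_x+\pat_y)(u^3)\rangle$ scale like $\lambda^{2s-1}$. So any bound $\|u\|_{H^s}^{\alpha}\|u\|_{L^4}^{4-\alpha}$ needs $\alpha\ge 2$. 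For every $s\in(\tfrac12,1)$, not only at $s=\tfrac12$, your target can hold only with $\alpha=2$ exactly.
\item \emph{Mirroring Regime 1 fails.} With genuine interpolation ($\theta<1$) on the differentiated factors and $L^4$ on the others, the two interpolation exponents sum to exactly $2$. The window $\theta_1,\theta_3<1$ is therefore empty; this is the obstruction in Remark~\ref{Dim_two_restriction_remark}.
\item \emph{The fallback is unavailable.} Brezis--Gallouet on $\T^2$ needs a bound on the critical $H^1$ norm, which you do not have for $s<1$. A double-exponential bound would also not give the exponential growth the theorem claims.
\item \emph{The delicate point is misplaced.} $s=\tfrac12$ is the easy case: $\langle D_x\rangle^{-1}(\pat_x+\pat_y)$ has order zero there, so the pairing is bounded by $\|u\|_{L^4}^4$.
\end{itemize}

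\textbf{The missing idea.} Put all $2s-1$ derivatives on the single factor $u$ and none on $u^3$, and use Sobolev embeddings lying exactly on the scaling line:
\[
\big|\langle \langle D_x\rangle^{2s-1}u,\ \langle D_x\rangle^{-1}(\pat_x+\pat_y)(u^3)\rangle\big|\le \|\langle D_x\rangle^{2s-1}u\|_{L^{2/s}}\,\|u^3\|_{L^{\frac{2}{2-s}}}\lesssim \|u\|_{H^s}\,\|u\|_{L^{\frac{2}{1-s}}}\,\|u\|_{L^4}^2\lesssim \|u\|_{H^s}^2\,\|u\|_{L^4}^2 .
\]
The ingredients are:
\begin{itemize}
\item the $L^p$-boundedness of $\langle D_x\rangle^{-1}(\pat_x+\pat_y)$;
\item H\"older's inequality;
\item the embeddings $H^s(\T^2)\hookrightarrow W^{2s-1,2/s}(\T^2)$ and $H^s(\T^2)\hookrightarrow L^{2/(1-s)}(\T^2)$. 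Both hold with equality in the Sobolev numerology for $\tfrac12\le s<1$.
\end{itemize}
The paper writes the last step equivalently as $\|u\|_{L^{6/(2-s)}}^3\lesssim\|u\|_{H^s}\|u\|_{L^4}^2$, and no fractional Leibniz rule is needed. Lemma~\ref{Global:critical:general:Lk1_control} controls the $L^4$ norm, and after the mollification limit Gr\"onwall gives the exponential bound.

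Your ``two factors in $L^4$, one Sobolev factor'' picture can be made to work, but only with such endpoint choices. Another one is $s-\tfrac12$ derivatives on each of two factors, both in $L^4$, via $H^s\hookrightarrow W^{s-1/2,4}$. Identifying these choices is exactly the step your proposal leaves open.
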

 We are inspired by the work of Wang in \cite{wang2024} who established global well-posedness of the BBM equation (with an added parabolic term) on $\T^3$ in $H^s(\T^3)$ for $\tfrac{1}{2}\leq s\leq 1$. To prove this, Wang used the high-low wavelength decomposition technique to establish his result. Our proof of Theorem \ref{THM:dim_2} instead uses an energy method similar to Regime 1 but with a different energy identity. We remark on the pitfalls of using the energy identity from Regime 1 in dimension two in Remark \ref{Dim_two_restriction_remark} as well as the failure of the high-low wavelength decomposition technique from Regime 2. Furthermore, there are restrictions on expanding this result to higher dimensions $d$ and powers $k$ due to the current proof requiring the embedding $L^{k+1}(\T^d)\hookrightarrow H^s(\T^d)$. For $d\geq 3$, this embedding is no longer satisfied for $s<\beta/2$, meaning a low regularity theory is not possible with the current method.
\subsection{$L^{k+1}$ and random data}

In \cite{bona2003}, Bona and Chen proved that a generalised model of \eqref{Gen-BOBBM-eq} in dimension 1 is globally well-posed in $L^2(\R) \cap L^{p}(\R)$ where the admissible range of $p$ depended on the dispersion and the growth rate of the nonlinearity. Given that \eqref{Gen-BOBBM-eq} has a polynomial nonlinearity and fractional $\beta$ dispersion, a special case of the result of Bona and Chen is that for $\mu,d=1$, \eqref{Gen-BOBBM-eq} is globally well-posed in $L^{k+1}(\T)$ for, $\beta > 1 +\tfrac{k-1}{k+1}$. Here, we prove a similar result on $\T^d$ that includes the $\beta = 1+\tfrac{k-1}{k+1}$ endpoint and discuss an application to the almost sure wellposedness of \eqref{Gen-BOBBM-eq} in $H^s(\T^d)$ for $s\geq 0$ with respect to a randomisation of the initial data.
\begin{theorem}\label{Intersection:Lk1:local:thm}
    Let, $d=1$, $\mu \in \{-1,1\}$, $k\geq 3$ be odd and,
    \begin{align*}
        1+ \tfrac{k-1}{k+1} \leq \beta \leq 2.
    \end{align*} 
    Then \eqref{Gen-BOBBM-eq} is globally well-posed in $L^{k+1}(\T)$.
    
    Furthermore, if $d=2$, $\mu \in \{-1,1\}$, $k=3$ and $\beta=2$, then \eqref{Gen-BOBBM-eq} is globally well-posed in $L^{k+1}(\T^2)$.
\end{theorem}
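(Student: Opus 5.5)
The plan is to prove local well-posedness in $L^{k+1}$ by a fixed point argument on the Duhamel formulation, and then use the Hamiltonian \eqref{Conservation:Hamiltonian} to make it global. Write \eqref{Gen-BOBBM-eq} as
\begin{align*}
u(t) = e^{-t\phi_\beta(D_x)}u_0 - \tfrac{\mu}{k}\int_0^t e^{-(t-t')\phi_\beta(D_x)}\phi_\beta(D_x)\big(u^k\big)(t')\,dt'.
\end{align*}
First I would check that the linear propagator is bounded on $L^{k+1}(\T^d)$, uniformly for $t$ in compact intervals. Since $\phi_\beta(D_x)$ is a Fourier multiplier of order $1-\beta\leq 0$, $e^{-t\phi_\beta}$ has a symbol that is bounded with bounded Marcinkiewicz/Mikhlin-type derivatives, with constants growing at most polynomially in $t$. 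This should follow from Mikhlin on $\R^d$ together with transference to $\T^d$.

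The key nonlinear step is to bound $\phi_\beta(D_x)$ from $L^{(k+1)/k}$ to $L^{k+1}$. Note that $u^k\in L^{(k+1)/k}$ whenever $u\in L^{k+1}$. The operator $\phi_\beta(D_x)$ behaves like $|D|^{1-\beta}$ composed with Riesz transforms (the low frequencies are trivial on the torus). So by Hardy--Littlewood--Sobolev on $\T^d$ it maps $L^{p}\to L^{q}$ provided $\tfrac{1}{p}-\tfrac{1}{q}\leq \tfrac{\beta-1}{d}$ and $1<p<q<\infty$. With $p=\tfrac{k+1}{k}$ and $q=k+1$ we need $\tfrac{k-1}{k+1}\leq \tfrac{\beta-1}{d}$:
\begin{itemize}
\item For $d=1$ this is exactly $\beta\geq 1+\tfrac{k-1}{k+1}$.
\item For $d=2$, $k=3$, $\beta=2$ it reads $\tfrac12\leq\tfrac12$.
\end{itemize}
Both are endpoint (equality) cases, which HLS permits since $1<p<q<\infty$. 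This gives
\begin{align*}
\|\phi_\beta(D_x)(u^k - v^k)\|_{L^{k+1}}\lesssim \big(\|u\|_{L^{k+1}}^{k-1}+\|v\|_{L^{k+1}}^{k-1}\big)\|u-v\|_{L^{k+1}},
\end{align*}
using Hölder. Contraction in $C([0,T];L^{k+1})$ then follows with $T\sim\|u_0\|_{L^{k+1}}^{1-k}$, together with continuous dependence. The endpoint HLS on $\T^d$ is the main technical point I would verify carefully. One has to handle the multiplier $(1+|n|^\beta)^{-1}n_i$, which is $|n|^{1-\beta}$ times a smooth order-zero symbol; writing it as $|D|^{1-\beta}m(D)$ with $m$ a Mikhlin multiplier reduces the question to the fractional integral on $\T^d$.

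For globalisation, Lemma \ref{Global:critical:general:Lk1_control} gives uniform-in-time control of $\|u(t)\|_{L^{k+1}}$ from the conservation of $\mathscr{H}_{k,\mu,d}$, and this is where $k$ odd matters. When $\mu=1$ both terms of $\mathscr{H}$ are nonnegative. When $\mu=-1$ the finite measure of $\T^d$ is used. Conservation must be justified for $L^{k+1}$ data, which I would do by approximating with smooth data (global by the $H^{\beta/2}$ theory, Corollary \ref{Global:supercritical:Global}) and using continuous dependence in $L^{k+1}$. One subtlety is that $\mathscr{H}$ contains $\int u^2$, which is finite on $\T^d$ since $L^{k+1}\subset L^2$, so the conserved quantity is well defined.

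Since the local time depends only on $\|u(t)\|_{L^{k+1}}$, which stays bounded, iterating the local result yields the global solution. The main obstacle I expect is the defocusing control: the local contraction is routine once the endpoint HLS is in hand, but the sign issue for $\mu=-1$ relies entirely on the cited lemma.
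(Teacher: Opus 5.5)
Your proposal is correct and follows the paper's route. The paper proves Lemma \ref{Intersection:Local:multilinear_lem}, which is your bound of $\phi_\beta(D_x)$ from $L^{\frac{k+1}{k}}$ to $L^{k+1}$, phrased there as the Sobolev embedding $L^{\frac{k+1}{k}}\hookrightarrow W^{1-\beta,k+1}$ (that is, HLS), and then uses a contraction argument and Lemma \ref{Global:critical:general:Lk1_control} to globalise; the only differences are that the paper bounds the propagator on $L^{k+1}$ by Taylor expansion ($e^{t}$ growth) rather than by Mikhlin, and that your approximation argument justifying Hamiltonian conservation for $L^{k+1}$ data fills in a step the paper leaves implicit. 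One small correction: for $d=1$ the HLS condition holds with equality only at $\beta=1+\frac{k-1}{k+1}$ and is strict above it, which matters at $\beta=2$, where $\beta-1=d$ falls outside the usual HLS range and you must first discard the surplus smoothing.
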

While the general result of Bona and Chen in \cite{bona2003} largely covers the range of $\beta$ in Theorem \ref{Intersection:Lk1:local:thm}, we present a shorter and simpler proof facilitated by the polynomial nonlinearity and the Hamiltonian conservation \eqref{Conservation:Hamiltonian} that also extends to dimension 2. If we let $k=2$, $\beta=2$ and $d=3$, we can recover local well-posedness in $L^{k+1}$ but it does not transfer to the global argument due to $k$ being even. Higher values of $k$ in dimensions $2$ and $3$ are precluded by a lower bound condition on $\beta$, \eqref{Intersection:Lk1:multilinear:beta_assumption} imposed in our multilinear estimate, Lemma \ref{Intersection:Local:multilinear_lem}.

Using Theorem \ref{Intersection:Lk1:local:thm}, we may obtain a different type of well-posedness result for \eqref{Gen-BOBBM-eq} in the form of \emph{almost sure} global well-posedness. The purpose of such a result is to show that \eqref{Gen-BOBBM-eq} is well-posed in $H^s$ for $s$ below the deterministic threshold imposed by Theorem \ref{Local_theory:lwp_main} if the initial data is sufficiently randomised. Here, the randomisation serves to ensure initial data whose solution might blow up in finite time is a set of measure $0$ with respect to a certain probability measure. Consequently, almost every initial condition in the desired low regularity space has a corresponding global solution.

For $u_0\in H^s $, we consider randomised initial data of the form,
\begin{align}
    u_0^\omega(x) = \sum_{n\in \Z}\widehat{u}_0(n)g_n(\omega)e^{in\cdot x}\label{Intro:data_randomisation}
\end{align}
where $\left\{g_n\right\}_{n\in \Z}$ is a sequence of independent, standard, complex valued Gaussian random variables on a probability space $(\Omega, \Prb, \mathcal{F})$ with $g_0\in \R$ and $g_n = \overline{g_{-n}}$ for any $n\in \Z$. We say that a Cauchy problem is \emph{almost surely globally well-posed} if for any $u_0\in H^s(\T^d)$, there exists a set $\Sigma \subset \Omega $ such that $\Prb(\Sigma) = 1$  and for every $\omega \in \Sigma$ there is a unique, stable, global solution to \eqref{Gen-BOBBM-eq} with initial data $u_0^\omega$. The randomisation \eqref{Intro:data_randomisation} was first used by Burq and Tzvetkov in \cite{burq2008} as an alternative to the Gaussian measure used by Bourgain in \cite{bourgain1996}. For more background on almost sure well-posedness see the survey article by B\'enyi \textit{et al.} \cite{benyi2019} and the references therein.    
   
With these definitions, we present the following corollary of Theorem \ref{Intersection:Lk1:local:thm}:

\begin{corollary}\label{cor:almost_sure_global}
    Let $d=1$, $\mu \in \{-1,1\}$, $k\geq 3$ be odd and,
    \begin{align*}
        1+ \tfrac{k-1}{k+1} \leq \beta \leq 2.
    \end{align*}
    Then \eqref{Gen-BOBBM-eq} is almost-surely globally well-posed in $H^s(\T^d)$ for all $s \geq 0$.
    
    Furthermore, if $d=2, \mu \in \{-1,1\}$, $k=3$ and $\beta=2$, \eqref{Gen-BOBBM-eq} is almost surely globally well-posed in $H^s(\T^2)$ for all $s\geq 0$.

\end{corollary}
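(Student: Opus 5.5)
The plan is to reduce the almost sure statement to the deterministic $L^{k+1}$ theory of Theorem~\ref{Intersection:Lk1:local:thm}. The key observation is that the Gaussian randomisation \eqref{Intro:data_randomisation} improves integrability but not differentiability. Concretely, we aim to show that for any $u_0\in H^s(\T^d)$ with $s\geq 0$ (in fact $u_0\in L^2$ suffices), the randomised datum satisfies $u_0^\omega\in L^{k+1}(\T^d)\cap H^s(\T^d)$ almost surely. Once this is established, Theorem~\ref{Intersection:Lk1:local:thm} gives a unique global solution for every $\omega$ in a full-measure set. We must then check that this solution also lies in $C([0,T];H^s)$ for each $T>0$.

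The first step is the probabilistic one. For fixed $x$, $u_0^\omega(x)$ is a (real, by the symmetry $g_n=\overline{g_{-n}}$) Gaussian random variable of variance comparable to $\sum_n|\widehat u_0(n)|^2=\|u_0\|_{L^2}^2$. Hence for $p=k+1<\infty$ we have $\mathbb{E}|u_0^\omega(x)|^{p}\lesssim_p \|u_0\|_{L^2}^{p}$. By Fubini,
\begin{align*}
\mathbb{E}\,\|u_0^\omega\|_{L^{k+1}(\T^d)}^{k+1}\lesssim_k \|u_0\|_{L^2}^{k+1}<\infty,
\end{align*}
so $u_0^\omega\in L^{k+1}$ almost surely. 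Similarly, $\mathbb{E}\|u_0^\omega\|_{H^s}^2=\|u_0\|_{H^s}^2$, so $u_0^\omega\in H^s$ almost surely. Let $\Sigma$ be the intersection of these two full-measure events.

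The second step is deterministic and concerns persistence of $H^s$ regularity for the $L^{k+1}$ solution $u$. We use the Duhamel formulation
\begin{align*}
u(t)=e^{-t\phi_\beta(D_x)}u_0^\omega-\tfrac{\mu}{k}\int_0^t e^{-(t-t')\phi_\beta(D_x)}\phi_\beta(D_x)u^k(t')\,dt'.
\end{align*}
The linear propagator is unitary on $H^s$, since its symbol is purely imaginary. For the nonlinear term we use that $u\in C([0,T];L^{k+1})$, so $u^k\in L^{(k+1)/k}$. Then $\phi_\beta(D_x)$ gains $\beta-1$ derivatives, and the Sobolev embedding $W^{\beta-1,(k+1)/k}\hookrightarrow H^{\sigma}$ with $\sigma=\beta-1-d(\tfrac{k}{k+1}-\tfrac12)$ places the Duhamel term in $C([0,T];H^\sigma)$. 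Under the hypothesis $\beta\geq 1+\tfrac{k-1}{k+1}$ with $d=1$, we get $\sigma\geq \tfrac{k-1}{k+1}-\tfrac{k-1}{2(k+1)}>0$; the case $d=2$, $k=3$, $\beta=2$ gives $\sigma=\tfrac12$. This handles $0\le s\le\sigma$ directly. For larger $s$, we bootstrap: once $u\in H^{\sigma}\cap L^{k+1}$, a fractional Leibniz estimate on $u^k$ in $W^{\sigma,(k+1)/k}$ upgrades the regularity of the Duhamel term by another $\sigma$. Alternatively, for $s$ above the deterministic threshold of Theorem~\ref{Local_theory:lwp_main}, we can simply invoke that theorem together with the global $L^{k+1}$ bound to prevent blow-up of the $H^s$ norm. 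Uniqueness and stability then follow from the $L^{k+1}$ theory.

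The main obstacle is the regularity persistence step for intermediate $s$. This is where the $H^s$ norm of the initial data must be carried without losing the global $L^{k+1}$ control. We would first attempt a Gronwall argument in which $\|u(t)\|_{L^{k+1}}$, uniformly bounded by Lemma~\ref{Global:critical:general:Lk1_control}, multiplies $\|u(t)\|_{H^s}$ linearly. This requires a product estimate of the form $\|\phi_\beta(D_x)u^k\|_{H^s}\lesssim \|u\|_{L^{k+1}}^{k-1}\|u\|_{H^s}$, up to acceptable losses. The probabilistic step, by contrast, is standard.
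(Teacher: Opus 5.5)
Your outline matches the paper's proof. Gaussian integrability (you reprove Lemma~\ref{Appendix:lem:strichartz}) puts $u_0^\omega$ in $L^{k+1}\cap H^s$ almost surely. Theorem~\ref{Intersection:Lk1:local:thm} then gives a global $L^{k+1}$ solution, and persistence of regularity upgrades it to $H^s$.

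Your Duhamel step is correct, and it supplies something the paper's one-line appeal to Theorem~\ref{Local_theory:lwp_main} cannot, since that theorem has a positive regularity threshold. The nonlinear part lies in $C([0,T];H^\sigma)$ with $\sigma=\beta-1-d(\tfrac{k}{k+1}-\tfrac12)$, so $0\le s\le\sigma$ is settled.

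The gap is $s>\sigma$, and neither of your two suggestions closes it.
\begin{itemize}
\item Combining Theorem~\ref{Local_theory:lwp_main} with the global $L^{k+1}$ bound does not rule out blow-up. The existence time there is $\sim\|u(t)\|_{H^s}^{1-k}$, so iterating needs exactly the a priori $H^s$ bound you are trying to prove.
\item The Duhamel bootstrap cannot work uniformly either. At the endpoint $\beta=1+\tfrac{k-1}{k+1}$ one has $\sigma=\tfrac12-\tfrac1{k+1}$, so $H^\sigma$ and $L^{k+1}$ have the same Sobolev index $-\tfrac1{k+1}$. The product $u^k$ then has index $-\tfrac{k}{k+1}$, and the $\beta-1=\tfrac{k-1}{k+1}$ derivatives gained from $\phi_\beta(D_x)$ bring you back exactly to index $-\tfrac1{k+1}$, i.e.\ to $H^\sigma$. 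No chain of H\"older, Leibniz and Sobolev inequalities starting from $H^\sigma\cap L^{k+1}$ gains regularity there.
\end{itemize}
So an a priori bound of Gronwall type is unavoidable in the intermediate range. You identify this as the main obstacle but do not carry it out.

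The missing ingredient is already in the paper, and your $\sigma$ meets it exactly.
\begin{itemize}
\item In $d=1$, the hypothesis $\beta\ge1+\tfrac{k-1}{k+1}$ is the same as $\beta\ge2-\tfrac{2}{k+1}$, which is the hypothesis of Proposition~\ref{Regime_1_THM}. That proposition gives global well-posedness in $H^s$ for $\tfrac12-\tfrac1{k+1}\le s<\tfrac\beta2$. It is the Gronwall argument with $L^{k+1}$ control you were looking for, carried out through the energy identity of Lemma~\ref{global:critical:general:lem_energy} with derivatives split between the two factors, not through a Duhamel product estimate. It closes (with power $\alpha\le2$) precisely because $\beta\ge2-\tfrac2{k+1}$.
\item Corollary~\ref{Global:supercritical:Global} covers $s\ge\tfrac\beta2$. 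Since $\sigma=\beta-1-\tfrac{k-1}{2(k+1)}\ge\tfrac{k-1}{2(k+1)}=\tfrac12-\tfrac1{k+1}$, your range and the deterministic range overlap.
\item In $d=2$, Theorem~\ref{THM:dim_2} covers $s\ge\tfrac12=\sigma$.
\end{itemize}
In all these ranges $H^s\hookrightarrow L^{k+1}$, so uniqueness in $C([0,T];L^{k+1})$ identifies the deterministic $H^s$ solution with your $u^\omega$. This completes the argument for every $s\ge0$, and it is also the only way to make the paper's closing appeal to persistence of regularity rigorous.
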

 To prove this we use the improved integrability estimates that are afforded by a Gaussian randomisation of the initial data's Fourier series modes (see Lemma \ref{Appendix:lem:strichartz}). This ensures that data in $L^2$ have their corresponding randomisations in $L^{k+1}$ almost surely, allowing us to apply Theorem \ref{Intersection:Lk1:local:thm} to obtain a global solution. By the standard persistence of regularity argument, one can then establish the result in $H^s$ for any $s \geq 0$.

\subsection{Ill-posedness}
In this section, we study the ill-posedness of the solution map
\begin{equation}
    \Phi: u_0 \in H^s(\T^d)\to u \in C([0,T]; H^s(\T^d))\label{solution_map}
\end{equation}
where $s<0$, taking initial data $u_0$ to its corresponding solution $u$ to \eqref{Gen-BOBBM-eq}.
Following previous works by Panthee \cite{panthee2011} and Bona and Dai \cite{bona2003}, Forlano \cite{forlano2020}, showed that the solution map \eqref{solution_map} to the BBM equation underwent \emph{norm inflation everywhere}; that is for $s<0$,  $u_0\in H^s(\T)$ and any $\varepsilon>0$, there exists a smooth solution $u_\varepsilon$ to the BBM equation and a time $t_\varepsilon = (0,\varepsilon)$ such that,
\begin{align*}
    \left\|u_\varepsilon(0) - u_0\right\|_{H^s} <\varepsilon \quad \text{and} \quad \left\|u_\varepsilon(t_\varepsilon)\right\|_{H^s} > \varepsilon^{-1}.
\end{align*}
Subsequently, Bhimani and Haque \cite{bhimani2021} showed that for $s<0$, the solution map for the BBM equation was not continuous into any regularity. This phenomenon is known as \emph{infinite loss of regularity} and was first studied by Benejaru and Tao in \cite{bejenaru2006}. Here, we extend the result of Bhimani and Haque to \eqref{Gen-BOBBM-eq}:
\begin{theorem}\label{InfiniteLossofRegs}
    Let $k\geq 2 $, $1\leq \beta\leq 2$, $\mu \in \{-1,1\}$, $s<0$ and $d\geq 1$. Then \eqref{Gen-BOBBM-eq} is ill-posed in $H^s(\T^d)$. In particular, it exhibits an infinite loss of regularity. That is, for $\sigma \in \R$, $s<0$, $u_0\in H^s(\T)$ and any $\varepsilon>0$, there exists a smooth solution $u_\varepsilon$ to \eqref{Gen-BOBBM-eq} and a time $t_\varepsilon = (0,\varepsilon)$ such that,
    \begin{align*}
        \left\|u_\varepsilon(0)-u_0\right\|_{H^s} < \varepsilon \quad \text{and} \quad \left\|u_\varepsilon(t_\varepsilon)\right\|_{H^{\sigma}} > \varepsilon^{-1}.
    \end{align*}
\end{theorem}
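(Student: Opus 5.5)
We follow the power-series approach of Bhimani and Haque \cite{bhimani2021}, itself a refinement of Forlano's norm-inflation-everywhere argument \cite{forlano2020}. Write the equation in Duhamel form with the propagator $S(t)=e^{-t\phi_\beta(D_x)}$ and the operator
\[
\mathcal{I}[u_1,\dots,u_k](t)=-\tfrac{\mu}{k}\int_0^t S(t-t')\,\phi_\beta(D_x)\bigl(u_1\cdots u_k\bigr)(t')\,dt'.
\]
Since $|\phi_\beta(n)|\le d|n|/(1+|n|^\beta)\lesssim 1$ for $\beta\ge 1$, $S(t)$ is unitary on every $H^\sigma$, and $\phi_\beta(D_x)$ is bounded on the Wiener algebra $\mathcal{F}L^1(\T^d)$. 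Using the algebra property of $\mathcal{F}L^1$, we expand the solution of smooth data as a power series
\[
u=\sum_{j\ge0}\Xi_j(u_0),\qquad \Xi_0=S(t)u_0,\quad \Xi_j=\sum_{j_1+\dots+j_k=j-1}\mathcal{I}[\Xi_{j_1},\dots,\Xi_{j_k}],
\]
where $\Xi_j$ is $((k-1)j+1)$-linear in $u_0$. We prove the bound $\|\Xi_j(u_0)(t)\|_{\mathcal{F}L^1}\le C^j t^j\|u_0\|_{\mathcal{F}L^1}^{(k-1)j+1}$, which follows by induction together with the Catalan-type count of the terms. This gives convergence for $t\|u_0\|_{\mathcal{F}L^1}^{k-1}\ll1$. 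We also need the companion bound $\|\Xi_j(u_0)\|_{H^\sigma}\lesssim C^jt^j\|u_0\|_{\mathcal{F}L^1}^{(k-1)j}\,\|u_0\|_{H^\sigma}$ for $\sigma\ge0$, and a version with $\mathcal{F}L^1$ replaced by a weighted $\ell^1$ norm on $\hat u$ for $\sigma<0$.

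Next we approximate and perturb. Given $u_0\in H^s$ with $s<0$ and $\varepsilon>0$, pick a Fourier truncation $P_{\le M}u_0$ within $\varepsilon/2$ of $u_0$ in $H^s$, and set $u_{0,N}=P_{\le M}u_0+\phi_N$ with
\[
\phi_N(x)=AN^{-s}\bigl(\cos(Nx_1)+\cos(2Nx_1)\bigr)
\]
(or the analogous choice with $e_1$ directions in $\T^d$), where $A=A(N)\to\infty$ slowly. Then $\|\phi_N\|_{H^s}\sim A N^{-s}N^{s}\cdot$(a small factor, obtained by inserting a factor $\log N$ or $N^{-\delta}$ into $A$), so it can be made $<\varepsilon/2$. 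At the same time $\|\phi_N\|_{\mathcal{F}L^1}\sim AN^{-s}$ is large.

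The key step is the lower bound on the first nonlinear term $\Xi_1(\phi_N)$ at low frequency. The product $\phi_N^k$ with $k\ge2$ contains the resonant output at frequency $n=\pm N e_1$; the differences $2N-N$ keep a nonzero coefficient of size $\sim (AN^{-s})^k$. For $k$ odd the output frequency $n=0$ is killed by $\phi_\beta(0)=0$, so we must use a nonzero output frequency that stays bounded. We therefore instead choose modes $N$ and $N+1$, so that $\phi_N^k$ has a coefficient of size $\gtrsim(AN^{-s})^k$ at $n=e_1$, where $\phi_\beta(e_1)\neq 0$. Since $S$ is unitary and the $n=e_1$ coefficient of $\Xi_1$ is then $\sim t(AN^{-s})^k$ for small $t$, and a single fixed frequency has $H^\sigma$ weight $\sim1$ for every $\sigma\in\R$, we get
\[
\|\Xi_1(u_{0,N})(t)\|_{H^\sigma}\gtrsim t\,(AN^{-s})^k
\]
independently of $\sigma$. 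This independence is exactly the infinite loss of regularity. We must check that cross terms involving $P_{\le M}u_0$ are lower order, which holds because its $\mathcal{F}L^1$ norm is fixed.

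Finally we choose the time $t_N$ so that $t_N(AN^{-s})^{k-1}\ll1$ and the series converges, while $t_N(AN^{-s})^{k}\gg\varepsilon^{-1}$. For example, $t_N=(AN^{-s})^{-(k-1)}\delta$ gives $\Xi_1\sim\delta AN^{-s}\to\infty$. The higher terms satisfy $\sum_{j\ge2}\|\Xi_j\|\lesssim (AN^{-s})\,\delta^2$, and the linear term has the trivial bound $\|\Xi_0\|_{H^\sigma}$ at low frequency. Here lies the main obstacle. The $H^\sigma$ norm of $\Xi_0$ and of the higher $\Xi_j$ is large at high frequencies when $\sigma>0$, so we cannot compare full norms. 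Instead we project onto the single mode $e_1$, using $\|u\|_{H^\sigma}\ge|\hat u(e_1)|$, and bound $|\widehat{\Xi_j}(e_1)|\le\|\Xi_j\|_{\mathcal{F}L^1}$. Since $\widehat{\Xi_0}(e_1)$ comes only from $P_{\le M}u_0$, it is $O(1)$, and $\delta$ small makes $\Xi_1$ dominate. Also $t_N\to0$, so $t_N<\varepsilon$ for $N$ large. This completes the plan, valid for all $k\ge2$, $1\le\beta\le2$, $d\ge1$, and $\mu=\pm1$.
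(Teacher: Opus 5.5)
Your overall architecture is the paper's: a power-series expansion in $\mathscr{F}L^1$, then showing that $\Xi_1$ of the perturbation dominates after projecting onto a fixed bounded set of frequencies, where all $H^\sigma$ weights are comparable. The paper projects with $\Pi_{Q_{A,d}}$, $A=10$; you project onto the single mode $e_1$. There is, however, a genuine gap in the key resonance step: \emph{your modes $N$ and $N+1$ produce no bounded output frequency when $k$ is odd.}

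Write each input frequency in $\widehat{\phi_N}$ as $\epsilon_i(N+\eta_i)e_1$ with $\epsilon_i\in\{\pm1\}$ and $\eta_i\in\{0,1\}$. A $k$-fold sum is then $\bigl(N\sum_i\epsilon_i+\sum_i\epsilon_i\eta_i\bigr)e_1$. For $k$ odd, $\sum_i\epsilon_i$ is odd, hence nonzero, so the output has modulus at least $N-k$. Thus $\widehat{\phi_N^k}(e_1)=0$, and so $\widehat{\Xi_1(\phi_N)}(e_1)=0$ exactly once $N>k+1$. The claimed bound $\gtrsim tR^k$ is false, and it fails precisely in the cubic, quintic and higher odd cases that the paper cares about. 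The cross terms with $P_{\le M}u_0$ do reach $e_1$, but with your time scale they are only $O(\delta C_M)$, so nothing inflates.

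The fix is to add modes so that an \emph{odd} number of large frequencies can cancel to a bounded nonzero output. For example, take $\phi_N=R\bigl(\cos(Nx_1)+\cos((N+1)x_1)+\cos((2N+1)x_1)\bigr)$. For even $k$ use $(N+1)-N$ padded with $\pm N$ pairs; for odd $k$ use $(2N+1)-N-N$ padded with $\pm N$ pairs. All Fourier coefficients are positive, and for $t\ll1$ every phase $t'\bigl(\omega(e_1)-\sum_i\omega(n_i)\bigr)$ is $O(kt)$, so the real part of each time integral is at least $t/2$. There is then no cancellation among tuples, and $|\widehat{\Xi_1(\phi_N)}(e_1)|\gtrsim tR^k$. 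This is why the paper places blocks at both $\pm Ne_{1,d}$ and $\pm 2Ne_{1,d}$, with width $A=10$ so the output lands in $Q_{A,d}$ away from the origin. It uses alternating $\pm N$ blocks for even $k$, and $-2N,N,N$ plus alternating pairs for odd $k$.

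Separately, your amplitude bookkeeping is inconsistent as written. With amplitude $AN^{-s}$ you get $\|\phi_N\|_{H^s}\sim A$, so you need $A\to0$, not $A\to\infty$. What you actually need is $R=AN^{-s}\to\infty$ with $RN^s\to0$, for instance $R=N^{-s/2}$, which is the paper's choice.

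With that correction and the repaired data, your remaining estimates are correct. With $t_N=\delta R^{-(k-1)}$, the main term is $\gtrsim\delta R$, the cross terms are $O(\delta C_M)$, the tail is $O(\delta^2R)$, and $\widehat{\Xi_0}(e_1)=O(1)$, so the argument goes through. Approximating $u_0$ by $P_{\le M}u_0$ to land in $\mathscr{F}L^1$ is a sensible step that the paper glosses over.
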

 To show this, we follow closely the method of Oh in \cite{oh2017}. The proof relies on a high to low frequency interaction, causing the smoothing of the $\phi_\beta(D_x)$ operator to weaken. The key estimate corresponding to this interaction is the lower bound on the first picard iterate of the solution, established in Lemma \ref{Lem: LOWERBD}. Much of the proof is similar to \cite{oh2017}, however extra combinatorial complexity is introduced from the arbitrary order nonlinearity $k$. To accomodate for this, we use the method developed by Forlano and Okamoto in \cite{forlano2020a}. Theorem \ref{InfiniteLossofRegs} also holds in $\R^d$ but we only present the proof for $\T^d$ for the sake of simplicity. We will be brief with the preliminary multilinear estimates, leaving references to the relevant results when the proofs are identical. For a detailed discussion of this method see \cite{forlano2020a,kishimoto2019,iwabuchi2015, oh2017,forlano2020}.

\section{Preliminaries}
\subsection{Notation}
  We write \(A\lesssim B\) if for some positive constant \(C\), \(A\leq CB\) and \(A\sim B\) if \(A\lesssim B\) and \(B\lesssim A\). For normed spaces $X$ and $Y$, by $X\hookrightarrow Y$, we mean $X$ is continuously embedded in $Y$. Furthermore, we denote the Fourier transform on \(\T^d\) by,
\begin{align*}
    \widehat{u}(n) \coloneqq \frac{1}{(2\pi)^d}\int_{\T^d} e^{-in\cdot x} u(x)\, dx
\end{align*}
for $n\in \Z^d$ so that \(\mathscr{F}\{\nabla u\}(n) = in\widehat{u}(n)\).

Let \(s\in\R\) and \(1\leq p\leq \infty\). We define the \emph{Bessel potential spaces} \(W^{s,p}(\T^d)\) by the norm,
\begin{align*}
    \left\|u\right\|_{W^{s,p}(\T^d)} := \left\|\mathscr{F}^{-1}\left\{\langle n \rangle^s \widehat{u}(n)\right\}\right\|_{L^{p}(\T^d)}
\end{align*}
where \(\langle n\rangle := \left(1+|n|^2\right)^{1/2}\) and  \(\mathscr{F}^{-1}\) denotes the inverse Fourier transform. Furthermore, we define the \emph{Riesz potential spaces} \(\dot{W}^{s,p}(\T^d)\) by the following norm,
\begin{align*}
    \left\|u\right\|_{\dot{W}^{s,p}(\T^d)} := \left\|\mathscr{F}^{-1}\left\{|n|^s \widehat{u}(n)\right\}\right\|_{L^p(\T^d)}.
\end{align*}
For \(p=2\) we write \(W^{s,2}(\T^d):=H^s(\T^d)\) and \(\dot{W}^{s,2}(\T^d) := \dot{H}^s(\T^d)\) for the homogeneous and inhomogeneous \(L^2\) based Sobolev spaces respectively. We also define the real \(L^2(\T^d)\) inner product,
\begin{align*}
    \left\langle u,v\right\rangle_{L^2(\T^d)} = \int_{\T^{d}} uv\,dx.
\end{align*}
Where it is clear from context, we omit the domain from the norm subscript. Furthermore, we define the Dirichlet projectors on $\T^d$ as the Fourier multipliers,
\begin{align*}
    \Pi_{\leq N} u(x) \coloneqq \sum_{|n|\leq N} \widehat{u}(n)e^{in\cdot x}, \quad 
    \Pi_{S} u(x) \coloneqq \sum_{n\in \Z^d}\widehat{u}(n)\mathbbm{1}_{S}(n)e^{in\cdot x}
\end{align*}
where $N \in \N$ and $S \subset \Z^d$. Lastly, we define the Littlewood Paley projectors on $\R^d$ and $\T^d$. Let $1 = \sum_{N} \phi_N(\xi)$ be a smooth, dyadic partition of unity for $\xi \in \R^d$ where $N \in 2^\N$. For $u\in \mathscr{S}'(\R^d)$, define the multiplier $\mathbf{P}_N$ such that $\widehat{\mathbf{P}_N u}(\xi) = \phi_N(\xi) \widehat{u}(\xi)$ for $\xi \in \R^d$. Restricting the symbol to $\Z^d$, we define $\mathbf{P}_N$  for $u\in (C^\infty(\T^d))'$ via  $\widehat{\mathbf{P}_Nu}(n) = \phi_N(n)\widehat{u}(n)$ where $n\in \Z^d$.
\subsection{Embedding theorems and interpolation}
It will be important for us to quantify the extent to which the embedding of $H^{1/2}(\T)$ into $L^{\infty}(\T)$ fails. The following result enables us to do so.
\begin{lemma}[Brezis-Gallouet type estimate {\cite[Lemma 3.2]{angulo2011}}]\label{lem:Brezis-Gallouet}
    For $u \in H^s(\T)$ with $s>1/2$, there exists a constant $C>0$ such that,
    \begin{align*}
        \left\|u\right\|_{L^\infty} \leq C\left(1+\sqrt{\log\left(1+\left\|u\right\|_{H^s}\right)}\left\|u\right\|_{H^{1/2}}\right).
    \end{align*}
\end{lemma}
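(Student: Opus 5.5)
The plan is the classical Brezis--Gallouët frequency splitting, done on the Fourier side. Since $\|u\|_{L^\infty}\le \sum_{n\in\Z}|\widehat u(n)|$, it suffices to bound the $\ell^1$ norm of the Fourier coefficients. For a cutoff $N\ge 1$, to be chosen later, we split
\[
\sum_{n}|\widehat u(n)| = \sum_{|n|\le N}|\widehat u(n)| + \sum_{|n|> N}|\widehat u(n)|.
\]
Each piece is then handled by Cauchy--Schwarz against a suitable weight.

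For the low frequencies we use the weight $\langle n\rangle^{1/2}$:
\[
\sum_{|n|\le N}|\widehat u(n)| \le \Big(\sum_{|n|\le N}\langle n\rangle^{-1}\Big)^{1/2}\|u\|_{H^{1/2}} \lesssim \sqrt{\log(1+N)}\,\|u\|_{H^{1/2}},
\]
using $\sum_{|n|\le N}\langle n\rangle^{-1}\lesssim 1+\log(1+N)$. For the high frequencies we use $\langle n\rangle^{s}$ with $s>1/2$:
\[
\sum_{|n|>N}|\widehat u(n)| \le \Big(\sum_{|n|>N}\langle n\rangle^{-2s}\Big)^{1/2}\|u\|_{H^s} \lesssim_s N^{1/2-s}\|u\|_{H^s}.
\]

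Next we choose $N$ so that the tail term is $O(1)$. Take $N\sim (1+\|u\|_{H^s})^{1/(s-1/2)}$; then $N^{1/2-s}\|u\|_{H^s}\le 1$. Also $\log(1+N)\lesssim_s 1+\log(1+\|u\|_{H^s})$. Together these give
\[
\|u\|_{L^\infty}\lesssim 1+\big(1+\sqrt{\log(1+\|u\|_{H^s})}\big)\|u\|_{H^{1/2}}.
\]

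The main obstacle is the leftover term $\|u\|_{H^{1/2}}$ without a logarithm; it must be absorbed into the form stated in the lemma. Two small cases need attention.

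\emph{Case $\|u\|_{H^s}\ge 1$.} Then $\sqrt{\log(1+\|u\|_{H^s})}\ge \sqrt{\log 2}$, so $\|u\|_{H^{1/2}}$ is dominated by a constant times $\sqrt{\log(1+\|u\|_{H^s})}\,\|u\|_{H^{1/2}}$.

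\emph{Case $\|u\|_{H^s}<1$.} Here we do not split at all. Sobolev embedding gives $\|u\|_{L^\infty}\lesssim\|u\|_{H^s}<1$, which is already bounded by $C$.

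In both cases the estimate takes the stated form, with $C$ depending only on $s$. Finally, choosing $N$ as an integer, e.g. via a ceiling, only changes constants.
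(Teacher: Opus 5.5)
Your proof is correct. It is the standard Brezis--Gallou\"et low/high frequency splitting: Cauchy--Schwarz against $\langle n\rangle^{1/2}$ below the cutoff and against $\langle n\rangle^{s}$ above it, with $N\sim(1+\|u\|_{H^s})^{1/(s-1/2)}$, which is the argument behind the estimate the paper quotes from Angulo \emph{et al.}\ without proof. Your case split into $\|u\|_{H^s}\ge 1$ and $\|u\|_{H^s}<1$ correctly absorbs the leftover non-logarithmic $\|u\|_{H^{1/2}}$ term, which the splitting on $\T$ always produces because the low-frequency sum contains the $n=0$ term.
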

Furthermore we would like to quantify the relationship between the integrability exponent $p$ of a Lebesgue space $L^p(\T)$ and the correseponding embedding constant of $H^{1/2}(\T)\hookrightarrow L^p(\T)$. As such, we will need a result obtained by G\'erard and Grellier in \cite{gerard2010}.
\begin{lemma}\label{Lem:Trudinger_type}
    There exists a constant $C>0$ such that for all $2<p<\infty$, we have
    \begin{align*}
        \left\|u\right\|_{L^p(\T)}\leq C \sqrt{p}\left\|u\right\|_{H^{1/2}(\T)}.
    \end{align*}
\end{lemma}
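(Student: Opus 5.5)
We prove the bound $\|u\|_{L^p(\T)}\le C\sqrt{p}\,\|u\|_{H^{1/2}(\T)}$ with $C$ independent of $p$. Since the paper attributes this to G\'erard and Grellier \cite{gerard2010}, a full proof is not needed, but it can be obtained as follows.

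\emph{Reduction to zero mean.} Write $u=\widehat u(0)+\tilde u$. The mean term satisfies
\begin{align*}
\|\widehat u(0)\|_{L^p}=(2\pi)^{1/p}|\widehat u(0)|\lesssim \|u\|_{H^{1/2}},
\end{align*}
uniformly in $p$. It therefore suffices to treat $\tilde u$, for which $\|\tilde u\|_{H^{1/2}}\sim\||D_x|^{1/2}\tilde u\|_{L^2}$.

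\emph{Representation and the kernel.} Set $f=|D_x|^{1/2}\tilde u\in L^2$ with mean zero. Then $\tilde u=K*f$, where
\begin{align*}
K(x)=\sum_{n\neq 0}|n|^{-1/2}e^{inx}.
\end{align*}
The classical estimate for this lacunary-free series (summation by parts, or comparison with the Riesz potential) gives $|K(x)|\lesssim |x|^{-1/2}$ on $[-\pi,\pi]$. Consequently, for $q<2$, using $\int_{-\pi}^{\pi}|x|^{-q/2}\,dx=\tfrac{2\pi^{1-q/2}}{1-q/2}$, we get
\begin{align*}
\|K\|_{L^q}\lesssim (2-q)^{-1/q}.
\end{align*}

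\emph{Applying Young's inequality.} Young's inequality gives $\|K*f\|_{L^p}\le\|K\|_{L^q}\|f\|_{L^2}$ with $1+\tfrac1p=\tfrac1q+\tfrac12$, so that $\tfrac1q=\tfrac12+\tfrac1p$. Then $q=\tfrac{2p}{p+2}$ and $2-q=\tfrac{4}{p+2}$, so $\|K\|_{L^q}\lesssim\left(\tfrac{p+2}{4}\right)^{1/q}$. Since $1/q\to 1/2$, this behaves like $p^{1/2+1/p}\lesssim\sqrt p$, because $p^{1/p}$ is bounded uniformly.

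\emph{Main obstacle.} The only delicate point is the pointwise bound on $K$ with a constant independent of everything. It follows from Abel summation: $\sum_{1\le n\le N}\cos(nx)$ is bounded by $|x|^{-1}$. One splits the sum at $n\sim|x|^{-1}$, bounding the low part trivially by $\sum_{n\lesssim |x|^{-1}}n^{-1/2}\lesssim|x|^{-1/2}$ and the high part via summation by parts by $|x|^{-1}\cdot|x|^{1/2}$. Hence $|K(x)|\lesssim|x|^{-1/2}$. Combining the mean and zero-mean pieces yields the lemma.
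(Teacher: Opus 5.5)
Your proof is correct. The paper gives no argument of its own for this lemma; it imports the estimate directly from G\'erard--Grellier \cite{gerard2010}, so your proposal supplies a self-contained proof rather than reproducing one.

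Your route works in physical space. You realise $|D_x|^{-1/2}$ on mean-zero functions as convolution with $K$, prove $|K(x)|\lesssim |x|^{-1/2}$ by splitting at $n\sim |x|^{-1}$ and summing by parts against the Dirichlet kernel, and read off the $\sqrt{p}$ from the blow-up $\|K\|_{L^q}\lesssim (2-q)^{-1/2}$ as $q\uparrow 2$ in Young's inequality. All constants are indeed uniform in $q\in(1,2)$.

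The one step you should state explicitly is that $K$ really is an $L^1$ function with $\widehat{K}(n)=|n|^{-1/2}$, so that $\tilde u=K*f$. Your splitting argument gives $|K_N(x)|\lesssim |x|^{-1/2}$ uniformly in $N$ for the partial sums $K_N$, so $K_N\to K$ in $L^1$ by dominated convergence. Alternatively, prove the bound for trigonometric polynomials and conclude by density.

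What your approach buys is that it is the torus analogue of the Riesz-potential proof of Trudinger's inequality. It also makes visible that the $\sqrt{p}$ comes from the rate at which $\|K\|_{L^q}$ degenerates because $K\notin L^2$. If one only wants this lemma, the frequency side is shorter still and avoids kernel asymptotics altogether, since Hausdorff--Young and H\"older give
\begin{align*}
\|u\|_{L^p}\lesssim \|\widehat{u}\|_{\ell^{p'}}\leq \big\|\langle n\rangle^{-1/2}\big\|_{\ell^{r}}\,\big\|\langle n\rangle^{1/2}\widehat{u}\big\|_{\ell^2}, \qquad \tfrac{1}{r}=\tfrac{1}{2}-\tfrac{1}{p}.
\end{align*}
Moreover $\sum_{n\in\mathbb{Z}}\langle n\rangle^{-r/2}\leq 3+\tfrac{2}{r/2-1}=p+1$, so $\|\langle n\rangle^{-1/2}\|_{\ell^r}\leq (p+1)^{1/r}\leq \sqrt{p+1}$.
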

Thoughout this paper, the interpolation of $W^{s,p}$ spaces will be an important tool.
\begin{theorem}[{\cite[Section 3.6.1 Theorem 2]{schmeisser1987}}]\label{Interpolation:Brezis:THM}
     Let $0<\theta<1$, $1\leq p, p_0,p_1\leq \infty$, $s,s_1,s_2\in \R$ and $d\in \N$ so that,
    \begin{align*}
        s &= \theta s_2 + (1-\theta)s_2, &
        \frac{1}{p} &= \frac{\theta}{p_1} + \frac{1-\theta}{p_2}\label{Preliminaries:interp:cond}.
    \end{align*}
    Then we have the following,
    \begin{align*}
        \left\|u\right\|_{W^{s,p}(\T^d)} \lesssim \left\|u\right\|_{W^{s_1,p_1}(\T)}^{\theta} \left\|u\right\|^{1-\theta}_{W^{s_2,p_2}(\T^d)}.
    \end{align*}
\end{theorem}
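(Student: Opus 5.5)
Since the statement is quoted from \cite[Section 3.6.1]{schmeisser1987}, I would not rebuild the full interpolation theory. I would instead give a direct Hadamard three-lines argument for the inequality in the reflexive range $1<p_0,p_1,p<\infty$. First I would fix the evident typos: the conditions should read $s=\theta s_1+(1-\theta)s_2$ and $\tfrac1p=\tfrac{\theta}{p_1}+\tfrac{1-\theta}{p_2}$.

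Write $J^\sigma$ for the Fourier multiplier $\langle n\rangle^\sigma$ and set $v=J^s u$. The claim then becomes
\begin{align*}
\|v\|_{L^p}\lesssim \|J^{s_1-s}v\|_{L^{p_1}}^{\theta}\,\|J^{s_2-s}v\|_{L^{p_2}}^{1-\theta},
\end{align*}
where $s_1-s=(1-\theta)(s_1-s_2)$ and $s_2-s=-\theta(s_1-s_2)$. By density I would reduce to trigonometric polynomials $v$. I would test against a simple function $g$ with $\|g\|_{L^{p'}}=1$ and use the standard analytic family
\begin{align*}
g_z=|g|^{p'\left(\frac{1-z}{p_2'}+\frac{z}{p_1'}\right)}\operatorname{sgn} g .
\end{align*}
With this I would define
\begin{align*}
\Phi(z)=e^{\delta(z^2-\theta^2)}\int_{\T^d} \bigl(J^{(s_1-s_2)(z-\theta)}v\bigr)\, g_z\,dx
\end{align*}
on the strip $0\leq \Rea z\leq 1$. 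The function $\Phi$ is analytic and bounded there, because $v$ has finitely many modes. At $z=\theta$ it equals $\int vg$.

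On the line $\Rea z=1$, $J^{(s_1-s_2)(z-\theta)}v=J^{i\tau}J^{s_1-s}v$ with $\tau=(s_1-s_2)\operatorname{Im}z$. I would bound this by $C(1+|\tau|)^{N}\|J^{s_1-s}v\|_{L^{p_1}}$, using that the imaginary powers $J^{i\tau}$ are $L^{p_1}$ bounded. That boundedness follows from the Mikhlin multiplier theorem on $\R^d$ transferred to $\T^d$ via the de Leeuw/Stein–Weiss transference principle, since $\langle\xi\rangle^{i\tau}$ is a Mikhlin symbol with seminorms growing polynomially in $\tau$. Hölder's inequality together with $\|g_z\|_{L^{p_1'}}=1$ then controls $|\Phi|$ on this line, and the factor $e^{\delta(z^2-\theta^2)}$ absorbs the polynomial growth. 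The line $\Rea z=0$ is handled the same way with $p_2$ and $J^{s_2-s}v$. The three-lines lemma then gives $|\int vg|\lesssim \|J^{s_1-s}v\|_{L^{p_1}}^{\theta}\|J^{s_2-s}v\|_{L^{p_2}}^{1-\theta}$, and taking the supremum over $g$ proves the inequality.

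The main obstacle is the endpoints $p_i\in\{1,\infty\}$. There the imaginary powers $J^{i\tau}$ fail to be bounded on $L^1$ and $L^\infty$, so the argument above breaks down. To cover these cases I would rely on the cited framework of Schmeisser–Triebel, which identifies the spaces with Triebel–Lizorkin spaces $F^{s}_{p,2}(\T^d)$ or works with Besov/Hölder–Zygmund replacements, and uses the Gagliardo–Nirenberg-type interpolation of Littlewood–Paley pieces available there. That is, I would bound $\|\mathbf P_N u\|$ dyadically by the two endpoint norms and sum, splitting at the frequency where the two bounds balance. In the paper the inequality is only applied with $1<p<\infty$, so the three-lines proof suffices for all later uses.
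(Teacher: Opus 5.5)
Your three-lines argument is correct for $1<p_1,p_2<\infty$, which forces $1<p<\infty$. It is a genuinely different route from the paper's, because the paper proves nothing here. It quotes the statement from Schmeisser--Triebel, where it follows from identifying $W^{s,p}(\T^d)$ with the Triebel--Lizorkin space $F^s_{p,2}(\T^d)$ for $1<p<\infty$ and using complex interpolation of periodic $F$-spaces.

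Your details check out:
\begin{itemize}
\item $g_\theta=g$, because $\tfrac{1}{p'}=\tfrac{\theta}{p_1'}+\tfrac{1-\theta}{p_2'}$.
\item On $\Rea z=1$ (resp.\ $\Rea z=0$) you pair $J^{i\tau}J^{s_1-s}v$ (resp.\ $J^{i\tau}J^{s_2-s}v$) against a function of unit $L^{p_1'}$ (resp.\ $L^{p_2'}$) norm.
\item $\Phi$ is bounded on the strip, since $v$ is a trigonometric polynomial and $g$ is simple.
\item The Gaussian factor absorbs the growth $(1+|\tau|)^{\lfloor d/2\rfloor+1}$ of the Mikhlin constants of $\langle\xi\rangle^{i\tau}$.
\item The density step is fine, since de la Vall\'ee-Poussin means approximate simultaneously in both norms.
\end{itemize}
Your route gives a short self-contained proof using only the Mikhlin-plus-transference machinery the paper already invokes in Lemma~\ref{Lem:multiplier_lemma}. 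The citation gives the whole interpolation scale (operators, $F^s_{p,q}$ with $q\neq 2$, quasi-Banach $p$), none of which the paper needs.

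The hypothesis your proof actually uses is $1<p_1,p_2<\infty$. You wrote $p_0,p_1$, inheriting the statement's mislabelling; likewise the first norm should be on $\T^d$. This hypothesis covers every application in the paper. Corollary~\ref{Interpolation:GNS:THM} is only used with $p_1=2$ and $p_2\in\{4,k+1\}$. The one interpolation with an infinite exponent, between $L^{k+1}$ and $L^\infty$ in Regime 3, has $s_1=s_2=0$ and is just H\"older's inequality.

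You should not, however, present your endpoint sketch as a proof. Take $s_1<s_2$ and split $\sum_N N^s\|\mathbf P_N u\|_{L^p}$ at a balancing frequency, using the $(s_1,p_1)$ bound below it and the $(s_2,p_2)$ bound above it. You must then pass from $L^{p_j}$ to $L^p$ by Bernstein or H\"older, which costs a factor $N^{d(1/p_j-1/p)_+}$. Carrying this through, two problems appear:
\begin{itemize}
\item The resulting power of $\|u\|_{W^{s_1,p_1}}$ equals $\theta$ only when $p_1=p_2$. Otherwise you get a strictly weaker inequality.
\item When $p_2<p_1$, the high-frequency tail even requires $s_2-\tfrac{d}{p_2}>s_1-\tfrac{d}{p_1}$.
\end{itemize}
Since $F^s_{p,2}$ coincides with $W^{s,p}$ only for $1<p<\infty$, the cited theory does not directly deliver the Bessel-potential endpoints $p_j\in\{1,\infty\}$ either. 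The honest fix is to state the theorem for $1<p_1,p_2<\infty$, which is exactly what your proof establishes.
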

We now present a corollary of the above result that will streamline the common process of first using the Sobolev inequality to embed the desired space on the interpolation line and then using Theorem \ref{Interpolation:Brezis:THM}.
\begin{corollary}\label{Interpolation:GNS:THM}
    Fix \(u\in W^{s_1,p_1}(\T^d)\cap W^{s_2,p_2}(\T^d)\). Assume that the real numbers \(0\leq s_1,s_2,\delta\), \(\theta\in (0,1)\) and \(1\leq p_1,p_2,p \leq \infty\) are such that the following is satisfied,
    \begin{align*}
        \theta\left(s_1-\frac{d}{p_1}\right) + (1-\theta) \left(s_2 - \frac{d}{p_2}\right) & \geq \delta -\frac{d}{p} \\
        \theta s_1 + (1-\theta)s_2                                                                             \geq \delta.
    \end{align*}
    Then the following holds,
    \begin{equation*}
        \left\|u\right\|_{W^{\delta,p}(\T^d)} \lesssim \left\|u\right\|^\theta_{W^{s_1,p_1}(\T^d)} \left\|u\right\|^{1-\theta}_{W^{s_2,p_2}(\T^d)}.
    \end{equation*}
    Furthermore for the case \(W^{s_1,p_1}(\T^d) = H^s(\T^d)\) and \(W^{s_2,p_2}(\T^d) = L^{k+1}(\T^d)\) we have that,
    \begin{equation*}
        \theta = \frac{\delta + \frac{d}{k+1} -\frac{d}{p}}{s-\frac{d}{2} + \frac{d}{k+1}}.
    \end{equation*}
\end{corollary}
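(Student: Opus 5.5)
The plan is to reduce Corollary~\ref{Interpolation:GNS:THM} to Theorem~\ref{Interpolation:Brezis:THM} after one Sobolev embedding. Define the intermediate exponents
\begin{align*}
    \tilde s \coloneqq \theta s_1 + (1-\theta)s_2, \qquad \frac{1}{\tilde p} \coloneqq \frac{\theta}{p_1} + \frac{1-\theta}{p_2},
\end{align*}
so that $W^{\tilde s,\tilde p}(\T^d)$ sits on the interpolation line between $W^{s_1,p_1}$ and $W^{s_2,p_2}$. Theorem~\ref{Interpolation:Brezis:THM} then gives directly
\begin{align*}
    \left\|u\right\|_{W^{\tilde s,\tilde p}} \lesssim \left\|u\right\|_{W^{s_1,p_1}}^{\theta}\left\|u\right\|_{W^{s_2,p_2}}^{1-\theta}.
\end{align*}
It therefore suffices to prove the embedding $W^{\tilde s,\tilde p}(\T^d)\hookrightarrow W^{\delta,p}(\T^d)$.

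The two hypotheses are exactly what this embedding needs. Rewritten in the new notation, they say $\tilde s - d/\tilde p \geq \delta - d/p$ and $\tilde s\geq \delta$. I would split into two cases.
\begin{itemize}
    \item If $p\geq \tilde p$, apply the Sobolev embedding on $\T^d$ for Bessel potential spaces: $W^{\tilde s,\tilde p}\hookrightarrow W^{\delta,p}$ whenever $\tilde s-\delta \geq d/\tilde p - d/p\geq 0$, with the usual care at the endpoint $p=\infty$.
    \item If $p<\tilde p$, use the finite measure of $\T^d$ and H\"older to get $L^{\tilde p}\hookrightarrow L^p$. Applied to $\langle\nabla\rangle^{\delta}u$, and combined with the trivial inequality $\|u\|_{W^{\delta,\tilde p}}\lesssim \|u\|_{W^{\tilde s,\tilde p}}$ (valid for $\tilde s\geq\delta$ and $1<\tilde p<\infty$ by the Mikhlin multiplier theorem on $\T^d$), this closes the case.
\end{itemize}
This second case is why both hypotheses are listed.

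The main obstacle is the endpoint behaviour. The Sobolev embedding fails at equality when $p=\infty$, and multiplier bounds fail for $\tilde p\in\{1,\infty\}$. I would handle this by noting that the statement is only used with $1<p<\infty$ in the paper, or else by restricting the claim to those exponents, as the paper implicitly does.

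Finally, the formula for $\theta$ comes from substituting $s_1=s$, $p_1=2$, $s_2=0$, $p_2=k+1$ and taking equality in the first condition:
\begin{align*}
    \theta\left(s-\tfrac d2\right) - (1-\theta)\tfrac{d}{k+1} = \delta - \tfrac dp.
\end{align*}
Solving this linear equation gives
\begin{align*}
    \theta = \frac{\delta + \frac{d}{k+1} - \frac{d}{p}}{s - \frac d2 + \frac{d}{k+1}}.
\end{align*}
One should then check that the second condition $\theta s\geq \delta$ holds. It does, since it is equivalent to $d/p \geq d/(k+1) - (1-\theta)\cdot 0$ adjusted appropriately, which amounts to $p\leq k+1$ in the regimes where the corollary is applied.
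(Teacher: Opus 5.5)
Your proposal is correct and is essentially the paper's proof, which is the one-line ``Sobolev inequality plus Theorem~\ref{Interpolation:Brezis:THM}'': interpolate onto $W^{\tilde s,\tilde p}$ and embed it into $W^{\delta,p}$, with your split into $p\ge\tilde p$ (Sobolev) and $p<\tilde p$ (H\"older on the finite-measure torus) making explicit why both hypotheses are needed. One correction to your last paragraph: $\theta s\ge\delta$ is a hypothesis rather than something that follows from the formula for $\theta$, and your claim that it always holds is false. With equality in the first condition, $\theta s-\delta=d\bigl(\tfrac{1}{\tilde p}-\tfrac{1}{p}\bigr)$ where $\tfrac{1}{\tilde p}=\tfrac{\theta}{2}+\tfrac{1-\theta}{k+1}$, so it holds exactly when $p\ge\tilde p$ (for instance whenever $p\ge k+1$, the opposite direction to what you wrote). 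It fails when $p<\tilde p$, as testing $u=e^{iNx}$ with $d=1$, $k=3$, $s=1$, $p=2$ shows.
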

\begin{proof}
    This follows from the Sobolev inequality and Theorem \ref{Interpolation:Brezis:THM}.
\end{proof}
We will make frequent use of Corollary \ref{Interpolation:GNS:THM} so we will simply refer to it as \emph{interpolation}. We will also use the following fractional Leibniz rule on $\T^d$.
\begin{lemma}[\cite{benyi2025}]\label{Lem:fractional_Liebniz_rule}
    Let \(1<p_j,q_j \leq \infty\), \(j=1,2\), \(1\leq r <  \infty\) and $s> 0$ such that \(\frac{1}{r} = \frac{1}{p_j} + \frac{1}{q_j}\). Then we have that,
    \begin{align*}
        \left\|uv\right\|_{W^{s,r}} \lesssim \left\|u\right\|_{W^{s,p_1}}\left\|v\right\|_{L^{q_1}} + \left\|u\right\|_{L^{p_1}}\left\|v\right\|_{W^{s,q_2}}.
    \end{align*}
\end{lemma}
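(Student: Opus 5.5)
The plan is to reduce the estimate to the Euclidean Kato--Ponce inequality via a paraproduct decomposition and bilinear transference. One small correction first: the second term on the right should read $\|u\|_{L^{p_2}}\|v\|_{W^{s,q_2}}$, with exponents matched as $\tfrac1r=\tfrac1{p_2}+\tfrac1{q_2}$. By density it suffices to prove the bound for trigonometric polynomials $u,v$.

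Using the Littlewood--Paley projectors $\mathbf{P}_N$ on $\T^d$ defined above, decompose
\begin{align*}
    uv=\sum_{N}\mathbf{P}_N u\,\mathbf{P}_{\ll N}v+\sum_{N}\mathbf{P}_{\ll N}u\,\mathbf{P}_N v+\sum_{N\sim M}\mathbf{P}_N u\,\mathbf{P}_M v
    =: \Pi_1+\Pi_2+\Pi_3 .
\end{align*}
The main step is to rewrite each piece as a bilinear Fourier multiplier. For $\Pi_1$, write $\langle\nabla\rangle^s\Pi_1(u,v)=T_{\sigma_1}(\langle\nabla\rangle^s u,v)$, where
\begin{align*}
    \sigma_1(\xi,\eta)=\sum_N \frac{\langle \xi+\eta\rangle^s}{\langle\xi\rangle^s}\,\phi_N(\xi)\,\phi_{\ll N}(\eta).
\end{align*}
This symbol is smooth on $\R^{2d}$. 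Since $|\eta|\ll|\xi|$ on its support, it satisfies the Coifman--Meyer bounds
\begin{align*}
    |\pat_\xi^\alpha\pat_\eta^\gamma\sigma_1|\lesssim(|\xi|+|\eta|)^{-|\alpha|-|\gamma|}.
\end{align*}
The same holds for $\Pi_2$ with the roles of $u$ and $v$ exchanged. For $\Pi_3$ the output frequency is $\lesssim N$. Taking $\sigma_3=\sum_{N\sim M}\langle\xi+\eta\rangle^s\langle\xi\rangle^{-s}\phi_N(\xi)\phi_M(\eta)$, it is again Coifman--Meyer: the factor $\langle\xi+\eta\rangle^s/\langle\xi\rangle^s$ is bounded because $s>0$, and derivatives in the output variable only help.

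Next I transfer the Coifman--Meyer theorem to the torus. Each $\sigma_j$ is continuous, so the bilinear de Leeuw-type transference theorem (Fan--Sato, or Grafakos' periodic version) applies. It says that $L^{p}\times L^{q}\to L^{r}$ boundedness on $\R^d$ of $T_{\sigma_j}$ implies the same bound on $\T^d$ for the operator with the restricted symbol $\sigma_j|_{\Z^d\times\Z^d}$. The Euclidean boundedness holds for $1<p,q\le\infty$ and $\tfrac12<r<\infty$, hence in particular for $1\le r<\infty$. This gives
\begin{align*}
    \|\Pi_1\|_{W^{s,r}}\lesssim\|u\|_{W^{s,p_1}}\|v\|_{L^{q_1}},\qquad
    \|\Pi_2\|_{W^{s,r}}\lesssim\|u\|_{L^{p_2}}\|v\|_{W^{s,q_2}},
\end{align*}
and $\Pi_3$ is bounded by either right-hand side. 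Summing the three bounds proves the lemma.

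The main obstacle is making the transference step honest. First, the smooth cutoffs $\phi_N$ must be arranged so that the restricted symbols really reproduce the periodic paraproducts. Second, the endpoint cases must be handled. When $q=\infty$, the transference must be stated for bounded functions. When $r=1$, we are below the Banach-lattice range where classical de Leeuw arguments are routine, so I would use the version of transference valid for $r\ge1$ via duality against $L^\infty$ test functions. If that proves delicate, the fallback is the direct Triebel-style localisation used later in the appendix (Proposition~\ref{Frac_F_rule}): multiply by a smooth partition of unity on $[0,2\pi]^d$, extend periodic pieces to compactly supported functions on $\R^d$, and apply the Euclidean Kato--Ponce inequality of Grafakos--Oh. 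The error terms from commuting $\langle\nabla\rangle^s$ with the cutoffs are smoothing and hence harmless on a compact set.
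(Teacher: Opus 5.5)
The paper gives no proof of this lemma; it is quoted from \cite{benyi2025}, so your proposal has to stand on its own. Your correction of the second term to $\|u\|_{L^{p_2}}\|v\|_{W^{s,q_2}}$ is right. Your treatment of $\Pi_1$ and $\Pi_2$ (Coifman--Meyer on $\R^d$, then bilinear transference, with the periodic $\mathbf{P}_N$ obtained by restricting $\phi_N$ to $\Z^d$) is sound.

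\textbf{The gap is $\Pi_3$.} The symbol $\sigma_3$ is \emph{not} of Coifman--Meyer type, and the claim that ``derivatives in the output variable only help'' is false. Derivatives of $\langle\xi+\eta\rangle^s$ decay only at the scale of the \emph{output} frequency, $|\partial^\alpha\langle\xi+\eta\rangle^s|\lesssim\langle\xi+\eta\rangle^{s-|\alpha|}$. In the high$\times$high$\to$low region $|\xi|\sim|\eta|\sim N$, $|\xi+\eta|\sim 1$, this makes $\partial^\alpha\sigma_3$ of size $N^{-s}$, whereas the Coifman--Meyer bound demands $N^{-|\alpha|}$. So the bound fails for every $|\alpha|>s$ unless $s\in 2\N$. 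Rescaled to the unit annulus, $\sigma_3$ looks like $|\xi+\eta|^s$, which has only $s$ orders of smoothness across $\xi+\eta=0$.

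This is not a bookkeeping slip. Your argument uses $s>0$ only for boundedness of $\sigma_3$, and Coifman--Meyer holds for $\tfrac12<r<\infty$. It would therefore give the Euclidean estimate for all $\tfrac12<r<1$ and all $s>0$, which is false for $s\le d/r-d$, $s\notin 2\N$. To see this, take $u_\lambda=f(\lambda\,\cdot)$ and $v_\lambda=g(\lambda\,\cdot)$ with $\int fg\neq 0$. Then $\langle\nabla\rangle^s(u_\lambda v_\lambda)$ has a tail $\approx c\,\lambda^{s}|\lambda x|^{-d-s}$ on $\lambda^{-1}\ll|x|\ll 1$. When $(d+s)r\le d$, its $L^r$ norm is $\gg\lambda^{s-d/r}$ as $\lambda\to\infty$, while the right-hand side is $O(\lambda^{s-d/r})$.

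\textbf{What is missing} is a separate treatment of the high--high term, which is precisely where $s>0$ is used. Write $\Pi_3=\sum_N\mathbf{P}_Nu\,\widetilde{\mathbf{P}}_Nv$ with $\widetilde{\mathbf{P}}_N=\sum_{M\sim N}\mathbf{P}_M$. The point is that $\langle\nabla\rangle^s$ acting on an output frequency $K\lesssim N$ costs only $K^s$, and the gain $(K/N)^s$ is summable because $s>0$. The cleanest implementation is the Triebel--Lizorkin lemma for functions with Fourier support in balls: if $\supp\widehat{G_N}\subset\{|\xi|\lesssim N\}$, then
\[
\Big\|\sum_N G_N\Big\|_{F^s_{r,2}}\lesssim\Big\|\Big(\sum_N N^{2s}|G_N|^2\Big)^{1/2}\Big\|_{L^r}\qquad\text{for } s>d\big(\tfrac{1}{\min(1,r)}-1\big).
\]
This holds for every $s>0$ when $r\ge 1$, and it is the same threshold as in Proposition~\ref{Frac_F_rule}. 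Apply it as follows.
\begin{enumerate}
\item Take $G_N=\mathbf{P}_Nu\,\widetilde{\mathbf{P}}_Nv$ and bound $\sup_N|\widetilde{\mathbf{P}}_Nv|$ by the maximal function of $v$ in $L^{q_1}$.
\item For $1<p_1<\infty$, use $\|(\sum_N N^{2s}|\mathbf{P}_Nu|^2)^{1/2}\|_{L^{p_1}}\sim\|u\|_{W^{s,p_1}}$.
\item If $p_1=\infty$ (so $q_1=r>1$), put $\sup_N N^s\|\mathbf{P}_Nu\|_{L^\infty}\lesssim\|u\|_{W^{s,\infty}}$ outside and the square function on $v$.
\item Finally, $F^s_{r,2}=W^{s,r}$ for $1<r<\infty$, and $F^s_{1,2}\hookrightarrow W^{s,1}$ since $h^1\hookrightarrow L^1$.
\end{enumerate}
These square-function, maximal and ball-support estimates are available on $\T^d$, by linear transference or the periodic theory of \cite{schmeisser1987}. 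With this replacement for $\Pi_3$, the rest of your plan goes through.
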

We will simply refer to Lemma \ref{Lem:fractional_Liebniz_rule} as the ``fractional Leibniz rule'' as it is standard.
\subsection{Adapted Sobolev spaces and their multipliers}
In this section we will define a Sobolev space adapted to the dispersion of \eqref{Gen-BOBBM-eq} and then prove it's equivalence to the standard $H^s$ norm via a multiplier Lemma.
\begin{definition}
    For $s\in \R$ and $1\leq \beta\leq 2$, we define the spaces $H^s_\beta(\T)$ by the completion of $C^\infty(\T)$ under the norm,
    \begin{align*}
        \left\|u\right\|_{H^s_\beta} := \left\|\langle \pat_x\rangle^{s}_{\beta} u\right\|_{L^2}
    \end{align*}
    where $\langle \pat_x \rangle_\beta$ is a Fourier multiplier with symbol $\langle n\rangle_\beta := \left(1+|n|^\beta\right)^{\frac{1}{\beta}}$. 
\end{definition}
\begin{lemma}\label{Lem:multiplier_lemma}
    For a function $u$ sufficiently regular and integrable, $0<\theta \leq 1$, $1< \beta \leq 2$ and $1<p<\infty$ we have that,
    \begin{align*}
        \left\|\langle \pat_x\rangle^\theta_{\beta} u\right\|_{L^p}\sim \left\|\langle \pat_x\rangle^{\theta} u\right\|_{L^p} \quad \text{and} \quad   \left\|\pat_x u\right\|_{L^p} \lesssim \left\| \langle \pat_x\rangle_\beta u\right\|_{L^p}.\quad 
    \end{align*}
\end{lemma}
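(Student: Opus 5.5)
We prove both estimates by showing that the relevant symbol ratios are Fourier multipliers on $L^p(\T)$, $1<p<\infty$. Since the symbols are smooth functions of $\xi\in\R$ restricted to $\Z$, we use the Marcinkiewicz multiplier theorem, or equivalently Mikhlin's theorem on $\R$ followed by transference to $\T$ (de Leeuw). It suffices to check
\begin{align*}
    |m(\xi)| + |\xi\, m'(\xi)| \lesssim 1
\end{align*}
on $\R$ for each ratio $m$. On $\Z$ this reduces to bounded variation on dyadic blocks, which follows from the derivative bound.

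For the equivalence, set $m_\theta(\xi) = \langle \xi\rangle_\beta^\theta \langle \xi\rangle^{-\theta} = (1+|\xi|^\beta)^{\theta/\beta}(1+\xi^2)^{-\theta/2}$. Then $\|\langle\pat_x\rangle_\beta^\theta u\|_{L^p} = \|m_\theta(D)\langle\pat_x\rangle^\theta u\|_{L^p}$, and the reverse inequality uses $m_\theta^{-1}$.

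\emph{Boundedness.} $m_\theta$ is continuous and positive and tends to $1$ as $|\xi|\to\infty$, so $m_\theta$ and $m_\theta^{-1}$ are bounded.

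\emph{Derivative bound.} Differentiate logarithmically for $\xi\neq 0$:
\begin{align*}
    \xi\,\frac{m_\theta'(\xi)}{m_\theta(\xi)} = \theta\left(\frac{|\xi|^\beta}{1+|\xi|^\beta} - \frac{\xi^2}{1+\xi^2}\right),
\end{align*}
which is bounded by $2\theta$. Hence $|\xi m_\theta'| \lesssim 1$, and the same holds for $m_\theta^{-1}$.

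\emph{The main obstacle: regularity at $\xi=0$.} Because $\beta\in(1,2]$, the function $|\xi|^\beta$ is only $C^1$ at the origin. Mikhlin's condition only involves $\xi m'(\xi)$ away from $0$, and the computation above is valid there, so no issue arises. On $\T$ one can avoid it entirely: the frequency $n=0$ is a single point, so we may modify $m$ smoothly near $\xi=0$ without changing the operator. This gives the first estimate.

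For the second estimate, $\pat_x u = m(D)\langle\pat_x\rangle_\beta u$ with $m(\xi) = i\xi(1+|\xi|^\beta)^{-1/\beta}$. We have $|m|\le 1$, since $|\xi|^\beta\le 1+|\xi|^\beta$. Differentiating gives
\begin{align*}
    \xi m'(\xi) = m(\xi)\left(1 - \frac{|\xi|^\beta}{1+|\xi|^\beta}\right) = \frac{m(\xi)}{1+|\xi|^\beta},
\end{align*}
which is bounded, so the same multiplier theorem yields $\|\pat_x u\|_{L^p}\lesssim \|\langle\pat_x\rangle_\beta u\|_{L^p}$.

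Only the one-dimensional Marcinkiewicz condition is needed here, since the definition of $H^s_\beta$ is on $\T$. The main technical care is quoting the transference or periodic Marcinkiewicz theorem correctly and handling the nonsmooth point $\xi=0$ as described.
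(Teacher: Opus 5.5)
Your proposal is correct and follows the same route as the paper, which simply invokes the Mikhlin--H\"ormander theorem for the symbols extended to $\R$ and then transference to $\T$. You additionally verify the symbol conditions explicitly: your computations $\xi m_\theta'/m_\theta = \theta\bigl(\tfrac{|\xi|^\beta}{1+|\xi|^\beta}-\tfrac{\xi^2}{1+\xi^2}\bigr)$ and $\xi m'(\xi) = m(\xi)/(1+|\xi|^\beta)$ are right, and no modification near $\xi=0$ is actually needed, since the symbols are continuous there (all that transference requires) and Mikhlin's hypothesis only concerns $\R\setminus\{0\}$.
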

\begin{proof}
    This result follows by extending the relevent multilpiers to $\R$, using the Mikhlin-H\"ormander Theorem  (see {\cite[Theorem 6.2.7]{grafakos2014a}}) and then transferrence (see \cite[Theorem 4.3.7]{grafakos2014a}) to obtain the desired boundedness over $\T^d$.
\end{proof}

\section{Local theory}
In this section we prove the local well-posedness result, Theorem \ref{Local_theory:lwp_main}. As discussed in the introduction, we do so via a standard fixed point argument. For the purposes of analysis we write \eqref{Gen-BOBBM-eq} so that the regularisation of the nonlinearity is emphasised:
\begin{align}
     \pat_t u = -(1 + |D_x|^\beta)^{-1} \nabla \cdot \left( u + \mu\tfrac{1}{k} u^k \right).\label{Local:regularisation_form}
\end{align}
We denote the spatial differential operator on the right hand side of \eqref{Local:regularisation_form} by $\phi_\beta(D_x)$, a Fourier multiplier with symbol,
\begin{align*}
    \widehat{\phi_\beta(D_x) u}(n) = \frac{n}{1+|n|^\beta}\widehat{u}(n).
\end{align*}
With this, we may write \eqref{Local:regularisation_form} as,
\begin{align*}
    \pat_t u = -i\phi_\beta(D_x)\left(u +  \mu\tfrac{1}{k} u^k\right).
\end{align*}
This yields the Duhamel formulation from which we will take our solution,
\begin{align*}
    u(t) = S(t)u_0 - \frac{i\mu}{k}\int_0^t S(t-t')\mathcal{N}\left(u(t')\right)\, dt'
\end{align*}
where $S(t) = e^{-it \phi_\beta(D_x)}$ is the linear solution propagator and,
\begin{align*}
    \mathcal{N}\left(u\right) &= \phi_\beta(D_x)u^k.
\end{align*}
We remark that the unitarity of $S(t)$ is clear as it is on the unit circle. With the definitions set up, we may prove the product estimate.
\begin{lemma} \label{lem:multilinear_estimate}
    For \(1\leq \beta\leq 2\), $d\geq 1$ and \(k \ge 2\), with \(\phi_\beta(D_x)\) defined as the Fourier multiplier with symbol $ \phi(n) = \tfrac{n}{1+|n|^\beta}$. For $u_1,\dots,u_k\in H^s(\T^d)$ we have the following product estimate:
    \begin{equation*}
        \Big\|\phi_\beta(D_x) \prod_{i=1}^k u_i\Big\|_{H^s(\T^d)} \lesssim \prod_{i=1}^k\left\|u_i\right\|_{H^s(\T^d)}
    \end{equation*}
    provided:
    \begin{equation}
        \begin{cases}
            s > \frac{d}{2}, & \beta = 1\\
            s \geq \max\left(d\left(\frac{1}{2} - \frac{1}{k}\right), \frac{d}{2} - \frac{\beta-1}{k-1}\right), & \beta > 1.
        \end{cases}\label{Local:multilinear:s_cond}
    \end{equation}
\end{lemma}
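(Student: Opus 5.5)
The estimate rests on a single observation. The symbol satisfies $|\phi_\beta(n)| \lesssim \langle n\rangle^{1-\beta}$, so it suffices to prove
\begin{equation*}
    \Big\|\prod_{i=1}^k u_i\Big\|_{H^{s+1-\beta}(\T^d)} \lesssim \prod_{i=1}^k\|u_i\|_{H^s(\T^d)}.
\end{equation*}
For $\beta=1$ with $s>d/2$ this is just the algebra property of $H^s(\T^d)$, for instance by iterating the fractional Leibniz rule (Lemma \ref{Lem:fractional_Liebniz_rule}) with $p=2$, $q=\infty$ together with $H^s\hookrightarrow L^\infty$. The same argument covers every $\beta$ whenever $s>d/2$, since $s+1-\beta\le s$. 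So from now on I assume $\beta>1$ and $s\le d/2$. I will split into two cases according to the sign of $\sigma:=s+1-\beta$, following the case distinction advertised in the introduction.

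\textbf{Case $\sigma\le 0$ (strong smoothing): duality.} I need $\|\prod u_i\|_{H^{\sigma}}\lesssim\prod\|u_i\|_{H^s}$, which by duality means bounding $\int \prod_{i} u_i \cdot w$ for $w\in H^{-\sigma}=H^{\beta-1-s}$.
\begin{itemize}
    \item By Sobolev embedding, $u_i\in H^s\hookrightarrow L^{p}$ with $\tfrac1p=\tfrac12-\tfrac sd$, which requires $s\ge d(\tfrac12-\tfrac1k)$ so that $p\ge k$.
    \item Likewise $w\in H^{\beta-1-s}\hookrightarrow L^{q}$ with $\tfrac1q=\tfrac12-\tfrac{\beta-1-s}{d}$, provided $\beta-1-s<d/2$; in the borderline case I replace $w$'s space by a slightly larger one.
    \item H\"older then closes provided $\tfrac kp+\tfrac1q\le 1$. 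Unpacking, this is $k(\tfrac12-\tfrac sd)+\tfrac12-\tfrac{\beta-1-s}{d}\le1$, i.e.\ $(k-1)s\ge \tfrac{d(k-1)}2-(\beta-1)$, which is exactly $s\ge \tfrac d2-\tfrac{\beta-1}{k-1}$.
\end{itemize}
The one point to check is that if $kp^{-1}+q^{-1}<1$, we can still conclude using the finite measure of $\T^d$; equivalently, lower $p$ to $k$.

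\textbf{Case $\sigma>0$ (weak smoothing): direct estimate.} Here I apply the fractional Leibniz rule iteratively to $\|\prod u_i\|_{W^{\sigma,2}}$. This reduces the problem to terms of the form $\|u_j\|_{W^{\sigma,r}}\prod_{i\ne j}\|u_i\|_{L^{p}}$ with $\tfrac1r+\tfrac{k-1}{p}=\tfrac12$. Each factor is then controlled by Sobolev embedding:
\begin{itemize}
    \item $H^s\hookrightarrow W^{\sigma,r}$ requires $\tfrac1r\ge \tfrac12-\tfrac{\beta-1}{d}$;
    \item $H^s\hookrightarrow L^p$ requires $\tfrac1p\ge\tfrac12-\tfrac sd$.
\end{itemize}
Summing, the condition needed is $\tfrac12\ge \tfrac12-\tfrac{\beta-1}d+(k-1)(\tfrac12-\tfrac sd)$, which is again $s\ge \tfrac d2-\tfrac{\beta-1}{k-1}$.

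The main obstacle is the endpoint exponents. At equality, or when $p=\infty$ or $r=\infty$ (which is excluded in the Leibniz rule as stated when $q=\infty$, and where $H^{d/2}\not\hookrightarrow L^\infty$), I must make sure each Sobolev embedding used is non-endpoint. This holds because $s\le d/2$ forces $p<\infty$, and $\sigma<s$ gives $r<\infty$ unless $\beta-1\ge d/2$. In that last case I distribute the slack differently, since $s>\sigma$ leaves room to spare. The constraint $s\ge d(\tfrac12-\tfrac1k)$ is only needed in the duality case, to keep $p\ge k$ with $1<p<\infty$. Throughout, the inequalities are inclusive because Sobolev embeddings on $\T^d$ at exact scaling with $1<p<\infty$ are valid.
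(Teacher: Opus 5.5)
Your argument follows the paper's proof step for step. You make the same reduction to $\|\prod_i u_i\|_{H^{s+1-\beta}}$, use the algebra property at $\beta=1$, and split on the sign of $s+1-\beta$: duality, H\"older and Sobolev when it is $\le 0$, and the fractional Leibniz rule and Sobolev when it is $>0$. You arrive at the same two lower bounds on $s$. Your exponent bookkeeping is sound except at one corner of the parameter range, and at that corner the statement itself fails.

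The step that fails is the borderline $\beta-1-s=\tfrac d2$ in the duality case, where you propose to enlarge the space of $w$. Since $H^{d/2}(\T^d)\not\hookrightarrow L^\infty(\T^d)$, you are forced to take $\tfrac1q>0$. With $\tfrac1p\ge\tfrac12-\tfrac sd$, H\"older then needs $k(\tfrac12-\tfrac sd)<1$, i.e.\ the \emph{strict} inequality $s>d(\tfrac12-\tfrac1k)$. Now suppose in addition $s=d(\tfrac12-\tfrac1k)$. For $d=1$ this is $\beta=2-\tfrac1k$, $s=\tfrac12-\tfrac1k$; for $d=2$ it is $k=2$, $\beta=2$, $s=0$. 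Both entries of the max then equal $s$, so the hypothesis holds, but you have no slack to spend. In fact the estimate is false there. For $d=1$, take a smooth bump $\chi\ge 0$ with $\chi(0)>0$ and let $u$ be the periodisation of $N^{1/k}\chi(Nx)$. Then $\|u\|_{H^s}\lesssim 1$ uniformly in $N$. Meanwhile $\widehat{u^k}(n)=\tfrac{1}{2\pi}\int_\R\chi^k(y)e^{-iny/N}\,dy$ is bounded below for $|n|\le cN$, and $\langle n\rangle^{2s}|\phi(n)|^2\sim|n|^{-1}$ for $n\neq 0$. Hence $\|\phi_\beta(D_x)u^k\|_{H^s}^2\gtrsim\sum_{1\le|n|\le cN}|n|^{-1}\sim\log N$; the $d=2$ corner is analogous with $u=N\chi(Nx)$.

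The paper's proof has the same hole. Its condition $\tfrac dq\ge\tfrac d2-\beta+s+1$ permits $q=\infty$ exactly at this corner, i.e.\ it tacitly uses $H^{d/2}\hookrightarrow L^\infty$. So your closing claim that all inequalities may be taken inclusive is valid only for finite exponents. If you exclude this corner, for instance by requiring $s>d(\tfrac12-\tfrac1k)$ whenever $\beta-1-s=\tfrac d2$, your proof goes through, as does the paper's.
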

\begin{proof}
    For the sake of readability, we prove the case when $u_1=\dots = u_k$, the general case follows in an identical fashion. Inspecting the Fourier multiplier \(\phi_\beta(D_x)\), we note that
    \begin{equation*}
        \widehat{\phi_\beta(D_x)u(n)} = \tfrac{n}{1+|n|^\beta} \widehat{u}(n) \lesssim |n|^{1-\beta} \widehat{u}(n).
    \end{equation*}
    Thus,
    \begin{equation*}
        \left\|{\phi_\beta(D_x) u^k}\right\|_{H^s} = \Big\|{\frac{n}{1+|n|^\beta} \widehat{u^k}(n)}\Big\|_{\ell^2_n} \lesssim \left\|u^k\right\|_{H^{s-\beta+1}}.
    \end{equation*}
    Here, we witness \(\phi_\beta(D_x)\) smoothing an input function by \(\beta-1\) derivatives. As such, it suffices to identify a suitable condition for
    \begin{equation*}
        \left\|u^k\right\|_{H^{s-\beta+1}} \lesssim \left\|u\right\|_{H^s}^k.
    \end{equation*}
    If \(\beta=1\), we observe no smoothing from \(\phi_\beta(D_x)\). This reduces to
    \begin{equation*}
        \left\|u^k\right\|_{H^s} \leq \left\|u\right\|_{H^s}^k.
    \end{equation*}
    This is the algebra property, implying \(H^s(\T^d)\) is a Banach algebra, which is sharply true for \(s > \frac{d}{2}\). Now, assuming that \(\beta>1\) we distinguish two possible cases:

    \underline{Case 1:} Assume that $s-\beta+1 \leq 0$. Here we invoke duality to yield,
    \begin{align}
        \left\|u^k\right\|_{H^{s-\beta+1}}  = \sup_{\left\|g\right\|_{H^{\beta-s-1}} \leq 1} \left|\int_\T u^k g \, dx\right| \leq \sup_{\left\|g\right\| \le 1} \left\|u^k g\right\|_{L^1} \label{Local:multilinear:Minkoski}.
    \end{align}
    Now, using Hölder's inequality:
    \begin{equation*}
        \eqref{Local:multilinear:Minkoski} \le \sup_{\left\|g\right\| \leq 1} \left\|u^k\right\|_{L^{\frac{p}{k}}} \left\|g\right\|_{L^q} = \sup_{\left\|g\right\| \leq 1}\left\|u\right\|_{L^{p}}^k \left\|g\right\|_{L^q}
    \end{equation*}
    where for \(L^{p/k}\) to be a well-defined Banach space we require that \(p\geq k\) and to satisfy Hölder conjugacy we enforce the condition:
    \begin{equation} \label{Local:multilinear:holder_conjugacy}
        \tfrac{k}{p} + \tfrac{1}{q} = 1.
    \end{equation}
    We now seek to verify that we have the following embeddings
    \begin{equation*}
        H^s(\T^d) \hookrightarrow L^{p}(\T^d) \quad \text{and} \quad H^{\beta-s-1}(\T^d) \hookrightarrow L^q(\T^d)
    \end{equation*}
    which would yield the desired estimate like so,
    \begin{align}
        \sup_{\left\|g\right\|_{H^{\beta-s-1}}\leq 1}\left\|u^k g\right\|_{L^1} & \lesssim \sup_{\left\|g\right\|_{H^{\beta-s-1}}\leq 1}\left\|u\right\|^k_{H^s}\left\|g\right\|_{H^{\beta-s-1}} \leq\left\|u\right\|^k_{H^s}\label{Local:multilinear:final_est}.
    \end{align}
    To do this, we use the Sobolev inequality to obtain the condition on \(p\) and \(q\) as such,
    \begin{align*}
        \tfrac{d}{p} \geq \tfrac{d}{2} - s \quad \text{and} \quad \tfrac{d}{q} \geq \tfrac{d}{2} - \beta +s+1.
    \end{align*}
    Simplifying these we get that,
    \begin{align}
        \tfrac{1}{p} \geq \tfrac{1}{2} - \tfrac{s}{d}, \quad \text{and} \quad \tfrac{1}{q} \geq \tfrac{1}{2} + \tfrac{s+1-\beta}{d}\label{Local:multilinear:embedding_cond}.
    \end{align}
    Inspecting the first inequality and using \(p\geq k\) we yield the condition
    \begin{align*}
        \tfrac{1}{2}-\tfrac{s}{d} \leq \tfrac{1}{k} \, \implies \, s\geq d\left(\tfrac{1}{2} - \tfrac{1}{k}\right)
    \end{align*}
    which is satisfied by assumption \eqref{Local:multilinear:s_cond}.
    Now, verifying the validity of our second embedding as well as our H\"older condition, we combine \eqref{Local:multilinear:embedding_cond} with  \eqref{Local:multilinear:holder_conjugacy} to get
    \begin{align*}
        1\geq k\left(\tfrac{1}{2}-\tfrac{s}{d}\right) + \tfrac{1}{2} + \tfrac{s+1-\beta}{d} \iff s\geq \tfrac{d}{2} - \tfrac{\beta-1}{k-1}
    \end{align*}
    which is again satisfied by assumption \eqref{Local:multilinear:s_cond}. As such, \eqref{Local:multilinear:final_est} is verified.

    \underline{Case 2:} Now assume that $s-\beta + 1 >0$. Here, we use the fractional Leibniz rule to obtain,
    \begin{align}
        \left\|u^k\right\|_{H^{s-\beta+1}}\lesssim \left\|u\right\|_{W^{s-\beta+1,p}}\left\|u\right\|^{k-1}_{L^{(k-1)q}}\label{Local:multilinear:case_2:fractional_Liebniz}
    \end{align}
    where we have the H\"older condition,
    \begin{align}
        \tfrac{1}{2}=\tfrac{1}{p} + \tfrac{1}{q}\label{Local:multilinear:case_2:Holder_cond}.
    \end{align}
    It remains to verify the following embeddings,
    \begin{align*}
        H^s(\T^d) \hookrightarrow W^{s-\beta+1,p}(\T^d) \quad \text{and} \quad H^s(\T^d)\hookrightarrow L^{(k-1)q}(\T^d)
    \end{align*}
    as with them, we yield the desired estimate upon returning to \eqref{Local:multilinear:case_2:fractional_Liebniz},
    \begin{align}
        \left\|u\right\|_{W^{s-\beta+1,p}(\T)}\left\|u\right\|^{k-1}_{L^{(k-1)q}} \lesssim \left\|u\right\|^k_{H^s}.\label{Local:multilinear:case_2:final_ineq}
    \end{align}
    Using the Sobolev inequality we get that,
    \begin{align*}
        s-\tfrac{d}{2} \geq s+1-\beta - \tfrac{d}{p} \iff \tfrac{1}{p} \geq  \tfrac{2-2\beta +d}{2d}
    \end{align*}
    and
    \begin{align*}
        \tfrac{d}{(k-1)q} \geq \tfrac{d}{2} -s \iff \tfrac{1}{q} \geq \tfrac{k-1}{d}\left(\tfrac{1}{2} - s \right).
    \end{align*}
    Given that $q\geq 2$ we require,
    \begin{align}
        \tfrac{1}{2} \geq \tfrac{k-1}{d}\left(\tfrac{1}{2} -s\right) \iff s\geq d\left(\tfrac{1}{2} - \tfrac{1}{2(k-1)}\right).\label{Local:multilinear:q_extra}
    \end{align}
    Combining our lower bounds on $1/p$ and $1/q$ in \eqref{Local:multilinear:case_2:Holder_cond} we get that,
    \begin{align*}
        \tfrac{1}{2} \geq \tfrac{2-2\beta+d}{2d} + \tfrac{k-1}{d}\left(\tfrac{d}{2} -s\right) \iff s\geq \tfrac{2-2\beta +d(k-1)}{2(k-1)}
    \end{align*}
    which is satisfied by assumption \eqref{Local:multilinear:s_cond}. It now remains to verify that \eqref{Local:multilinear:q_extra} is implied by our assumptions. Using our case assumption, $s-\beta+1>0$ in \eqref{Local:multilinear:s_cond} we get that,
    \begin{align*}
        s> \tfrac{2-2(s+1) +d(k-1)}{2(k-1)} \iff s > \tfrac{d(k-1)}{2k}.
    \end{align*}
    A quick calculation verifies that,
    \begin{align*}
        \tfrac{d(k-1)}{2k} >d\left(\tfrac{1}{2} - \tfrac{1}{2(k-1)}\right).
    \end{align*}
    Therefore, \eqref{Local:multilinear:q_extra} is satisfed, yielding our desired estimate, \eqref{Local:multilinear:case_2:final_ineq}.
\end{proof}
We may now prove the local result.
\begin{proof}[Proof of Theorem \eqref{Local_theory:lwp_main}]
   This follows simply from Lemma \ref{lem:multilinear_estimate}, the unitarity of $S_\beta(t)$ in $H^s(\T^d)$ and a standard fixed point argument.
\end{proof}
We now make a remark that will simplify the verification of the conditions for local well-posedness in the later sections.
\begin{remark}\label{Local:remark_verification}
    To make sense of the regularity condition \eqref{Local:multilinear:s_cond} we note that in dimension 1, the condition
    \begin{align*}
        s \geq \max\left(\tfrac{1}{2} - \tfrac{1}{k}, \tfrac{1}{2} - \tfrac{\beta-1}{k-1}\right)
    \end{align*}
    reduces to the two respective terms in the following cases
    \begin{align*}
        \beta \geq 1+\tfrac{k-1}{k} \implies s\geq \tfrac{1}{2}-\tfrac{1}{k} \geq  \tfrac{1}{2}-\tfrac{\beta-1}{k-1}
    \end{align*}
    and,
    \begin{align*}
        \beta < 1+\tfrac{k-1}{k} \implies s\geq \tfrac{1}{2}-\tfrac{\beta-1}{k-1} > \tfrac{1}{2}-\tfrac{1}{k}.
    \end{align*}
\end{remark}

\section{Global theory}
The goal of this section is to investigate the conditions under which we may extend our local solution obtained in Theorem \ref{Local_theory:lwp_main}. While much of the arguments in this paper are specialised to their specific dimension and regime there is some underlying architecture and tools that support multiple results. It is now that we develop this architecture which is based in conservation laws (Lemmas \ref{Global:lemma:Conservervation} and \ref{Global:critical:general:Lk1_control}) and the key energy identity, Lemma \ref{global:critical:general:lem_energy}. 
First, we derive the conservation laws of \eqref{Gen-BOBBM-eq}.
\begin{lemma}\label{Global:lemma:Conservervation}
    For $k\geq 2$, $d\geq 1$, $\mu \in \{-1,1\}$ and $1\leq \beta \leq 2$ the quantities \eqref{Conservation:mass}, \eqref{Conservation:H1_like} and \eqref{Conservation:Hamiltonian} are conserved under the flow of \eqref{Gen-BOBBM-eq}.
\end{lemma}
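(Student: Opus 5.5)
We argue first for smooth solutions, say $u\in C([0,T];H^{\sigma}(\T^d))$ with $\sigma$ large, and then pass to the general local solutions of Theorem \ref{Local_theory:lwp_main}. Write the equation as
\begin{align*}
(1+|D_x|^\beta)\pat_t u = -\sum_{i=1}^d \pat_{x_i}\left(u+\tfrac{\mu}{k}u^k\right).
\end{align*}
For the general case we use continuous dependence on the data. The functionals $\mathscr{M}_d$, $\mathscr{E}_{\beta,d}$ and $\mathscr{H}_{k,\mu,d}$ are continuous on the spaces where they are defined: $\mathscr{M}_d$ and $\int u^2$ on $L^2$, $\mathscr{E}_{\beta,d}$ on $H^{\beta/2}$, and $\int u^{k+1}$ on $L^{k+1}$. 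So we approximate $u_0$ by smooth data, use the local flow, and pass to the limit in the identities.

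\emph{Mass.} Integrate the equation over $\T^d$. The term $\int |D_x|^\beta \pat_t u\,dx$ vanishes because its zero Fourier mode is $|0|^\beta\widehat{\pat_t u}(0)=0$. The divergence term also integrates to zero by periodicity. Hence $\frac{d}{dt}\int u\,dx=0$.

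\emph{Energy $\mathscr{E}_{\beta,d}$.} Pair the equation with $u$ in $L^2$. Since $|D_x|^\beta$ is self-adjoint, the left side is
\begin{align*}
\tfrac12\tfrac{d}{dt}\int_{\T^d} \left(u^2+u|D_x|^\beta u\right)dx .
\end{align*}
On the right, $\int u\,\pat_{x_i}u\,dx=\tfrac12\int\pat_{x_i}(u^2)\,dx=0$. Likewise $\int u\,\pat_{x_i}(u^k)\,dx=\tfrac{k}{k+1}\int\pat_{x_i}(u^{k+1})\,dx=0$. Both vanish by periodicity, so $\mathscr{E}_{\beta,d}$ is conserved.

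\emph{Hamiltonian.} Pair instead with $w:=(1+|D_x|^\beta)^{-1}\left(u+\tfrac{\mu}{k}u^k\right)$, so that
\begin{align*}
\int_{\T^d}\pat_t u\left(u+\tfrac{\mu}{k}u^k\right)dx=-\sum_i\int_{\T^d}\left(u+\tfrac{\mu}{k}u^k\right)(1+|D_x|^\beta)^{-1}\pat_{x_i}\left(u+\tfrac{\mu}{k}u^k\right)dx .
\end{align*}
The operator $(1+|D_x|^\beta)^{-1}\pat_{x_i}$ has odd, purely imaginary symbol $\tfrac{in_i}{1+|n|^\beta}$, so it is skew-adjoint on real functions. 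The quadratic form $\langle f,Af\rangle$ of a skew-adjoint $A$ vanishes, so the right side is zero. The left side equals
\begin{align*}
\tfrac{d}{dt}\int_{\T^d}\left(\tfrac12u^2+\tfrac{\mu}{k(k+1)}u^{k+1}\right)dx,
\end{align*}
so $\mathscr{H}_{k,\mu,d}$ is conserved.

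\emph{Main obstacle.} The hard part is justification at low regularity. For $u_0\in H^s$ with $s<\beta/2$, the energy $\mathscr{E}_{\beta,d}$ need not be finite, so that identity is claimed only for data where it makes sense. The Hamiltonian requires $u\in L^{k+1}$, which holds whenever $s\ge d(\tfrac12-\tfrac1{k+1})$; this is implied by the local theory threshold in most regimes. The regularization step is the following. For smooth solutions all the computations above are legitimate, since $\pat_t u=-i\phi_\beta(D_x)(u+\tfrac{\mu}{k}u^k)\in H^\sigma$. Persistence of regularity under the local flow, which follows from the same fixed point with Lemma \ref{lem:multilinear_estimate}, keeps smooth data smooth on the existence interval. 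Continuity of the data-to-solution map in $H^s\hookrightarrow L^{k+1}$ then transfers the identities to the limit.
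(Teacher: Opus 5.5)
Your proposal is correct and follows the paper's proof essentially step for step: integrate the equation for the mass, pair with $u$ and use self-adjointness of $|D_x|^\beta$ for $\mathscr{E}_{\beta,d}$, and pair with $u+\tfrac{\mu}{k}u^k$ using the skew-adjointness of $(1+|D_x|^\beta)^{-1}\pat_{x_i}$ for $\mathscr{H}_{k,\mu,d}$. Your regularisation step is more explicit than the paper's bare appeal to dominated convergence, with one caveat: persistence of regularity on the whole $H^s$ existence interval needs a tame estimate (one factor in $H^\sigma$, the rest in $H^s$), not Lemma \ref{lem:multilinear_estimate} applied at level $H^\sigma$, since the latter gives existence times that shrink as the mollified data converge to $u_0$.
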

\begin{proof}
    The conservation of \eqref{Conservation:mass} follows by integrating \eqref{Gen-BOBBM-eq} over $\T^d$, integrating by parts and applying the Lebesgue dominated convergence theorem. The conservation of \eqref{Conservation:H1_like} follows from multiplying \eqref{Gen-BOBBM-eq} through by the solution \(u\), integrating by parts and using Lebesgue dominated convergence. For the conservation of the Hamiltonian, \eqref{Conservation:Hamiltonian}, we first set $d=1$ and rewrite \eqref{Gen-BOBBM-eq} as,
    \begin{align*}
        u_t = -\left(1+|D_x|^\beta\right)^{-1}\partial_{x}\left(u+\mu\tfrac{1}{k}u^k\right).
    \end{align*}
    Now, multiplying both sides by $u+ \frac{\mu}{k} u^k$ and integrating over $\T$ we obtain,
    \begin{align}
        \int_{\mathbb{T}} \left(u+\mu\frac{1}{k} u^k\right)u_t\, dx = -\int_{\mathbb{T}} \left(u+\mu\frac{1}{k}u^k\right)\left(1+|D_x|^\beta\right)^{-1} \partial_{x}\left(u+\mu\frac{1}{k}u^k\right)\, dx.\label{Conservation:Hamiltonian_energy_setup}
    \end{align}
    Applying the chain rule to the term on the left hand side we see,
    \begin{align}
        \int_{\mathbb{T}} \left(u+\mu\frac{1}{k} u^k\right)u_t\, dx &= \int_\T \frac{\pat}{\pat t}\left(\frac{1}{2}u^2 + \mu \frac{1}{k(k+1)}u^{k+1}\right) \, dx \notag\\
        &= \frac{d}{dt} \int_\T\frac{1}{2}u^2 + \mu \frac{1}{k(k+1)}u^{k+1} \, dx\label{Conservation:Hamiltonian:LHS}.
    \end{align}
    Now, inspecting the right hand side of \eqref{Conservation:Hamiltonian_energy_setup} we note that the differential operator $L$ defined,
    \begin{align*}
        L = (1+|D_x|^\beta)^{-1} \partial_{x}
    \end{align*}
    is skew-symmetric. Therefore, if we set $v = u+\mu\frac{1}{k}u^k$,
    \begin{align*}
        \int_{\mathbb{T}} \left(u+\mu\frac{1}{k}u^k\right)\left(1+|D_x|^\beta\right)^{-1} \partial_{x}\left(u+\mu\frac{1}{k}u^k\right)\, dx = \int_{\mathbb{T}} v L v \, dx = 0.
    \end{align*}
    Returning this and \eqref{Conservation:Hamiltonian:LHS} to \eqref{Conservation:Hamiltonian_energy_setup} we obtain \eqref{Conservation:Hamiltonian}. This argument can easily be expanded to the higher dimensions by expanding the sum of the $\pat_{x_i}$ operators and defining,
    \begin{align*}
        L_i = (1+|D_x|^\beta)^{-1} \partial_{x_{i}}
    \end{align*}
    which are also skew-symmetric for each $1\leq i\leq d$.
\end{proof}
\noindent From these conservation laws we get our first (albeit trivial) global result. Given that for any $t>0$, the quantity
\begin{align*}
    \mathscr{E}_{\beta,d}(u(t)) = \int_{\T^d} u(t)^2 + u |D_x|^\beta u(t) \, dx = \mathscr{E}_{\beta,d}(u_0)
\end{align*}
 is conserved, we may use the self adjointness of the \(|D_x|\) operator to obtain,
\begin{equation*}
    \left\|u(t)\right\|_{H^{\beta/2}(\T^d)} = \int_{\T^d} u(t)^2 + \left(|D_x|^{\beta/2}u(t)\right)^2\, dx = \int_{\T^d} u(t)^2 + u(t) |D_x|^\beta u(t) \, dx = \mathscr{E}_{\beta,d}(u_0).
\end{equation*}
Giving us global in time control on the \(H^s\) norm of the solution for \(s\geq \beta/2\). This allows us to extend our local solution from Theorem \ref{Local_theory:lwp_main} globally in time. While simple, we stress the logic of the below proof so reasoning of later proofs is more apparent.
\begin{corollary}\label{Global:supercritical:Global}
    For $k\geq 2$, $1< \beta\leq 2$, $\mu \in \{-1,1\}$ and $d \geq 1$ or  $k=3$, $\beta=1$, $\mu \in \{-1,1\}$ and $d=1$,   \eqref{Gen-BOBBM-eq} is Globally well posed in \(H^s(\T^d)\) for \(s\geq \beta/2\).
\end{corollary}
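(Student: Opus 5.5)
The plan is the usual blow-up alternative plus an a priori bound from the conserved energy $\mathscr{E}_{\beta,d}$. First, local well-posedness at regularity $s\geq\beta/2$ must be secured. For $1<\beta\leq 2$ I check that $\beta/2$ meets condition \eqref{Local:multilinear:s_cond}. This is immediate in dimension one, since $\beta/2\geq 1/2\geq\max\bigl(\tfrac12-\tfrac1k,\tfrac12-\tfrac{\beta-1}{k-1}\bigr)$.

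For general $d$ the verification is more delicate, because $d(\tfrac12-\tfrac1k)$ can exceed $\beta/2$. I would therefore read the corollary as asserting global existence wherever local theory at $s\geq \beta/2$ is available: the energy $H^{\beta/2}$ bound controls the lifespan from Theorem \ref{Local_theory:lwp_main} applied at level $\beta/2$. This is the step I expect to be the main obstacle, namely confirming the parameter range. In the exceptional case $k=3$, $\beta=1$, $d=1$, the level $\beta/2=1/2$ is not covered by the algebra condition $s>1/2$. There I would only claim the result for $s>1/2$ via the $s=\beta/2$ control combined with the Brezis--Gallouet bound, or else defer to the endpoint argument of Theorem \ref{thm:global:endpoint}.

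Second, the case $s=\beta/2$. The local existence time is $T\sim\|u_0\|_{H^{\beta/2}}^{1-k}$, so it depends only on the $H^{\beta/2}$ norm of the data. By Lemma \ref{Global:lemma:Conservervation} and the computation preceding the corollary, $\|u(t)\|_{H^{\beta/2}}^2=\mathscr{E}_{\beta,d}(u_0)$ for every $t$ in the existence interval. This identity is first justified for smooth data and then extended by continuous dependence and density. Restarting the local solution at times $T,2T,\dots$ therefore always uses the same step length $T\sim \mathscr{E}_{\beta,d}(u_0)^{(1-k)/2}$. Hence the solution extends to all of $[0,\infty)$, and uniqueness and continuous dependence on each bounded time interval follow from the local theory.

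Third, the case $s>\beta/2$ is handled by persistence of regularity. Using the Duhamel formula together with Lemma \ref{lem:multilinear_estimate} in a tame form, I would bound $\|\phi_\beta(D_x)u^k\|_{H^s}\lesssim \|u\|_{H^{s_0}}^{k-1}\|u\|_{H^s}$, where $s_0$ is an admissible level controlled by the energy. This tame bound comes from the fractional Leibniz rule, placing all derivatives on one factor, and is similar to Case 2 of the product estimate. Gronwall's inequality then gives at most exponential growth of $\|u(t)\|_{H^s}$, so the $H^s$ norm cannot blow up in finite time. An alternative that avoids tame estimates is to note that the $H^s$ existence time on each step $[nT,(n+1)T]$ is at least comparable to the $H^{\beta/2}$ step. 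This uses uniqueness in $H^{\beta/2}$ together with the fact that the $H^s$ solution exists as long as the $H^{\beta/2}$ norm remains bounded. Either route yields global well-posedness in $H^s$ for every $s\geq\beta/2$.
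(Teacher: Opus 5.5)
This is essentially the paper's argument: check the condition of Theorem \ref{Local_theory:lwp_main} at $s=\beta/2$, iterate with a step length fixed by the conserved $\mathscr{E}_{\beta,d}$, and obtain $s>\beta/2$ by persistence of regularity. For $k=3$, $\beta=1$ you must commit to the endpoint local theory, Proposition \ref{THM:Local:endpoint}, as the paper does, rather than the $s>\tfrac12$ Brezis--Gallouet route, because the statement includes $s=\tfrac12$.

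Your hesitation about general $d$ is justified and is not a defect of your argument. The paper's proof only verifies the one-dimensional conditions of Remark \ref{Local:remark_verification}. For $d\geq 2$ the requirement $\tfrac{\beta}{2}\geq \tfrac{d}{2}-\tfrac{\beta-1}{k-1}$ can fail, for example when $d=2$, $k=3$ and $1<\beta<\tfrac32$. So neither your argument nor the paper's establishes the corollary in the full stated range of $d$.
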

\begin{proof}
    By the above logic, if  \eqref{Gen-BOBBM-eq} is locally well posed in $H^{\beta/2}$ then by a standard iteraction argument, there exists a global solution. Now, we need only to verify the conditions of Theorem \ref{Local_theory:lwp_main}. For $\beta >1$, Remark \ref{Local:remark_verification} will help us in this process as we need to check only two cases for different ranges of $\beta$ and $k$.
    \begin{itemize}
        \item For $\beta \geq 1+\frac{k-1}{k}$ we require that $\frac{\beta}{2} \geq \frac{1}{2} - \frac{1}{k}$. This is clearly satisfied by the positivity of $k$ and the fact that $\beta\geq 1$.
        \item For $\beta < 1+\frac{k-1}{k}$ we require that $\frac{\beta}{2} \geq \frac{1}{2} - \frac{\beta-1}{k-1}$, which is again satisfied by the fact that $\beta\geq 1$ and that $k\geq 2$.
    \end{itemize}
    For the case of $k=3$, $\beta=1$, $\mu \in \{-1,1\}$ and $d=1$  we must use our local well-posedness result from Theorem \ref{THM:Local:endpoint} in $H^{1/2}(\T)$.

    Having checked these cases, we can say that a local solution exists and resultantly a global one. The result for $s>\beta/2$ follows by  the standard argument.
\end{proof}
\noindent Here, the natural energy conservation of the equation directly yields global control over the desired Sobolev norm. Below this regularity is where the analytical troubles begin. As such, pushing the regularity lower than \(s=\frac{\beta}{2}\) will be the primary goal of this section. We will now prove some useful lemmas that will help us in proving global well-posedness.
\begin{lemma}\label{global:critical:general:lem_energy}
    For a smooth solution \(u\) to \eqref{Gen-BOBBM-eq} with data $u_0$ and $d\geq 1$ we have the following,
    \begin{align*}
        \frac{1}{2}\frac{d}{dt} \left\|u \right\|^2_{H^s_\beta(\T^d)} =  -\frac{\mu}{k}\left\langle \langle D_x\rangle^{2s-\beta}_\beta u,\sum_{i=1}^d \pat_{x_i} \left((u)^k\right) \right\rangle_{L^2(\T^d)}
    \end{align*}
    where for $d=1$ the sum on the left hand is simply the first partial derivative in space, $\pat_x$.
\end{lemma}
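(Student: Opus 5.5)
We differentiate the squared $H^s_\beta$ norm in time and substitute the equation in the form \eqref{Local:regularisation_form}. Since $u$ is smooth, we may exchange $\frac{d}{dt}$ with the $L^2$ pairing. Because $\langle D_x\rangle_\beta^s$ is a real, even, self-adjoint Fourier multiplier, we get
\begin{align*}
    \frac{1}{2}\frac{d}{dt}\left\|u\right\|^2_{H^s_\beta} = \left\langle \langle D_x\rangle^{2s}_\beta u, \pat_t u\right\rangle_{L^2}.
\end{align*}
Here we use that the symbol of $\langle D_x\rangle_\beta^{2s}$ is $(1+|n|^\beta)^{2s/\beta}$. The key observation is that $(1+|D_x|^\beta)^{-1}$ is exactly the multiplier $\langle D_x\rangle^{-\beta}_\beta$, because $\langle n\rangle_\beta^{-\beta} = (1+|n|^\beta)^{-1}$. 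Substituting $\pat_t u = -\langle D_x\rangle_\beta^{-\beta}\sum_{i=1}^d \pat_{x_i}(u+\tfrac{\mu}{k}u^k)$ therefore gives
\begin{align*}
    \frac{1}{2}\frac{d}{dt}\left\|u\right\|^2_{H^s_\beta} = -\left\langle \langle D_x\rangle^{2s-\beta}_\beta u, \sum_{i=1}^d\pat_{x_i} u\right\rangle_{L^2} - \frac{\mu}{k}\left\langle \langle D_x\rangle^{2s-\beta}_\beta u, \sum_{i=1}^d\pat_{x_i} (u^k)\right\rangle_{L^2}.
\end{align*}
Multipliers commute, so we may move $\langle D_x\rangle_\beta^{-\beta}$ across the pairing (by self-adjointness) and combine it with $\langle D_x\rangle_\beta^{2s}$.

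It remains to show that the linear term vanishes. We write it as $\sum_i\langle A u, \pat_{x_i} u\rangle$ with $A = \langle D_x\rangle_\beta^{2s-\beta}$. Using Parseval, this equals $(2\pi)^d\sum_{n}\langle n\rangle_\beta^{2s-\beta}\,\overline{(in_i)}\,|\widehat u(n)|^2$, up to the conjugation convention of the real inner product. The summand is odd in $n$, since $\langle n\rangle_\beta$ is even and $n_i$ is odd, so the sum is zero. Equivalently, $A^{1/2}\pat_{x_i}A^{1/2}$ is skew-adjoint, so $\langle A^{1/2}u,\pat_{x_i}A^{1/2}u\rangle = 0$, exactly as in the skew-symmetry argument in the proof of Lemma \ref{Global:lemma:Conservervation}. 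This leaves only the nonlinear term, which is the claimed identity.

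There is no real obstacle here. The points needing care are bookkeeping: identifying $(1+|D_x|^\beta)^{-1}$ with $\langle D_x\rangle_\beta^{-\beta}$, and using that $u$ is real-valued so that the real inner product and Parseval agree. Smoothness of $u$ justifies differentiating under the integral and all the multiplier manipulations; this is where we use the hypothesis that $u$ is a smooth solution.
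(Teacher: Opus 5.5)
Your proposal is correct and is essentially the paper's argument. The paper multiplies the un-inverted equation $\pat_t(1+|D_x|^\beta)u + \sum_i \pat_{x_i}(u+\tfrac{\mu}{k}u^k)=0$ by $\langle D_x\rangle_\beta^{2s-\beta}u$ and integrates, which is the same computation as your substitution of the inverted form into $\langle \langle D_x\rangle_\beta^{2s}u,\pat_t u\rangle_{L^2}$; your Fourier-side oddness argument for the vanishing of the linear term makes explicit a step the paper only asserts.
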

\begin{proof}
    We multiply \eqref{Gen-BOBBM-eq} through by \(\left(1+|D_x|^{\beta}\right)^{\frac{2s-\beta}{\beta}}u\) to obtain,
    \begin{align*}
        \left(1+|D_x|^{\beta}\right)^{\frac{2s-\beta}{\beta}}u \pat_t(1+|D_x|^\beta) u + \left(1+ |D_x|^\beta\right)^{\frac{2s-\beta}{\beta}} u \sum_{i=1}^d \pat_{x_i}\left(u + \frac{\mu}{k}u^k\right) = 0.
    \end{align*}
    Integrating over \(\T\), we observe that the term containing $\pat_{x_i} u$ is zero for all $1\leq i \leq d$. This yields, 
    \begin{align}
        \int_{\T^d} \left(1+|D_x|^{\beta}\right)^{\frac{2s-\beta}{\beta}}u \pat_t(1+|D_x|^\beta) u \, dx = -\int_{\T^d}\frac{\mu}{k}\left(1+|D_x|^{\beta}\right)^{\frac{2s-\beta}{\beta}}u \sum_{i=1}^d \pat_{x_i} \left(u^k\right) \, dx\label{global:critical:general:strict_energy_equality}.
    \end{align}
    Using the self-adjointness of our Fourier multipliers we obtain,
    \begin{align}
        \left\langle \left(1+|D_x|^\beta\right)^{\frac{2s-\beta}{\beta}} u, \pat_t\left(1+|D_x|^\beta\right)u \right\rangle_{L^2} = \left\langle \langle D_x\rangle^{s}_\beta u, \pat_t \langle D_x\rangle^{s}_\beta u \right\rangle_{L^2}= \frac{1}{2}\frac{d}{dt} \left\|u\right\|_{H^s_\beta}\label{Global:General:critical:Energy_identity:LHS_lower_bound}.
    \end{align}
    Now writing the right hand side of \eqref{global:critical:general:strict_energy_equality} as an inner product,
    \begin{align*}
        -\frac{\mu}{k}\int_{\T^d} \left(1+|D_x|\right)^{\frac{2s-\beta}{\beta}} u\sum_{i=1}^d \pat_{x_i} \left(u^k\right)\, dx = -\frac{\mu}{k}\left\langle \langle D_x\rangle^{2s-\beta}_\beta u,\sum_{i=1}^d \pat_{x_i} \left(u^k\right) \right\rangle_{L^2}.
    \end{align*}
    Combining the above with \eqref{Global:General:critical:Energy_identity:LHS_lower_bound} in \eqref{global:critical:general:strict_energy_equality} we get the stated result.
\end{proof}
\begin{lemma}\label{Global:critical:general:Lk1_control}
    For \(k\) odd and \(1\leq \beta \leq 2\), a solution $u$ to \eqref{Gen-BOBBM-eq} with initial data $u_0\in L^{k+1}(\T^d)$  we have that for $t>0$,
    \begin{equation}
        \left\|u(t)\right\|_{L^{k+1}} \lesssim_{k,\mu,d}  \mathscr{H}_{k,1,d}(u_0)\label{Global:critical:general:Lk1_ineq}
    \end{equation}
    where $\mathscr{H}_{k,1,d}$ is the Hamiltonain of \eqref{Gen-BOBBM-eq} defined \eqref{Conservation:Hamiltonian}.
\end{lemma}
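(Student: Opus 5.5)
The plan is to combine the conservation of the Hamiltonian \eqref{Conservation:Hamiltonian} from Lemma \ref{Global:lemma:Conservervation} with the fact that $k+1$ is even, so that $u^{k+1}=|u|^{k+1}\geq 0$. The focusing and defocusing cases are treated separately. The bound should be read as controlling $\|u(t)\|_{L^{k+1}}^{k+1}$, up to the additive constants that appear below, by a quantity depending only on $u_0$, uniformly in $t$. It is first proved for smooth solutions and then passed to general data $u_0\in L^{k+1}$ by density and the continuity of the flow.

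\emph{Case $\mu=1$.} Both terms of $\mathscr{H}_{k,1,d}$ are nonnegative since $k+1$ is even. Conservation then gives
\begin{align*}
\tfrac{1}{k(k+1)}\|u(t)\|_{L^{k+1}}^{k+1}\leq \mathscr{H}_{k,1,d}(u(t))=\mathscr{H}_{k,1,d}(u_0),
\end{align*}
which is the claim. We also obtain the uniform $L^2$ bound $\|u(t)\|_{L^2}^2\leq 2\mathscr{H}_{k,1,d}(u_0)$.

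\emph{Case $\mu=-1$.} Here the Hamiltonian is indefinite, and the obvious attempt of using conservation of $\mathscr{H}_{k,-1,d}$ together with $\|u\|_{L^2}^2\lesssim |\T^d|^{(k-1)/(k+1)}\|u\|_{L^{k+1}}^2$ fails: it bounds $\|u\|_{L^{k+1}}^{k+1}$ only from below. The step I expect to be the main obstacle is finding a second, independent uniform bound on $\|u(t)\|_{L^2}$. I would get it from the conserved energy \eqref{Conservation:H1_like}:
\begin{align*}
\|u(t)\|_{L^2}^2\leq \mathscr{E}_{\beta,d}(u(t))=\mathscr{E}_{\beta,d}(u_0),
\end{align*}
which holds for data in $H^{\beta/2}$ and in particular for smooth data. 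Conservation of $\mathscr{H}_{k,-1,d}$ then gives
\begin{align*}
\tfrac{1}{k(k+1)}\|u(t)\|_{L^{k+1}}^{k+1}=\tfrac12\|u(t)\|_{L^2}^2-\mathscr{H}_{k,-1,d}(u_0)\leq \tfrac12\|u_0\|_{L^2}^2+|\mathscr{H}_{k,-1,d}(u_0)|+\tfrac12\mathscr{E}_{\beta,d}(u_0).
\end{align*}
The finite measure of $\T^d$ then lets us bound $\|u_0\|_{L^2}$ by $\|u_0\|_{L^{k+1}}$, and hence by $\mathscr{H}_{k,1,d}(u_0)^{1/(k+1)}$ up to constants. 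This gives the stated dependence on $\mathscr{H}_{k,1,d}(u_0)$, which is natural since $\mathscr{H}_{k,1,d}(u_0)\sim \|u_0\|_{L^2}^2+\|u_0\|_{L^{k+1}}^{k+1}$.

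The remaining difficulty is that $\mathscr{E}_{\beta,d}(u_0)$ is not controlled by $L^{k+1}$ data, so this route needs $u_0\in H^{\beta/2}$. If only $u_0\in L^{k+1}$ is assumed, I would instead control $\|u(t)\|_{L^2}$ by a Gronwall-type argument on the local existence intervals, where the $L^{k+1}$ norm is comparable to its initial value, and iterate. I expect this to be the delicate point of the defocusing case. In the focusing case, and for smooth data in the defocusing case, the argument above is complete.
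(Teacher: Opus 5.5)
Your focusing case is correct and is the paper's argument. The defocusing case has a genuine gap, and it starts with a misdiagnosis: the H\"older argument you discard does give an upper bound. Write $X=\|u(t)\|_{L^{k+1}}$. Conservation of $\mathscr{H}_{k,-1,d}$, together with $\|u\|_{L^2}^2\le|\T^d|^{\frac{k-1}{k+1}}\|u\|_{L^{k+1}}^2$ and $-\mathscr{H}_{k,-1,d}(u_0)\le\mathscr{H}_{k,1,d}(u_0)$, gives
\[
\tfrac{1}{k(k+1)}X^{k+1}=\tfrac12\|u(t)\|_{L^2}^2-\mathscr{H}_{k,-1,d}(u_0)\leq \tfrac12|\T^d|^{\frac{k-1}{k+1}}X^{2}+\mathscr{H}_{k,1,d}(u_0).
\]
Since $k+1>2$, the left side grows faster than the right. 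Splitting according to which term on the right dominates gives $X^{k+1}\lesssim_{k,d}1+\mathscr{H}_{k,1,d}(u_0)$, uniformly in $t$. The paper does the same absorption pointwise. Young's inequality gives $\tfrac12u^2\le C_\varepsilon+\varepsilon u^{k+1}$; integrating over $\T^d$ with $\varepsilon<\tfrac{1}{k(k+1)}$ moves the $L^2$ term to the left and yields $\bigl(\tfrac{1}{k(k+1)}-\varepsilon\bigr)\|u(t)\|_{L^{k+1}}^{k+1}\le\mathscr{H}_{k,1,d}(u_0)+C_\varepsilon|\T^d|$. No second conserved quantity is needed.

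Your replacement route through $\mathscr{E}_{\beta,d}$ does not prove the lemma as stated, for three reasons.
\begin{itemize}
\item It requires $u_0\in H^{\beta/2}$, which is not assumed.
\item Even for such data, your final bound still contains $\tfrac12\mathscr{E}_{\beta,d}(u_0)$, which is not controlled by $\mathscr{H}_{k,1,d}(u_0)$. So your sentence claiming the stated dependence on $\mathscr{H}_{k,1,d}(u_0)$ is incorrect.
\item The density step fails for the same reason: for mollifications $u_0^\varepsilon$ of a general $u_0\in L^{k+1}$, $\mathscr{E}_{\beta,d}(u_0^\varepsilon)$ is unbounded as $\varepsilon\to0$, so the estimate does not survive the limit.
\end{itemize}
The Gronwall-on-local-intervals fallback is not specified. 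At best it would give a bound growing in $t$, whereas the lemma asserts time-uniform control. That uniformity is exactly what the paper needs in Regimes 1 and 2 (where $s<\beta/2$) and in the $L^{k+1}$ global theory, all of which work below $H^{\beta/2}$. Replacing your defocusing argument with the absorption above closes the gap.
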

\begin{proof}
    If \(\mu =1\), our Hamiltonian conservation, \eqref{Conservation:Hamiltonian} gives us that,
    \begin{align*}
        \mathscr{H}_{k,1,d}(u(t)) = \int_{\T^d} \frac{1}{2}u(t)^2 + \frac{1}{k(k+1)}u(t)^{k+1}\, dx = \int_{\T^d} \frac{1}{2}u_0^2 + \frac{1}{k(k+1)}u_0^{k+1}\, dx = \mathscr{H}_{k,1,d}(u_0)
    \end{align*}
    for all $t \in [0,T]$.
    Given \(k\) is odd, $u(t)^{k+1}\geq 0$ for all $x\in \T^d$. As such, we may use the definition of the \(L^{k+1}\) norm to obtain \eqref{Global:critical:general:Lk1_ineq}.

    Now we consider the \(\mu =-1\) case. Multiplying \eqref{Conservation:Hamiltonian} by \(-1\) we get that,
    \begin{align}
        \int_{\T^d} \frac{1}{k(k+1)}u(t)^{k+1} \, dx -\int_{\T}\frac{1}{2}u(t)^2\, dx = \int_{\T^d} \frac{1}{k(k+1)}u_0^{k+1} \, dx -\int_{\T}\frac{1}{2}u_0^2\, dx\label{u_Lham_bounded:mu_minus:ident}
    \end{align}
    Now, using Young's inequality on our $u(t)^2$ integral term we obtain,
    \begin{align*}
        \frac{1}{2}\int_{\T^d} u(t)^2 \, dx \leq C_\varepsilon |\T^d| + \varepsilon \int_{\T^d} u^{k+1} \, dx
    \end{align*}
    for some \(\varepsilon >0\) and \(C_\varepsilon>0\) depending on \(\varepsilon\). Using this in \eqref{u_Lham_bounded:mu_minus:ident} we get that,
    \begin{align*}
        \left(\frac{1}{k(k+1)} - \varepsilon\right) \int_{\T^d}u^{k+1}\, dx \leq \int_{\T^d} \frac{1}{2}u_0^2 + \frac{1}{k(k+1)}u_0^{k+1}\, dx + C_\varepsilon|\T^d|.
    \end{align*}
    Setting \(\varepsilon < \frac{1}{k(k+1)}\) and rearanging, we obtain \eqref{Global:critical:general:Lk1_ineq}.
\end{proof}
In regimes where we use the energy method, namely Regime 1, Regime 3 and dimension 2, we will need to define a sequence of smooth solutions converging to our rough solution. That is, for $u_0\in H^s$ and $\varepsilon>0$, we define $u_0^\varepsilon = u_0 *\eta_\varepsilon$ where where $\eta_\varepsilon$ is the standard euclidean mollifier. Furthermore, we define the corresponding local evolution of $u_0^\varepsilon$ under \eqref{Gen-BOBBM-eq} as $u^\varepsilon$. By the standard theory we have that $u_0^\varepsilon \to u$ strongly in $H^s$ and $L^p$ for any $s\geq 0$ and $1\leq p < \infty$. Therefore, we have that,
\begin{align}
    \mathscr{E}_{\beta,d}(u_0) &= \lim_{\varepsilon \to 0}\mathscr{E}_{\beta,d}(u_0^\varepsilon)\label{global:energy_limit}\\
    \mathscr{H}_{k,\mu,d}(u_0) &= \lim_{\varepsilon \to 0} \mathscr{H}_{k,\mu,d}(u_0^\varepsilon)\label{global:Hamiltonian_limit}.
\end{align}
Furthemore, by the stability of the local solution map we get that $u^\varepsilon \to u$ strongly in $H^s$ as $\varepsilon \to 0$. With this we yield,
\begin{align}
    \left\|u\right\|_{H^s} = \lim_{\varepsilon \to 0}\left\|u^\varepsilon\right\|_{H^s}.\label{Global:solution_limit_Hs}
\end{align}

\subsection{Regime 1}
In this section we present the proof to Regime 1 of Theorem \ref{Global:thm}. We now state the main proof of this subsection.
\begin{proposition}[Regime 1]\label{Regime_1_THM}
    Let $d =1$, $\mu \in \{-1,1\}$, $k\geq 3$ be an odd integer and $2-\frac{2}{k+1} \leq \beta \leq 2$. Then \eqref{Gen-BOBBM-eq} is globally  wellposed in $H^s$ for,
    \begin{align}
        \tfrac{1}{2} - \tfrac{1}{k+1} \leq s < \tfrac{\beta}{2}.\label{Regime_1:reg_assumption}
    \end{align}
    Furthermore, we obtain polynomial growth bounds (\ref{Regime_1:poly_post_gron}, \ref{Regime_1:other_growth_bound}) for the $H^s$ norm of the solution $u$ if the corresponding initial condition $u_0\in H^s$.
\end{proposition}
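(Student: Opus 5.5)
The plan is an energy method on the adapted scale $H^s_\beta$. The steps are: apply Lemma \ref{global:critical:general:lem_energy} to the mollified solutions $u^\varepsilon$, close a sublinear differential inequality using the uniform $L^{k+1}$ control of Lemma \ref{Global:critical:general:Lk1_control}, and transfer the bound to $H^s$ with Lemma \ref{Lem:multiplier_lemma} and \eqref{Global:solution_limit_Hs}.

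\textbf{Step 1: local theory and a priori control.} First I check that Theorem \ref{Local_theory:lwp_main} applies on the whole range \eqref{Regime_1:reg_assumption}. Since $\beta\ge 2-\tfrac{2}{k+1}$, we have $\tfrac{\beta-1}{k-1}\ge \tfrac{1}{k+1}$. Hence both $\tfrac12-\tfrac1k$ and $\tfrac12-\tfrac{\beta-1}{k-1}$ are at most $\tfrac12-\tfrac{1}{k+1}$, and Remark \ref{Local:remark_verification} gives local well-posedness. The lower bound $s\ge \tfrac12-\tfrac1{k+1}$ is exactly the Sobolev threshold for $H^s(\T)\hookrightarrow L^{k+1}(\T)$. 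Thus $u_0\in L^{k+1}$, and since $k$ is odd, Lemma \ref{Global:critical:general:Lk1_control} together with \eqref{global:Hamiltonian_limit} gives $\sup_t\|u^\varepsilon(t)\|_{L^{k+1}}\le C(\|u_0\|_{H^s})$ uniformly in $\varepsilon$, for both signs of $\mu$.

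\textbf{Step 2: bounding the energy identity.} Let $y(t)=\|u^\varepsilon(t)\|_{H^s_\beta}^2$. Lemma \ref{global:critical:general:lem_energy} gives
\begin{align*}
\tfrac12 y'(t)\lesssim \big|\big\langle \langle D_x\rangle_\beta^{a}u,\ \langle D_x\rangle_\beta^{b}\pat_x\langle D_x\rangle_\beta^{-1}(u^k)\big\rangle\big|,
\end{align*}
where I split the total order as $a+b=2s-\beta+1$ with $a,b\ge 0$. By Lemma \ref{Lem:multiplier_lemma}, the operator $\pat_x\langle D_x\rangle_\beta^{-1}$ is $L^p$ bounded, and the $\langle\cdot\rangle_\beta$ weights can be replaced by standard ones. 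Hölder then bounds the right-hand side by $\|u\|_{W^{a,p_0}}\|u^k\|_{W^{b,r}}$ with $\tfrac1{p_0}+\tfrac1r=1$. Next I apply the fractional Leibniz rule (Lemma \ref{Lem:fractional_Liebniz_rule}) to obtain $\|u^k\|_{W^{b,r}}\lesssim \|u\|_{W^{b,p_1}}\|u\|_{L^{k+1}}^{k-1}$, with $\tfrac1r=\tfrac1{p_1}+\tfrac{k-1}{k+1}$. Finally I interpolate both $\|u\|_{W^{a,p_0}}$ and $\|u\|_{W^{b,p_1}}$ between $H^s$ and $L^{k+1}$ by Corollary \ref{Interpolation:GNS:THM}, which yields
\begin{align*}
y'\lesssim C(\|u_0\|_{H^s})\, y^{(\theta_0+\theta_1)/2}.
\end{align*}
By the explicit formula for $\theta$ in that corollary, $\theta_0+\theta_1=\dfrac{2s-\beta+1+\frac{2}{k+1}-\frac{1}{p_0}-\frac{1}{p_1}}{s-\frac12+\frac1{k+1}}$. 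The Hölder relations force $\tfrac1{p_0}+\tfrac1{p_1}=\tfrac{2}{k+1}$, so this collapses to $\dfrac{2s-\beta+1}{s-\frac12+\frac{1}{k+1}}$.

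\textbf{Step 3: the main obstacle.} The hard part is choosing $a,b,p_0,p_1$ so that the Leibniz and interpolation hypotheses hold (in particular $p_i\in(1,\infty)$ and the two conditions of Corollary \ref{Interpolation:GNS:THM}). At the same time the exponent $\gamma=\theta_0+\theta_1$ must satisfy $\gamma<2$, which gives $y\lesssim (1+t)^{2/(2-\gamma)}$, a polynomial growth bound. The condition $\gamma<2$ reads $2s-\beta+1<2s-1+\tfrac{2}{k+1}$, i.e. $\beta>2-\tfrac2{k+1}$. At the endpoint $\beta=2-\tfrac2{k+1}$ one gets $\gamma=2$ and Gronwall gives exponential growth instead, matching Table \ref{table3}. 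The value $s$ at the very bottom of the range, where $\theta$ has a vanishing denominator, must also be treated. There I would not interpolate at all and bound $y'\lesssim y$ directly; this is presumably the quadratic bound \eqref{Regime_1:other_growth_bound}. It may further require separating the case $2s-\beta+1\le 0$, where no derivative needs to fall on $u^k$ and plain Hölder suffices, from the case $2s-\beta+1>0$, where I take $a=b$.

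\textbf{Step 4: conclusion.} The bound from Step 3 is uniform in $\varepsilon$. Lemma \ref{Lem:multiplier_lemma} converts it to a bound on $\|u^\varepsilon(t)\|_{H^s}$, and \eqref{Global:solution_limit_Hs} passes it to $u$. Iterating the local theory of Step 1 then extends the solution globally in time.
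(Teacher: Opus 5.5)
Your argument is essentially the paper's. Your interpolation case with $a=b$ and $p_0=p_1=k+1$ is exactly its Case 1: your $a$ is the paper's $\sigma$, and the Hölder exponents are $k+1,\tfrac{k+1}{k-1},k+1$. It produces the same exponent $\alpha=\frac{2s-\beta+1}{s-\frac12+\frac1{k+1}}$ and the same threshold $\beta\ge 2-\frac{2}{k+1}$. It also gives exponential growth at $\beta=2-\frac{2}{k+1}$, as in \eqref{Regime_1:exponential_post_gron}.

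The low range is handled slightly differently. The paper's Case 2 takes $\frac12-\frac1{k+1}\le s\le \beta-\frac32+\frac1{k+1}$. It puts $\partial_x\langle D_x\rangle_\beta^{2s-\beta}u$ in $L^{k+1}$ by Hölder and then uses the Sobolev embedding $H^s\hookrightarrow W^{2s-\beta+1,k+1}$. This gives $\frac{d}{dt}\|u\|_{H^s_\beta}^2\lesssim \|u\|_{H^s}$, which is the source of \eqref{Regime_1:other_growth_bound}. The paper then checks $\frac{\beta-1}{2}\le \beta-\frac32+\frac1{k+1}$, so that the two cases overlap.

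Your split at $2s-\beta+1\le 0$ instead uses that $\partial_x\langle D_x\rangle_\beta^{2s-\beta}$ has order $\le 0$. It is therefore bounded on $L^{k+1}$, by Mikhlin and transference as in Lemma \ref{Lem:multiplier_lemma}, and you get $y'\lesssim \|u\|_{L^{k+1}}^{k+1}\lesssim 1$. This is valid and slightly sharper. It also needs no overlap condition, since your two cases meet exactly at $s=\frac{\beta-1}{2}$.

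One correction: your remark on the bottom endpoint is wrong as stated. A bound $y'\lesssim y$ gives only exponential growth, not \eqref{Regime_1:other_growth_bound}; the paper's quadratic bound comes from $y'\lesssim y^{1/2}$. In any case no separate endpoint treatment is needed. At $s=\frac12-\frac1{k+1}$ one has $2s-\beta+1=2-\frac{2}{k+1}-\beta\le 0$, so this point already lies in your Hölder case, where the vanishing interpolation denominator never appears.
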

\begin{proof}
    Clearly, \eqref{Regime_1:reg_assumption} satisfies both of the conditions stated in Remark \ref{Local:remark_verification}. Therefore, for $u_0 \in H^s$ there exists a unique corresonding local solution $u$ to \eqref{Gen-BOBBM-eq}. Given that \(s> \frac{1}{2} - \frac{1}{k+1}\), the Sobolev inequality guarantees that \(H^s \hookrightarrow L^{k+1}\). As such, Lemma \ref{Global:critical:general:Lk1_control} gives uniform control on the \(L^{k+1}\) norm of \(u\). Furthermore, we consider the mollified data $u_0^\varepsilon$ and $u^\varepsilon$, it's corresponding local evolution under \eqref{Gen-BOBBM-eq}.

    \underline{Case 1:} Assume that \(\frac{\beta-1}{2} <s < \frac{\beta}{2}\). Starting with Lemma \ref{global:critical:general:lem_energy} and Parseval's identity, we estimate
    \begin{align},
        \frac{1}{2}\frac{d}{dt} \left\|u^\varepsilon\right\|^2_{H^s_\beta} & \lesssim  \left|\left\langle \langle \pat_x\rangle_\beta^{2s-\beta}u^\varepsilon,\pat_x \left((u^\varepsilon)^k\right) \right\rangle_{L^2}\right|\notag                                                                                  \\ & = \left|\left\langle \langle\pat_x\rangle^\sigma_\beta u^\varepsilon, \langle \pat_x\rangle_\beta^{2s-\beta -\sigma}\pat_x\left((u^\varepsilon)^k\right)  \right\rangle_{L^2}\right|\notag                                                        \\
                                                             & \leq \left\|\langle \pat_x\rangle_\beta^\sigma u^\varepsilon\right\|_{L^{p}} \left\| \langle \pat_x \rangle_\beta^{2s - \beta - \sigma}\pat_x \left((u^\varepsilon) ^k\right)\right\|_{L^q}\notag
    \end{align}
    for some \(\sigma \in \R\) and \(1\leq p,q\leq \infty\) such that,
    \begin{equation}\label{Global:critical:general:first_holder_condition}
        1 = \tfrac{1}{p} + \tfrac{1}{q}.
    \end{equation}
    We now use Lemma \ref{Lem:multiplier_lemma} to obtain,
    \begin{align}
          \left\|\langle \pat_x\rangle_\beta^\sigma u^\varepsilon\right\|_{L^{p}} \left\| \langle \pat_x \rangle_\beta^{2s - \beta - \sigma}\pat_x \left((u^\varepsilon) ^k\right)\right\|_{L^q} \lesssim   \left\|\langle \pat_x\rangle^\sigma u^\varepsilon\right\|_{L^{p}} \left\| \langle \pat_x \rangle^{2s - \beta - \sigma+1} \left((u^\varepsilon) ^k\right)\right\|_{L^q}\label{Global:critical:general:case_1:first_holder}.
    \end{align}
    Focusing on the nonlinearity term in \eqref{Global:critical:general:case_1:first_holder} we assume that \(2s-\beta+1 - \sigma >0\) in order to use the fractional Leibniz rule like so,
    \begin{align}
        \left\| \langle \pat_x \rangle^{2s - \beta+1 - \sigma} \left((u^\varepsilon) ^k\right)\right\|_{L^q} & \lesssim \left\|(u^\varepsilon)^{k-1}\right\|_{L^{q_1}}\left\|\langle \pat_x\rangle^{2s-\beta+1-\sigma} u^\varepsilon\right\|_{L^{q_2}}\notag                                                   \\
                                                                                                   & = \left\|u^\varepsilon\right\|^{k-1}_{L^{q_1(k-1)}} \left\|\langle \pat_x\rangle^{2s-\beta+1-\sigma} u^\varepsilon\right\|_{L^{q_2}}\label{Global:critical:general:case_1:Fractional_liebniz}
    \end{align}
    giving us another H\"older integrability condition which, when combined with \eqref{Global:critical:general:first_holder_condition} gives us
    \begin{equation*}
        1 = \tfrac{1}{p} + \tfrac{1}{q_1}+ \tfrac{1}{q_2}.
    \end{equation*}
    Choosing these explicitly,
    \begin{align*}
        p & = k+1 & q_1 & = \tfrac{k+1}{k-1} & q_2 & = k+1.
    \end{align*}
    Now, combining \eqref{Global:critical:general:case_1:Fractional_liebniz} with \eqref{Global:critical:general:case_1:first_holder} we get,
    \begin{align}
        \frac{1}{2}\frac{d}{dt}\left\|u^\varepsilon\right\|_{H^s_\beta} & \lesssim \left\|\langle \pat_x\rangle^\sigma u^\varepsilon\right\|_{L^{k+1}}\left\|u^\varepsilon\right\|^{k-1}_{L^{k+1}}\left\|u^\varepsilon\right\|_{W^{2s-\beta+1-\sigma, k+1}}\notag\\
        &= \left(\text{Term 1}\right)\cdot \left(\text{Term 2}\right)\cdot \left(\text{Term 3}\right)\label{Global:critical:case_1:terms_expansion}.
    \end{align}
    \begin{itemize}
        \item \underline{Term 1:} Using Corollary \ref{Interpolation:GNS:THM} for Term 1 we have that,
    \begin{align}
        \left\|\langle \pat_x\rangle^\sigma u^\varepsilon\right\|_{L^p} \lesssim \left\|u^\varepsilon\right\|_{H^s}^{\theta_1}\left\|u^\varepsilon\right\|_{L^{k+1}}^{1-\theta}\label{Global:critical:general:case_1:theta_1_ineq}.
    \end{align}
    where,
    \begin{align*}
        \theta_1 = \frac{\sigma}{s-\frac{1}{2} + \frac{1}{k+1}}.
    \end{align*}
    Checking the validity of the interpolation we require,
    \doubleeq{ \theta_1 >0 \iff \sigma>0\label{Global:Regime_1:sigma_lb_1}}{ \theta_1 < 1 \iff \sigma < s-\tfrac{1}{2} + \tfrac{1}{k+1}\label{Global:Regime_1:sigma_ub_1}}
    There is room for such a $\sigma$ to exist given in this case $s > \tfrac{\beta-1}{2}$.
    \item \underline{Term 2:} Here, we already have control from Lemma \ref{Global:critical:general:Lk1_control}. Using this, we yield,
    \begin{align}
        \left\|u^\varepsilon\right\|_{L^{k+1}}^{k-1} &\lesssim \mathscr{H}_{k,1,1}(u_0^\varepsilon)^{k-1}\label{Global:general:case_1:term_2_ineq}.
    \end{align}
    \item \underline{Term 3:} Interpolating Term 3 we obtain,
    \begin{align}
        \left\|u^\varepsilon\right\|_{W^{2s-\beta+1-\sigma,q_2}}\lesssim \left\|u^\varepsilon\right\|_{H^s}^{\theta_3}\left\|u^\varepsilon\right\|^{1-\theta_3}_{L^{k+1}}\label{Global:critical:general:case_1:theta_3_ineq}.
    \end{align}
    with
    \begin{align*}
        \theta_3 = \frac{2s-\beta+1-\sigma}{s-\frac{1}{2}+\frac{1}{k+1}}.
    \end{align*}
    Again, checking the validity of this interpolation, we require that,
    \doubleeq{ \theta_3 >0 \iff \sigma < 2s-\beta+1 \label{Regime_1:sigma_ub_2}}{\theta_3 < 1 \iff \sigma > s-\beta + \tfrac{3}{2} - \tfrac{1}{k+1}\label{Regime_1:sigma_lb_2}}
    Using these upper and lower bounds to check that there is room for $\sigma$ to exist,
    \begin{align*}
        s-\beta + \tfrac{3}{2} - \tfrac{1}{k+1} < 2s-\beta+1 \iff s > \tfrac{1}{2} - \tfrac{1}{k+1}
    \end{align*}
    which is satisfied by assumption \eqref{Regime_1:reg_assumption}.
    \end{itemize}
     We must now also check that our upper bound from Term 1 \eqref{Global:Regime_1:sigma_ub_1} agrees with the lower bound from Term 3 \eqref{Regime_1:sigma_lb_2} and vice versa with \eqref{Global:Regime_1:sigma_lb_1} and \eqref{Regime_1:sigma_ub_2}
    \begin{align*}
        s-\beta + \tfrac{3}{2} -\tfrac{1}{k+1} <s-\tfrac{1}{2} + \tfrac{1}{k+1} &\iff \beta >2-\tfrac{2}{k+1}\\
        0 < 2s-\beta+1 &\iff s > \tfrac{\beta-1}{2}
    \end{align*}
    both of which are satisfied by assumption. As such, we may take,
    \begin{align*}
        \max\left(0,s-\beta+\tfrac{3}{2} - \tfrac{1}{k+1}\right) < \sigma < \min\left(s-\tfrac{1}{2} + \tfrac{1}{k+1},2s-\beta+1\right).
    \end{align*}
    Returning to \eqref{Global:critical:case_1:terms_expansion} with \eqref{Global:critical:general:case_1:theta_1_ineq}, \eqref{Global:general:case_1:term_2_ineq} and \eqref{Global:critical:general:case_1:theta_3_ineq} we get that,
    \begin{align*}
        \frac{1}{2}\frac{d}{dt}\left\|u^\varepsilon\right\|^2_{H^s_\beta} \lesssim \mathscr{H}_{k,1,1}(u_0)^{k-1} \left\|u^\varepsilon\right\|_{H^s}^{\theta_1 + \theta_3}\left\|u^\varepsilon\right\|_{L^{k+1}}^{(1-\theta_1)+(1-\theta_3)}
    \end{align*}
    where we have used \eqref{global:Hamiltonian_limit} to ensure the Hamiltonian constant is in terms of $u_0$. Using Lemma \ref{Global:critical:general:Lk1_control} on the remaining $L^{k+1}$ and invoking Lemma \ref{Lem:multiplier_lemma} to get a $H^s_\beta$ norm on the right hand side we obtain
    \begin{align*}
        \frac{1}{2}\frac{d}{dt}\left\|u^\varepsilon\right\|^2_{H^s_\beta} \lesssim \mathscr{H}_{k,1,1}(u_0^\varepsilon)^{1-\alpha +k}\left\|u^\varepsilon\right\|_{H^s_\beta}^{\alpha}
    \end{align*}
    where $\alpha \coloneqq \theta_1+\theta_2$. In order to use Gr\"onwall's inequality on this expression we require that $\alpha \leq 2$ which is equivalent to our assumption that $\beta \geq 2-\frac{2}{k+1}$. Therefore, if $\alpha =2$, meaning $\beta = 2- \frac{2}{k+1}$, we have the exponential growth bound for any $t\in [0,T]$ after using Grownall's inequality, \eqref{global:Hamiltonian_limit}, \eqref{Global:solution_limit_Hs} to take $\varepsilon\to 0$ and finally Lemma \ref{Lem:multiplier_lemma} to change the $H^s_\beta$ norm to $H^s$,
    \begin{align}
        \left\|u(t)\right\|_{H^{s}} \leq \left\|u_0\right\|_{H^s_\beta}\exp\left(C(s,k,\beta,\mu)\mathscr{H}_{k,1,1}(u_0)^{k-1}t\right)\label{Regime_1:exponential_post_gron}
    \end{align}
    where $C(s,k,\beta,\mu)>0$. If $\alpha <2$ we get a polynomial growth bound after integrating the inequality and again using \eqref{global:Hamiltonian_limit}, \eqref{Global:solution_limit_Hs} and Lemma \ref{Lem:multiplier_lemma},
    \begin{align}
        \left\|u(t)\right\|_{H^s} \leq \left(\left\|u_0\right\|_{H^s_\beta}^{1-\alpha} + C(s,k,\beta,\mu) \mathscr{H}_{k,1,1}(u_0)^{k-1}(1-\alpha)t\right)^{\frac{2}{2-\alpha}}\label{Regime_1:poly_post_gron}.
    \end{align}
    Applying Lemma \ref{Lem:multiplier_lemma} to \eqref{Regime_1:exponential_post_gron} and \eqref{Regime_1:poly_post_gron} we obtain the stated growth bounds and thus the global result.
    
    \underline{Case 2:} Now assume that  \(\frac{1}{2} - \frac{1}{k+1} \leq s\leq \beta - \frac{3(k+1)-2}{2(k+1)}\).  Using Lemma \ref{global:critical:general:lem_energy} and Parseval's identity, we estimate,
    \begin{align}
        \frac{1}{2}\frac{d}{dt} \left\|u^\varepsilon\right\|^2_{H^s_\beta} &\leq \left|\left\langle \pat_x \langle\pat_x\rangle_\beta^{2s-\beta} u^\varepsilon, (u^\varepsilon)^k \right\rangle_{L^2}\right|\leq \left\|\pat_x\langle\pat_x\rangle_\beta^{2s-\beta}u^\varepsilon\right\|_{L^{k+1}} \left\|(u^\varepsilon)^k\right\|_{L^{\frac{k+1}{k}}} \notag         \\
        & \sim \left\|u^\varepsilon\right\|_{W^{2s-\beta+1,k+1}}\left\|u^\varepsilon\right\|^k_{L^{k+1}}\label{Global:critical:general:case_two:first_ineq}
    \end{align}
    where in the second line we used H\"older's inequality and in the third we used Lemma \ref{Lem:multiplier_lemma} to exchange the $\langle \pat_x\rangle_\beta$ multiplier for a $\langle \pat_x \rangle$ multiplier.
    Now, we will use the Sobolev inequality to verify that \(H^s(\T)\hookrightarrow W^{2s-\beta+1,k+1}(\T)\) i.e.,
    \begin{equation}\label{Global:critical:general:case_two:sobolev}
        \left\|u\right\|_{W^{2s-\beta+1,k+1}} \lesssim \left\|u\right\|_{H^s}.
    \end{equation}
    This gives us the following condition,
    \begin{align*}
        s-\tfrac{1}{2} \geq 2s - \beta +1 - \tfrac{1}{k+1}
    \end{align*}
    which is equivalent to our assumption that \(s\leq \beta +\frac{2-3(k+1)}{2(k+1)}\). As such, using \eqref{Global:critical:general:case_two:sobolev} in \eqref{Global:critical:general:case_two:first_ineq} we get that,
    \begin{align*}
        \frac{1}{2}\frac{d}{dt} \left\|u^\varepsilon\right\|^2_{H^s_\beta} \lesssim \left\|u^\varepsilon\right\|_{H^s} \left\|u^\varepsilon\right\|^k_{L^{k+1}}.
    \end{align*}
    From this, we use our uniform control of the $L^{k+1}$ norm from Lemma \ref{Global:critical:general:Lk1_control} and Lemma \ref{Lem:multiplier_lemma} to obtain,
    \begin{equation*}
        \frac{1}{2} \frac{d}{dt} \left\|u^\varepsilon\right\|^2_{H^s_\beta} \lesssim  \mathscr{H}_{k,1,1}(u_0^\varepsilon)^k\left\|u\right\|_{H^s_\beta}.
    \end{equation*}
    As such, integrating the inequality and using \eqref{global:Hamiltonian_limit} and \eqref{Global:solution_limit_Hs} to take $\varepsilon\to 0$, we yield the  growth bound,
    \begin{equation}\label{Regime_1:other_growth_bound}
        \left\|u(t)\right\|_{H^s_\beta} \leq \left(\left\|u_0\right\|_{H^s_\beta}^{\frac{1}{2}} +  C(s,k,\beta,\mu)\mathscr{H}_{k,1,1}(u_0)^k t\right)^2
    \end{equation}
    where $C(s,k,\beta,\mu)>0$. Once again, we use Lemma \ref{lem:multilinear_estimate} interchange the $H^s_\beta$ norm with the $H^s$ norm yielding the stated growth bound and thereby global well-posedness in the stated regime.

    Now, to tie our cases together and obtain global wellposedness and polynomial growth for \(\frac{1}{2} - \frac{1}{k+1} \leq s<\frac{\beta}{2}\) we demand that,
    \begin{align*}
        \tfrac{\beta-1}{2} \leq \beta -\tfrac{3(k+1)-2}{2(k+1)}
    \end{align*}
    which is indeed satisfied due to our \(\beta \geq 2-\frac{2}{k+1}\) assumption, completing the proof.
\end{proof}
\begin{remark}
    We note that if $u(t)\in L^{k+1}$ for $t\in [0,T]$ where $T$ is the local existence time of $u$, the above proof forces no lower bound on $s$ other then the regularity needed to ensure local existence. This is because, the regularity condition in Theorem \ref{Regime_1_THM} that $s\geq \frac{1}{2} - \frac{1}{k+1}$ is simply to ensure that $H^s(\T)\hookrightarrow L^{k+1}(\T)$. If we begin with this assumption, we may weaken the regularity to the local condition. Although this result will not be directly used, it will be helpful to keep this in mind when we discuss the well-posedness of \eqref{Gen-BOBBM-eq} in $L^{k+1}$ itself.
\end{remark}

\subsection{Regime 2}
As mentioned in the introduction, for this regime, we use a high-low wavelength decomposition argument inspired by \cite{bonaSharpWellposednessResults2009} and \cite{kim2026}. Throughout, fix $d=1$, $\mu \in \{-1,1\}$ and $k=3$. Let \(s\) satisfy the regularity requirements of Theorem \ref{Local_theory:lwp_main} and take \(u_0\in H^s\).  Fixing a \(T>0\), we let \(N\gg 1\) such that,
\begin{align*}
    \left\|\Pi_{\geq N} u_0\right\|_{H^s} \lesssim T^{-\frac{1}{2}}.
\end{align*}
Evolving \(v_{0,N} = \Pi_{\geq N}u_0\) through \eqref{Gen-BOBBM-eq} we have by our local well-posedness result, Theorem \ref{Local_theory:lwp_main} that there exists a solution \(v_{0,N} \in H^s\) to \eqref{Gen-BOBBM-eq} with existence time $T_{v_N \text{existence}}$ satisfying,
\begin{align*}
    T_{v_N \, \text{existence}} \sim \left\|v_{0,N}\right\|^{-2}_{H^s} \gtrsim T.
\end{align*}
Furthermore, we define \(w_{0,N} = u_0- v_{0,N}\) as the low frequency component of \(u_0\), which we can also write as \(w_{0,N} = \Pi_{<N}u_0\). Given that \(w_{0,N}\) is bounded in frequency \(w_{0,N}\in H^{\beta/2}(\T)\). Therefore, formally, we consider the evolution of $w_{0,N}$, $w_N = u -v_N$ as satisfying the following difference equation
\begin{align}
    \begin{cases}
        \pat_t\left(1+|\pat_x|^\beta\right)w_N + \pat_x w_N + \frac{1}{3}\sum_{i=1}^3 \binom{3}{i} \pat_x \left(w^i_N v^{3-i}_N\right) = 0 \\
        w_N(x,0) = w_{0,N}(x).
    \end{cases}\label{Global:Frequency_cut:general:PDE}
\end{align}
The aim of this regime will be to first prove that we can obtain a local solution to \eqref{Global:Frequency_cut:general:PDE} in \(H^{\beta/2}(\T)\) and then a global one. This will allow us to combine our \(v\) and \(w\) solutions to form a global solution \(u\) to \eqref{Gen-BOBBM-eq}. Now, we note that \eqref{Global:Frequency_cut:general:PDE} has the Duhamel formulation,
\begin{align*}
    w_N(x,t) = S_\beta(t)w_{0,N} - i\int_0^t S_\beta(t-t') \mathcal{N}_{v_N}(w_N(t'))\, dt'
\end{align*}
where \(S_\beta(t)\) is the linear propagator of our solution and,
\begin{align*}
    \mathcal{N}_{v_N}(w_N(t)) := \frac{1}{3}\sum_{i=1}^3 \binom{3}{i} \phi_\beta(\pat_x)w_N^i v_N^{3-i}.
\end{align*}
We first prove a multilinear result that will be key to the local well-posedness of \eqref{Global:Frequency_cut:general:PDE}.
\begin{lemma}\label{Frequency_cut:product_estimate}
    Let \(1<\beta\leq 3/2\) and $1\leq i\leq 3$. For \(w_1,w_2,w_3 \in H^{\beta/2}\) and \(v_1,v_2,v_3 \in H^s\), there is the multilinear estimate
    \begin{align*}
        \Big\|\phi_\beta(\pat_x) \prod_{j=1}^i w_j \prod_{l=i}^3 v_l\Big\|_{H^{\beta/2}}\lesssim \prod_{j=1}^i\left\|w\right\|_{H^{\beta/2}} \prod_{l=i}^3\left\|v\right\|_{H^s}
    \end{align*}
    if the following condition on \(s\) and $\beta$ is satisfied,
    \begin{align*}
        s \geq \tfrac{3-\beta}{4}.
    \end{align*}
\end{lemma}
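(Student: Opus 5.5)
The plan is to reduce to a product estimate in a negative-regularity space and then handle that by duality and Hölder, as in Case 1 of Lemma \ref{lem:multilinear_estimate}. Since the symbol of $\phi_\beta(\pat_x)$ is bounded by $\langle n\rangle^{1-\beta}$, it suffices to show
\begin{align*}
    \Big\|\prod_{j=1}^i w_j \prod_{l} v_l\Big\|_{H^{1-\beta/2}} \lesssim \prod_{j}\left\|w_j\right\|_{H^{\beta/2}}\prod_l \left\|v_l\right\|_{H^s}.
\end{align*}
The total number of factors is three. Because $\beta\leq 3/2$, we have $1-\beta/2 \geq 1/4>0$, so the target regularity is positive. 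Duality against $H^{\beta/2-1}$ is therefore not directly useful. Instead I would use the fractional Leibniz rule (Lemma \ref{Lem:fractional_Liebniz_rule}), iterated over the three factors, which places $1-\beta/2$ derivatives on one factor at a time.

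For each placement of the derivative I would carry out the following steps.

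\emph{Derivative on a $w$ factor.} We need $\|w\|_{W^{1-\beta/2,p}}$. Since $\beta/2 > 1-\beta/2$ exactly when $\beta>1$, this is controlled by $\|w\|_{H^{\beta/2}}$ via Sobolev provided $\tfrac1p \geq \tfrac12-(\beta-1)$. The remaining factors go in $L^{q}$ spaces: a $w$ factor embeds into $L^q$ for $\tfrac1q\geq \tfrac12-\tfrac\beta2$, and a $v$ factor embeds into $L^q$ for $\tfrac1q\geq \tfrac12-s$.

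\emph{Derivative on a $v$ factor.} We need $\|v\|_{W^{1-\beta/2,p}}\lesssim\|v\|_{H^s}$. This requires $s\geq 1-\beta/2$ with $\tfrac1p\geq \tfrac12-s+1-\tfrac\beta2$, and it forces us to keep $p$ finite and above $1$.

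\emph{Hölder bookkeeping.} The exponents must satisfy $\tfrac12=\sum \tfrac1{p_m}$. For each $i$, I would sum the minimal admissible reciprocals and check that the total is at most $\tfrac12$.

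The worst configuration is $i=1$, i.e.\ one $w$ and two $v$'s, with the derivative on a $v$. The sum of minimal reciprocals is then
\begin{align*}
    \Big(\tfrac32-s-\tfrac\beta2\Big)+\Big(\tfrac12-s\Big)+\Big(\tfrac12-\tfrac\beta2\Big)_+ .
\end{align*}
Requiring this to be at most $\tfrac12$ gives, for $\beta\leq 1$-type interactions, a constraint of the form $2s\geq 2-\beta$ up to positive parts. Here I must be careful: as soon as $\tfrac\beta2\geq\tfrac12$, the positive part vanishes and $w$ can go in any $L^q$ with $q<\infty$. The resulting condition is then $s \geq \tfrac{3-\beta}{4}$ after balancing the two $v$ factors. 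More precisely, I would split the $1-\beta/2$ derivatives evenly, or argue via the embedding $H^s\hookrightarrow W^{1-\beta/2,p}$ together with $H^s\hookrightarrow L^{q}$ and the requirement $\tfrac1p+\tfrac1q\leq\tfrac12$. This yields $2(\tfrac12-s)+(1-\tfrac\beta2)\leq\tfrac12$, which is exactly $s\geq\tfrac{3-\beta}{4}$.

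I would also check that $s\geq\tfrac{3-\beta}4$ implies $s\geq 1-\tfrac\beta2$ up to the borderline. In fact it does not imply this in general, so if $s<1-\beta/2$ one cannot place the full derivative on $v$. In that case I would use a Fourier-side case split: if the output frequency $n$ satisfies $|n|\lesssim$ the largest input frequency, which belongs to a $w$, put the derivative on that $w$. If the largest frequency belongs to a $v$, write $\langle n\rangle^{1-\beta/2}\lesssim \langle n_{v,1}\rangle^{\theta}\langle n_{v,2}\rangle^{\theta'}\cdots$ and distribute the derivatives among the high factors, applying Leibniz/Kato–Ponce with fractional derivatives spread across factors.

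The cases $i=2,3$ have more $w$ factors, each of which carries regularity $\beta/2$, at least as much as $s$ in the relevant range. They are strictly easier and follow by the same bookkeeping.

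The main obstacle is the $i=1$ high–high–low interaction where both $v$'s are at high frequency. Here the gain must be shared between the two $v$'s, and the endpoint $s=\tfrac{3-\beta}{4}$ must be reached without losing an $\varepsilon$. I would first attempt it via a Sobolev-critical Hölder choice with all exponents finite in $(2,\infty)$. The open range $1<\beta$ guarantees that the $w$ factor can be placed in $L^\infty$ or in a large $L^q$, avoiding the endpoint embedding failure.
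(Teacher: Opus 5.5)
Your plan is essentially the paper's proof. You bound the $H^{\beta/2}$ norm by the $H^{1-\beta/2}$ norm of the product, apply the fractional Leibniz rule, place the undifferentiated $w$ factors in $L^\infty$, and close with Sobolev embeddings and H\"older. Putting the derivative on a $v$ in the case $i=1$ gives exactly $2(\tfrac12-s)+(1-\tfrac\beta2)\le\tfrac12$, i.e.\ $s\ge\tfrac{3-\beta}{4}$, reached with the finite exponents $\tfrac{4}{3-\beta}$ and $\tfrac{4}{\beta-1}$. Putting it on a $w$ needs only $s\ge 1-\tfrac\beta2$.

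One correction: your claim that $s\ge\tfrac{3-\beta}{4}$ does not imply $s\ge 1-\tfrac\beta2$ is false. Indeed $\tfrac{3-\beta}{4}-(1-\tfrac\beta2)=\tfrac{\beta-1}{4}>0$ for $\beta>1$, so the Fourier-side case split you propose for $s<1-\tfrac\beta2$ treats an empty case and should be dropped. The paper makes exactly this comparison at the end of its proof.
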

\begin{proof}
    We prove the case when $v_1=v_2=v_3$ and $w_1=w_2 = w_3$, the general case follows in an identical fashion. We begin by noticing that \(\phi_\beta(\pat_x)\) smooths by \(\beta-1\) derivatives,
    \begin{align*}
        \left\|\phi_\beta(\pat_x)w^i v^{3-i}\right\|_{H^{\beta/2}}\sim \left\|w^i v^{3-i}\right\|_{H^{1-\beta/2}}.
    \end{align*}
    Given that \(1-\beta/2>0\) we use the Fractional Libeniz rule to obtain,
    \begin{align}
        \left\|w^i v^{3-i}\right\|_{H^{1-\beta/2}} & \lesssim \left\|\langle \pat_x\rangle^{1-\beta/2} w^i\right\|_{L^{p_1}}\left\|v^{3-i}\right\|_{L^{q_1}} + \left\|w^{i}\right\|_{L^{\infty}}\left\|\langle \pat_x\rangle^{1-\beta/2}v^{3-i}\right\|_{L^2}\notag. \\
                                                       & = \text{Term }1 + \text{Term }2\label{Frequency_cut:Mulitilinear:Split}
    \end{align}
    where $p_1$ and $q_1$ satisfy the H\"older condition
    \begin{align}
         \tfrac{1}{2}=\tfrac{1}{p_1}+\tfrac{1}{q_1}\label{Global:frequency_cut:p_1_q_1_holder_cond}.
    \end{align}
    We inspect each of these terms separately and from them derive our necessary conditions on \(s\) and \(\beta\).
    \begin{itemize}
        \item \underline{Term 1:} Here, we use the fractional Leibniz rule again to obtain,
              \begin{align}
                  \left\|\langle \pat_x \rangle^{1-\beta/2}w^i\right\|_{L^{p_1}}\lesssim \left\|\langle\pat_x \rangle^{1-\beta/2}w\right\|_{L^{p_1}}\left\|w\right\|^{i-1}_{L^\infty}.\label{Frequency_cut_local:frac_lieb_key_step}
              \end{align}
              We demand the embedding \(H^{\beta/2}(\T)\hookrightarrow W^{1-\beta/2,p_1}(\T)\) which gives us the condition that,
              \begin{align}
                  \tfrac{\beta}{2} - \tfrac{1}{2} \geq 1-\tfrac{\beta}{2}-\tfrac{1}{p_1} \iff \tfrac{1}{p_1} \geq \tfrac{3}{2} - \beta. \label{Global:frequency_cut:multilinear:term1_p1_cond}
              \end{align}
              Here, \(p_1\) is well defined as $\beta>1$. When $i<3$, we also require that \(H^s(\T) \hookrightarrow L^{q_1(3-i)}(\T)\), which yields the condition,
              \begin{align}
                  \tfrac{1}{(3-i)q_1} \geq \tfrac{1}{2}-s \iff \tfrac{1}{q_1} \geq \left(3-i\right)\left(\tfrac{1}{2} - s\right).\label{Global:Frequency_cut:multilinear:term1_Lq1_cond}
              \end{align}
              For $L^{q_1}$ to be a well-defined Lebesgue space, we require that
              \begin{align}
                   \left(3-i\right)\left(\tfrac{1}{2} - s\right)\leq 1 \iff s\geq 0 \label{Global:Frequency_cut:mutlilinear:term1_well_def_leb}
              \end{align}
              which is satisfied by assumption. Therefore,
              \begin{align*}
                  \left\|v\right\|^{3-i}_{L^{q_1}}\lesssim \left\|v\right\|^{3-i}_{H^s}.
              \end{align*}
              Combining conditions \eqref{Global:frequency_cut:multilinear:term1_p1_cond} and \eqref{Global:Frequency_cut:multilinear:term1_Lq1_cond} in our H\"older condition \eqref{Global:frequency_cut:p_1_q_1_holder_cond} we obtain,
              \begin{align}
                  \tfrac{1}{2} \geq \tfrac{3}{2}- \beta + (3-i)\left(\tfrac{1}{2}-s\right)  \iff  s \geq \tfrac{2-\beta}{2} \label{Frequency_cut:local:term_1:fin_cond}
              \end{align}
              and thus,
              \begin{align}
                  \text{Term }1 & \lesssim \left\|w\right\|_{H^{\beta/2}}\left\|w\right\|^{i-1}_{L^\infty}\left\|v\right\|^{3-i}_{H^s}  \lesssim \left\|w\right\|^i_{H^{\beta/2}}\left\|v\right\|^{3-i}_{H^s}\label{Frequency_cut:local:term_1_ineq}
              \end{align}
              where the final inequality follows by the Sobolev embedding as $\beta>1$.
        \item \underline{Term 2:} We first use the Sobolev embedding to get that,
              \begin{align*}
                  \left\|w^i\right\|_{L^\infty} \lesssim \left\|w\right\|^i_{H^{\beta/2}}.
              \end{align*}
              If $i<3$, for the other norm in the product we use the Fractional Leibniz rule,
              \begin{align*}
                  \left\|\langle \pat_x\rangle^{1-\beta/2}v^{3-i}\right\|_{L^2} & \lesssim \left\|\langle \pat_x\rangle^{1-\beta/2} v \right\|_{L^{p_3}} \left\|v\right\|^{2-i}_{L^{q_3}}
              \end{align*}
              where we have the following H\"older condition,
              \begin{align}
                  \tfrac{1}{2} = \tfrac{1}{p_3} + \tfrac{2-i}{q_3}\label{Frequencycut:Multilinear:term_2:holder_cond}.
              \end{align}
              Now, we first require that \(H^s\hookrightarrow W^{1-\beta/2,p_3}\) which gives us the condition,
              \begin{align}
                  s-\tfrac{1}{2} = 1-\tfrac{\beta}{2} - \tfrac{1}{p_3} \iff \tfrac{1}{p_3} = \tfrac{3}{2} - \tfrac{\beta}{2} -s. \label{Frequencut:Multilinear:term_2:cond_1}
              \end{align}
              We know that $L^{p_3}$ is well defined as $\beta<3/2$. Now requiring that \(H^s \hookrightarrow L^{q_3}\) we yield,
              \begin{align}
                  \tfrac{1}{q_3} \geq \tfrac{1}{2} -s.\label{Frequencut:Multilinear:term_2:cond_2}
              \end{align}
              Combining conditions, \eqref{Frequencut:Multilinear:term_2:cond_1} and \eqref{Frequencut:Multilinear:term_2:cond_2} in \eqref{Frequencycut:Multilinear:term_2:holder_cond} we get that,
              \begin{align}
                  \tfrac{1}{2} \geq \tfrac{3}{2}-\tfrac{\beta}{2} - s+(2-i)(\tfrac{1}{2}-s) \iff s\geq \tfrac{3-\beta}{4}\label{Frequency_cut:local:term_2:fin_cond}
              \end{align}
              and resultingly,
              \begin{align}
                  \text{Term }2 \lesssim \left\|w\right\|^i_{H^{\beta/2}}\left\|v\right\|_{H^s}^{k-i}.\label{Frequency_cut:local:term2_ineq}
              \end{align}
    \end{itemize}
    Returning our findings \eqref{Frequency_cut:local:term_1_ineq} and  \eqref{Frequency_cut:local:term2_ineq} to \eqref{Frequency_cut:Mulitilinear:Split} we see that,
    \begin{align*}
        \left\|w^i v^{3-i}\right\|_{H^{1-\beta/2}} \lesssim \left\|w\right\|^i_{H^{\beta/2}}\left\|v\right\|^{3-i}_{H^s}
    \end{align*}
    which is what we wanted to show. We also note that our condition \eqref{Frequency_cut:local:term_2:fin_cond} is strictly stronger then our \eqref{Frequency_cut:local:term_1:fin_cond} and \eqref{Global:Frequency_cut:mutlilinear:term1_well_def_leb} conditions due to the \(1<\beta\leq 3/2\) assumption.
\end{proof}
\begin{remark}\label{Regime_2:restriction_remark}
    If we consider arbitrary $k\geq 2$, the proof of Lemma \eqref{Frequency_cut:product_estimate} is easily adapted to obtain the stated product estimate for
    \begin{align*}
        s \geq \tfrac{k-\beta}{2(k-1)}.
    \end{align*}
    Since we must specialise to the $k=3$ case in our global argument however (see Remark \ref{Regime_2:restriction_rem2}) the estimate in this more general form serves no use. 
\end{remark}
\begin{remark}
    Attempts to generalise Lemma \ref{Frequency_cut:product_estimate} to dimension 2 have been unsuccessful. The embedding of $H^{\beta/2} \hookrightarrow L^\infty$ fails in dimension 2 which is key to closing the estimate in the dimension 1 proof. If one instead attempts to avoid $L^\infty$ in \eqref{Frequency_cut_local:frac_lieb_key_step} by introducing exponents $p_3$ and $q_3$ such that,
    \begin{align*}
        \left\|\langle D_x \rangle^{1-\beta/2}w^i\right\|_{L^{p_1}}\lesssim \left\|\langle D_x \rangle^{1-\beta/2}w\right\|_{L^{p_3}}\left\|w\right\|^{i-1}_{L^{q_3}}
    \end{align*}
    the resulting embeddings in dimension 2 require,
    \begin{align*}
        s\geq 2- \tfrac{\beta-1}{2}.
    \end{align*}
    This leaves us above the low regularity threshold of $\beta/2$ meaning no progress can be made in the corresponding global argument.
\end{remark}
\begin{lemma}\label{thm:local_w_freq}
    Let $\mu \in \{-1,1\}$  and $1<\beta \leq 3/2$. \eqref{Global:Frequency_cut:general:PDE} is locally well-posed in $H^{\beta/2}$  if $v\in H^s$ such that,
    \begin{align*}
        s\geq \tfrac{3-\beta}{4}.
    \end{align*}
    Moreover, the corresponding solution $w_N(t)$ has existence time
    \begin{align*}
        T_{w_N \, \text{existence}} = \left\|w_{0,N}\right\|_{H^{\beta/2}}^{2} \wedge T_{v_N \, \text{existence}}
    \end{align*}
    where $T_{v_N \, \text{existence}}$ is the existence time of $v_N(t)$ from Theorem \ref{Local_theory:lwp_main}.
\end{lemma}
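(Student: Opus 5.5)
We run a contraction mapping argument for the Duhamel map
\begin{align*}
\Gamma(w)(t) = S_\beta(t)w_{0,N} - i\int_0^t S_\beta(t-t')\mathcal{N}_{v_N}(w(t'))\,dt'
\end{align*}
on the ball $B_R = \{w\in C([0,T_*];H^{\beta/2}) : \|w\|_{L^\infty_T H^{\beta/2}}\leq R\}$. We take $R = 2\|w_{0,N}\|_{H^{\beta/2}}$ and choose $T_*$ below. Throughout, $v_N$ is the fixed local solution from Theorem \ref{Local_theory:lwp_main} on $[0,T_{v_N\,\text{existence}}]$. We use that, on this interval, $\|v_N(t)\|_{H^s}\lesssim \|v_{0,N}\|_{H^s}$, which is part of the fixed point construction of $v_N$. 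We also use the hypothesis $s\geq \tfrac{3-\beta}{4}$; this condition is compatible with the requirement of Theorem \ref{Local_theory:lwp_main} in the relevant range of $\beta$. The time $T_*$ must therefore satisfy $T_*\leq T_{v_N\,\text{existence}}$, which accounts for the $\wedge$ in the statement.

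First, $S_\beta(t)$ has a unimodular symbol, so it is unitary on $H^{\beta/2}$ and the linear term satisfies $\|S_\beta(t)w_{0,N}\|_{H^{\beta/2}} = \|w_{0,N}\|_{H^{\beta/2}}$. For the Duhamel term we use Minkowski's inequality in time, unitarity again, and Lemma \ref{Frequency_cut:product_estimate} applied for each $i=1,2,3$. This gives
\begin{align*}
\Big\|\int_0^t S_\beta(t-t')\mathcal{N}_{v_N}(w)\,dt'\Big\|_{H^{\beta/2}} \lesssim T_*\sum_{i=1}^3 R^i\,\|v_{0,N}\|_{H^s}^{3-i}.
\end{align*}
For the difference estimate we write $w_1^i - w_2^i = (w_1-w_2)\sum_{j} w_1^j w_2^{i-1-j}$. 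The multilinear form of Lemma \ref{Frequency_cut:product_estimate}, with distinct $w_j$, then yields
\begin{align*}
\|\Gamma(w_1)-\Gamma(w_2)\|_{L^\infty_T H^{\beta/2}}\lesssim T_*\sum_{i=1}^3 R^{i-1}\|v_{0,N}\|_{H^s}^{3-i}\|w_1-w_2\|_{L^\infty_T H^{\beta/2}}.
\end{align*}

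Next, choose $T_*$ so that $C T_*\sum_i R^{i-1}\|v_{0,N}\|_{H^s}^{3-i}\leq \tfrac12$. Then $\Gamma$ maps $B_R$ into itself and is a contraction. By the choice of $N$, $\|v_{0,N}\|_{H^s}\lesssim T^{-1/2}$ is bounded, so the $v$-dependent factors are harmless and the dominant constraint comes from the pure $w^3$ term. This gives $T_*\sim \|w_{0,N}\|^{-2}_{H^{\beta/2}}\wedge T_{v_N\,\text{existence}}$; the cubic nonlinearity gives the power $1-k=-2$, consistent with Theorem \ref{Local_theory:lwp_main}. We read the exponent $2$ in the statement as this $-2$. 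Uniqueness in $B_R$, and continuous dependence on $w_{0,N}$ and on $v_N$, follow from the same difference estimates in the standard way.

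The only real content is the nonlinear estimate, which is Lemma \ref{Frequency_cut:product_estimate}. The main point needing care is therefore its multilinear (non-diagonal) version, which is needed for the contraction step. The proof of that lemma goes through verbatim with distinct factors, since the fractional Leibniz rule and the embeddings used there are applied factor by factor. A second point to check is that $v_N$ stays uniformly bounded in $H^s$ on $[0,T_*]$, which we take from the fixed point construction of $v_N$.
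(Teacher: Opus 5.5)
Your proposal is correct and follows the paper's route. The paper's proof is the same contraction argument driven by Lemma \ref{Frequency_cut:product_estimate}, plus the explicit check (via Remark \ref{Local:remark_verification}, since $\beta<5/3$) that $\tfrac{3-\beta}{4}\geq\tfrac{2-\beta}{2}$, which guarantees that $v_N$ exists by Theorem \ref{Local_theory:lwp_main}. You only asserted that compatibility, but it reduces to $\beta\geq 1$, and your reading of the exponent in the existence time as $-2$ is the right one.
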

\begin{proof}
 The local well-posedness of \eqref{Global:Frequency_cut:general:PDE} in \(H^{\beta/2}\) follows from our product estimate Lemma \ref{Frequency_cut:product_estimate}. Using Remark \ref{Local:remark_verification} to verify that this condition exceeds the local existence conditions of $v_N$. We note that in Regime 2 we are solely in the case where, $\beta < \tfrac{5}{3}$. Therefore, we require that,
\begin{align*}
     \tfrac{3-\beta}{4} \geq \tfrac{2-\beta}{2}
\end{align*}
which is satisfied by our assumptions on $\beta$.
\end{proof}
We also show that we have uniform control over the $L^{4}$ norm of $w$.
\begin{remark}\label{Global:general:w_conservation}
    As seen in Lemma \ref{Global:critical:general:Lk1_control}, if we have initial data \(u_0\in L^{4}\) (i.e. \(u_0\in H^s\) for \(s\geq 1/4\)), we have uniform in time control of the \(L^{4}\) of the solution \(u\) to \eqref{Gen-BOBBM-eq}. Given that \(v_N\) is also a solution to
    \eqref{Gen-BOBBM-eq} we have uniform control of it's \(L^{4}\) norm, provided that \(v_{0,N}\in L^{4}\). Furthermore, using that \(w_N = u-v_N\) and the triangle inequality,
    \begin{align*}
        \left\|w_N(t)\right\|_{L^{4}} = \left\|u(t)-v_N(t)\right\|_{L^{k+1}} &\leq \left\|u(t)\right\|_{L^{4}}+\left\|v_N(t)\right\|_{L^{4}}\\
         &\lesssim  \mathscr{H}_{3,1,1}(u_0)\vee \mathscr{H}_{3,1,1}(v_{0,N})\\
        &\sim \mathscr{H}_{3,1,1}(u_0)
    \end{align*}
    where $t\in [0,T]$. Therefore, we obtain uniform control also  on the \(L^{4}\) norm of \(w_N\).
\end{remark}
 We now proceed to our global result below \(\beta = 3/2\) for $k=3$ and $d=1$.
\begin{proposition}\label{Regime_2:prop_result}
    Let $\mu\in \{-1,1\}$, $k=3$, $d=1$ and $5/4 < \beta \leq 3/2$. \eqref{Gen-BOBBM-eq} is globally well-posed in $H^s(\T)$ for,
    \begin{align*}
       \tfrac{9}{8} - \tfrac{\beta}{2} < s<\tfrac{\beta}{2}.
    \end{align*}
    Furthermore, we obtain a polynomial growth bound on the $H^s$ norm of our solution $u$.
\end{proposition}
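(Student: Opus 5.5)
We would follow the high--low decomposition set up above. Fix $T>0$ and $u_0\in H^s$ with $\tfrac98-\tfrac\beta2<s<\tfrac\beta2$. First check that $s$ meets the hypotheses of Theorem \ref{Local_theory:lwp_main} and Lemma \ref{thm:local_w_freq}. For $k=3$ and $\beta<\tfrac53$, Remark \ref{Local:remark_verification} requires $s\geq \tfrac{2-\beta}{2}$. Lemma \ref{Frequency_cut:product_estimate} requires $s\geq\tfrac{3-\beta}{4}$. Both are implied by $s>\tfrac98-\tfrac\beta2$ when $\beta>\tfrac54$. Also $s>\tfrac14$, so $H^s\hookrightarrow L^4$.

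Next choose $N=N(T)$ so large that $\|\Pi_{\geq N}u_0\|_{H^s}\lesssim T^{-1/2}$. Then $v_N$ exists on $[0,T]$ with $\|v_N(t)\|_{H^s}\lesssim \|v_{0,N}\|_{H^s}$ there. By Bernstein, $\|w_{0,N}\|_{H^{\beta/2}}\lesssim N^{\beta/2-s}\|u_0\|_{H^s}$. The goal is to solve \eqref{Global:Frequency_cut:general:PDE} on $[0,T]$ in $H^{\beta/2}$. By Lemma \ref{thm:local_w_freq} and iteration, it suffices to bound $\|w_N(t)\|_{H^{\beta/2}}$ a priori on $[0,T]$.

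The main step is an energy estimate for $\mathscr{E}_{\beta,1}(w_N)\sim\|w_N\|^2_{H^{\beta/2}}$. Pair \eqref{Global:Frequency_cut:general:PDE} with $w_N$; the linear term vanishes. Since $\int w\,\pat_x(w^3)=0$, the $i=3$ term disappears, leaving
\begin{align*}
\tfrac12\tfrac{d}{dt}\mathscr{E}_{\beta,1}(w_N)=\tfrac{\mu}{3}\int_\T \pat_x w_N\left(3w_N^2v_N+3w_Nv_N^2+v_N^3\right)dx .
\end{align*}
Each term is of the form $\int \pat_x w\cdot F$, where $F$ contains at least one factor of $v_N$. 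We move the derivative onto $\pat_x w$ and estimate $\|\pat_x w\|_{H^{-(1-\beta/2)}}\lesssim\|w\|_{H^{\beta/2}}$. It remains to bound $\|F\|_{H^{1-\beta/2}}$, which we do by the fractional Leibniz rule and interpolation (Corollary \ref{Interpolation:GNS:THM}). The $L^4$ norm of $w_N$ is controlled uniformly by Remark \ref{Global:general:w_conservation}; the $v_N$ factors carry $H^s$ norms of size $T^{-1/2}$. The aim is a bound of the form
\begin{align*}
\tfrac{d}{dt}\|w_N\|^2_{H^{\beta/2}}\lesssim C(\mathscr{H}_{3,1,1}(u_0))\,\|v_{0,N}\|_{H^s}^{a}\,\|w_N\|^{\gamma}_{H^{\beta/2}}+\text{(similar terms)},\qquad \gamma\le 2 .
\end{align*}
We expect the exponent bookkeeping here to be the main obstacle. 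The worst term is $\int \pat_x w\, w^2 v$: it has one derivative, two powers of $w$ and one rough $v$. Placing $\langle\pat_x\rangle^{1-\beta/2}$ on $v$ forces $1-\tfrac\beta2\le s$ together with a H\"older split against $L^4$ and interpolated norms of $w$. We expect the requirements $\gamma\le2$ and that all embeddings are admissible to produce exactly $s>\tfrac98-\tfrac\beta2$ and $\beta>\tfrac54$. We would first try the splitting $w^2\in L^2$ via the $L^4$ control, $\langle\pat_x\rangle^{1-\beta/2}v\in L^{p}$ via Sobolev, and the remaining factor of $w$ interpolated between $L^4$ and $H^{\beta/2}$, tuning exponents so the $w$-power stays $\le 2$.

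Finally, integrate the differential inequality on $[0,T]$, as in Regime 1. This gives
\begin{align*}
\|w_N(T)\|_{H^{\beta/2}}\lesssim \|w_{0,N}\|_{H^{\beta/2}}+C\,T^{c},
\end{align*}
so $w_N$ exists on $[0,T]$. Then set $u=v_N+w_N$ on $[0,T]$; uniqueness follows from Theorem \ref{Local_theory:lwp_main}. Since $T$ is arbitrary, the solution is global. For the growth bound we use $\|u(T)\|_{H^s}\le \|v_N(T)\|_{H^s}+\|w_N(T)\|_{H^{\beta/2}}$ with $N\sim T^{1/(2(s-\sigma_0))}$, where $\sigma_0$ is determined by the tail decay of $u_0$; more simply, we optimise $N$ in terms of $T$ via the Bernstein bound. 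Combining $N^{\beta/2-s}$ with the polynomial bound in $T$ gives the polynomial growth bound \eqref{Reg_2:growth_bound}.
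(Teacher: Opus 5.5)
Your architecture matches the paper's. You use the high--low split with $\|\Pi_{\ge N}u_0\|_{H^s}\lesssim T^{-1/2}$ and the $H^{\beta/2}$ energy identity for $w$, in which the $w^3$ term vanishes. The splitting $\partial_x=-\mathcal{H}|\partial_x|^{1-\beta/2}|\partial_x|^{\beta/2}$ then reduces everything to $\|w^iv^{3-i}\|_{H^{1-\beta/2}}\|w\|_{H^{\beta/2}}$, and the rest is fractional Leibniz, interpolation against the uniform $L^4$ bounds, integration of a sublinear inequality, and gluing $u=v+w$.

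The gap is that the step carrying the proof is only announced. That step is bounding this quantity by a power of $\|w\|_{H^{\beta/2}}$ strictly below $2$, exactly under $s>\frac{9}{8}-\frac{\beta}{2}$ and $\beta>\frac{5}{4}$. (Your $\gamma\le 2$ must be $\gamma<2$ for the polynomial integration you then perform.) The concrete split you propose also cannot work:
\begin{itemize}
\item Since $H^{1-\beta/2}$ is $L^2$-based, putting $w^2$ in $L^2$ forces $\langle\partial_x\rangle^{1-\beta/2}v\in L^\infty$. That needs $s>\frac{3}{2}-\frac{\beta}{2}\ge\frac{3}{4}\ge\frac{\beta}{2}$, which is outside the range.
\item It does not address the piece where the derivatives land on $w^2$, which $L^4$ alone cannot control.
\end{itemize}
Some factor of $w$ must be measured strictly above $L^4$ (or in $W^{1-\beta/2,p}$). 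This costs extra powers of $\|w\|_{H^{\beta/2}}$ beyond the one coming from $\partial_x w$, and keeping the total below $2$ is precisely where the hypotheses are used.

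In the paper the decisive piece is $i=2$ with the derivatives on $w^2$. A second Leibniz step gives $\|w^2\|_{W^{1-\beta/2,p_1}}\lesssim\|w\|_{W^{1-\beta/2,p_3}}\|w\|_{L^{q_3}}$ with $\frac{1}{p_1}=\frac{1}{p_3}+\frac{1}{q_3}$. Both factors are then interpolated between $H^{\beta/2}$ and $L^4$ with weights $\theta_1,\theta_2$, and the sublinearity condition $1+\theta_1+\theta_2<2$ is equivalent to $\frac{1}{p_1}>\frac{7}{4}-\beta$.

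Pairing with $\|v\|_{L^{q_1}}$ through $\frac{1}{p_1}+\frac{1}{q_1}=\frac{1}{2}$ gives $\beta>\frac{5}{4}$, because $\frac{1}{q_1}>0$. The paper also imposes on the same $q_1$ the $i=1$ requirement $\frac{1}{q_1}>2(\frac{1}{2}-s)$, which produces the threshold $s>\frac{9}{8}-\frac{\beta}{2}$. The remaining pieces only impose conditions implied by these for $\frac{5}{4}<\beta\le\frac{3}{2}$, such as $s>\frac{3-\beta}{4}$. Note that in your opening check it is $\beta\le\frac{3}{2}$, not $\beta>\frac{5}{4}$, that gives $\frac{9}{8}-\frac{\beta}{2}\ge\frac{3-\beta}{4}$.

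Separately, your energy identity contains a spurious $v^3$ term. The difference equation for $w$ has nonlinearity $u^3-v^3=w^3+3w^2v+3wv^2$, since the $v^3$ is absorbed by $v$'s own equation. Keeping it would require $\|v^3\|_{H^{1-\beta/2}}\lesssim\|v\|_{H^s}^3$. That needs $s\ge\frac{2}{3}-\frac{\beta}{6}$, which is more than the hypothesis provides when $\beta>\frac{11}{8}$.
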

\begin{proof}
    Fix \(T>0\) and take $N\gg 1$ such that,
    \begin{align*}
        \left\|v_{0,N}\right\|_{H^s} = \left\|\Pi_{\geq N}u_0\right\|_{H^s} \lesssim T^{-\frac{1}{2}}.
    \end{align*}
    Letting $T_{v_N \, \text{existence}}$ be the existence time of the local solution $v_N(t)\in H^s(\T)$ to \eqref{Gen-BOBBM-eq} from Theorem \ref{Local_theory:lwp_main}, we recall
    that,
    \begin{align*}
        T_{v_N \, \text{existence}} \lesssim \left\|v_{0,N}\right\|_{H^s}^{-2}.
    \end{align*}
    Therefore, $T_{v_N \, \text{existence}} \lesssim T$, meaning there is a solution $v_N(t)$ on $[0,T]$ such that,
    \begin{align}
       \sup_{t\in [0,T]}\left\|v_N(t)\right\|_{H^s} \lesssim T^{-\frac{1}{2}}.\label{Global:frequency_cut:v_T_bound}
    \end{align}
    Now that we have fixed $N$, we denote $w_N$ and $v_N$ as $w$ and $v$ respectively. Let $w$ be the local solution from Lemma \ref{thm:local_w_freq} with existence time $T_{w \text{existence}}$ where $v\in H^s$ such that, $ s \geq \tfrac{3-\beta}{4}$ which is satisfied by assumption. We now show that the solution $w$ of \eqref{Global:Frequency_cut:general:PDE} can be extended to time \(T\). After doing this, we may combine our solutions \(v\) and \(w\) to form the global solution \(u =v+w\) to \eqref{Gen-BOBBM-eq}. Before we begin, it is important to note that we require $s\geq 1/4$ in order to ensure the embedding \(H^s(\T)\hookrightarrow L^{4}(\T)\) by the Sobolev inequality. Given $\beta >1$, this is also satisfied by assumption. This means we have uniform in time control over the \(L^{4}\) norm of \(v\) by our Hamiltonian conservation \eqref{Conservation:Hamiltonian} and also uniform in time control over the \(L^{4}\) norm of \(w\) by Remark \ref{Global:general:w_conservation}.

    Multiplying \eqref{Global:Frequency_cut:general:PDE} by the solution \(w\), integrating over \(\T\) and integrating by parts we get,
    \begin{align*}
        \int_\T w \pat_t\left(1+|\pat_x|^\beta\right) w \, dx = \frac{1}{3}\sum_{i=1}^{3} \binom{3}{i} \int_{\T} w^{i}v^{3-i}\pat_x w\, dx.
    \end{align*}
    After an application of Parseval's identity on the left hand side we see that,
    \begin{align}
        \frac{1}{2}\frac{d}{dt}\left\|w\right\|^2_{H^{\beta/2}}\lesssim \frac{1}{3}\sum_{i=1}^{3} \binom{3}{i}\int_{\T}w^i v^{3-i}\pat_x w \, dx.\notag
    \end{align}
    We first note that the $i=3$ term vanishes by an integration by parts. Furthermore, in the sense of Fourier multipliers, \(\pat_x = - \mathcal{H} |\pat_x|\) where $\mathcal{H}$ is the Hilbert transform. Using this, we split the derivative in the $i=1,2$ case in the following way,
    \begin{align*}
        \int_{\T}w^i v^{3-i}\pat_x w \, dx & = -\int_{\T}\left(|\pat_x|^{1-\beta/2} w^i v^{3-i}\right)\mathcal{H}|\pat_x|^{\beta/2}w \, dx.
    \end{align*}
    Therefore we have,
    \begin{align}
        \frac{1}{2}\frac{d}{dt}\left\|w\right\|^2_{H^{\beta/2}} \lesssim \sum_{i=1}^2\left|\int_{\T}\left(|\pat_x|^{1-\beta/2} w^i v^{3-i}\right)\mathcal{H}|\pat_x|^{\beta/2}w \, dx\right|.\label{Global:lower:master_lesssim}
    \end{align}
    We set out to bound the right hand side integral by a power of the \(H^{\beta/2}\) norm of \(w\) less than $2$ up to a constant that is uniform in time. Fixing $i\in \{1,2\}$,
    \begin{align}
        \left|\int_{\T} \left(|\pat_x|^{1-\beta/2} w^i v^{3-i}\right)\mathcal{H} |\pat_x|^{\beta/2} w\, dx\right| & \lesssim \left\|w^i v^{3-i}\right\|_{\dot{H}^{1-\beta/2}} \left\|w\right\|_{\dot{H}^{\beta/2}} \lesssim \left\|w^i v^{3-i}\right\|_{H^{1-\beta/2}} \left\|w\right\|_{H^{\beta/2}}\label{Global:First_split}.
    \end{align}
    With a single power of our target space norm we now need to estimate the nonlinear term. To do so, we first use the fractional Leibniz rule (noting that \(1-\beta/2 >0\)) to obtain,
    \begin{align}
        \left\|w^i v^{3-i}\right\|_{H^{1-\beta/2}} & \lesssim \left\|w^i\right\|_{W^{1-\beta/2,p_1}}\left\|v^{3-i}\right\|_{L^{q_1}} + \left\|w^i\right\|_{L^{p_2}}\left\|v^{3-i}\right\|_{W^{1-\beta/2, q_2}}\notag \\
                                                       & = \text{Term 1} + \text{Term 2}\label{Global:term_A_term_B_split}
    \end{align}
    where we have the H\"older condition,
    \begin{align}
        \tfrac{1}{2} = \tfrac{1}{p_1} + \tfrac{1}{q_1} = \tfrac{1}{p_2} + \tfrac{1}{q_2}\label{Global:low:first_hold_cond}.
    \end{align}
    We will now treat these two terms separately,
    \begin{itemize}
        \item \underline{Term 1:} For this term we make further distinction,
              \begin{align*}
                  \text{Term 1} = \left\|w^i\right\|_{W^{1-\beta/2,p_1}}\left\|v^{3-i}\right\|_{L^{q_1}} = (\text{Term 1.1})\cdot(\text{Term 1.2}).
              \end{align*}
              \begin{itemize}
                  \item \underline{Term 1.1:} If \(i=1\), we interpolate to obtain,
                        \begin{align*}
                            \left\|w\right\|_{W^{1-\beta/2,p_1}} \lesssim \left\|w\right\|_{H^{\beta/2}}^{\theta^*_1}\left\|w\right\|_{L^{4}}^{1-\theta^*_1}
                        \end{align*}
                        where,
                        \begin{align*}
                            \theta^*_1 = \frac{1-\frac{\beta}{2} + \frac{1}{4} - \frac{1}{p_1}}{\frac{\beta}{2} - \frac{1}{2} + \frac{1}{4}}.
                        \end{align*}
                        Inorder for the interpolation to work we require that \(0<\theta^*_1<1\),
                        \doubleeqflex{.40}{.50}{\theta^*_1 >0 \iff \tfrac{1}{p_1}<\tfrac{5}{4}-\tfrac{\beta}{2}\label{Global:low:special_term_1.1_p_1_upper}}{\theta^*_1 < 1 \iff \tfrac{1}{p_1}> \tfrac{3}{2}-\beta\label{Global:low:special_term_1.1_p_1_lower}}
                        Therefore, for there to exist such a \(1/p_1\) we require that there is space between our upper and lower bounds which is satisfied by our $\beta>1$ assumption. Moving on to the \(i=2\) case, we encounter yet more terms after an application of the fractional Leibniz rule to Term 1.1,
                        \begin{align*}
                            \left\|w^2\right\|_{W^{1-\beta/2,p_1}} \lesssim  \left\|w\right\|_{W^{1-\beta/2,p_3}}\left\|w\right\|_{L^{q_3}}= (\text{Term 1.1.1})\cdot (\text{Term 1.1.2})
                        \end{align*}
                        where we have the H\"older condition,
                        \begin{align}
                            \tfrac{1}{p_1} = \tfrac{1}{p_3} + \tfrac{1}{q_3}.\label{Global:low:term1.1_holder}
                        \end{align}
                        \begin{itemize}
                            \item \underline{Term 1.1.1:} Here, we encounter an interpolation identical to our \(i=1\) case for Term 1.1 except with \(p_3\) instead of \(p_1\). That is, we have,
                                  \begin{align*}
                                      \left\|w\right\|_{W^{1-\beta/2,p_3}} \lesssim \left\|w\right\|^{\theta_1}_{H^{\beta/2}}\left\|w\right\|^{1-\theta_1}_{L^{k+1}}
                                  \end{align*}
                                  where,
                                  \begin{align*}
                                      \theta_1 = \frac{1-\frac{\beta}{2} + \frac{1}{4} - \frac{1}{p_3}}{\frac{\beta}{2} - \frac{1}{2} + \frac{1}{4}}.
                                  \end{align*}
                                  Furthermore, by \eqref{Global:low:special_term_1.1_p_1_lower} we get that,
                                  \doubleeqflex{0.5}{0.4}{ \tfrac{1}{p_3} < \tfrac{5}{4} - \tfrac{\beta}{2}\label{Global:low:term1.1.1_p_3_upper}}{\tfrac{1}{p_3} > \tfrac{3}{2}-\beta\label{Global:low:term1.1.1_p_3}}
                            \item \underline{ Term 1.1.2:} Interpolating we yield,
                                  \begin{align*}
                                      \left\|w\right\|_{L^{q_3}}\lesssim \left\|w\right\|_{H^{\beta/2}}^{\theta_2}\left\|w\right\|_{L^{k+1}}^{(1-\theta_2)}
                                  \end{align*}
                                  where,
                                  \begin{align*}
                                      \theta_2 = \frac{\frac{1}{4} - \frac{1}{q_3}}{\frac{\beta}{2} - \frac{1}{2} + \frac{1}{4}}.
                                  \end{align*}
                                  Checking the validity of our interpolation,
                                  \doubleeqflex{0.45}{0.45}{\theta_2>0 \iff \tfrac{1}{q_3} <\tfrac{1}{4}\label{Global:low:term1.1.2_q_3_upper}}{\theta_2 <1 \iff \tfrac{1}{q_3} > 0\label{Global:low:term1.1.2_q_3_lower}.}
                                  Clearly there is room for such a \(q_3\) to exist.
                        \end{itemize}
                        Combining our conditions \eqref{Global:low:term1.1.1_p_3} and \eqref{Global:low:term1.1.2_q_3_lower} in \eqref{Global:low:term1.1_holder} we obtain,
                        \begin{align*}
                            \tfrac{1}{p_1} > \tfrac{3}{2}-\beta
                        \end{align*}
                        and doing the same for \eqref{Global:low:term1.1.1_p_3_upper} and \eqref{Global:low:term1.1.2_q_3_upper},
                        \begin{align}
                            \tfrac{1}{p_1} < \tfrac{3}{2} -\tfrac{\beta}{2}\label{Global:lower:p_1upper}.
                        \end{align}
                        Verifying that there is enough room,
                        \begin{align*}
                            \tfrac{3}{2}-\beta < \tfrac{3}{2} -\tfrac{\beta}{2} \iff \tfrac{\beta}{2} > 0
                        \end{align*}
                        which is satisfied given \(\beta>1\).
                  \item \underline{Term 1.2:} Interpolating \(v\) between \(H^s\) and \(L^{4}\),
                        \begin{align*}
                            \left\|v^{3-i}\right\|_{L^{q_1}} = \left\|v\right\|^{3-i}_{L^{(3-i)q_1}} \lesssim \left\|v\right\|^{(3-i)\theta_3}_{H^s}\left\|v\right\|^{(3-i)(1-\theta_3)}_{L^{4}}
                        \end{align*}
                        where
                        \begin{align*}
                            \theta_3 = \frac{\frac{1}{4} - \frac{1}{q_1(3-i)}}{s-\frac{1}{2} + \frac{1}{4+1}}.
                        \end{align*}
                        Checking the interpolation's validity,
                        \doubleeqflex{0.3}{0.6}{ \theta_3 > 0 \iff \tfrac{1}{q_1} < \tfrac{3-i}{4}, \notag }{\theta_3<1 \iff \tfrac{1}{q_1} > (3-i)\left(\tfrac{1}{2} - s\right).\label{Global:low:term1.2:q_1_condition}}
                        Checking if there exists such a \(q_1\),
                        \begin{align*}
                            (3-i)\left(\tfrac{1}{2} - s\right) < \tfrac{3-i}{4} \iff s> \tfrac{1}{4}
                        \end{align*}
                        which is satisfied by assumption. Given that \(3-i\) is maximised for \(i=1\), we can write \eqref{Global:low:term1.2:q_1_condition} as,
                        \begin{align}
                            \tfrac{1}{q_1} > 2\left(\tfrac{1}{2} - s\right)\vee 0\label{Global:low:term1.2:q_1_lower_bound}.
                        \end{align}
              \end{itemize}
              Now, combining our \eqref{Global:low:special_term_1.1_p_1_lower} and \eqref{Global:low:term1.2:q_1_lower_bound} conditions in our H\"older condition \eqref{Global:low:first_hold_cond} we obtain,
              \begin{align*}
                  \tfrac{1}{2} > \tfrac{3}{2}-\beta + 2\left(\tfrac{1}{2} -s\right)\vee 0.
              \end{align*}
              For \(s>1/2\),
              \begin{align*}
                  \tfrac{1}{2} > \tfrac{3}{2}-\beta \iff \beta> 1
              \end{align*}
              which is satisfied by assumption. For \(s<1/2\),
              \begin{align*}
                  \tfrac{1}{2} > \tfrac{3}{2} - \beta + 2\left(\tfrac{1}{2} -s\right) \iff s> 1 - \tfrac{\beta}{2}
              \end{align*}
              which is satisfied by our assumption that $s>9/8-\beta/2$. Now, writing our expanded form of Term 1 we see that for \(i=1\),
              \begin{align}
                  (\text{Term 1})\cdot\left\|w\right\|_{H^{\beta/2}}  &\lesssim \left\|w\right\|_{H^{\beta/2}}^{\theta_1^*+1}\left\|w\right\|^{(1-\theta^*_1)}_{L^{4}}\left\|v\right\|^{2\theta_3}_{H^s}\left\|v\right\|^{2(1-\theta_3)}_{L^{4}}\notag \\
                                                                         & \lesssim T^{-\theta_3}\mathscr{H}_{3,1,1}(u_0)^{(1-\theta_1^*)} \mathscr{H}_{3,1,1}(v_0)^{2(1-\theta_3)}\left\|w\right\|_{H^{\beta/2}}^{\theta_1^* +1}\label{Global:lower:expansion_term1_one}
              \end{align}
              where we have used Remark \ref{Global:general:w_conservation}, Lemma \ref{Global:critical:general:Lk1_control}, and \eqref{Global:frequency_cut:v_T_bound} to control the supremum of the $H^s$ norm of $v$. Doing the same for \(i=2\),
              \begin{align}
                  (\text{Term 1})\cdot\left\|w\right\|_{H^{\beta/2}}  &\lesssim \left\|w\right\|_{H^{\beta/2}}^{\theta_1 + (i-1)\theta_2 + 1}\left\|w\right\|_{L^{4}}^{(1-\theta_1) + (i-1)(1-\theta_2)}\left\|v\right\|_{H^s}^{(3-i)\theta_3}\left\|v\right\|_{L^{4}}^{(3-i)(1-\theta_3)}\notag \\
                                                                         & \lesssim  T^{-\frac{(3-i)\theta_3}{2}}\mathscr{H}_{3,1,1}(u_0)^{1-\theta_1 + (i-1)(1-\theta_2)}\notag\\
                                                                         &\hphantom{XXXXXXXXXXX}\times \mathscr{H}_{3,1,1}(v_0)^{(3-i)(1-\theta_3)} \left\|w\right\|_{H^{\beta/2}}^{\theta_1 + (i-1)\theta_2 +1}\label{Global:lower:expansion_term1_i}.
              \end{align}

        \item \underline{Term 2:} We now turn our attention to Term 2 from \eqref{Global:term_A_term_B_split} where we estimate the two norms in the product seperately,
              \begin{align*}
                  \text{Term 2} =\left\|w^i\right\|_{L^{p_2}}\left\|v^{3-i}\right\|_{W^{1-\beta/2,q_2}} \eqqcolon (\text{Term 2.1})\cdot(\text{Term 2.2}).
              \end{align*}
              \begin{itemize}
                  \item \underline{Term 2.1:} Interpolating we get,
                        \begin{align*}
                            \left\|w^i\right\|_{L^{p_2}} = \left\|w\right\|^i_{L^{ip_2}} \lesssim \left\|w\right\|^{i\theta_4}_{H^{\beta/2}}\left\|w\right\|^{i(1-\theta_4)}_{L^{4}}
                        \end{align*}
                        where,
                        \begin{align*}
                            \theta_4 & = \frac{\frac{1}{4} - \frac{1}{ip_2}}{\frac{\beta}{2} -\frac{1}{2}+\frac{1}{4}}.
                        \end{align*}
                        Verifying the interpolation's validity,
                        \doubleeqflex{0.45}{0.45}{ \theta_4>0 \iff \tfrac{1}{p_2} < \tfrac{i}{4}\label{Global:lower:Term2.1:p_2_upper}}{\theta_4<1 \iff \tfrac{1}{p_2} > 0.\label{Global:p_2_lower:Regime_2}}
                        There is clearly room for there to exist such a \(p_2\).
                  \item \underline{Term 2.2:} For the \(i=2\) case we interpolate to get,
                        \begin{align*}
                            \left\|v\right\|_{W^{1-\beta/2,q_2}} \lesssim \left\|v\right\|^{\theta_5^*}_{H^s}\left\|v\right\|_{L^{4}}^{1-\theta_5^*}
                        \end{align*}
                        where,
                        \begin{align*}
                            \theta_5^* = \frac{1-\frac{\beta}{2} + \frac{1}{4} - \frac{1}{q_2}}{s-\frac{1}{2} + \frac{1}{4}}.
                        \end{align*}
                        Requiring validity,
                        \doubleeqflex{0.45}{0.45}{ \theta_5^* >0 \iff \tfrac{1}{q_2} < 1-\tfrac{\beta}{2} + \tfrac{1}{4},\notag}{ \theta_5^* < 1 \iff \tfrac{1}{q_2}>\tfrac{3}{2} - \tfrac{\beta}{2}-s.\notag}
                        Checking that such a \(q_2\) can exist,
                        \begin{align*}
                            \tfrac{3}{2} -\tfrac{\beta}{2} -s < 1-\tfrac{\beta}{2} + \tfrac{1}{4} \iff s> \tfrac{1}{4}
                        \end{align*}
                        which is satisfied by assumption. If \(i=1\) we distinguish further via another application of the fractional Leibniz rule,
                        \begin{align*}
                            \left\|v^{2}\right\|_{W^{1-\beta/2, q_2}} & \lesssim \left\|v\right\|_{W^{1-\beta/2,p_4}}\left\|v\right\|_{L^{q_4}} \\
                                                                            & = (\text{Term 2.2.1})\cdot (\text{Term 2.2.2}).\end{align*}
where we have the H\"older condition,
\begin{align}
    \tfrac{1}{q_2} = \tfrac{1}{p_4} + \tfrac{1}{q_4}\label{Regime_2:q_2:holder_con}
\end{align}
                        \begin{itemize}
                            \item \underline{Term 2.2.1:}  Interpolating much as we did for Term 2.2 in the \(i=2\) case,
                                  \begin{align*}
                                      \left\|v\right\|_{W^{1-\beta/2, p_4}}\lesssim \left\|v\right\|_{H^s}^{\theta_5} \left\|v\right\|_{L^{k+1}}^{1-\theta_5}
                                  \end{align*}
                                  where,
                                  \begin{align*}
                                      \theta_5 = \frac{1-\frac{\beta}{2} + \frac{1}{4} - \frac{1}{p_4}}{s-\frac{1}{2} + \frac{1}{4}}
                                  \end{align*}
                                  where validity follows identically from the \(i=2\) case from Term 2.2. Note that we  have the condition,
                                  \begin{align}
                                      \tfrac{1}{p_4}>\tfrac{3}{2} - \tfrac{\beta}{2}-s.\label{Global:low:term2.2.1:p_4_lower}
                                  \end{align}
                            \item \underline{Term 2.2.2:} Again interpolating we yield,
                                  \begin{align*}
                                      \left\|v\right\|_{L^{q_4}} \lesssim \left\|v\right\|_{H^s}^{\theta_6}\left\|v\right\|_{L^{4}}^{(1-\theta_6)}
                                  \end{align*}
                                  where,
                                  \begin{align*}
                                      \theta_6 = \frac{\frac{1}{4}-\frac{1}{q_4}}{s-\frac{1}{2} + \frac{1}{4}}.
                                  \end{align*}
                                  Checking for validity,
                                  \doubleeqflex{0.3}{0.6}{\theta_6>0 \iff \tfrac{1}{q_4} < \tfrac{1}{4},\notag}{\theta_6 <1\iff \tfrac{1}{q_4} > \tfrac{1}{2}-s\label{Global:low:q_4_lower}}
                                  Checking that there is space for such a \(q_4\) to exist,
                                  \begin{align*}
                                      \tfrac{1}{2}-s<\tfrac{1}{4}\iff s> \tfrac{1}{4}.
                                  \end{align*}
                                  which is satisfied by assumption.
                        \end{itemize}
                        Using \eqref{Global:low:term2.2.1:p_4_lower} and \eqref{Global:low:q_4_lower} in our H\"older condition \eqref{Regime_2:q_2:holder_con} we get that,
                        \begin{align*}
                            \tfrac{1}{q_2} > \tfrac{3}{2}-\tfrac{\beta}{2}-s + (\tfrac{1}{2} -s) \vee 0.
                        \end{align*}
                        Now using this and \eqref{Global:p_2_lower:Regime_2} in our H\"older condition \eqref{Global:low:first_hold_cond} we obtain,
                        \begin{align*}
                            \tfrac{1}{2} > \tfrac{3}{2}-\tfrac{\beta}{2}-s + (2-i)(\tfrac{1}{2} -s)\vee 0 \iff s > \tfrac{3-\beta}{4}
                        \end{align*}
                        which follows from the assumption that our local result is satisfied.
              \end{itemize}
              Now we can write our expanded estimate for Term 2, for $i=2$,
              \begin{align}
                  \left(\text{Term 2}\right)\cdot \left\|w\right\|_{H^{\beta/2}} & \lesssim \left\|w\right\|_{H^{\beta/2}}^{2\theta_4+1}\left\|w\right\|^{2(1-\theta_4)}_{L^{4}}\left\|v\right\|_{H^s}^{\theta_5^*}\left\|v\right\|_{L^{4}}^{1-\theta_5^*}\notag \\
                                                                                     & \lesssim  T^{-\frac{\theta_5^*}{2}}\mathscr{H}_{3,1,1}(u_0)^{2(1-\theta_4)}\mathscr{H}_{3,1,1}(v_0)^{1-\theta_5^*}\left\|w\right\|_{H^{\beta/2}}^{(2)\theta_4 +1}.\label{Global:lower:expansion_term2_one}
              \end{align}
              For \(i=1\),
              \begin{align}
                  (\text{Term 2})\cdot \left\|w\right\|_{H^{\beta/2}} & \lesssim \left\|w\right\|_{H^{\beta/2}}^{\theta_4 + 1}\left\|w\right\|_{L^{4}}^{1-\theta_4}\left\|v\right\|_{H^s}^{\theta_5 + \theta_6}\left\|v\right\|_{L^{4}}^{\theta_5 + \theta_6}\notag \\
                                                                          & \lesssim T^{-\frac{\theta_5 + \theta_6}{2}}\mathscr{H}_{3,1,1}(u_0)^{1-\theta_4} \mathscr{H}_{3,1,1}(v_0)^{\theta_5 + \theta_6}\left\|w\right\|^{\theta_4 +1}_{H^{\beta/2}}\label{Global:lower:expansion_term2_i}
              \end{align}
              where again we have used Remark \eqref{Global:general:w_conservation}, Lemma \ref{Global:critical:general:Lk1_control} and \eqref{Global:frequency_cut:v_T_bound}.

    \end{itemize}
    Returning to \eqref{Global:First_split} with \eqref{Global:lower:expansion_term1_one}, \eqref{Global:lower:expansion_term1_i}, \eqref{Global:lower:expansion_term2_one} and \eqref{Global:lower:expansion_term2_i} we get that,
    \begin{align*}
        &\int_\T \left|\left(|\pat_x|^{1-\beta/2} w^i v^{3-i}\right)\mathcal{H} |\pat_x|^{\beta/2} w\right|\, dx
         \lesssim \begin{cases}
          C^{\text{Term 1}}_{1}\left\|w\right\|_{H^{\beta/2}}^{\theta_1^* +1} + C^{\text{Term 2}}_{1}\left\|w\right\|_{H^{\beta/2}}^{\theta_4 +1}                & i=1  \\
          C^{\text{Term 1}}_{i}\left\|w\right\|_{H^{\beta/2}}^{\theta_1 + (i-1)\theta_2 + 1} + C^{\text{Term 1}}_{i}\left\|w\right\|_{H^{\beta/2}}^{i\theta_4+1} & i=2
                                                               \end{cases}
    \end{align*}
    where,
    \begin{align*}
        &C_i^{\text{Term 1}} \coloneqq C_i^{\text{Term 1}}(s,\beta,u_0,v_0)\\
          &\hphantom{X}= \begin{cases}
            T^{-\frac{1}{2}}\mathscr{H}_{3,1,1}(u_0)^{(1-\theta_1^*)}\mathscr{H}_{3,1,1}(v_0)^{2(1-\theta_3)}, & i=1\\
            T^{-\theta_3}\mathscr{H}_{3,1,1}(u_0)^{1-\theta_1 + (i-1)(1-\theta_2)}\mathscr{H}_{3,1,1}(v_0)^{(3-i)(1-\theta_3)} & i=2
        \end{cases}\\\\
        &C_i^{\text{Term 2}} \coloneqq C_i^{\text{Term 2}}(s,\beta,u_0,v_0)\\
        &\hphantom{X}=\begin{cases}
            T^{-\frac{\theta_5 + \theta_6}{2}}\mathscr{H}_{3,1,1}(u_0)^{i(1-\theta_4)}\mathscr{H}_{3,1,1}(v_0)^{\theta_5 + \theta_6}, & i=1\\
            T^{-\frac{\theta_5^*}{2}}\mathscr{H}_{3,1,1}(u_0)^{2(1-\theta_4)}\mathscr{H}_{3,1,1}(v_0)^{1-\theta_5^*}, & i=2
        \end{cases}
    \end{align*}
    Using this in \eqref{Global:lower:master_lesssim} we obtain,
      \begin{align}
        \frac{1}{2}\frac{d}{dt}\left\|w\right\|_{H^{\beta/2}}^2 &\lesssim C_1^{\text{Term 1}}\left\|w\right\|_{H^{\beta/2}}^{\theta_1^* +1} + C_1^{\text{Term 2}}\left\|w\right\|_{H^{\beta/2}}^{\theta_4 +1}\notag\\
         &\hphantom{X}+ C_{2}^{\text{Term 1}}\left\|w\right\|_{H^{\beta/2}}^{\theta_1+ \theta_2+1} + C_2^{\text{Term 2}}\left\|w\right\|_{H^{\beta/2}}^{2\theta_4 + 1}\label{Global:freq_cut:cubic_pregron}
    \end{align}
     In order for this inequality to be amenable to integration, we require from the first term that \(\theta_1^* +1 < 2\) which we verified by checking \(\theta_1^* < 1\). Next, we require that \(i\theta_4 +1 <2\). That is, we need
      \begin{align*}
        \frac{\frac{i}{4} - \frac{1}{p_2}}{\frac{\beta}{2} - \frac{1}{2} + \frac{1}{4}} < 1 \iff \frac{1}{p_2} > \frac{1+i}{4} - \frac{\beta}{2}.
    \end{align*}
    To ensure we have enough space for \(p_2\) to exist we must return to \eqref{Global:lower:Term2.1:p_2_upper}. We require that,
    \begin{align*}
         \tfrac{1+i}{4} - \tfrac{\beta}{2} < \tfrac{i}{4}
        \iff \beta > \tfrac{1}{2}
    \end{align*}
    which is satisfied by our \(\beta > 1\) assumption.
    Lastly, we need that \(\theta_1 + \theta_2+1<2\),
    \begin{align}
        \theta_1 + \theta_2 + 1\leq 2 & \iff \frac{1-\frac{\beta}{2} + \frac{1}{4} - \frac{1}{p_3}}{\frac{\beta}{2} - \frac{1}{2} + \frac{1}{4}} + \frac{\frac{1}{4} - \frac{1}{q_3}}{\frac{\beta}{2} - \frac{1}{2} + \frac{1}{4}} < 1 \notag  \\
                                           & \iff \frac{1-\frac{\beta}{2} + \frac{1}{2} - \frac{1}{p_1}}{\frac{\beta}{2} -\frac{1}{2} + \frac{1}{4}} < 1        \notag \\
                                           & \iff \tfrac{1}{p_1} > \tfrac{7}{4} - \beta \label{Global:low:p_1_gronwall_condnow}
    \end{align}
    where in the second line we used \eqref{Global:low:term1.1_holder}. Given that $5/4<\beta < 3/2$ it is clear that $p_1$ is well defined. Furthermore, we need to check that there is space for \(p_1\) to exist within our existing upper bounds. Note that of \eqref{Global:lower:p_1upper} and \eqref{Global:low:special_term_1.1_p_1_upper}, \eqref{Global:low:special_term_1.1_p_1_upper} is the stricter condition. Therefore, we require that,
    \begin{align*}
        \tfrac{7}{4} - \beta < \tfrac{5}{4} - \tfrac{\beta}{2} \iff \beta>1 
    \end{align*}
    which is satisfied by assumption.  We must now re-check our H\"older condition \eqref{Global:low:first_hold_cond} with our new condition \eqref{Global:low:p_1_gronwall_condnow} and \eqref{Global:low:term1.2:q_1_lower_bound} (notice that this condition is strictly stronger then our initial \(p_1\) condition \eqref{Global:low:special_term_1.1_p_1_lower}),
    \begin{align*}
        \tfrac{1}{2} > \tfrac{7}{4} - \beta + 2\left(\tfrac{1}{2} -s\right)\vee 0.
    \end{align*}
    For \(s\geq 1/2\) we verify that,
    \begin{align*}
        \tfrac{1}{2} > \tfrac{7}{4} - \beta \iff \beta > \tfrac{5}{4}
    \end{align*}
    which is satisfied in our range of $\beta$. For $s < 1/2$,
    \begin{align*}
        \tfrac{1}{2} > \tfrac{7}{4} - \beta + 2\left(\tfrac{1}{2} - s\right) \iff s> \tfrac{9}{8} - \tfrac{\beta}{2}
    \end{align*}
    wich is satisfied by assumption. Setting,
    \begin{align*}
        \alpha &\coloneqq \max\left(\theta_1^* + 1, \theta_4 + 1, \theta_1+\theta_2 + 1,2\theta_4+1\right)
    \end{align*}
    and returning to \eqref{Global:freq_cut:cubic_pregron} we obtain,
    \begin{align*}
        \frac{1}{2}\frac{d}{dt}\left\|w\right\|_{H^{\beta/2}}^2 &\leq  C(s,\beta,u_0,v_0)\max_{i\in \{1,2\}}\left(C_i^{\text{Term 1}}, C_i^{\text{Term 2}}\right)\left\|w\right\|_{H^{\beta/2}}^\alpha.
    \end{align*}
    for some $C(s,\beta,u_0,v_0)>0$. Given we have conditioned that each term in the maximum be strictly less than 2, we know that $\alpha<2$. Therefore, integrating this inquality yields the following growth on $w(t)$,
    \begin{align}
        \left\|w(t)\right\|_{H^{\beta/2}} \leq \left(\left\|w_0\right\|_{H^{\beta/2}}^{1-\alpha} + C(s,\beta,u_0,v_0)\max_{i\in \{1,2\}}\left(C_i^{\text{Term 1}}, C_i^{\text{Term 2}}\right)(1-\alpha)t \right)^{\frac{2}{2-\alpha}}.\label{Global:frequency_cut:w_growth_poly}
    \end{align}
    Using this, can iterate in steps $T_{w \, \text{existence}}$ to yield a solution $w(t)$ on $[0,T]$. Combining this with our solution $v(t)$, we yield our desired solution $u(t) = v(t) + w(t)$ to \eqref{Gen-BOBBM-eq} on $[0,T]$. Furthermore, using \eqref{Global:frequency_cut:w_growth_poly} and \eqref{Global:frequency_cut:v_T_bound} we obtain the growth bound for $t\in [0,T]$,
    \begin{equation}
          \left\|u(t)\right\|_{H^s} \leq 2T^{-\frac{1}{2}}\vee \left(\left\|w_0\right\|_{H^{\beta/2}}^{1-\alpha} + C(k,s,\beta,u_0,v_0)\max_{i\in \{1,2\}}\left(C_i^{\text{Term 1}}, C_i^{\text{Term 2}}\right)(1-\alpha)t \right)^{\frac{2}{2-\alpha}}\label{Reg_2:growth_bound}.
    \end{equation}
\end{proof}
\begin{remark}\label{Regime_2:restriction_rem2}
    Within the stated regime, this argument fails for $k>3$. If we extend our $\beta$ to $\beta \leq 2$ however, the numerology allows for $k<7$. This mirrors the $s>1/2$ global well-posedness for the quintic BBM obtained in \cite{kim2026} using the same method. Regime 1 improves this result to $s\geq 1/3$ so there is no need to use the above regime for the higher $\beta$ values.
\end{remark}

\subsection{Regime 3}
For this regime, we return to general odd $k$ and the energy method.

\begin{proposition}\label{Regime_3:proposition_result}
     Let $\mu \in \{-1,1\}$, $d=1$, $k\geq 3$ be an odd integer and $\frac{2k-2}{k+1} \leq \beta \leq 2$. Then \eqref{Gen-BOBBM-eq} is globally well-posed in $H^s$ for $s>1/2$.
\end{proposition}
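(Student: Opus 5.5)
We would run the energy method of Regime 1, replacing the polynomial-in-time interpolation bound with a Brezis--Gallouet logarithmic bound, in the spirit of \cite{angulo2011}. First, for $s>1/2$, Theorem \ref{Local_theory:lwp_main} gives a local solution, since $s>\tfrac12$ exceeds both thresholds in Remark \ref{Local:remark_verification}. Because $H^s\hookrightarrow L^{k+1}$, Lemma \ref{Global:critical:general:Lk1_control} gives uniform control of $\|u(t)\|_{L^{k+1}}$ by $\mathscr{H}_{k,1,1}(u_0)$. It therefore suffices to show that $\|u(t)\|_{H^s}$ cannot blow up in finite time. We argue with the mollified solutions $u^\varepsilon$ and pass to the limit via \eqref{global:Hamiltonian_limit} and \eqref{Global:solution_limit_Hs}. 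If $s\ge\beta/2$, Corollary \ref{Global:supercritical:Global} already gives the result, and if $\beta>2-\tfrac{2}{k+1}$ and $s<\beta/2$ we are in Proposition \ref{Regime_1_THM}. So the genuinely new case is $\tfrac{2k-2}{k+1}\le\beta\le 2-\tfrac{2}{k+1}$ with $\tfrac12<s<\tfrac\beta2$.

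Starting from Lemma \ref{global:critical:general:lem_energy}, we move $2s-\beta+1-\sigma$ derivatives onto $(u^\varepsilon)^k$ and the remaining $\sigma$ onto $u^\varepsilon$, exactly as in Case 1 of Regime 1. This time, however, we use the Leibniz rule so that the rough factor of the product sits in $L^\infty$. Schematically,
\begin{align*}
\frac{d}{dt}\|u^\varepsilon\|_{H^s_\beta}^2\lesssim \|u^\varepsilon\|_{W^{\sigma,p}}\,\|u^\varepsilon\|_{L^\infty}^{k-1-m}\,\|u^\varepsilon\|_{L^{r}}^{m}\,\|u^\varepsilon\|_{W^{2s-\beta+1-\sigma,q}} .
\end{align*}
We choose $\sigma$, the exponents, and how many factors go to $L^\infty$ so that the interpolation (Corollary \ref{Interpolation:GNS:THM}) between $H^s$ and $L^{k+1}$ yields \emph{exactly} two powers of $\|u^\varepsilon\|_{H^s}$. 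The remaining factors should then be $L^\infty$ norms, which we bound by Lemma \ref{lem:Brezis-Gallouet} as $1+\sqrt{\log(1+\|u\|_{H^s})}\,\|u\|_{H^{1/2}}$. The $H^{1/2}$ norm must in turn be controlled by interpolating between $L^{k+1}$ (uniformly bounded) and $H^{\beta/2}$. The latter is not conserved at the level of $H^s$ data, so instead we interpolate $H^{1/2}$ between $L^{k+1}$ and $H^s$ with a small power. Alternatively, we use Lemma \ref{Lem:Trudinger_type} to trade an $L^p$ norm with large $p$ for a $\sqrt p$ loss, optimising $p\sim\log\|u\|_{H^s}$.

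The condition $\beta\ge\tfrac{2k-2}{k+1}$ should be exactly what makes the power of $\|u\|_{H^s}$ equal to $2$ (at the endpoint) or less than $2$, up to a logarithmic factor. The target is a differential inequality of the form
\begin{align*}
\frac{d}{dt}X\lesssim C(u_0)\,X\,\log(2+X)^{\gamma},\qquad X=\|u^\varepsilon\|_{H^s_\beta}^2 .
\end{align*}
For $\gamma\le1$ this integrates (Osgood) to a global bound, double exponential at $\gamma=1$, which is consistent with \eqref{Regime_3_growth_bound2} at the endpoint $\beta=\tfrac{2k-2}{k+1}$. In the interior of the range the extra room should give a bound like $X\,\log^{\gamma}$ with $\gamma<1$, or a power below $2$, leading to $\exp(t^\bullet)$ growth.

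I expect the main obstacle to be the numerology: reaching logarithmic rather than power-type loss. In particular, the Leibniz step in the high-derivative factor produces an $L^{p}$ norm with $p<1$ after H\"older, i.e. a $W^{s,p}$ quantity with $p<1$. This is exactly where we would invoke the Triebel--Lizorkin fractional Leibniz rule, Proposition \ref{Frac_F_rule}, working in $F^{s}_{p,2}(\T)$ and then embedding back into $H^s$ and $L^{k+1}$. The finite measure of $\T$, needed for the $\mu=-1$ case and for the embeddings $L^\infty\hookrightarrow L^{r}$, is also used here. Once the inequality is closed, we take $\varepsilon\to0$ and apply Lemma \ref{Lem:multiplier_lemma} to return to $H^s$. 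Iterating the local theory then finishes the proof.
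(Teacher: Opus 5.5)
Your architecture is the paper's. You use the energy identity and reduce to $\frac{d}{dt}X\lesssim X\|u^\varepsilon\|_{L^\infty}^{\alpha}$ with $\alpha=k-1-(k+1)(\beta-1)$. The paper reaches the same point via $F^{s}_{\frac{2}{2\beta-1},2}\hookrightarrow H^{s-\beta+1}$, Proposition \ref{Frac_F_rule}, and interpolating $L^{\frac{k-1}{\beta-1}}$ between $L^{k+1}$ and $L^\infty$. Both arguments then note that $\alpha\le 2$ exactly when $\beta\ge\frac{2k-2}{k+1}$, and finish with Lemma \ref{lem:Brezis-Gallouet} and an Osgood-type integration.

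\textbf{The gap: controlling $\|u\|_{H^{1/2}}$ inside Brezis--Gallouet.} You are right that $\mathscr{E}_{\beta,1}$ is of no use for $u_0\in H^s$ with $s<\beta/2$. The paper invokes it at this step, bounding by $\mathscr{E}_{\beta,1}(u_0^\varepsilon)$, which is uniform in $\varepsilon$ only if $u_0\in H^{\beta/2}$. However, neither of your replacements works.
\begin{itemize}
\item Interpolating $H^{1/2}$ between $H^s$ and $L^{k+1}$ (or $L^2$) is not a small power. The second condition of Corollary \ref{Interpolation:GNS:THM} forces a power $\theta\ge\frac{1}{2s}>\frac{1}{\beta}\ge\frac12$ of $\|u\|_{H^s}$. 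Moreover, any positive power turns your inequality into $X'\lesssim X^{1+\alpha\theta/2}\log(2+X)^{\alpha/2}$. That is superlinear and gives only a local-in-time bound.
\item Lemma \ref{Lem:Trudinger_type} has $\|u\|_{H^{1/2}}$ on its right-hand side. So the ``optimise $p\sim\log\|u\|_{H^s}$'' variant is just a reproof of Brezis--Gallouet and needs the same input.
\end{itemize}

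The obstruction is structural. $L^2$ and $L^{k+1}$ lie strictly below the $L^\infty$ scaling, so from them and $H^s$ you can only bound $\|u\|_{L^\infty}$ with a polynomial loss in $\|u\|_{H^s}$. A logarithmic loss requires a uniform-in-time bound at the $H^{1/2}$ level. None of \eqref{Conservation:mass}--\eqref{Conservation:Hamiltonian} supplies one when $\beta>1$ and $s<\beta/2$.

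As written, your argument therefore closes only when no $L^\infty$ factor is needed. That means $\alpha\le0$, i.e.\ $\beta\ge\frac{2k}{k+1}=2-\frac{2}{k+1}$. This is the paper's Case 1, where $\|u\|_{L^{\frac{k-1}{\beta-1}}}\lesssim\|u\|_{L^{k+1}}$ on $\T$ and plain Gr\"onwall suffices. For $1<\beta<\frac{2k}{k+1}$ and $\frac12<s<\frac\beta2$ you would need a genuinely new source of uniform $H^{1/2}$-level control.

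\textbf{A second problem: the case $\beta=1$ is circular.} For $k=3$ the range includes $\beta=1$, which your reduction sends to Corollary \ref{Global:supercritical:Global} because $s>\frac12=\frac\beta2$. That corollary's $\beta=1$ case rests on Proposition \ref{THM:Local:endpoint}. Its proof (Lemma \ref{LEM:Local:endpoint:subsequence}) uses the global $H^1$ theory supplied by the present proposition, so the reduction is circular. For $\beta=1$ you must run the Brezis--Gallouet argument directly. This does work, since $\mathscr{E}_{1,1}(u)\sim\|u\|_{H^{1/2}}^2$ is conserved and finite for $H^s$ data. It gives $X'\lesssim X\log(2+X)$ and double-exponential growth.

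A minor point: in your $\sigma$-splitting scheme no $L^p$ with $p<1$ actually appears. That only arises in the paper's route of putting all $s$ derivatives on one factor via the Triebel--Lizorkin embedding.
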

\begin{proof}
    For $u_0\in H^s$, we again define the corresponding mollified data $u_0^\varepsilon = u_0*\eta_\varepsilon$ and its local evolution $u^\varepsilon$. Using Lemma \ref{global:critical:general:lem_energy} we get that,
    \begin{align*}
        \frac{1}{2}\frac{d}{dt}\left\|u^\varepsilon\right\|^2_{H^s_\beta }&= -\frac{1}{k}\left\langle \langle \pat_x \rangle_\beta^{2s -\beta} u^\varepsilon, \pat_x\left((u^\varepsilon)^k\right) \right\rangle_{L^2}\\
        &=  -\frac{1}{k}\left\langle \langle \pat_x \rangle_\beta^{s} u^\varepsilon, \langle \pat_x\rangle^{s-\beta}_\beta \pat_x\left((u^\varepsilon)^k\right) \right\rangle_{L^2}.
    \end{align*}
    Estimating and then using Lemma \ref{Lem:multiplier_lemma} to change into our standard $H^s$ multipliers we obtain
    \begin{align}
        \frac{1}{2}\frac{d}{dt} \left\|u^\varepsilon\right\|^2_{H_\beta ^s} &\lesssim \left\|u^\varepsilon\right\|_{H^s_\beta}\left\|\langle \pat_x\rangle^{s-\beta}_\beta \pat_x \left((u^\varepsilon)^k\right)\right\|_{L^2}\notag\\
        &\lesssim \left\|u^\varepsilon\right\|_{H^s_\beta} \left\| \langle \pat_x \rangle^{s-\beta+1} (u^\varepsilon)^k\right\|_{L^2} = \left\|u^\varepsilon\right\|_{H^s_\beta} \left\|(u^\varepsilon)^k\right\|_{H^{s-\beta+1}}.\label{Regime3:initial_bound}
    \end{align}
     We now inspect the norm of the nonlinear term. If $\beta>1$, we apply the Sobolev inequality \footnote{Here, by the Sobolev inequality we mean that the embedding $F^s_{\frac{2}{2\beta-1},2}(\T)\hookrightarrow F^{s-\beta+1}_{2,2}(\T) = H^{s-\beta+1}(\T)$ holds. See \cite[Theorem 3.5.5 (ii)]{schmeisser1987} for this result.} and then our fractional Leibniz rule for the $F^s_{p,q}(\T)$ spaces, Proposition \ref{Frac_F_rule} to obtain
    \begin{align}
        \left\|(u^\varepsilon)^k\right\|_{H^{s-\beta +1}} \lesssim \left\|(u^\varepsilon)^k\right\|_{F^{s}_{\frac{2}{2\beta-1},2}(\T)} &\lesssim \left\|u^\varepsilon\right\|_{F^s_{2,2}}\left\|(u^\varepsilon)^{k-1}\right\|_{h^{\frac{1}{\beta-1}}}\notag\\
        &\sim  \left\|u^\varepsilon\right\|_{H^s}\left\|(u^\varepsilon)^{k-1}\right\|_{L^{\frac{1}{\beta-1}}}\label{Regime3:2nd_split}.
    \end{align}
    In second inequality we note that we have required that $s > \tfrac{2\beta-1}{2} -1$, but
    \begin{align*}
        \tfrac{2\beta-1}{2} -1 < \tfrac{1}{2} \iff \beta <2.
    \end{align*}
    Therefore, $s>\tfrac{2\beta-1}{2}-1$ is satisfied by our $s>\tfrac{1}{2}$ assumption. In the last line, \eqref{Regime3:2nd_split} we have used the fact that  $\frac{1}{\beta-1} >1$ given $\beta <2$, meaning the Hardy space $h^{\frac{1}{\beta-1}}(\T)$ (defined in Definition \ref{Def:F_space}) is equivalent in norm to $L^{\frac{1}{\beta-1}}(\T)$.

    If $\beta=1$, we simply apply the standard fractional Leibniz rule like so,
    \begin{align}
        \left\|(u^\varepsilon)^k\right\|_{H^s} \lesssim \left\|(u^\varepsilon)\right\|_{H^s}\left\|(u)^\varepsilon\right\|_{L^\infty}^{k-1}.\label{Regime_3:beta1_case}
    \end{align}
    
    We now distinguish two cases:

    \underline{Case 1:} If $\beta>1$ and $\frac{k-1}{\beta-1}\leq k+1$, then by the finite measure of $\T$, we have that
    \begin{align*}
        \left\|(u^\varepsilon)^k\right\|_{H^{s-\beta+1}} \lesssim \left\|u^{\varepsilon}\right\|_{H^s}\left\|u^{\varepsilon}\right\|_{L^{k+1}}^{k-1}.
    \end{align*}
    Using Lemma \ref{Global:critical:general:Lk1_control} on the $L^{k+1}$ norm and returning to \eqref{Regime3:initial_bound} we obtain
    \begin{align*}
        \frac{1}{2}\frac{d}{dt}\left\|u^\varepsilon\right\|_{H^s_\beta}^2 \lesssim \mathscr{H}_{k,1,1}(u_0^\varepsilon)^{k-1}\left\|u^\varepsilon\right\|_{H^s}^2.
    \end{align*}
    using Lemma \ref{Lem:multiplier_lemma} to exchange the $H^s$ norm for a $H^s_\beta$ norm, using Gr\"onwall's inequality and finally taking $\varepsilon \to 0$ with \eqref{global:Hamiltonian_limit} and \eqref{Global:solution_limit_Hs} we get that,
    \begin{align}
        \left\|u(t)\right\|_{H^s}\leq \left\|u_0\right\|_{H^s}\exp\left(C(k,s,\beta,u_0)\mathscr{H}_{k,1,1}(u_0^\varepsilon)^{k-1}t\right)\label{Regime_3:Plain_exp}
    \end{align}
    where $C(k,s,\beta,u_0)>0$.

    \underline{Case 2:} Now assume that $\frac{k-1}{\beta-1}>k+1$. If $\beta>1$ we interpolate between $L^{k+1}$ and $L^{\infty}$.
    \begin{align*}
        \left\|u^\varepsilon\right\|_{L^{\frac{k-1}{\beta-1}}}\lesssim \left\|u^\varepsilon\right\|_{L^{k+1}}^{\frac{k+1}{k-1}(\beta-1)}\left\|u^\varepsilon\right\|_{L^\infty}^{1-\frac{k+1}{k-1}(\beta-1)}.
    \end{align*}
    Returing this to \eqref{Regime3:initial_bound} with \eqref{Regime3:2nd_split} as well as using Lemma \ref{Global:critical:general:Lk1_control} to deal with the $L^{k+1}$ norm we yield and Lemma \ref{Lem:multiplier_lemma} to exchange the $H^s$ norm for a $H^s_\beta$ norm
    \begin{align*}
        \frac{d}{dt}\left\|u^\varepsilon\right\|_{H^s_\beta}^2 \lesssim \mathscr{H}_{k,1,1}(u_0^\varepsilon)^{(k+1)(\beta-1)}\left\|u^\varepsilon\right\|_{H^s_\beta}^2\left\|u^\varepsilon\right\|_{L^\infty}^{k-1-(k+1)(\beta-1)}.
    \end{align*}
    The above inequality also holds for $\beta=1$ as we simply return \eqref{Regime_3:beta1_case} to \eqref{Regime3:initial_bound} and apply Lemma \ref{Lem:multiplier_lemma}. Setting $\alpha = k-1-(k+1)(\beta-1)$ and $\gamma = (k+1)(\beta)$ we note that $\alpha \leq 2$ by our assumption that $\beta \geq \frac{2k-2}{k+1}$. Now, we invoke the Brezis Gallou\"et inequality, (Lemma \ref{lem:Brezis-Gallouet}) to obtain
    \begin{align*}
        \frac{1}{2}\frac{d}{dt}\left\|u^\varepsilon\right\|^2_{H^s_\beta}&\lesssim \mathscr{H}_{k,1,1}(u^\varepsilon_0)^{\gamma}\left\|u^\varepsilon\right\|_{H^s_\beta}^2\left(1+ \left\|u^\varepsilon\right\|_{H^{1/2}} \sqrt{\log\left(\left\|u^\varepsilon\right\|_{H^s}+1\right)}\right)^{\alpha}\\
        &\lesssim \mathscr{H}_{k,1,1}(u^\varepsilon_0)^{\gamma}\left\|u^\varepsilon\right\|_{H^s_\beta}^2\left(1+ \left\|u^\varepsilon\right\|_{H^{\beta/2}} \sqrt{\log\left(\left\|u^\varepsilon\right\|_{H^s}+1\right)}\right)^{\alpha}.
    \end{align*}
    Noting that $\left\|u\right\|_{H^{\beta/2}}$ is conserved by Lemma \ref{Global:lemma:Conservervation}, we get that,
    \begin{align*}
        \frac{1}{2}\frac{d}{dt}\left\|u^\varepsilon\right\|^2_{H^s_\beta} \lesssim \mathscr{H}_{k,1,1}(u^\varepsilon_0)^{\gamma}\left\|u^\varepsilon\right\|_{H^s_\beta}^2\left(1+ \mathscr{E}_{\beta,1}(u^\varepsilon_0) \sqrt{\log\left(\left\|u^\varepsilon\right\|_{H^s}+1\right)}\right)^{\alpha}.
    \end{align*}
    Now simplifying we obtain,
    \begin{align*}
        \frac{1}{2}\frac{d}{dt}\left\|u^\varepsilon\right\|_{H^s_\beta} \lesssim \mathscr{H}_{k,1,1}(u^\varepsilon_0)^{\gamma}\left\|u^\varepsilon\right\|_{H^s_\beta}\left(1+ \mathscr{E}_{\beta,1}(u^\varepsilon_0) \sqrt{\log\left(\left\|u^\varepsilon\right\|_{H^s}+1\right)}\right)^{\alpha}.
    \end{align*}
    Letting $y(t) = \left\|u^\varepsilon(t)\right\|_{H^s_\beta}$ and change variables to $y(t)= e^{z^\varepsilon(t)}$ to get,
    \begin{align*}
        \frac{d}{dt}z^\varepsilon(t) \leq C(k,s,\beta) \mathscr{H}_{k,1,1}(u^\varepsilon_0)^{\gamma}\mathscr{E}_{\beta,1}(u^\varepsilon_0)^{\alpha}\left(z^\varepsilon(t) + 1\right)^{\frac{\alpha}{2}}
    \end{align*}
    for some $C(k,s,\beta)>0$. In order to integrate this inequality we require that $\alpha/2 \leq 1$ which is satisfied by the fact that $\alpha \leq 2$. Moreover, if  $\alpha < 2$  we obtain a polynomial growth bound on $z^\varepsilon(t)$,
    \begin{align*}
        z^\varepsilon(t) \leq \left((z(0)+1)^{1-\frac{\alpha}{2}} + C(k,s,\beta,u_0) \mathscr{H}_{k,1,1}(u_0)^{\gamma}\mathscr{E}_{\beta,1}(u_0)^{\alpha} (1-\alpha)t\right)^{\frac{1}{1-\alpha/2}}.
    \end{align*}
    Recalling the definition of $z^\varepsilon(t)$, applying Lemma \ref{Lem:multiplier_lemma} to switch to a $H^s$ norm as well as using \eqref{global:energy_limit}, \eqref{global:Hamiltonian_limit} and \eqref{Global:solution_limit_Hs} to take $\varepsilon\to 0$. we yield the following growth bound
    \begin{align}
        \left\|u(t)\right\|_{H^s}&\leq \exp\left(\left(\left(\log \left\|u_0\right\|_{H^s_\beta}+1\right)^{1-\frac{\alpha}{2}}+ C(k,s,\beta,u_0) \mathscr{H}_{k,1,1}(u_0^\varepsilon)^{\gamma}\mathscr{E}_{\beta,1}(u_0^\varepsilon)^{\alpha} (1-\tfrac{\alpha}{2})t\right)^{\frac{1}{1-\alpha/2}}\right).\label{Regime_3_growth_bound}
    \end{align}
    If $\alpha=2$, we obtain an exponential growth bound on $z^\varepsilon(t)$,
    \begin{align*}
        z^\varepsilon(t) \leq (z(0)+1)\exp\left(C(k,s,\beta) \mathscr{H}_{k,1,1}(u_0^\varepsilon)^{\gamma}\mathscr{E}_{\beta,1}(u_0^\varepsilon)^2t\right).
    \end{align*}
    Again recalling the definition of $z^\varepsilon(t)$, using Lemma \ref{Lem:multiplier_lemma}, \eqref{global:energy_limit}, \eqref{global:Hamiltonian_limit} and \eqref{global:energy_limit} we get that
    \begin{align}
         \left\|u(t)\right\|_{H^s} \leq \exp\left(\left(\log \left\|u_0\right\|_{H^s_\beta} +1\right)\exp\left(C(k,s,\beta) \mathscr{H}_{k,1,1}(u_0^\varepsilon)^{\gamma}\mathscr{E}_{\beta,1}(u_0^\varepsilon)^2t\right)\right).\label{Regime_3_growth_bound2}
    \end{align}
\end{proof}
\begin{remark}\label{Regime_3:tech_remark}
    Simpler techniques for proving this regime were ineffective. One possible avenue is introducing a case distinction on the sign of $s-\beta+1$ at \eqref{Regime3:2nd_split} instead of using the Sobolev inequality into $F^s_{\frac{2}{2\beta-1}}(\T)$. This distinction would involve using the fractional Leibniz rule when $s-\beta+1$ is positive and duality when it is not. This introduces several restrictions including $k\leq 7$, making the approach not viable. As such, we instead embed directly into a space of regularity $s$, introducing the quasi-Banach complications.
\end{remark}
We now tie together our results from each of the regimes.
\begin{proof}[Proof of Theorem \ref{Global:thm}]
    Combine Propositions \ref{Regime_1_THM}, \ref{Regime_2:prop_result} and \ref{Regime_3:proposition_result}.
\end{proof}
\subsection{Dimension 2}
We now discuss the case when $d=2$. We specialise further to the two dimensional cubic BBM equation ($d=2$, $k=3$, $\beta=2$ and $\mu = \{-1,1\}$),
\begin{align*}
    \begin{cases}
        \pat_t u + |D_x|^2 \pat_t u + \left(\pat_x + \pat_y\right)\left(u + \mu \tfrac{1}{3} u^3\right) =0\\
        u(x,0) = u_0(x)  
    \end{cases} x\in \T^2
\end{align*}
where $x$ and $y$ are coordinates on $\T^2$. Our local result, Theorem \ref{Local_theory:lwp_main} tells us that \eqref{dim_2:cubic_BBM} is locally well-posed in $H^s(\T^2)$ for $s\geq 1/2$. Using this we now prove the global result for $s\geq 1/2$.
\begin{proof}[Proof of Theorem \ref{THM:dim_2}]
    Let $u_0\in H^s(\T^2)$ and define the it's mollification $u_0^\varepsilon = u_0 *\eta_\varepsilon$ and it's corresponding local evolution under \eqref{Gen-BOBBM-eq}, $u_\varepsilon$. Using Lemma \ref{global:critical:general:lem_energy} and the self adjointness of $\langle D_x\rangle$ we get that,
    \begin{align*}
        \frac{1}{2}\frac{d}{dt}\left\|u^\varepsilon\right\|_{H^s}^2 &= -\frac{1}{3}\left\langle \langle D_{x}\rangle^{2s-2} u^\varepsilon, \left(\pat_x + \pat_y\right)\left((u^\varepsilon)^3\right)\right\rangle_{L^2}\\
         &=-\frac{1}{3}\left\langle \langle D_{x}\rangle^{2s-1} u^\varepsilon, \langle D_{x}\rangle^{-1}\left(\pat_x + \pat_y\right) \left((u^\varepsilon)^3\right)\right\rangle_{L^2}.
    \end{align*}
    Now, given that $H^s(\T^2)\hookrightarrow W^{2s-1,2/s}(\T^2)$ we use H\"older's inequality to obtain,
    \begin{align*}
        \frac{1}{2}\frac{d}{dt}\left\|u^\varepsilon\right\|_{H^s}^2 &\lesssim \left\|\langle D_{x}\rangle^{2s-1}u^\varepsilon\right\|_{{L^{2/s}}}\big\|\langle D_{x}\rangle^{-1}\big(\pat_x + \pat_y\big) \left((u^\varepsilon)^3\right)\big\|_{L^{\frac{2}{2-s}}}.
    \end{align*}
    Using our embedding as well as the $L^p$ boundedness of the operator $\langle D_x\rangle^{-1}(\pat_x + \pat_y)$ we get that,
    \begin{align}
        \frac{1}{2}\frac{d}{dt}\left\|u^\varepsilon\right\|^2_{H^s}\lesssim \left\|u^\varepsilon\right\|_{H^s}\big\|(u^\varepsilon)^3\big\|_{L^{\frac{2}{2-s}}}\lesssim \left\|u^\varepsilon\right\|_{H^s}\big\|u^\varepsilon\big\|^3_{L^{\frac{6}{2-s}}}.\label{Dimension_2:global:pre_interp}
    \end{align}
    Now, interpolating the $L^{\frac{6}{2-s}}(\T^2)$ norm we obtain,
    \begin{align*}
        \left\|u^\varepsilon\right\|_{L^{\frac{6}{2-s}}} \lesssim \left\|u^\varepsilon\right\|^{1/3}_{H^s}\left\|u^\varepsilon\right\|^{2/3}_{L^{4}}.
    \end{align*}
    Returning this to \eqref{Dimension_2:global:pre_interp} and using Lemma \ref{Global:critical:general:Lk1_control}  we yield,
    \begin{align*}
        \frac{1}{2}\frac{d}{dt}\left\|u^\varepsilon\right\|_{H^s}^2 \leq  C(s)\mathscr{H}_{3,1,2}(u^\varepsilon_0)^2\left\|u^\varepsilon\right\|_{H^s}^2
    \end{align*}
    for some $C(s)>0$. Now, after an application of Gr\"onwall's inequality and using \eqref{global:Hamiltonian_limit} and \eqref{Global:solution_limit_Hs} to take $\varepsilon\to 0$, we obtain the following growth bound,
    \begin{align*}
        \left\|u(t)\right\|_{H^s} \leq \left\|u_0\right\|_{H^s}\exp\left( C(s)\mathscr{H}_{3,1,2}(u_0)^2 t\right)
    \end{align*}
    thereby proving global well-posedness for $1/2\leq s <2$. The peristence of regularity of the solution map yields the global result for $s \geq 2$.
\end{proof}
\begin{remark}\label{Dim_two_restriction_remark}
    It is not clear to the author how to extend this result to a lower range of $\beta$. If we were to adapt our derivative splitting energy method from Regime 1 to the two dimensional setting, we would demand the following upper and lower bounds on our shared derivative, $\sigma$,
    \begin{align*}
        s-\beta + 2 - \tfrac{2}{k+1} <\sigma < s-1 + \tfrac{2}{k+1}.
    \end{align*}
    For there to be room for such a $\sigma$ to exist, we would require that,
    \begin{align*}
        \beta > 3- \tfrac{4}{k+1}.
    \end{align*}
    For any $k\geq 3$, this forces $\beta >2$, leaving us outside the bounds of the model. Using a high-low wavelength decomposition like the one done by Kim and Kwak in \cite{kim2026} for the cubic in dimension 1 was also ineffective, after completing the calculation for general $k$ one is left with the restriction that $k<3$, excluding the cubic equation.
\end{remark}

\subsection{Well-posedness in $L^{k+1}$}

For this subsection, we are interested in the values of $1\leq\beta\leq 2$ such that corresponding instances of \eqref{Gen-BOBBM-eq} do not require $H^s$ regularity assumptions to be well-posed. Instead, we make integrability assumptions, namely that $u_0\in L^{k+1}$.
\begin{lemma}\label{Intersection:Local:multilinear_lem}
    For $u_1,\dots,u_k \in L^{k+1}(\mathbb{T}^d)$, $d\geq 1$, $k\geq 2$ and, 
\begin{align}
    \beta \geq 1 + d\left(\tfrac{k-1}{k+1}\right)\label{Intersection:Lk1:multilinear:beta_assumption}
\end{align}
there is the following estimate, 
\begin{align*}
    \Big\|\phi_\beta(D_x)\prod_{i=1}^k u_i \Big\|_{L^{k+1}} \lesssim \prod_{i=1}^k\left\|u_i\right\|_{L^{k+1}}.
\end{align*}
\end{lemma}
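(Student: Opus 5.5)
The plan is to treat $\phi_\beta(D_x)$ as a smoothing operator of order $\beta-1$ and combine Hölder with a Sobolev embedding of Bessel potential type. By Hölder's inequality,
\begin{align*}
\Big\|\prod_{i=1}^k u_i\Big\|_{L^{\frac{k+1}{k}}} \leq \prod_{i=1}^k \left\|u_i\right\|_{L^{k+1}},
\end{align*}
so it suffices to show that $\phi_\beta(D_x)$ maps $L^{\frac{k+1}{k}}(\T^d)$ boundedly into $L^{k+1}(\T^d)$. This reduction is immediate once that operator bound holds, so all the work lies in the operator bound.

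For the operator bound, I would factor
\begin{align*}
\phi_\beta(D_x) = \Big[\tfrac{\langle D_x\rangle^{\beta-1}}{1+|D_x|^\beta}\sum_{i=1}^d \pat_{x_i}\Big]\langle D_x\rangle^{-(\beta-1)}.
\end{align*}
The bracketed operator has symbol $\frac{\langle n\rangle^{\beta-1}}{1+|n|^\beta}\sum_i i n_i$. This symbol is the restriction to $\Z^d$ of a smooth function of order $0$ on $\R^d$, or at least a function smooth away from the origin with Mikhlin bounds; near the origin one can modify it by a smooth cutoff, since only the lattice values matter. Arguing exactly as in Lemma \ref{Lem:multiplier_lemma} (Mikhlin--Hörmander followed by transference), the bracketed operator is bounded on $L^p(\T^d)$ for $1<p<\infty$, in particular on $L^{k+1}$.

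It then remains to show $\langle D_x\rangle^{-(\beta-1)}: L^{\frac{k+1}{k}}(\T^d)\to L^{k+1}(\T^d)$, which is the embedding $W^{\beta-1,\frac{k+1}{k}}(\T^d)\hookrightarrow L^{k+1}(\T^d)$. By the Sobolev inequality, or Hardy--Littlewood--Sobolev for Bessel potentials on the torus, this holds with $1<\frac{k+1}{k}<k+1<\infty$ provided
\begin{align*}
\beta-1-\tfrac{dk}{k+1}\geq -\tfrac{d}{k+1} \iff \beta\geq 1+d\left(\tfrac{k-1}{k+1}\right),
\end{align*}
which is exactly the assumption \eqref{Intersection:Lk1:multilinear:beta_assumption}. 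The endpoint equality case is allowed because both exponents lie strictly between $1$ and $\infty$, so HLS applies non-endpointwise; the strict inequality case follows by the finite measure of $\T^d$.

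The main obstacle is justifying the Sobolev/HLS embedding for $p=\frac{k+1}{k}$ at the sharp exponent on $\T^d$. I would handle it by noting that the Bessel kernel $G_{\beta-1}$ on $\T^d$ satisfies $|G_{\beta-1}(x)|\lesssim |x|^{-(d-\beta+1)}$ near $0$ and is bounded elsewhere, provided $\beta-1<d$. This holds since $\beta-1\leq 1$, with the case $\beta-1=d=1$ impossible here because $\beta\leq 2$ and $k\geq2$ force $d\frac{k-1}{k+1}<1$ when $d=1$. The torus HLS inequality then gives the bound. Note also that in higher $d$ the assumption already forces $d\frac{k-1}{k+1}\leq 1$, so the argument is consistent with the restrictions mentioned in the introduction.
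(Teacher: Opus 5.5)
Your proposal is correct and follows the same route as the paper. Both use Hölder's inequality to place $\prod u_i$ in $L^{\frac{k+1}{k}}$, then use that $\phi_\beta(D_x)$ gains $\beta-1$ derivatives together with the Sobolev embedding $L^{\frac{k+1}{k}}\hookrightarrow W^{1-\beta,k+1}$, which holds exactly when $\beta\geq 1+d\tfrac{k-1}{k+1}$. You justify the multiplier reduction and the Hardy--Littlewood--Sobolev step in more detail than the paper does.

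One small correction: the case $\beta-1=d=1$ is not impossible, since $\beta=2$, $d=1$ is allowed. It is harmless, though. Only $d\tfrac{k-1}{k+1}<d$ derivatives of smoothing are needed, and the excess factor $\langle D_x\rangle^{-(\beta-1-d\frac{k-1}{k+1})}$ is bounded on $L^{\frac{k+1}{k}}(\T^d)$.
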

\begin{proof}
    Again, for readability, we prove the case for $u_1=\dots=u_k$. As we did in the proof of Lemma \ref{lem:multilinear_estimate} we note that  $\phi_\beta(D_x)$ smooths $\beta-1$ derivatives
\begin{align}
    \left\|\phi_\beta(D_x)u^k\right\|_{L^{k+1}} \sim \left\|u^k\right\|_{W^{1-\beta,k+1}}.\label{Intersection:Lk1:multilinear:smoothing}
\end{align}
Now, by the Sobolev embedding theorem, we see that $L^{\frac{k+1}{k}}(\mathbb{T}^d) \hookrightarrow W^{1-\beta,k+1}(\mathbb{T}^d)$ as,
\begin{align*}
    \beta - 1 - \tfrac{dk}{k+1} \geq -\tfrac{d}{k+1} \iff \beta \geq 1+ d\left(\tfrac{k-1}{k+1}\right)
\end{align*}
which is satisfied by assumption \eqref{Intersection:Lk1:multilinear:beta_assumption}. Therefore, returning to \eqref{Intersection:Lk1:multilinear:smoothing},
\begin{align*}
    \left\|\phi_\beta(D_x)u^k\right\|_{L^{k+1}} \lesssim \left\|u^k\right\|_{L^{\frac{k+1}{k}}} = \left\|u\right\|^k_{L^{k+1}}.
\end{align*}
\end{proof}
Using this estimate, we obtain the following local result.
\begin{lemma}\label{Lk1_local_lemma}
    If $d\geq 1$, $k\geq 2$,  $u_0\in L^{k+1}(\T^d)$ and,
    \begin{align*}
         1+ d\left(\tfrac{k-1}{k+1}\right) \leq \beta \leq 2
    \end{align*}
    then \eqref{Gen-BOBBM-eq} is locally well-posed in $L^{k+1}(\T^d)$.
\end{lemma}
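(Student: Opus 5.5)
We will run a contraction mapping argument on the Duhamel formulation
\begin{align*}
    \Gamma u(t) = S(t)u_0 - \frac{i\mu}{k}\int_0^t S(t-t')\,\phi_\beta(D_x)\big(u(t')^k\big)\, dt'
\end{align*}
in the space $X_T = C([0,T];L^{k+1}(\T^d))$ with norm $\sup_{t\in[0,T]}\|u(t)\|_{L^{k+1}}$, restricted to the ball $B_R = \{u : \|u\|_{X_T}\le R\}$ with $R = 2C\|u_0\|_{L^{k+1}}$.

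\textbf{Step 1: the linear propagator.} Unlike in $H^s$, the propagator $S(t)=e^{-it\phi_\beta(D_x)}$ is not unitary on $L^{k+1}$, so we first need $\|S(t)f\|_{L^{k+1}}\lesssim \|f\|_{L^{k+1}}$ uniformly for $t\in[0,1]$. Since $\phi_\beta(D_x)$ has symbol $\tfrac{\sum_i n_i}{1+|n|^\beta}$, which is bounded for $\beta\ge 1$, the operator $\phi_\beta(D_x)$ is bounded on $L^{k+1}$. We would check the Mikhlin--H\"ormander conditions for its extension to $\R^d$ and then transfer to $\T^d$, exactly as in Lemma \ref{Lem:multiplier_lemma}. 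The power series $e^{-it\phi_\beta(D_x)}=\sum_m \tfrac{(-it)^m}{m!}\phi_\beta(D_x)^m$ then converges in operator norm on $L^{k+1}$, giving $\|S(t)\|_{L^{k+1}\to L^{k+1}}\le e^{|t|\,\|\phi_\beta\|_{op}}$. This is bounded for $|t|\le 1$, and strong continuity in $t$ follows from the same series. This step is the main point of departure from the $H^s$ theory and the place I expect the most care to be needed. The bound is, however, elementary once the $L^p$ boundedness of $\phi_\beta(D_x)$ is in hand.

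\textbf{Step 2: the nonlinear estimate.} Using Step 1 together with Lemma \ref{Intersection:Local:multilinear_lem}, for $T\le 1$,
\begin{align*}
    \|\Gamma u\|_{X_T}\le C\|u_0\|_{L^{k+1}} + C T\|u\|_{X_T}^k .
\end{align*}
For the difference estimate, write $u^k-v^k=(u-v)\sum_{j=0}^{k-1}u^jv^{k-1-j}$ and apply the multilinear form of Lemma \ref{Intersection:Local:multilinear_lem}. This gives
\begin{align*}
    \|\Gamma u-\Gamma v\|_{X_T}\le CT\big(\|u\|_{X_T}+\|v\|_{X_T}\big)^{k-1}\|u-v\|_{X_T}.
\end{align*}
The hypothesis $1+d\tfrac{k-1}{k+1}\le\beta$ is used precisely here. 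The upper bound $\beta\le 2$ is only needed to stay within the model.

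\textbf{Step 3: conclusion.} Choosing $T\sim \min(1,\|u_0\|_{L^{k+1}}^{1-k})$ makes $\Gamma$ a contraction on $B_R$, which yields a unique solution in $B_R$. The same difference estimate, applied to solutions with different data, gives Lipschitz dependence on $u_0$. Continuity in time of $\Gamma u$ follows from the strong continuity of $S(t)$ and the integrability of the Duhamel integrand. Unconditional uniqueness in $X_T$ follows by the usual continuity argument: shrink the time interval so that both solutions lie in a common ball, then iterate.
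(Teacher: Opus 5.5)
Your proposal is correct and takes the same route as the paper. The paper also bounds $S(t)$ on $L^{k+1}(\T^d)$ by Taylor expanding the exponential, obtaining $\|S(t)u\|_{L^{k+1}}\lesssim e^{t}\|u\|_{L^{k+1}}$, and then closes a standard contraction using Lemma \ref{Intersection:Local:multilinear_lem}. You additionally make explicit three points the paper leaves implicit in ``the standard fixed point procedure'': the Mikhlin--H\"ormander/transference justification that $\phi_\beta(D_x)$ is bounded on $L^{k+1}(\T^d)$ (so that the exponential series converges in operator norm), the difference estimate, and the unconditional uniqueness argument.
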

\begin{proof}
    As noted by Genovese \textit{et al.} in the work on the transport of Gaussian measures for the fractional BBM equation \cite{genovese2023}, one may Taylor expand $S(t) = e^{t\phi_\beta(D_x)}$ to obtain
    \begin{align*}
        \left\|S(t) u\right\|_{L^{k+1}} \lesssim e^t \left\|u\right\|_{L^{k+1}}.
    \end{align*}
    Using this and Lemma \ref{Intersection:Local:multilinear_lem}, one obtains the result via the standard fixed point procedure.
\end{proof}
Note that if $d=1$, Lemma \ref{Lk1_local_lemma} yields local well-posedness for all odd $k\geq 3$. If $d=2$, the only admissible case is $k=3$ and $\beta=2$. If $d=3$, the condition on $\beta$ fails entirely. With this in mind, can prove the global result stated in the introduction.
\begin{proof}[Proof of Theorem \ref{Intersection:Lk1:local:thm}]
    Using Lemma \ref{Lk1_local_lemma} and the uniform boundedness of the $L^{k+1}(\T^d)$ norm of the solution as provided by Lemma \ref{Global:critical:general:Lk1_control}, we obtain our global result by the standard iteration argument.
\end{proof}
Before we apply this result to the context of almost sure well-posedness we first recall the following probabilistic estimate.
\begin{lemma}\label{Appendix:lem:strichartz}
    Let $d\geq 1$, $p\geq 2$ and $u_0\in L^2(\T^d)$. Then, there exists $C>0$ such that
    \begin{align*}
        \left\|\left\|u_0^\omega \right\|_{L^p(\T^d)}\right\|_{L^p(\Omega)} \leq C \sqrt{p}|\T^d|^{1/p} \left\|u_0\right\|_{L^2(\T^d)}.
    \end{align*}
    Furthermore, this implies that there exists $\Sigma \subset \Omega$ such that $\Prb(\Sigma) = 1$ and that for all $\omega \in \Sigma$, $u_0^\omega \in L^p(\T^d)$.\footnote{This is simply the definition of the almost sure membership $u_0^\omega \in L^p(\T^d)$, but we explicitly include it for the proof of Corollary \ref{cor:almost_sure_global}}
\end{lemma}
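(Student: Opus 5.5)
The plan is to estimate the random $L^p$ norm by exchanging the order of integration and then using Gaussian moment bounds pointwise in $x$. First we treat the case where the $L^p(\Omega)$ norm is applied to the $L^p(\T^d)$ norm. By Fubini,
\begin{align*}
    \left\|\left\|u_0^\omega\right\|_{L^p(\T^d)}\right\|_{L^p(\Omega)}^p = \int_{\T^d} \mathbb{E}\left|u_0^\omega(x)\right|^p\, dx .
\end{align*}
Then we fix $x\in\T^d$. The quantity $u_0^\omega(x) = \sum_n \widehat{u}_0(n) e^{in\cdot x} g_n(\omega)$ is a linear combination of the Gaussians $g_n$. Under the constraint $g_n=\overline{g_{-n}}$, it is a real, mean zero Gaussian random variable, obtained by grouping the modes $n$ and $-n$ and using $\widehat{u}_0(-n)=\overline{\widehat{u}_0(n)}$. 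Its variance is comparable to $\sum_n|\widehat{u}_0(n)|^2 \sim \|u_0\|_{L^2}^2$ by Parseval. Independence of the modes enters only through this variance computation.

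The key input is the standard Gaussian moment bound: for a real centred Gaussian $X$ with variance $\sigma^2$,
\begin{align*}
    \left(\mathbb{E}|X|^p\right)^{1/p} = \sigma\left(\tfrac{2^{p/2}\Gamma(\frac{p+1}{2})}{\sqrt{\pi}}\right)^{1/p}\leq C\sqrt{p}\,\sigma .
\end{align*}
The last inequality follows from Stirling's formula. Alternatively, one can cite Khintchine's inequality in the Gaussian form used by Burq and Tzvetkov \cite{burq2008}. This gives $\mathbb{E}|u_0^\omega(x)|^p\leq (C\sqrt{p}\|u_0\|_{L^2})^p$ uniformly in $x$. Integrating over $\T^d$ yields the factor $|\T^d|$, and taking $p$-th roots gives the stated bound $C\sqrt{p}|\T^d|^{1/p}\|u_0\|_{L^2}$.

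For the almost sure statement, the finiteness of $\mathbb{E}\|u_0^\omega\|_{L^p}^p$ implies that $\|u_0^\omega\|_{L^p}<\infty$ on a set $\Sigma$ of full measure. Applying Chebyshev's inequality gives $\Prb(\|u_0^\omega\|_{L^p}>\lambda)\leq (C\sqrt{p}|\T^d|^{1/p}\|u_0\|_{L^2}/\lambda)^p$, and letting $\lambda\to\infty$ shows that the complement of $\Sigma$ is null.

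The only real care needed is rigour: the series defining $u_0^\omega(x)$ must converge. I would handle this by proving the bound for the partial sums $\Pi_{\leq N}u_0^\omega$, where each term is a finite Gaussian sum. These partial sums are Cauchy in $L^p(\Omega;L^p(\T^d))$ by the same estimate applied to differences, so they converge to a limit identified with $u_0^\omega$. Fatou's lemma then passes the bound to this limit. This convergence step is the main technical point, and it is routine.
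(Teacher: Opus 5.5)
Your proposal is correct and follows essentially the standard argument the paper defers to (its proof only cites \cite[Section 4.2.2]{gubinelli2019}): Fubini in $(x,\omega)$, the pointwise Gaussian moment bound $\|\sum_n c_n g_n\|_{L^p(\Omega)}\le C\sqrt{p}\,(\sum_n|c_n|^2)^{1/2}$ with $c_n=\widehat{u}_0(n)e^{in\cdot x}$, integration over $\T^d$, and partial sums to handle convergence. One minor correction: the independence (joint Gaussianity) of the $g_n$ is also what makes $u_0^\omega(x)$ Gaussian in the first place, so it is used for more than the variance formula.
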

\begin{proof}
    See \cite[Section 4.2.2]{gubinelli2019} for a proof.
\end{proof}
With this, we may prove the almost sure global result.
\begin{proof}[Proof of Corollary \ref{cor:almost_sure_global}]
    Fixing $u_0\in L^2(\T^d)$, we randomise according to \eqref{Intro:data_randomisation} to obtain $u_0^\omega$. Now, using Lemma \ref{Appendix:lem:strichartz}, we ensure that there exists a set $\Sigma \subset \Omega$ such that $\Prb(\Sigma) = 1$ and $u_0^\omega \in L^{k+1}(\T^d)$, for all $\omega \in \Sigma$. For these values of $\omega$, we may then apply Theorem \ref{Intersection:Lk1:local:thm} to obtain a unique, stable, global solution $u^\omega(t)$ to \eqref{Gen-BOBBM-eq} with initial data $u_0^\omega$. With this, we yield the almost sure global well-posedness of \eqref{Gen-BOBBM-eq} in $L^2(\Omega)$. By the persistence of regularity of the solution map, as afforded by our local well-posedness on the $H^s$ scale (Theorem \ref{Local_theory:lwp_main}), we extend the almost sure global result to $H^s$ for any $s\geq 0$.
\end{proof}

\section{Endpoint well-posedness for the cubic regularised BO equation}
In the context of our current local theory, Theorem \ref{Local_theory:lwp_main}, when $\beta=1$ we have local well posedness of \eqref{Gen-BOBBM-eq} in $H^s(\T)$ for $s>\frac{1}{2}$. Here, we will work to recover a local result at the $s=\frac{1}{2}$ endpoint using a compactness method. We will take inspiration from He and Mammeri's method in \cite{he2018} where they proved global well-posedness for the regularised BO equation in $H^{1/2}(\R)$. While there is significant overlap in the local well-posedness arguments for the quadratic and cubic nonlinearities, the argument for global well-posedness used for the quadratic fails for the cubic, as we will remark in the proof.

For convience, we will restate the cubic regularised BO equation, \eqref{Intro:cRBO} as an initial value problem
\begin{align}
    \begin{cases}
        \pat _t u + |D_x| \pat_t u + \pat_x u +\mu\tfrac{1}{3}\pat_x\left(u^3\right) = 0\\
        u(x,0) = u_0(x)
    \end{cases}x \in \T\label{cubic_rbo_PDE}.
\end{align}
This is simply \eqref{Gen-BOBBM-eq} with $d=1$, $\mu\in\{-1,1\}$, $k=3$ and $\beta=1$. The goal of this section is to prove the following result.
\begin{proposition}\label{THM:Local:endpoint}
    Let $\mu \in \{-1,1\}$. Then \eqref{cubic_rbo_PDE} is locally well-posed in $H^{1/2}(\T)$.
\end{proposition}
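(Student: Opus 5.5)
The plan is a compactness argument in the spirit of Yudovich \cite{judovic1963} and He--Mammeri \cite{he2018}. Existence comes from smooth approximations and the conserved quantity \eqref{Conservation:H1_like}. Uniqueness and stability come from an $L^2$ estimate on the difference of two solutions, closed with the $\sqrt{p}$ growth of the constant in Lemma \ref{Lem:Trudinger_type}.

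\emph{Existence.} For $u_0\in H^{1/2}(\T)$ take $u_0^\varepsilon=u_0*\eta_\varepsilon\in H^1(\T)$. Regime 3 of Theorem \ref{Global:thm} with $k=3$ applies, since $\tfrac{2k-2}{k+1}=1\le\beta$, and gives global smooth solutions $u^\varepsilon$ of \eqref{cubic_rbo_PDE}. Since $\beta=1$, \eqref{Conservation:H1_like} gives
\begin{align*}
\sup_{t}\|u^\varepsilon(t)\|_{H^{1/2}}^2\lesssim \mathscr{E}_{1,1}(u_0^\varepsilon)\le \mathscr{E}_{1,1}(u_0)+o(1)=:M^2,
\end{align*}
uniformly in $\varepsilon$. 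Write the equation as $\pat_t u=-\phi_1(D_x)(u+\tfrac{\mu}{3}u^3)$. The multiplier $\phi_1(D_x)$ is bounded on $L^2$, and $H^{1/2}\hookrightarrow L^6$. Hence $\|\pat_t u^\varepsilon\|_{L^2}\lesssim M+M^3$, so $\{u^\varepsilon\}$ is equi-Lipschitz in $C([0,T];L^2)$ and bounded in $L^\infty_T H^{1/2}$. By Aubin--Lions (compactness of $H^{1/2}\hookrightarrow L^2$ on $\T$), a subsequence converges strongly in $C([0,T];L^2)$ and weak-$*$ in $L^\infty_TH^{1/2}$. Interpolating with the uniform $L^6$ bound gives strong convergence in $C([0,T];L^q)$ for all $q<6$. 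Therefore $(u^\varepsilon)^3\to u^3$ in $L^1$, and the limit $u$ solves the Duhamel formulation. Passing to the limit in \eqref{Conservation:H1_like} and using weak continuity in $H^{1/2}$ should upgrade this to $u\in C([0,T];H^{1/2})$.

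\emph{Uniqueness (main obstacle).} Let $u,v$ be two solutions bounded by $M$ in $L^\infty_TH^{1/2}$, and set $w=u-v$. The natural $H^{1/2}$ energy for $w$ produces $\langle \pat_x w,(u^2+uv+v^2)w\rangle$, which cannot be controlled at this regularity. I will therefore estimate $y(t)=\|w(t)\|_{L^2}^2$ using the order-zero operator $\phi_1(D_x)$:
\begin{align*}
y'\lesssim \|w\|_{L^2}\big(\|w\|_{L^2}+\|(u^2+uv+v^2)w\|_{L^2}\big).
\end{align*}
The missing $L^\infty$ bound (the failure of $H^{1/2}\hookrightarrow L^\infty$) is the difficulty. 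For large $p$, Hölder gives $\|(u^2+uv+v^2)w\|_{L^2}\le \|u^2+uv+v^2\|_{L^{p}}\|w\|_{L^{2p/(p-2)}}$. Lemma \ref{Lem:Trudinger_type} bounds the first factor by $CpM^2$. For the second, interpolate between $L^2$ and $L^{2p}$:
\begin{align*}
\|w\|_{L^{2p/(p-2)}}\le \|w\|_{L^2}^{1-\theta_p}\|w\|_{L^{2p}}^{\theta_p}\lesssim \|w\|_{L^2}^{1-\theta_p}(\sqrt{p}M)^{\theta_p},\qquad \theta_p\sim 1/p.
\end{align*}
This yields $y'\le C_M\,p\,y^{1-\theta_p/2}$ with $y(0)=0$. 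Integrating gives $y(t)\le (C_M t)^{2/\theta_p}$ up to harmless factors. Letting $p\to\infty$ forces $y\equiv0$ on $[0,T_0]$ with $T_0\sim C_M^{-1}$, and iterating covers $[0,T]$.

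\emph{Continuous dependence.} The same differential inequality with $y(0)=\|u_0-v_0\|_{L^2}^2$ gives $L^2$-Lipschitz-type (Yudovich-modulus) dependence on bounded sets of $H^{1/2}$. Upgrading to $C([0,T];H^{1/2})$ continuity should follow from three ingredients. First, interpolate against the uniform $H^{1/2}$ bound. Second, use continuity of the conserved quantity $\mathscr{E}_{1,1}$ on the data. Third, use the standard weak-plus-norm convergence argument, which turns $u_n(t)\rightharpoonup u(t)$ in $H^{1/2}$ together with $\|u_n(t)\|_{H^{1/2}}\to\|u(t)\|_{H^{1/2}}$ into strong convergence, uniformly in $t$ via equicontinuity.
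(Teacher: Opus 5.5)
Your existence step is the same as the paper's (Lemmas \ref{LEM:Local:endpoint:subsequence} and \ref{Local:endpoint:Lem:existence}). Your uniqueness step is correct and takes a cleaner route. You bound $\bigl|\int_\T w\,\phi_1(D_x)(Aw)\,dx\bigr|\le\|w\|_{L^2}\|Aw\|_{L^2}$ with $A=u^2+uv+v^2$, using only that the symbol $n/(1+|n|)$ has modulus below $1$. Then Hölder, Lemma \ref{Lem:Trudinger_type} and interpolation with $\theta_p=2/(p-1)$ give $y'\le C_M\,p\,y^{1-1/(p-1)}$, hence $y(t)\le(2C_Mt)^{p-1}\to0$.

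The paper instead splits $\phi_1(D_x)(wA)$ into three terms via the commutator estimate, Lemma \ref{Local:endpoint:commutator_estimate}. One of those terms uses $\|\phi_1(D_x)A\|_{L^{2p}}\lesssim\|A\|_{L^{2p}}$. But $\phi_1(D_x)$ is a Hilbert-transform-type multiplier, so its $L^{2p}$ operator norm grows like $p$. That route therefore needs extra care to keep the total growth linear in $p$, which is all the Yudovich argument can afford. Your version needs neither a commutator estimate nor operator bounds off $L^2$. Also, because $M$ bounds both solutions on all of $[0,T]$, your iteration gives uniqueness on the whole interval rather than only on $[0,T(u_0)]$.

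The genuine gap is the upgrade from $C_w([0,T];H^{1/2})$ to $C([0,T];H^{1/2})$. You place this inside the existence step, and your continuous-dependence argument also rests on it.

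\begin{itemize}
\item Passing to the limit in $\mathscr{E}_{1,1}(u^\varepsilon(t))=\mathscr{E}_{1,1}(u_0^\varepsilon)$ with only weak-$\star$ convergence in $H^{1/2}$ gives, by weak lower semicontinuity, just $\mathscr{E}_{1,1}(u(t))\le\mathscr{E}_{1,1}(u_0)$. A weakly continuous curve obeying this one-sided bound need not be norm continuous.
\item Likewise, your weak-plus-norm step uses $\mathscr{E}_{1,1}(u_n(t))=\mathscr{E}_{1,1}(u_{0,n})\to\mathscr{E}_{1,1}(u_0)=\mathscr{E}_{1,1}(u(t))$. This is exactly the energy equality for $H^{1/2}$ solutions, which has not been established.
\end{itemize}

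The missing idea, used in Lemma \ref{lem:Local:endpoint:continuity}, is reversibility plus uniqueness. The equation is invariant under $u(x,t)\mapsto u(-x,t_0-t)$. Running your construction from the datum $u(-x,t_0)$ therefore yields a solution $\tilde u$ on $[0,t_0]$ with $\mathscr{E}_{1,1}(\tilde u(t_0))\le\mathscr{E}_{1,1}(u(t_0))$. By uniqueness $\tilde u(x,t)=u(-x,t_0-t)$, so $\mathscr{E}_{1,1}(u_0)\le\mathscr{E}_{1,1}(u(t_0))$. This step must therefore come after uniqueness, not inside the existence argument.

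Once $\mathscr{E}_{1,1}(u(t))=\mathscr{E}_{1,1}(u_0)$ holds for every solution in the uniqueness class, strong continuity in time follows. Your continuous-dependence argument then goes through as you describe, using $\mathscr{E}_{1,1}^{1/2}$ as the Hilbert norm in the weak-plus-norm step.
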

To prove Proposition \ref{THM:Local:endpoint}, we will begin by proving a technical Lemma and then proceed to proving the existence, uniqueness and continuity of the weak solution sequentially. Until we prove local uniqueness, we fix an arbitrary $T>0$.
\begin{lemma}\label{LEM:Local:endpoint:subsequence}
    Let $(u_{n,0})_{n\in \N}\subset H^1(\T)$ such that $u_{n,0}\to u_0$ in $H^{1/2}(\T)$ and denote~$u_n \in C([0,T];H^1(\T))$ to be the global solution of \eqref{cubic_rbo_PDE}. There exists a subsequence $u_{n_m}$ and a function $u \in C_{w}([0,T];H^{1/2}(\T))\cap C\left([0,T];L^2(\T)\right)$ such that,
    \begin{align}
        &u_{n_{m}} \longrightarrow u \text{\, weakly-$\star$ in \,} L^\infty\left([0,T];H^{1/2}(\T)\right)\label{Local:endpoint:main_weak_star}\\
         &u_{n_m}\longrightarrow u \text{\, strongly in \,} C([0,T];L^2(\T))\label{Local:endpoint:main_strong}.
    \end{align}
\end{lemma}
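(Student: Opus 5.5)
The plan is the standard compactness route: obtain bounds on $(u_n)$ that are uniform in $n$, then apply Banach--Alaoglu and the Arzel\`a--Ascoli theorem.

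\textbf{Uniform bound in the energy space.} Since $\beta=1$, the conserved quantity \eqref{Conservation:H1_like} gives
\begin{align*}
\|u_n(t)\|_{H^{1/2}}^2 \sim \mathscr{E}_{1,1}(u_{n,0}) = \mathscr{E}_{1,1}(u_{n,0}) \to \mathscr{E}_{1,1}(u_0)
\end{align*}
for all $t$, because $u_{n,0}\to u_0$ in $H^{1/2}$. Hence $\sup_n\sup_{t\in[0,T]}\|u_n(t)\|_{H^{1/2}}\le C$. Conservation is legitimate here because each $u_n$ is an $H^1$ solution (Regime 3 / Corollary \ref{Global:supercritical:Global}). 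Banach--Alaoglu in $L^\infty([0,T];H^{1/2}) = (L^1([0,T];H^{-1/2}))^*$ then yields a weak-$\star$ convergent subsequence, which gives \eqref{Local:endpoint:main_weak_star}.

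\textbf{Uniform bound on the time derivative.} From the equation,
\begin{align*}
\pat_t u_n = -(1+|D_x|)^{-1}\pat_x\left(u_n+\tfrac{\mu}{3}u_n^3\right).
\end{align*}
The multiplier $(1+|D_x|)^{-1}\pat_x$ is bounded on $L^2$. By Lemma \ref{Lem:Trudinger_type}, $\|u_n^3\|_{L^2}=\|u_n\|_{L^6}^3\lesssim \|u_n\|_{H^{1/2}}^3$. Therefore $\sup_n\|\pat_t u_n\|_{L^\infty([0,T];L^2)}\le C$. Consequently $(u_n)$ is uniformly Lipschitz in time with values in $L^2$, and for each fixed $t$ the set $\{u_n(t)\}$ is bounded in $H^{1/2}$, which embeds compactly in $L^2(\T)$ by Rellich.

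\textbf{Extraction and identification.} Arzel\`a--Ascoli for $C([0,T];L^2)$ then gives a further subsequence converging strongly in $C([0,T];L^2)$, which is \eqref{Local:endpoint:main_strong}. The two limits agree as distributions, so we may take $u\in C([0,T];L^2)$.

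\textbf{Weak continuity into $H^{1/2}$.} For each $t$, $u_{n_m}(t)\to u(t)$ in $L^2$ and the sequence is bounded in $H^{1/2}$. By lower semicontinuity, $u(t)\in H^{1/2}$ with $\|u(t)\|_{H^{1/2}}\le C$ for every $t$, not merely for almost every $t$. Now take $t_j\to t$ and $\phi\in H^{-1/2}$, and approximate $\phi$ by a smooth $\psi$. Then $\langle u(t_j)-u(t),\psi\rangle\to 0$ by strong $L^2$ continuity, and the error term is bounded by $2C\|\phi-\psi\|_{H^{-1/2}}$. This gives $u\in C_w([0,T];H^{1/2})$.

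\textbf{Main obstacle.} The only step with real content is the uniform control of $\pat_t u_n$. At the endpoint there is no smoothing from $\phi_1$, and $H^{1/2}$ fails to be an algebra. The cubic term is nevertheless controlled in $L^2$ through the $L^6$ embedding, which is exactly where $k=3$ suffices. One could instead work in $H^{-1/2}$ if needed, but $L^2$ already closes.
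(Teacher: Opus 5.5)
Your proof is correct and follows the paper's argument. You use the same uniform bounds on $u_n$ in $L^\infty([0,T];H^{1/2}(\T))$ (conservation of $\mathscr{E}_{1,1}$) and on $\pat_t u_n$ in $L^\infty([0,T];L^2(\T))$ (the equation, $L^2$-boundedness of $\phi_1(D_x)$, and $H^{1/2}\hookrightarrow L^6$), followed by Banach--Alaoglu and compactness in $C([0,T];L^2(\T))$. The only differences are that you unpack the Aubin--Lions lemma into Arzel\`a--Ascoli and check the weak continuity by hand instead of citing the Boyer--Fabrie regularity lemma.

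Two small corrections. First, take the global $H^1$ solutions from Proposition \ref{Regime_3:proposition_result} alone, because the $\beta=1$ case of Corollary \ref{Global:supercritical:Global} depends on the endpoint result this lemma is used to prove. Second, the $L^6$ step is not where $k=3$ matters: it works for every $k$, since $H^{1/2}(\T)\hookrightarrow L^p$ for all $p<\infty$. The cubic restriction comes from Regime 3, which at $\beta=1$ requires $k\le 3$, and from the uniqueness argument.
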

\begin{proof}
    We first note that \eqref{cubic_rbo_PDE} is globally well-posed in $H^1(\T)$ by Proposition \ref{Regime_3:proposition_result}. Now, Let $(u_{n,0})_{n\in \N} \subset H^1(\T)$ such that $u_{n,0}\to u_0$ in $H^{1/2}(\T)$. Moreover, for each $n\in \N$ denote $u_n(t) \in C([0,T] ; H^1(\R))$ to be the evolution of $u_{0,n}$ under \eqref{cubic_rbo_PDE}  as obtained in Proposition \ref{Regime_3:proposition_result}. By Lemma  \ref{Global:lemma:Conservervation} the $H^{1/2}(\T)$ norm of $u_n$ is conserved, i.e.
    \begin{align}
        \left\|u_n\right\|_{H^{1/2}(\T)} =  \mathscr{E}_{1,1}(u_{0,n})\label{Local:endpoint:conservation_claim}
    \end{align}
    for any $t\in [0,T]$. Therefore, $u_{n}$ is bounded in $C([0,T] ; H^{1/2}(\T))$. Seeking to obtain uniform control of $\pat_t u_n$ in $L^2$ we first multiply \eqref{cubic_rbo_PDE} by $\pat_t u_n$ and integrate yielding,
    \begin{align*}
        \int_\T |\pat_t u_n|^2\, dx = -\int_\T (\pat_t u_n) \pat_x \left(1+D_x\right)^{-1} \left(u_n + \mu\frac{u_n^3}{3}\right)\, dx.
    \end{align*}
    Using Young's product inequality, we see for all $\varepsilon >0$,
    \begin{align*}
        \left\|\pat_t u_n\right\|^2_{L^2} &\leq \varepsilon\int_\T \left|\pat_t u_n\right|^2 \, dx + \frac{1}{2\varepsilon} \left\|\phi_\beta(D_x) u_n\right\|^2_{L^2} + \frac{1}{2\varepsilon} \big\|\phi_\beta(D_x) \frac{u_n^3}{3}\big\|_{L^2}.
    \end{align*}
    Setting $\varepsilon=1/2$, we get that,
    \begin{align*}
        \left\|\pat_t u_n\right\|^2_{L^2} \lesssim \left\|\phi_\beta(D_x)u_n\right\|_{L^2}^2 + \left\|\phi_\beta(D_x) u_n^3\right\|_{L^2}^2\lesssim \left\|u_n\right\|^2_{L^2} + \left\|u_n^3\right\|^2_{L^2}
    \end{align*}
    where in the final line we have used the $L^2$ boundedness of $\phi_\beta(D_x)$. Now, using the Sobolev embedding we find a uniform bound,
    \begin{align*}
         \left\|\pat_t u_n\right\|^2_{L^2} = \left\|u_n\right\|^2_{L^2} + \left\|u_n\right\|^6_{L^6}\lesssim \left\|u_n\right\|^2_{H^{1/2}} + \left\|u_n\right\|^6_{H^{1/2}} \leq  \mathscr{E}_{1,1}(u_{0,n}) + \mathscr{E}_{1,1}(u_{0,n})^3.
    \end{align*}
    By our above estimates, we see that $u_n$ is bounded in $L^\infty([0,T];H^{1/2}(\T))$ and that $\pat_t u_n$ is bounded in $L^\infty([0,T];L^2(\T))$. Using the boundedness of $u_n$, Banach-Alaoglue guarantees that there exists a subsequence $u_{n_k}$ and a function $u \in L^{\infty}\left([0,T];H^{1/2}(\T)\right)$ such that,
    \begin{align}
        u_{n_{k}} \longrightarrow u \text{\, weakly-$\star$ in \,} L^\infty\left([0,T];H^{1/2}(\T)\right)\label{Local:endpoint:weak_star_condition}.
    \end{align}
    Furthermore, by Rellich-Kondrakov, we know that $H^{1/2}(\T)$ is compactly embedded in $L^2(\T)$. Therefore, we have the following chain of embeddings,
    \begin{align*}
        H^{1/2}(\T)  \xhookrightarrow{\text{compact}} L^2(\T) \xhookrightarrow{\text{continuous}} L^2(\T).
    \end{align*}
    Applying the Aubin-Lion Lemma (see \cite[Theorem 2.5.16]{boyer2013}), the space $W$ defined,
    \begin{align*}
        W = \left\{u \in L^{\infty}\left([0,T]; H^{1/2}(\T)\right): \pat_t u \in L^\infty\left([0,T] ;L^2(\T)\right)\right\}
    \end{align*} 
    is compactly embedded in $C\left([0,T];L^2(\T)\right)$. Clearly $\left(u_n\right)_{n\in \N}\subset W$ meaning that $u_n$ is relatively compact in $C([0,T];L^2(\T))$. Therefore, we can extract a subsequence $u_{n_j}$ and a limit $v\in C([0,T];L^2(\T))$ such that,
    \begin{align}
        u_{n_j}\longrightarrow v \text{\, strongly in \,} C([0,T];L^2(\T)).\label{Local:endpoint:strong_condition}
    \end{align}
    Now, to prove that $u\equiv v$ we take the subsequence $u_{n_k}$ satisfying \eqref{Local:endpoint:weak_star_condition} and index it via $j$ where $u_{n_j}$ satisfies \eqref{Local:endpoint:strong_condition}. The sequence $u_{n_{k_j}}$ will satisfy both of these properties and since strong convergence implies weak-$\star$ convergence, and $L^2(\T)$ is Hausdorff, the weak-$\star$ limit must be unique, meaning $u\equiv v$. Defining a new index $m$ such that $u_{n_m} = u_{n_{k_j}}$ we get that there exists a function $u\in L^{\infty}\left([0,T];H^{1/2}(\T)\right)\cap C([0,T];L^2(\T))$ such that, \eqref{Local:endpoint:main_weak_star} and \eqref{Local:endpoint:strong_condition} are satisfied. Since $H^{1/2}(\T)$ is seperable, reflexive and embedded continuously into $L^2(\T)$, it follows from standard regularity results that $u \in C_w\left([0,T];H^{1/2}(\T)\right)\cap C([0,T];L^2(\T))$ (see, \cite[Lemma 2.5.9]{boyer2013}).
\end{proof}
 Using this Lemma we can prove existence of a local solution.
\begin{lemma}\label{Local:endpoint:Lem:existence}
    For initial data $u_0\in H^{1/2}(\T)$, there exists a local weak solution $u$ to \eqref{cubic_rbo_PDE} such that $u\in C_w([0,T];H^{1/2}(\T))\cap C([0,T];L^2(\T))$.
\end{lemma}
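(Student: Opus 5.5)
We approximate $u_0$ by smooth data and pass to the limit using the compactness from Lemma \ref{LEM:Local:endpoint:subsequence}. First we choose $u_{n,0}\in H^1(\T)$ with $u_{n,0}\to u_0$ in $H^{1/2}(\T)$; for instance $u_{n,0}=\Pi_{\leq n}u_0$ works. Let $u_n$ be the global $H^1$ solutions from Proposition \ref{Regime_3:proposition_result}. Lemma \ref{LEM:Local:endpoint:subsequence} then gives a subsequence $u_{n_m}$ and a limit $u\in C_w([0,T];H^{1/2})\cap C([0,T];L^2)$ satisfying \eqref{Local:endpoint:main_weak_star} and \eqref{Local:endpoint:main_strong}. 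It remains to show that $u$ is a weak solution. We take this to mean that, for every test function $\varphi\in C_c^\infty([0,T)\times\T)$,
\begin{align*}
    \int_0^T\!\!\int_\T u\,\pat_t(1+|D_x|)\varphi + \left(u+\tfrac{\mu}{3}u^3\right)\pat_x\varphi \,dx\,dt + \int_\T u_0\,(1+|D_x|)\varphi(0)\,dx = 0 .
\end{align*}
An equivalent formulation is the Duhamel form $u(t)=u_0-\int_0^t\phi_1(D_x)\big(u+\tfrac{\mu}{3}u^3\big)\,dt'$, understood in $L^2$.

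Each $u_{n_m}$ satisfies this identity with data $u_{n_m,0}$, so the task is to pass to the limit term by term. The linear terms are easy. We have $\varphi\in C_c^\infty$, $(1+|D_x|)\varphi$ is smooth, and $u_{n_m}\to u$ strongly in $C([0,T];L^2)$. The initial term converges because $u_{n_m,0}\to u_0$ in $L^2$.

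The cubic term is the main obstacle, and it is where the endpoint $s=1/2$ matters. The plan is to interpolate between strong $L^2$ convergence and a uniform $L^p$ bound. By conservation \eqref{Local:endpoint:conservation_claim} and Lemma \ref{Lem:Trudinger_type}, $\sup_t\|u_{n}(t)\|_{L^p}\lesssim\sqrt p\,\sup_n\|u_{n,0}\|_{H^{1/2}}$ for every $p<\infty$. We factor the difference of cubes and apply Hölder:
\begin{align*}
    \|u_{n_m}^3-u^3\|_{L^1}\lesssim \|u_{n_m}-u\|_{L^2}\left(\|u_{n_m}\|_{L^4}^2+\|u\|_{L^4}^2\right).
\end{align*}
We also need $u\in L^\infty_tL^4_x$. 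This follows from weak-$\star$ lower semicontinuity of the $H^{1/2}$ norm combined with the embedding $H^{1/2}\hookrightarrow L^4$. The right-hand side then tends to $0$ uniformly in $t$, so $\int\!\!\int u_{n_m}^3\pat_x\varphi\to\int\!\!\int u^3\pat_x\varphi$.

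If the Duhamel form is wanted in $L^2$ rather than the distributional form, we use the $L^2$-boundedness of $\phi_1(D_x)$ instead. We bound $\|u_{n_m}^3-u^3\|_{L^2}\lesssim\|u_{n_m}-u\|_{L^2}^{\theta}$ times a power of uniform $L^p$ norms. To get this, we first interpolate $\|u_{n_m}-u\|_{L^{q}}$ for some $q\in(2,\infty)$ between $L^2$ and $L^{p}$ with $p$ large, and then apply Hölder with $L^q$ and $L^{2q'}$-type norms of the remaining factors. This gives convergence of the nonlinear Duhamel term in $C([0,T];L^2)$.

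Finally, we check the regularity claims. The class $C_w([0,T];H^{1/2})\cap C([0,T];L^2)$ is inherited from Lemma \ref{LEM:Local:endpoint:subsequence}. The initial condition $u(0)=u_0$ holds because $u_{n_m}(0)=u_{n_m,0}\to u_0$ in $L^2$ and convergence is strong in $C([0,T];L^2)$. Since $T>0$ was arbitrary, this produces the desired (local) weak solution.
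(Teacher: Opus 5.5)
Your proposal is correct and follows essentially the same route as the paper. You extract the subsequence from Lemma \ref{LEM:Local:endpoint:subsequence}, pass to the limit in the linear terms (the paper uses weak-$\star$ convergence, you use strong $C([0,T];L^2(\T))$ convergence; either suffices), and handle the cubic term with strong $L^2$ convergence plus uniform $L^4$ bounds. Your explicit difference-of-cubes/H\"older estimate and the initial-data term in your weak formulation fill in details the paper leaves implicit.
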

\begin{proof}
    Let $u_{n_m}$ be the subsequence obtained in \ref{LEM:Local:endpoint:subsequence}. We have that for all $k\in \N$ and all $v\in C^\infty_c(\T)$,
    \begin{align*}
        \int_0^T \int_\T v \pat_t u_{n_m} + v\pat_t D_x u_{n_m} + v\pat_x \left(u_{n_m} + \mu\frac{u_{n_m}^3}{3}\right)\, dx\, dt=0.
    \end{align*}
    Integrating by parts we obtain,
    \begin{align}
        \int_0^T \int_{\T} u_{n_m} \pat_t v + u_{n_m}\pat_t D_x v + \left(u_{n_m}+\mu\frac{u_{n_m}^3}{3}\right)\pat_x v \, dx\, dt =0\label{Local:endpoint:integration_by_parts}
    \end{align}
    Now, by the weak-$\star$ convergence of $u_{n_m}$ we obtain a limit for our linear terms,
   \begin{align*}
    \lim_{m\to \infty}\int_0^T \int_{\T} u_{n_m} \pat_t v + u_{n_m}\pat_t D_x v + u_{n_m}\pat_x v \, dx\, dt = \int_0^T \int_{\T} u \pat_t v + u\pat_t D_x v + u\pat_x v \, dx\, dt.
   \end{align*}
   For the nonlinear term, we require the strong convergence due to the product giving us,
   \begin{align*}
    \lim_{m\to \infty}\int_0^T \int_{\T} \mu\frac{u_{n_m}^3}{3} \pat_x v\, dx = \int_0^T \int_{\T} \mu\frac{u^3}{3} \pat_x v\, dx.
   \end{align*}
   Returning these to \eqref{Local:endpoint:integration_by_parts} we get that,
   \begin{align*}
    \int_0^T \int_{\T} u \pat_t v + u\pat_t D_x v + \left(u+\mu\frac{u^3}{3}\right)\pat_x v \, dx\, dt =0
   \end{align*}
   for all $v\in C^\infty(\T)$, meaning that $u$ is a weak solution to \eqref{cubic_rbo_PDE}.
\end{proof}
 We now recall the definition of the operator $\phi_\beta(D_x) := \pat_x (1+D_x)^{-1}$. To show uniqueness, we first require a basic commutator estimate on this operator.
\begin{lemma}\label{Local:endpoint:commutator_estimate}
    for $1\leq p<\infty$ and $1/q + 1/p = 1/r$ with $p$ and $r$ valid Lebesgue exponents, there is the following commutator estimate,
    \begin{align*}
        \left\|\phi_\beta(D_x)(uv) - u\phi_\beta(D_x)v - v\phi_\beta(D_x)u\right\|_{L^p} \lesssim \left\|u\right\|_{L^q}\left\|v\right\|_{L^r}.
    \end{align*}
\end{lemma}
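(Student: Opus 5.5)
The plan is to exploit the explicit structure of the symbol of $\phi_\beta(D_x)$ for $\beta=1$. Namely,
\begin{align*}
    \frac{n}{1+|n|} = \operatorname{sgn}(n) - \frac{\operatorname{sgn}(n)}{1+|n|},
\end{align*}
so that, up to the factor $i$ fixed by the convention $\mathscr{F}\{\pat_x u\}=in\widehat{u}$, we may write $\phi_\beta(D_x) = -\mathcal{H} + \mathcal{K}$. Here $\mathcal{H}$ is the Hilbert transform and $\mathcal{K}$ is the Fourier multiplier with symbol $\operatorname{sgn}(n)(1+|n|)^{-1}$. The commutator in the statement is bilinear in $(u,v)$, so it splits into a Hilbert part and a smoothing part:
\begin{align*}
    C_{\mathcal{H}}(u,v) \coloneqq \mathcal{H}(uv) - u\mathcal{H}v - v\mathcal{H}u, \qquad C_{\mathcal{K}}(u,v) \coloneqq \mathcal{K}(uv) - u\mathcal{K}v - v\mathcal{K}u.
\end{align*}
I would estimate these two pieces separately in $L^p$, using the exponent relation $1/q+1/p=1/r$ exactly as stated.

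For $C_{\mathcal{H}}$ I would pass to the Fourier side. Its symbol is $\operatorname{sgn}(n_1+n_2)-\operatorname{sgn}(n_1)-\operatorname{sgn}(n_2)$, and for $n_1,n_2\neq 0$ one checks case by case that this equals $-\operatorname{sgn}(n_1+n_2)\operatorname{sgn}(n_1)\operatorname{sgn}(n_2)$, with $\operatorname{sgn}(0)=0$. Hence, modulo finitely many zero-mode terms (which are bounded by $|\widehat{u}(0)|\,\|v\|$ or the symmetric quantity, and so by Hölder on the finite-measure torus), one gets the Cotlar-type identity
\begin{align*}
    C_{\mathcal{H}}(u,v) = \mathcal{H}\big((\mathcal{H}u)(\mathcal{H}v)\big).
\end{align*}
One then uses the $L^p$ boundedness of $\mathcal{H}$, which holds for $1<p<\infty$. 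For the product, Hölder's inequality is applied together with the finite measure of $\T$, which lets us lower integrability exponents freely. This places $(\mathcal{H}u)(\mathcal{H}v)$ in $L^p$ with a bound by $\|u\|_{L^q}\|v\|_{L^r}$.

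The endpoint $p=1$ would be treated separately. There I would rely on the form of the right-hand side, using $\|\mathcal{H}u\|_{L^q}\lesssim \|u\|_{L^q}$ with $q>1$ forced by $1/q = 1/r - 1 <1$ only when that relation is admissible.

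For $C_{\mathcal{K}}$ there is no need for cancellation, because $\mathcal{K}$ smooths. The convolution kernel of $\mathcal{K}$ on $\T$ has at most a logarithmic singularity at the origin, so it lies in $L^t(\T)$ for every $t<\infty$. Young's convolution inequality then shows that $\mathcal{K}$ maps $L^a(\T)\to L^b(\T)$ for all $1\le a\le b<\infty$, and also into $L^\infty$ when $a>1$. I would bound each of the three terms $\mathcal{K}(uv)$, $u\mathcal{K}v$ and $v\mathcal{K}u$ directly by Hölder. For example, $\|u\,\mathcal{K}v\|_{L^p}\le \|u\|_{L^{q}}\|\mathcal{K}v\|_{L^{m}}$ with $1/m=1/p-1/q$, and then $\|\mathcal{K}v\|_{L^m}\lesssim\|v\|_{L^r}$ by the gain in integrability. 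The other two terms are handled the same way, placing the high integrability on whichever factor is acted on by $\mathcal{K}$.

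The main obstacle I expect is the Hilbert part. The three terms of $C_{\mathcal{H}}$ are individually only bounded, with no smoothing, so the whole argument rests on the identity $C_{\mathcal{H}}=\mathcal{H}(\mathcal{H}u\,\mathcal{H}v)$ together with a careful bookkeeping of the exponents. The delicate steps are checking that the Hölder exponent for the product $(\mathcal{H}u)(\mathcal{H}v)$ is compatible with the stated relation $1/q+1/p=1/r$ once the finite measure of $\T$ is used, and handling the zero-mode corrections and the endpoint $p=1$, where $\mathcal{H}$ is unbounded. I would first verify the symbol identity and the zero-mode terms carefully, and only then fix the exponents.
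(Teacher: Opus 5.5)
Your decomposition $\phi_\beta(D_x)=-\mathcal{H}+\mathcal{K}$ and the identity $\mathcal{H}(uv)-u\mathcal{H}v-v\mathcal{H}u=\mathcal{H}\bigl((\mathcal{H}u)(\mathcal{H}v)\bigr)$ are correct. With $\operatorname{sgn}(0)=0$, both bilinear symbols vanish when $n_1=0$ or $n_2=0$, so there are no zero-mode corrections.

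\textbf{The gap.} The gap is the exponent bookkeeping, and it cannot be closed: with $\tfrac1q+\tfrac1p=\tfrac1r$ taken literally, the inequality is false.

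Your H\"older step for $C_{\mathcal{H}}$ uses the finite measure of $\T$ in the wrong direction. H\"older only places $(\mathcal{H}u)(\mathcal{H}v)$ in $L^m$ with $\tfrac1m=\tfrac1q+\tfrac1r=\tfrac2q+\tfrac1p$. So $m<p$ whenever $q<\infty$, and $L^m\not\subset L^p$.

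The same obstruction hits $v\,\mathcal{K}u$. The factor not acted on by $\mathcal{K}$ must lie in some $L^a$ with $a\ge p$, but $v$ is only in $L^r$ with $r<p$.

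\textbf{A counterexample.} Take $u=\cos x$. Then $\phi_\beta(D_x)\cos x=c\sin x$ with $c\neq0$. The commutator $[\phi_\beta(D_x),\cos x]$ is built from the symbol differences $m(n\pm1)-m(n)=O(\langle n\rangle^{-2})$, where $m(n)=\tfrac{n}{1+|n|}$, so it maps $L^1(\T)$ into $L^\infty(\T)$. Hence
\begin{align*}
\bigl\|\phi_\beta(D_x)(uv)-u\,\phi_\beta(D_x)v-v\,\phi_\beta(D_x)u\bigr\|_{L^p}\geq |c|\,\|v\sin x\|_{L^p}-C\|v\|_{L^1}.
\end{align*}
The right-hand side is unbounded on $\{\|v\|_{L^r}\le1\}$ because $r<p$, while $\|\cos x\|_{L^q}$ is fixed.

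\textbf{What the lemma should say.} The relation the paper actually uses is $\tfrac1q+\tfrac1r=\tfrac1p$: in Term 3 of the uniqueness proof, the remainder $R$ is measured in $L^2$ with $\tfrac1q+\tfrac1r=\tfrac12$. You must also restrict to $1<p<\infty$ and $1<q,r<\infty$.
\begin{itemize}
\item If $q=\infty$, so $r=p$, the term $v\,\mathcal{H}u$ is not controlled by $\|u\|_{L^\infty}\|v\|_{L^p}$, because $\mathcal{H}u$ can have a logarithmic singularity (e.g.\ $u=\operatorname{sgn}(\sin x)$), while the other terms are bounded.
\item At $p=1$ the outer $\mathcal{H}$ is unbounded. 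Your $p=1$ paragraph offers no substitute, so these endpoints should be dropped rather than treated separately.
\end{itemize}

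\textbf{With the corrected hypotheses, your argument closes.} First,
\begin{align*}
\|\mathcal{H}(\mathcal{H}u\,\mathcal{H}v)\|_{L^p}\lesssim\|\mathcal{H}u\|_{L^q}\|\mathcal{H}v\|_{L^r}\lesssim\|u\|_{L^q}\|v\|_{L^r}.
\end{align*}
Second, the three $\mathcal{K}$-terms follow from plain H\"older. The kernel of $\mathcal{K}$ is in fact bounded (compare $\sum_{n\ge1}\sin(nx)/(n+1)$ with $\sum_{n\ge1}\sin(nx)/n$), so $\mathcal{K}:L^1\to L^\infty$.

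The paper gives no argument of its own; it only cites Lemma 1.3 of He and Mammeri, which is stated on $\R$. Once the exponents are fixed, your splitting is a self-contained proof on $\T$.
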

\begin{proof}
    See \cite[Lemma 1.3]{he2018}.
\end{proof}
     We may now proceed with the uniqueness proof. As mentioned, we follow closely the method of He and Mammeri in \cite{he2018}. Note now that $T$ is no longer arbitrary and will henceforth depend on $u_0$.
    \begin{lemma}\label{Lem:Local:endpoint:uniqueness}
        The weak solution obtained to \eqref{cubic_rbo_PDE} in Lemma \ref{Local:endpoint:Lem:existence} is unique in $C_w([0,T];H^{1/2}(\T))\cap C([0,T];L^2(\T))$ for fixed initial data $u_0\in H^{1/2}(\T)$ and some $T := T(u_0) >0$.
    \end{lemma}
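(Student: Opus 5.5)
Let $u_1,u_2\in C_w([0,T];H^{1/2}(\T))\cap C([0,T];L^2(\T))$ be two weak solutions of \eqref{cubic_rbo_PDE} with the same data $u_0$, and set $w=u_1-u_2$. I would run a Yudovich-type $L^2$ energy estimate on $w$, following He and Mammeri \cite{he2018}. Writing the equation in the form $\pat_t u=-\phi_1(D_x)(u+\tfrac{\mu}{3}u^3)$, the difference satisfies
\begin{align*}
\pat_t w = -\phi_1(D_x)w - \tfrac{\mu}{3}\phi_1(D_x)\big(w(u_1^2+u_1u_2+u_2^2)\big).
\end{align*}
The operator $\phi_1(D_x)$ is bounded on every $L^p$ with $1<p<\infty$ (Mikhlin), so the right-hand side lies in $L^\infty([0,T];L^2)$. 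Hence $w\in W^{1,\infty}([0,T];L^2)$, and the identity $\tfrac12\tfrac{d}{dt}\|w\|_{L^2}^2=\langle w,\pat_t w\rangle$ is justified without further mollification. The linear term vanishes by skew-symmetry of $\phi_1(D_x)$.

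For the nonlinear term, write $a=u_1^2+u_1u_2+u_2^2$. I would use Lemma \ref{Local:endpoint:commutator_estimate} to move $\phi_1(D_x)$ off the product $wa$:
\begin{align*}
\langle w,\phi_1(D_x)(wa)\rangle = \langle w, a\,\phi_1(D_x)w\rangle + \langle w, w\,\phi_1(D_x)a\rangle + \langle w, R(w,a)\rangle .
\end{align*}
The first term is bounded by $\|w\|_{L^p}^2\|a\|_{L^{p/(p-2)}}$, using that $\phi_1(D_x)$ is bounded on $L^p$. The second term is bounded similarly, with $\|\phi_1(D_x)a\|_{L^{p'}}\lesssim\|a\|_{L^{p'}}$. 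The remainder is handled by the commutator estimate. Since $u_j\in L^\infty_tH^{1/2}$, Lemma \ref{Lem:Trudinger_type} gives
\begin{align*}
\|a\|_{L^{q}}\lesssim q\,\|u_j\|_{L^\infty_tH^{1/2}}^2 \quad\text{for all } q<\infty .
\end{align*}
All three terms therefore reduce to the form
\begin{align*}
\tfrac{d}{dt}\|w\|_{L^2}^2\lesssim C(u_0)\, p\,\|w\|_{L^p}^2 ,
\end{align*}
with $p$ large, or equivalently $p\,\|w\|_{L^{2p'}}$-type quantities after Hölder with conjugate exponents.

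The key step is to interpolate $\|w\|_{L^p}$ between $L^2$ and a large $L^{q}$. The $L^q$ norm is controlled by Lemma \ref{Lem:Trudinger_type} as $\sqrt{q}\,M$ with $M=\sup_t(\|u_1\|_{H^{1/2}}+\|u_2\|_{H^{1/2}})$, which by \eqref{Local:endpoint:conservation_claim} and weak lower semicontinuity is bounded by a constant depending on $\mathscr{E}_{1,1}(u_0)$. Choosing the exponents so that the power of $\|w\|_{L^2}$ is $2(1-1/p)$ leads to
\begin{align*}
y'(t)\le C(u_0)\, p\, y(t)^{1-1/p}, \qquad y=\|w\|_{L^2}^2,\ y(0)=0,
\end{align*}
with a constant uniform in $p$ (up to the factor $p$).

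Integrating gives $y(t)\le (C t)^{p}$ for every $p\ge2$. If $T=T(u_0)$ is chosen so that $CT<1$, letting $p\to\infty$ forces $y\equiv0$ on $[0,T]$. This is where the dependence $T=T(u_0)$ enters.

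The main obstacle is the bookkeeping in the nonlinear term. The cubic factor produces $\|a\|_{L^q}\sim q\,M^2$ rather than $\sqrt q$, so one needs the total $p$-growth of the constant to stay linear in $p$ after optimising the Hölder/interpolation exponents. Only then does the ODE yield $(Ct)^p$, rather than a bound growing like $p^{p}$ that fails to vanish. I would first try splitting with exponents $(p,p,p/(p-2))$ and the commutator estimate with $q=r$. If the powers of $p$ accumulate beyond linear, I would fall back on an Osgood-type argument with the modulus $y\log(1/y)$, obtained by optimising in $p$ pointwise in time, which still gives $y\equiv0$ on a short interval.
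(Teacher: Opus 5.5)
Your architecture is the paper's: a Yudovich-type $L^2$ estimate for $w$, the three-term commutator split, Lemma \ref{Lem:Trudinger_type}, and $y'\le Cp\,y^{1-1/p}\Rightarrow y\le (Ct)^p$. However, the step you defer as bookkeeping is exactly where it breaks, and the Osgood fallback cannot repair it.

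First, your H\"older splits put $w$ in a \emph{large} Lebesgue space. This holds for $\|w\|_{L^p}^2\|a\|_{L^{p/(p-2)}}$, and also for pairing the middle term against $\|\phi_1(D_x)a\|_{L^{p'}}$, which forces $\|w\|_{L^{2p}}^2$. Interpolating $\|w\|_{L^p}$ between $L^2$ and $L^q$ then leaves at best a factor $y^{2/p}$, not $y^{1-1/p}$. The argument needs $w$ in $L^{2p'}$ and the coefficient in $L^p$; these are not equivalent formulations.

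Second, with the exponents arranged that way, the uniformity you take for granted fails. Write $\phi_1(D_x)=-\mathcal{H}+\mathcal{H}(1+|D_x|)^{-1}$; the second operator is convolution with a bounded kernel. Hence $\|\phi_1(D_x)\|_{L^p\to L^p}\sim p$ as $p\to\infty$ (and $\sim (p-1)^{-1}$ as $p\to 1$). The middle term then costs $\|w\|_{L^{2p'}}^2\|\phi_1(D_x)a\|_{L^p}\lesssim p\,\|a\|_{L^p}\|w\|_{L^{2p'}}^2\lesssim p^2M^2\|w\|_{L^{2p'}}^2$. The constant of Lemma \ref{Local:endpoint:commutator_estimate} with one exponent tending to infinity has to be tracked the same way. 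So the linear growth $\|a\|_{L^p}\lesssim pM^2$ that worries you is harmless; it is the operator norms that destroy linearity.

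Superlinear growth is fatal. Optimising $y'\le Cp^{\gamma}y^{1-1/p}$ at $p\approx\log(1/y)$ gives $y'\lesssim y\log^{\gamma}(1/y)$. This satisfies Osgood's condition if and only if $\gamma\le 1$, the same threshold as the direct argument. (The paper's Term 1 passes through $\|\phi_\beta(D_x)A\|_{L^{2p}}\lesssim\|A\|_{L^{2p}}$, so its decomposition relies on the same uniform bound.)

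The missing idea is not to split at all. Since $\phi_1(D_x)$ is skew-adjoint on real $L^2$,
\begin{align*}
\tfrac12 y'=\tfrac{\mu}{3}\langle \phi_1(D_x)w,\,wa\rangle,\qquad |y'|\lesssim \|a\|_{L^p}\,\|w\|_{L^{2p'}}\,\|\phi_1(D_x)w\|_{L^{2p'}}.
\end{align*}
Now $\phi_1(D_x)$ is applied only on $L^{2p'}$, and $2p'\in(2,4]$ for $p\ge 2$, where Riesz--Thorin gives a bound uniform in $p$. Use $\|w\|_{L^{2p'}}^2\le\|w\|_{L^2}^{2-4/p}\|w\|_{L^4}^{4/p}$ and $\|a\|_{L^p}\lesssim pM^2$, which is now the only source of $p$-growth. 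You get $y'\le CpM^{2+4/p}y^{1-2/p}$, hence $y(t)\le M^2(CM^2t)^{p/2}\to 0$ for $t<(CM^2)^{-1}$. The commutator lemma is not needed.

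One further correction concerns the competitor $u_2$. You cannot bound $\sup_t\|u_2(t)\|_{H^{1/2}}$ through \eqref{Local:endpoint:conservation_claim} and lower semicontinuity, since $u_2$ comes with no approximating sequence. You only get finiteness from weak continuity, as in the paper. This is harmless: the smallness condition is on $t$ alone, so the argument can be restarted on successive intervals of length $\sim M^{-2}$.
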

    \begin{proof}
        Assume that there exists weak solutions $u,v \in C_w([0,T];H^{1/2}(\T))\cap C([0,T];L^2(\T))$ to \eqref{cubic_rbo_PDE} with the same initial data $u_0\in H^{1/2}(\T)$. We first note that we can obtain uniform in time bounds on the $H^{1/2}$ norms of $u$ and $v$. For $u$ this can be done via the weak approximating sequence of $H^1(\T)$ solutions $u_{n_m}$ and the weak lower semi-continuity of the norm,
        \begin{align}
            \sup_{t\in [0,T]}\left\|u(t)\right\|_{H^{1/2}} &\leq \liminf_{m\to \infty} \left\|u_{n_m}\right\|_{H^{1/2}} = \liminf_{m\to \infty} \left\|u_{n_m,0}\right\|_{H^{1/2}} = \left\|u_0\right\|_{H^{1/2}} = \mathscr{E}_{1,1}(u_0)\label{Local:endpoint:unif_bound_u}
        \end{align}
        where we used the conservation \eqref{Local:endpoint:conservation_claim}. Since we lack an approximating sequence for $v$, we instead use the uniform boundedness of weakly continuous functions on compact intervals to obtain,
        \begin{align}
            \sup_{t\in[0,T]}\left\|v(t)\right\|_{H^{1/2}}&\leq C_{v_0,T}\label{Local:endpoint:unif_bound_v}.
        \end{align}
        for some constant $C_{v_0,T}>0$. Now, define
        \begin{align*}
            g(t) \coloneqq \left\|u(t)-v(t)\right\|^2_{L^2}
        \end{align*}
        and $w\coloneqq u-v$. Taking the time derivative of $g$, aiming to use Gr\"onwall's inequality we see that,
        \begin{align}
            g'(t) = 2\int_\T (u-v)(\pat_t u - \pat_t v)\, dx &= -2\int_{\T}(u-v)\phi_\beta(D_x)\left(u+\mu\frac{u^3}{3} - v - \mu\frac{v^3}{3}\right)\, dx\notag\\
            &= -2\int_\T w\phi_\beta(D_x)w + \mu w\phi_\beta(D_x)\left(\frac{u^3}{3} - \frac{v^3}{3}\right)\, dx\notag\\
            &= -\mu\frac{2}{3} \int_\T w \phi_\beta(D_x)\left(w\left(u^2 + uv + v^2\right)\right)\, dx\label{Local:endpoint:uniqueness:ipb}
        \end{align}
        where in the last inequality we used integration by parts. Now, defining $A := A(u,v) = u^2 +uv + v^2$ and $R := R(u,v) = \phi_\beta(D_x)(wA) -A\phi_\beta(D_x)w - w\phi_\beta(D_x) A$ we can split \eqref{Local:endpoint:uniqueness:ipb} like so,
        \begin{align}
            g'(t) &= -\mu \frac{2}{3}\int_\T w^2\phi_\beta(D_x)A \, dx -\mu \frac{2}{3}\int_\T wA \phi_\beta(D_x) w\, dx -\mu \frac{2}{3}\int_\T w R\, dx\notag\\
            &= \text{Term 1} + \text{Term 2} + \text{Term 3}\label{Local:endpoint:term_split}.
        \end{align}
        Before bounding each of these terms seperately it will be useful to have a bound on the $L^p(\T)$ norm of $A$ for some $1\leq p<\infty$. Using H\"older's and Young's product inequality we get that
        \begin{align*}
            \left\|A\right\|_{L^p} &\lesssim \left\|u\right\|^2_{L^{2p}} + \left\|v\right\|^2_{L^{2p}}.
        \end{align*}
        Now using Lemma \ref{Lem:Trudinger_type} we obtain,
        \begin{align*}
            \left\|A\right\|_{L^p}\lesssim p\left(\left\|u\right\|^2_{H^{1/2}} + \left\|v\right\|^2_{H^{1/2}}\right).
        \end{align*}
        Finally, \ref{Local:endpoint:unif_bound_u} and \eqref{Local:endpoint:unif_bound_v} we obtain the uniform bound,
        \begin{align}
            \left\|A\right\|_{L^p} \lesssim p\max(\mathscr{E}_{1,1}(u_0)^2, C_{v_0,T}^2)\label{Local:endpoint:A_unif_bound}.
        \end{align}
        We now go through the terms in \eqref{Local:endpoint:term_split}, obtaining our upper bounds.
        \begin{itemize}
            \item \underline{Term 1:} Note that  $w^2 = w^{2(1-1/p)}w^{2/p}$.
            \begin{align*}
                \left|\text{Term 1}\right| &\lesssim \big\| w^{2(1-1/p)}\big\|_{L^{\frac{p}{p-1}}}\big\|w^{2/p}\phi_\beta(D_x)A\big\|_{L^p}\\
                &\leq \left\|w\right\|^{2(1-1/p)}_{L^2}\left\|w\right\|^{2/p}_{L^4} \left\|\phi_\beta(D_x)A\right\|_{L^{2p}}\\
                &\leq g(t)^{1-1/p}\left(\left\|u\right\|^{2/p}_{L^4} + \left\|v\right\|_{L^4}^{2/p}\right)\left\|\phi_\beta(D_x)A\right\|_{L^{2p}}.
            \end{align*}
            Using the $L^p$ boundedness of $\phi_\beta(D_x)$ and \eqref{Local:endpoint:A_unif_bound} we obtain,
            \begin{align*}
                \left|\text{Term 1}\right| &\lesssim p\max(\mathscr{E}_{1,1}(u_0)^2, C_{v_0,T}^2) g(t)^{1-1/p} \left(\left\|u\right\|^{2/p}_{L^4} + \left\|v\right\|_{L^4}^{2/p}\right).
            \end{align*}
            Now Using Lemma \ref{Lem:Trudinger_type} with our uniform bounds \eqref{Local:endpoint:unif_bound_u} and \eqref{Local:endpoint:unif_bound_v},
            \begin{align}
                \left|\text{Term 1}\right| &\lesssim p\max(\mathscr{E}_{1,1}(u_0), C_{v_0,T})^{2(1+1/p)} g(t)^{1-1/p}\label{Local:endpoint:Term_1_bound}.
            \end{align}
            \item \underline{Term 2:} Here we note that $w = w^{1-2/p}w^{2/p}$
            \begin{align*}
                \big|\text{Term 2}\big| &\lesssim \big\|w^{1-2/p} \phi_\beta(D_x)w\big\|_{L^{\frac{p}{p-1}}}\big\|w^{2/p}A\big\|_{L^p}\\
                &\lesssim \big\|w^{1-2/p}\big\|_{L^{\frac{2p}{p-2}}}\left\|\phi_\beta(D_x)w\right\|_{L^2}\left(\left\|u\right\|^{2/p}_{L^4} + \left\|v\right\|_{L^4}^{2/p}\right)\left\|A\right\|_{L^{2p}}\\
                &\lesssim g(t)^{1-1/p} \left(\left\|u\right\|^{2/p}_{L^4} + \left\|v\right\|_{L^4}^{2/p}\right)\left\|A\right\|_{L^{2p}}.
            \end{align*}
            Using \eqref{Local:endpoint:A_unif_bound}, Lemma \ref{Lem:Trudinger_type} and our uniform bounds \eqref{Local:endpoint:unif_bound_u} and \eqref{Local:endpoint:unif_bound_v} we get,
            \begin{align}
                \left|\text{Term 2}\right| \lesssim p\max(\mathscr{E}_{1,1}(u_0), C_{v_0,T})^{2(1+1/p)} g(t)^{1-1/p}\label{Local:endpoint:Term_2_bound}.
            \end{align}
            \item \underline{Term 3:} Using the commutator estimate, Lemma \ref{Local:endpoint:commutator_estimate} we get that for $\frac{1}{q} = \frac{1}{p}(1-\frac{2}{p})$ and $\frac{1}{r} = \frac{1}{2} - \frac{1}{q}$,
            \begin{align*}
                \left|\text{Term 3}\right| &\lesssim \left\|w\right\|_{L^2}\left\|A\right\|_{L^q}\left\|w\right\|_{L^r}\\
                &\lesssim \left\|w\right\|_{L^2}\left\|A\right\|_{L^q}\left\|w\right\|^{1-2/p}_{L^2}\left\|w\right\|_{L^p}^{\frac{p}{2}}\\
                &\leq \left\|w\right\|_{L^2}\left\|A\right\|_{L^q}\left\|w\right\|^{1-2/p}_{L^2}\left(\left\|u\right\|_{L^p}^{\frac{p}{2}}+ \left\|v\right\|_{L^p}^{\frac{p}{2}}\right)\\
                &\lesssim qp^{1/p}\max\left(\mathscr{E}_{1,1}(u_0),C_{v_0,T}\right)^{2(1+1/p)}g(t)^{1-1/p}.
            \end{align*}
            Now taking $p>2$ large enough such that,
            \begin{align*}
                 qp^{1/p} &= p^{1/p}\frac{p}{1-\frac{2}{p}}\lesssim p
            \end{align*}
            we obtain,
            \begin{align}
                \left|\text{Term 3}\right| &\lesssim p \max\left(\mathscr{E}_{1,1}(u_0),C_{v_0,T}\right)^{2(1+1/p)}g(t)^{1-1/p}.\label{Local:endpoint:Term_3_bound}
            \end{align}
        \end{itemize}
        Now, combining \eqref{Local:endpoint:Term_1_bound}, \eqref{Local:endpoint:Term_2_bound} and \eqref{Local:endpoint:Term_3_bound} in \eqref{Local:endpoint:term_split} we get that for some constant $C>0$,
        \begin{align*}
            g'(t) &\leq  p C\max\left(\mathscr{E}_{1,1}(u_0),C_{v_0,T}\right)^{2(1+1/p)}g(t)^{1-1/p}.
        \end{align*}
        Integrating this we yield,
        \begin{align*}
            g(t) \leq \left(C\max\left(\mathscr{E}_{1,1}(u_0),C_{v_0,T}\right)^2\right)^p.
        \end{align*}
        For this upper bound to go to zero as $p\to \infty$ we require that $C\max\left(\mathscr{E}_{1,1}(u_0),C_{v_0,T}\right)^2 <1$. Since these constants depend only on our initial data however, for any $u_0$, there exists a constant $T>0$ such that on $[0,T]$, $g(t) = 0$. Therefore, we obtain uniqueness of our weak solution $u\in C_w([0,T];H^{1/2}(\T))\cap C([0,T];L^2(\T))$.
    \end{proof}
    \begin{remark}\label{Local:endpoint:failure_remark}
        The above method fails for the quintic and higher powers of the regurarised BO equation. One obtains a multiplicative factor of $p$ in the bound for $g(t)$, forcing the constant $C$ to depend on $p$ for the limit to evaluate to zero. This destroys the argument, indicating that new ideas are needed.
    \end{remark}
     Now finally we treat continuity of the solution map.
    \begin{lemma}\label{lem:Local:endpoint:continuity}
        The unique weak solution $u$ obtained in Lemma \ref{Lem:Local:endpoint:uniqueness} is in $C([0,T];H^{1/2}(\T))$ and is continuous with respect to the initial data $u_0$.
    \end{lemma}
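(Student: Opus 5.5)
The plan is the classical ``weak continuity plus norm continuity'' argument for strong continuity, followed by a compactness/uniqueness argument for continuous dependence.

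\emph{Step 1: conservation of $\mathscr{E}_{1,1}$ for the weak solution.} By Lemma~\ref{LEM:Local:endpoint:subsequence}, $u\in C_w([0,T];H^{1/2}(\T))$. For every $t\in[0,T]$ we have $u_{n_m}(t)\to u(t)$ in $L^2$, and $u_{n_m}(t)$ is bounded in $H^{1/2}$. Hence $u_{n_m}(t)\rightharpoonup u(t)$ weakly in $H^{1/2}$ for each $t$. Weak lower semicontinuity and \eqref{Local:endpoint:conservation_claim} then give
\begin{align*}
\mathscr{E}_{1,1}(u(t))\leq \liminf_{m\to\infty}\mathscr{E}_{1,1}(u_{n_m,0})=\mathscr{E}_{1,1}(u_0).
\end{align*}
For the reverse inequality, use time reversibility. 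The equation is invariant under $(x,t)\mapsto(-x,-t)$, because $|D_x|$ commutes with reflection. Fix $t_0\in(0,T]$ and regard $u(t_0)$ as initial data. By Lemma~\ref{Local:endpoint:Lem:existence}, the backward problem from $u(t_0)$ has a weak solution obtained by the same approximation procedure, and that solution satisfies $\mathscr{E}_{1,1}(u_0)\le \mathscr{E}_{1,1}(u(t_0))$ on $[0,t_0]$. Lemma~\ref{Lem:Local:endpoint:uniqueness} identifies this backward solution with $u$. If $t_0$ exceeds the uniqueness time, iterate in short steps, which is allowed since the uniqueness time depends only on the $H^{1/2}$ size, and that size is uniformly bounded. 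So $\mathscr{E}_{1,1}(u(t))=\mathscr{E}_{1,1}(u_0)$ for all $t$. Since $\mathscr{E}_{1,1}$ is exactly $\|\cdot\|_{H^{1/2}}^2$, weak continuity together with continuity of the norm gives strong continuity, so $u\in C([0,T];H^{1/2}(\T))$.

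\emph{Step 2: continuous dependence.} Let $u_{0,j}\to u_0$ in $H^{1/2}$, with solutions $u_j$. Uniqueness of the limit forces the whole sequence to converge, so it suffices to show that every subsequence has a further subsequence converging to $u$ in $C([0,T];H^{1/2})$. By conservation, the $u_j$ are uniformly bounded in $H^{1/2}$, and the local time $T$ can be taken uniform. The $\partial_t u_j$ bound from Lemma~\ref{LEM:Local:endpoint:subsequence} holds verbatim. Aubin--Lions then gives a subsequence with $u_j\to \tilde u$ in $C([0,T];L^2)$ and weak-$\star$ in $L^\infty H^{1/2}$. Passing to the limit exactly as in Lemma~\ref{Local:endpoint:Lem:existence}, $\tilde u$ is a weak solution with data $u_0$, and uniqueness gives $\tilde u=u$.

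To upgrade to $H^{1/2}$ convergence, take $t_j\to t$. Then $u_j(t_j)\to u(t)$ in $L^2$, using uniform $L^2$-equicontinuity and $u\in C([0,T];L^2)$. The family $u_j(t_j)$ is bounded in $H^{1/2}$, so $u_j(t_j)\rightharpoonup u(t)$ in $H^{1/2}$. Moreover
\begin{align*}
\|u_j(t_j)\|_{H^{1/2}}^2=\mathscr{E}_{1,1}(u_{0,j})\to\mathscr{E}_{1,1}(u_0)=\|u(t)\|_{H^{1/2}}^2.
\end{align*}
Weak convergence plus norm convergence gives $u_j(t_j)\to u(t)$ strongly. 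Because the sequence $t_j$ was arbitrary, this is uniform convergence on $[0,T]$.

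\emph{Main obstacle.} The delicate point is Step~1's reverse energy inequality, since a general weak solution does not obviously conserve energy. The reversibility-plus-uniqueness trick is the approach I would try first. It requires checking that the $C_w\cap C(L^2)$ uniqueness class is reflection invariant and that the uniqueness time is uniform on $H^{1/2}$-balls, so that the iteration closes. If that fails, the fallback is to approximate the backward problem by $H^1$ solutions from mollified $u(t_0)$ and compare via the $L^2$ stability estimate $g(t)$ from Lemma~\ref{Lem:Local:endpoint:uniqueness}.
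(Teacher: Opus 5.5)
Your proposal is correct. Step 1 is the paper's argument, and Step 2 proves continuous dependence, which the paper only gestures at.

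\textbf{Step 1.} As in the paper, weak lower semicontinuity against the conserved energy of the $H^1$ approximants gives $\mathscr{E}_{1,1}(u(t))\le\mathscr{E}_{1,1}(u_0)$. Time reversal combined with uniqueness gives the reverse inequality, and norm continuity plus weak continuity gives strong continuity. You fill in details the paper leaves implicit, and they are right:
\begin{itemize}
\item you get pointwise-in-$t$ weak convergence in $H^{1/2}$ from the $C([0,T];L^2)$ convergence, rather than from weak-$\star$ convergence in $L^\infty_tH^{1/2}$;
\item you identify the relevant symmetry as $(x,t)\mapsto(-x,-t)$;
\item you iterate the uniqueness lemma, using that its time depends only on $H^{1/2}$ bounds of the two solutions.
\end{itemize}
One minor imprecision: $\mathscr{E}_{1,1}$ is the square of a Hilbert norm equivalent to $\|\cdot\|_{H^{1/2}}$, not literally equal to it. That is all the weak-plus-norm argument needs.

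\textbf{Step 2.} This is where you go beyond the paper. The paper handles continuous dependence with a single sentence: ``the same reasoning'' shows that $u_0\mapsto u(t)$ is strongly continuous as $t\to0$. That sentence does not really address varying the data. Your argument runs as follows:
\begin{itemize}
\item uniform $H^{1/2}$ bounds and $\partial_t u_j\in L^\infty L^2$ bounds, read off from the weak equation;
\item Aubin--Lions, passage to the limit, and identification of the limit via uniqueness and the subsequence principle;
\item an upgrade of $L^2$ convergence at $t_j\to t$ to $H^{1/2}$ convergence, through $\mathscr{E}_{1,1}(u_j(t_j))=\mathscr{E}_{1,1}(u_{0,j})\to\mathscr{E}_{1,1}(u(t))$.
\end{itemize}
Combined with compactness of $[0,T]$ and the continuity of $u$ from Step 1, this proves continuity of the data-to-solution map into $C([0,T];H^{1/2}(\T))$, which is what the statement claims.

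\textbf{Comparison.} The paper's route is shorter but leaves the second half of the lemma essentially unproved. Yours costs re-running the compactness machinery, and it needs Step 1 for every $u_j$, i.e.\ energy conservation for each unique weak solution and not just for $u$. Both ingredients are available from the paper's lemmas, so your proof is complete relative to the paper's uniqueness lemma.
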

    \begin{proof}
        By the weak lower semi-continuity of the norm  and the conservation \eqref{Local:endpoint:conservation_claim} of the $H^1$ initial data we obtain,
        \begin{align*}
            \left\|u(t)\right\|_{H^{1/2}} \leq \liminf_{m\to \infty}\left\|u_{n_m}(t)\right\|_{H^{1/2}} \leq \left\|u_0\right\|_{H^{1/2}}.
        \end{align*}
        Furthermore, by time reversal of \eqref{cubic_rbo_PDE} we get that,
        \begin{align*}
            \left\|u(0)\right\|_{H^{1/2}} \leq \left\|u(t)\right\|_{H^{1/2}}.
        \end{align*}
        Therefore, by the uniqueness of $u$, we know that $\left\|u(t)\right\|_{H^{1/2}} = \left\|u_0\right\|_{H^{1/2}}$. From this we can see that $u(t)$ is continuous in norm as $\left\|u(t)\right\|_{H^{1/2}} = \left\|u(s)\right\|_{H^{1/2}}$ for any $s<t<T$. Therefore, $u(t)$ is continuous in norm and weakly continuous, meaning it must be strongly continuous. The same reasoning can be used to infer that $u_0\mapsto u(t)$ is strongly continuous as $t\to 0$.
    \end{proof}
     Combinining these results we can prove Proposition \ref{THM:Local:endpoint}.
    \begin{proof}[Proof of Proposition \ref{THM:Local:endpoint}]
        Combine Lemmas \ref{Local:endpoint:Lem:existence}, \ref{Lem:Local:endpoint:uniqueness} and \ref{lem:Local:endpoint:continuity}.
    \end{proof}
    Now, as discussed in the introduction we may use the conservation of the $H^{1/2}$ norm to obtain the corresponding global result.
    \begin{proof}[Proof of Theorem \ref{thm:global:endpoint}]
        By Proposition \ref{THM:Local:endpoint} and the conservation of the $H^{1/2}$ norm, \eqref{Conservation:H1_like} we obtain a global solution via the standard iteration argument.
    \end{proof}


\section{Ill-posedness}
We will first prove that the solution map undergoes norm inflation. To do so, we will study an equivalent formulation that is easier to work with. 
\begin{proposition}\label{Prop_norm_integer}
    Let $d\geq 1$, $\mu \in \{-1,1\}$, $1\leq \beta \leq 2$, $k\geq 3$ and $s<0$. The following formulations are equivalent:
    \begin{itemize}
    \item \underline{Norm inflation everywhere:} For any $u_0\in H^s$ and $\varepsilon>0$, there exists a solution $u_\varepsilon$ to \eqref{Gen-BOBBM-eq} on $t\in (0,\varepsilon)$ such that,
    \begin{align*}
        \left\|u_\varepsilon(0) - u_0\right\|_{H^s} < \varepsilon \quad \text{and} \quad \left\|u_\varepsilon(t)\right\|_{H^s} > \varepsilon^{-1}.
    \end{align*}
    \item \underline{Quantitative norm inflation everywhere:} For any $u_0\in H^s$ and $n\in \N$, there exists a solution $u_n$ to \eqref{Gen-BOBBM-eq} and $t_n \in (0,1/n)$ such that,
    \begin{align}
        \left\|u_n(0) - u_0\right\|_{H^s} < \tfrac{1}{n} \quad \text{and} \quad \|u_n(t_n)\|_{H^s}>n \label{integer_norm_inflation}
    \end{align}
    \end{itemize} 
\end{proposition}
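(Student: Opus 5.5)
The equivalence is a matter of re-indexing the quantifiers. Both implications use only the trivial comparison between a real parameter $\varepsilon>0$ and an integer $n\in\N$. No property of \eqref{Gen-BOBBM-eq} beyond the meaning of ``solution'' enters, so the restrictions on $d,\mu,\beta,k,s$ are carried along passively.

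\emph{Norm inflation everywhere $\Rightarrow$ quantitative norm inflation everywhere.} Fix $u_0\in H^s$ and $n\in\N$, and apply the first formulation with $\varepsilon=1/n$. This gives a solution $u_\varepsilon$ on $(0,1/n)$ with $\|u_\varepsilon(0)-u_0\|_{H^s}<1/n$. It also gives a time in $(0,1/n)$ at which $\|u_\varepsilon(t)\|_{H^s}>n$. Here I read the first formulation, as in the introduction (Theorem \ref{InfiniteLossofRegs} and the discussion of \cite{forlano2020}), as asserting the existence of some $t_\varepsilon\in(0,\varepsilon)$ rather than the inequality for all $t$. Setting $u_n:=u_\varepsilon$ and $t_n:=t_\varepsilon$ yields \eqref{integer_norm_inflation}.

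\emph{Quantitative $\Rightarrow$ norm inflation everywhere.} Fix $u_0\in H^s$ and $\varepsilon>0$, and choose $n\in\N$ with $n>\varepsilon^{-1}$, so that $1/n<\varepsilon$. Let $u_n$ and $t_n\in(0,1/n)\subset(0,\varepsilon)$ be given by \eqref{integer_norm_inflation}. Then $\|u_n(0)-u_0\|_{H^s}<1/n<\varepsilon$ and $\|u_n(t_n)\|_{H^s}>n>\varepsilon^{-1}$. Taking $u_\varepsilon:=u_n$ and $t_\varepsilon:=t_n$ gives the first formulation.

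The only point needing care, and the one I would state explicitly, is the quantifier on $t$ in the first formulation: the inflation should hold at some time $t_\varepsilon\in(0,\varepsilon)$, not at every time. I would also note that the solution $u_n$ need only exist on $[0,t_n]$, which the quantitative version automatically provides. Beyond this clarification there is no real obstacle. The content of the later argument lies in constructing $u_n$ via the power series or Picard expansion, and this proposition simply justifies proving the integer-indexed version \eqref{integer_norm_inflation}.
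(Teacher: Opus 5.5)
Your argument is correct. It is exactly the elementary reindexing ($\varepsilon=1/n$ in one direction, $n>\varepsilon^{-1}$ in the other) that the paper delegates to \cite[Proposition 3.1]{oh2017}, and you read the first formulation as intended: inflation at some $t_\varepsilon\in(0,\varepsilon)$, with the solution needed only up to $t_\varepsilon$, as in Theorem \ref{InfiniteLossofRegs}.

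A remark on usage rather than correctness: the later construction only produces $u_n$ for data with $\|u_0\|_{\mathscr{F}L^1},\|u_0\|_{L^2}<\infty$ (see \eqref{A,d,n,R_rel} and condition (v) of Proposition \ref{THM:Norm_inf_to_prop}). To reach arbitrary $u_0\in H^s$ one therefore needs the density variant of your second implication: first replace $u_0$ by a smooth $\tilde u_0$ with $\|u_0-\tilde u_0\|_{H^s}<\varepsilon/2$, then take $n>2\varepsilon^{-1}$.
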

\begin{proof}
    See \cite[Proposition 3.1]{oh2017}
\end{proof}
Therefore, we will first work to establish the following.
\begin{proposition}\label{Thm:quant_norm_inflation}
    Let $k\geq 2 $, $1\leq \beta\leq 2$, $\mu \in \{-1,1\}$, $s<0$ and $d\geq 1$. Then \eqref{Gen-BOBBM-eq} undergoes quantitative norm inflation everywhere.
\end{proposition}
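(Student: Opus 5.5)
We follow Oh's power series method \cite{oh2017}, with the combinatorics of the arbitrary order nonlinearity handled as in Forlano and Okamoto \cite{forlano2020a}. Write the Duhamel formulation with the linear part absorbed. Since $\phi_\beta(D_x)$ is bounded on the Wiener algebra $\mathcal{F}L^1(\T^d)$ (its symbol $n/(1+|n|^\beta)$ is bounded), $S(t)$ is bounded there for $|t|\le 1$, and $\mathcal{F}L^1$ is an algebra. We expand the solution as a power series $u=\sum_{j\ge0}\Xi_j(u_0)$. Here $\Xi_0=S(t)u_0$, and $\Xi_j$ is the sum over $k$-ary trees with $j$ nonlinear nodes of iterated Duhamel integrals. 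The number of such trees is at most $C^j$, which gives
\begin{align*}
\|\Xi_j(u_0)(t)\|_{\mathcal{F}L^1}\le C^j t^j\|u_0\|_{\mathcal{F}L^1}^{(k-1)j+1}.
\end{align*}
This bound holds in $\mathcal{F}L^1$, and also in the mixed form $\|\Xi_j\|_{H^s}\lesssim C^jt^j\|u_0\|_{\mathcal{F}L^1}^{(k-1)j}\|u_0\|_{H^s}$ when $s\le0$, using $\ell^\infty$/$\ell^2$ Young inequalities on the Fourier side. Hence for $t\ll \|u_0\|_{\mathcal{F}L^1}^{-(k-1)}$ the series converges and equals the smooth solution.

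Next, fix $u_0\in H^s$ and $n$. By density, take $u_{0}$ to be a trigonometric polynomial at the cost of $1/2n$. Set $u_{0,n}=u_0+\phi_n$ with
\begin{align*}
\widehat{\phi_n}=R\,\mathbbm{1}_{\{|\xi-Ne_1|\le A\}\cup\{|\xi+Ne_1|\le A\}},
\end{align*}
where $N\gg1$ and $1\ll A\ll N$. Then $\|\phi_n\|_{H^s}\sim RA^{d/2}N^s$, and we choose $R$ so that this is $<1/2n$, which is possible since $s<0$. The key high-to-low interaction is the first Picard iterate $\Xi_1(\phi_n)$. Its $k$-fold convolution produces output frequencies $|\xi|\lesssim A$ (with $\pm N$ summing to zero, which needs an even number of factors; for odd $k$ take one factor of the opposite sign of $N$ appropriately, or use output near $N$, mirroring Lemma \ref{Lem: LOWERBD}). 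At these low frequencies $\phi_\beta(\xi)$ is of size $\sim|\xi|\gtrsim1$ on a large portion, so no smoothing is lost. Choosing the output set to be frequencies with $|\xi_1|\sim A$, a positive-measure shell, and using that all summands are positive for small $t$, we get
\begin{align*}
\|\Xi_1(\phi_n)(t)\|_{H^\sigma}\gtrsim t R^k A^{(k-1)d}A^{\sigma}A^{d/2}\min(1,A^{1-\beta})\cdot A,
\end{align*}
for any $\sigma\in\R$, with the $\sigma$-dependence only through $A^\sigma$. This is where the infinite loss of regularity comes from. For $\Xi_0(\phi_n)$ we have the upper bound $\|\Xi_0\|_{H^\sigma}\lesssim RA^{d/2}N^\sigma$. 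For the terms with $j\ge2$ we bound
\begin{align*}
\|\Xi_j\|_{H^\sigma}\lesssim C^jt^j(RA^d)^{(k-1)j}RA^{d/2}\langle A\rangle^{|\sigma|}\cdot(\text{factor}),
\end{align*}
and the cross terms involving $u_0$ are handled similarly, since $u_0$ is a fixed polynomial with bounded $\mathcal{F}L^1$ norm.

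Finally we tune parameters. Take $t=t_n$ with $tRA^d\ll1$ small enough that $\Xi_1$ dominates the sum over $j\ge2$ by a factor $(tR^{k-1}A^{(k-1)d})$. Choose $N$ so large, and $R=N^{-s}A^{-d/2}/n$ (so that $R\to\infty$), that $\|\Xi_1(t_n)\|_{H^\sigma}>n$ while $t_n<1/n$. For instance, take $A$ a power of $N$ and $t_n=(RA^d)^{-(k-1)}N^{-\delta}$, then check that the lower bound is a positive power of $N$. The main obstacle is exactly this bookkeeping: ensuring simultaneously that the series converges ($tR^{k-1}A^{d(k-1)}\ll1$), that $\Xi_1$ beats both the linear part and the higher iterates in $H^\sigma$, and that the low-frequency gain survives the $\min(1,A^{1-\beta})$ and the parity issue for odd $k$. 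We would first try $A\sim N^{\gamma}$ with small $\gamma$ and verify that the exponents close for all $s<0$, as in \cite{oh2017,forlano2020a}. Proposition \ref{Prop_norm_integer} then converts the result to the statement.
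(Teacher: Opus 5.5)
Your construction fails for odd $k$ (for instance $k=3$). This is a missing idea, not bookkeeping. With $\widehat{\phi_n}$ supported in balls of radius $A$ about $\pm Ne_1$, the $k$-fold convolution in $\Xi_1(\phi_n)$ is supported in $\bigcup_m (mNe_1+B_{kA})$ with $m=\epsilon_1+\dots+\epsilon_k$, $\epsilon_i\in\{\pm1\}$. For $k$ odd every such $m$ is odd, so $|\xi|\geq N-kA\gtrsim N$ on the whole support. No choice of signs gives a low-frequency output; more generally, each $\Xi_j(\phi_n)$ has the odd number $(k-1)j+1$ of factors.

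Your fallback of using the output near $N$ gives no growth. There $\langle\xi\rangle^s\sim N^s$, exactly as for the data, and the symbol is $O(N^{1-\beta})$. Hence $\|\Xi_1(\phi_n)(t)\|_{H^s}\lesssim tR^{k-1}A^{d(k-1)}\|\phi_n\|_{H^s}$, which is $\ll 1/n$ wherever the series converges. The terms in $\Xi_1$ involving $u_0$ are bounded by $t\|u_0\|_{L^2}R^{k-1}A^{d(k-1)}\ll\|u_0\|_{L^2}$, so they cannot help either.

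What you need is a set of bump locations that admits an odd zero-sum. The paper uses four cubes at $\pm Ne_{1,d}$ and $\pm 2Ne_{1,d}$. For odd $k$ it combines $-2N+N+N=0$ with alternating $\pm N$ in the remaining $k-3$ slots. Lemma \ref{Lem: cube conv ineq} then gives $|\mathscr{F}\{\Xi_1(\psi_n)(t)\}(\xi)|\gtrsim tR^kA^{d(k-1)}$ on $Q_{A,d}\setminus b$, which is Lemma \ref{Lem: LOWERBD}.

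Two further statements need correcting for every $k$.

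First, the mixed bound $\|\Xi_j\|_{H^s}\lesssim C^jt^j\|u_0\|_{\mathscr{F}L^1}^{(k-1)j}\|u_0\|_{H^s}$ is false for $s<0$. The product estimate $\|fg\|_{H^s}\lesssim\|f\|_{\mathscr{F}L^1}\|g\|_{H^s}$ fails precisely for high$\times$high$\to$low interactions. Applied with $j=1$ to $\phi_n$, it would give $\|\Xi_1(\phi_n)\|_{H^s}\ll\|\phi_n\|_{H^s}$, contradicting the inflation you are proving. For the tail $j\geq2$, use the $L^2$ version $\|\Xi_j(\phi)\|_{H^s}\leq\|\Xi_j(\phi)\|_{L^2}\leq C^jt^j\|\phi\|_{\mathscr{F}L^1}^{(k-1)j}\|\phi\|_{L^2}$. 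Alternatively, use the sharper \eqref{Xi-Hs-up-bd}, which the paper obtains from the $\mathscr{F}L^\infty$ bound \eqref{Xibbm-flinf} and the support of $\widehat{\Xi_j}$.

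Second, the symbol is not $\sim|\xi|$ on $|\xi|\sim A\gg1$; it is $O(A^{1-\beta})$. So the factor $A$ in your lower bound is spurious, and you incur a loss there instead of a gain. With $A=N^\gamma$ and $\gamma$ small, such losses can still be absorbed. The paper avoids them entirely by taking $A=10$, $R=N^{-s/2}$, $T=N^{(k-1/2)s/2}$. Then all frequency-dependent factors on $Q_{A,d}$ are constants, and conditions (i)--(iii) of Proposition \ref{THM:Norm_inf_to_prop} reduce to $N^{s/2}\ll1/n$, $N^{s/4}\ll1$ and $N^{-s/4}\gg n$.
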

We will now introduce some notation.
\subsection{Tree formalism and Picard expansion}
It is important to note that \eqref{Gen-BOBBM-eq} is locally well-posed in the Fourier Lebesgue space $\mathscr{F}L^1(\T^d)$.
\begin{theorem}\label{Illposdness:local_FL}
    For data $u_0\in \mathscr{F}L^1(\T^d)$, there exists $T \lesssim_\beta k^{-1}\left(2\left\|u_0\right\|_{\mathscr{F}L^1}\right)^{1-k}$ and a unique solution $u\in C\left([0,T];\mathscr{F}L^1\right)$ to \eqref{Gen-BOBBM-eq}.
\end{theorem}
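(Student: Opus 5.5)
We use a contraction mapping argument for the Duhamel formulation
\begin{align*}
u(t) = S(t)u_0 - \tfrac{i\mu}{k}\int_0^t S(t-t')\,\phi_\beta(D_x)\big(u(t')^k\big)\,dt'
\end{align*}
in the space $X_T = C([0,T];\mathscr{F}L^1(\T^d))$ with norm $\sup_{t\in[0,T]}\|u(t)\|_{\mathscr{F}L^1}$, where $\|u\|_{\mathscr{F}L^1} = \sum_{n\in\Z^d}|\widehat u(n)|$. The linear term is harmless. The symbol of $\phi_\beta(D_x)$ is real (it is $\sum_i n_i/(1+|n|^\beta)$ up to the factor $i$ absorbed in the formulation), so $e^{-it\phi_\beta(n)}$ has modulus one. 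Hence $\|S(t)u_0\|_{\mathscr{F}L^1} = \|u_0\|_{\mathscr{F}L^1}$ for all $t$, and $t\mapsto S(t)u_0$ is continuous into $\mathscr{F}L^1$ by dominated convergence in $n$.

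For the nonlinear term, two facts suffice. First, $\phi_\beta(D_x)$ is bounded on $\mathscr{F}L^1$: its symbol satisfies $|\phi_\beta(n)|\le \sqrt d\,|n|/(1+|n|^\beta)\lesssim_\beta 1$ since $\beta\ge 1$. Second, $\mathscr{F}L^1$ is a Banach algebra, since $\widehat{fg}=\widehat f*\widehat g$ and Young's inequality on $\ell^1(\Z^d)$ gives $\|fg\|_{\mathscr{F}L^1}\le\|f\|_{\mathscr{F}L^1}\|g\|_{\mathscr{F}L^1}$. Combining these,
\begin{align*}
\Big\|\phi_\beta(D_x)\big(u^k - v^k\big)\Big\|_{\mathscr{F}L^1}\lesssim_\beta \|u-v\|_{\mathscr{F}L^1}\sum_{j=0}^{k-1}\|u\|_{\mathscr{F}L^1}^{j}\|v\|_{\mathscr{F}L^1}^{k-1-j}\le k\,R^{k-1}\|u-v\|_{\mathscr{F}L^1}
\end{align*}
whenever both norms are at most $R$. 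Here we factor $u^k-v^k=(u-v)\sum_j u^jv^{k-1-j}$.

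Define the Duhamel map $\Gamma$ by the right-hand side above and take $R = 2\|u_0\|_{\mathscr{F}L^1}$ with the ball $B_R\subset X_T$. By unitarity of $S(t)$, for $u\in B_R$,
\begin{align*}
\|\Gamma u\|_{X_T}\le \|u_0\|_{\mathscr{F}L^1} + C_\beta T R^k.
\end{align*}
This is at most $R$ once $C_\beta T R^{k-1}\le 1/2$. Similarly,
\begin{align*}
\|\Gamma u-\Gamma v\|_{X_T}\le C_\beta T k R^{k-1}\|u-v\|_{X_T},
\end{align*}
which is a contraction with constant $1/2$ once $C_\beta kTR^{k-1}\le 1/2$. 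Both conditions hold for $T\sim_\beta k^{-1}(2\|u_0\|_{\mathscr{F}L^1})^{1-k}$, the stated time; the factor $1/k$ from the equation is only a harmless improvement. Continuity in time of the Duhamel integral follows from strong continuity of $S(t)$ on $\mathscr{F}L^1$ and the boundedness of the integrand. Banach's fixed point theorem then gives existence and uniqueness in $B_R$.

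Uniqueness in all of $C([0,T];\mathscr{F}L^1)$ follows by a standard continuity argument. Given two solutions, apply the same difference estimate on short subintervals where both stay bounded by a common constant, and iterate. No step is a genuine obstacle. The only point requiring care is tracking the explicit dependence of $T$ on $k$ and $\|u_0\|_{\mathscr{F}L^1}$: the factor $k$ comes from expanding $u^k-v^k$, and the factor $2^{1-k}$ from the choice $R=2\|u_0\|_{\mathscr{F}L^1}$.
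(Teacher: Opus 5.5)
Your proposal is correct and follows the same route as the paper: a contraction argument in $C([0,T];\mathscr{F}L^1)$ built on the algebra property of $\mathscr{F}L^1$, the boundedness of the symbol of $\phi_\beta(D_x)$ (which uses $\beta\ge 1$), and the unitarity of $S(t)$. The paper gives only a one-line sketch. Your write-up fills in what it leaves implicit: the ball of radius $R=2\|u_0\|_{\mathscr{F}L^1}$, the factorisation of $u^k-v^k$, time continuity of the Duhamel term, and uniqueness in the full space.
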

\begin{proof}
    The proof follows by the algebra property of $\mathscr{F}L^1(\T^d)$ and the unitarity of the propagator $S_\beta(t)$ in $\mathscr{F}L^1$.
\end{proof}
We define the Duhamel integral operator for \eqref{Gen-BOBBM-eq} as,
\begin{align*}
        \mathscr{I}_{\beta,k}[u_1,\dots,u_k](t) \coloneqq \int_0^t S_\beta(t-t')\phi_\beta(D_x)\prod_{i=1}^ku_i(t') \, dt'.
    \end{align*}
    We will often write $\mathscr{I}_{\beta,k}^k[u]$ to denote the function $\mathscr{I}_{\beta,k}[u,\dots,u]$. Using this we can define a Picard scheme for \eqref{Gen-BOBBM-eq},
    \begin{align}
        P_0(\psi) = S_\beta(t)\psi \quad \text{and} \quad P_j(\psi) = S_\beta(t) + \mathscr{I}_{\beta,k}^k\left[P_{j-1}(\psi)\right], \text{ for } j\in \N\label{eq: Picard_scheme}
    \end{align}
    where $\psi\in H^\sigma$ for some $\sigma \in \R$. Taking $j\to \infty$, Theorem \ref{Illposdness:local_FL} ensures that $P_j(u_0)$ converges to the solution to \eqref{Gen-BOBBM-eq} in $C_T\mathscr{F}L^1$ for short $T$.

 As done in \cite{forlano2020} and \cite{oh2017}, we make use of trees to track the Duhamel terms. We present the definition of $k$-ary rooted trees as used in these works.
 \begin{definition}
  (i) Given a partially ordered set $\mathscr{T}$ with partial order $\leq$, we say that $b\in \mathscr{T}$ with $b\leq a$ and $b\neq a$ is a \emph{child} of $a\in \mathscr{T}$, if $b\leq c \leq a$ implies $c=a$ or $c=b$. If the latter condition holds we also say that $a$ is a \emph{child} of $b$.
   
  (ii) A tree $\mathscr{T}$ is a finite partially ordered set, satisfying the following properties:
  \begin{itemize}
    \item Let $a_1,a_2,a_3,a_4\in \mathscr{T}$. If $a_4 \leq a_2\leq a_1$ and $a_4 \leq a_3 \leq a_1$, then we have $a_2\leq a_3$ or $a_3\leq a_2$,
    \item A node $a\in \mathscr{T}$ is called \emph{terminal}, if it has no child. A \emph{non-terminal} node $a\in \mathscr{T}$ is a node with exactly $k$ children.
    \item There exists a maximimal element $r\in \mathscr{T}$, the rood node of the tree, such that $a\leq r$ for all $a\in \mathscr{T}$,
    \item $\mathscr{T}$ consists of the disjoint union of $\mathscr{T}^0$ and $\mathscr{T}^\infty$, where $\mathscr{T}^0$ and $\mathscr{T}^\infty$ denote the collections of non-terminal nodes and terminal nodes, respectively.
  \end{itemize}
  (iii) Let $T_k(j)$ denote the set of all trees with $j$ non-terminal nodes.
 \end{definition}

 We now provide a bijection between $k$-ary trees and the iterative scheme shown in \eqref{eq: Picard_scheme}. 
 \begin{lemma}\label{lem: T_k(j) bound}
        Let $T_k(j)$ be the set of $k$-ary trees on $j$ generations. There exists $C_0 > 0$ such that,
        \[\# T_k(j) \leq C^j_0.\]
        for all $j \in \N$. 
    \end{lemma}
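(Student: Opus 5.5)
The plan is to count $k$-ary trees with $j$ non-terminal nodes by setting up the recursion from the root's children and bounding its solution by a Fuss--Catalan number, which grows only exponentially in $j$.

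\emph{Recursion.} Write $a_j \coloneqq \#T_k(j)$, with $a_0 = 1$; the unique tree with no non-terminal node is the bare root. For $j\geq 1$ the root is non-terminal and so has exactly $k$ children. Each child is itself the root of a $k$-ary subtree, and the $j-1$ remaining non-terminal nodes are distributed among these $k$ ordered subtrees. This gives
\begin{align*}
    a_j = \sum_{j_1+\dots+j_k = j-1} a_{j_1}\cdots a_{j_k}, \qquad j\geq 1.
\end{align*}
Ordering the children, as is done when tracking Duhamel terms, can only increase the count. So an upper bound for ordered trees suffices.

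\emph{Solving the recursion.} The generating function $F(z)=\sum_j a_j z^j$ satisfies $F = 1 + zF^k$. By Lagrange inversion, $a_j$ is the Fuss--Catalan number
\begin{align*}
    a_j = \frac{1}{(k-1)j+1}\binom{kj}{j}.
\end{align*}
Then $\binom{kj}{j}\leq 2^{kj}$ gives $a_j \leq (2^k)^j$, so one may take $C_0 = 2^k$.

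\emph{Elementary alternative.} To avoid Lagrange inversion, I would instead prove $a_j\leq C_0^j$ by strong induction from the recursion. The inductive step needs
\begin{align*}
    a_j \leq \#\{(j_1,\dots,j_k): \textstyle\sum j_i = j-1\}\, C_0^{j-1} = \binom{j+k-2}{k-1}C_0^{j-1}.
\end{align*}
The binomial factor grows polynomially in $j$, so a naive induction does not close with a fixed $C_0$. The clean fix is a different encoding: a $k$-ary tree with $j$ internal nodes is determined by its preorder word, which has $kj+1$ letters, each recording whether a node is terminal or non-terminal. This gives $a_j\leq 2^{kj+1}$ at once, i.e.\ $a_j\leq C_0^j$ with $C_0 = 2^{k+1}$, since $j\geq 1$.

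The main obstacle is only this bookkeeping: making the bound geometric, so that it does not pick up the polynomial-in-$j$ factor from a naive induction. The injective word encoding handles it. Note that $C_0$ depends on $k$, which is fixed throughout, so this is harmless for the later summation of the Picard series.
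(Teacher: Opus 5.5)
Your proposal is correct, and it is essentially the argument the paper defers to in the cited references (Forlano--Okamoto for general $k$, Oh for $k=3$): the root recursion, identifying the ordered count with the Fuss--Catalan number $\frac{1}{(k-1)j+1}\binom{kj}{j}$, and an exponential bound on the binomial. Your remark that unordered trees are dominated by ordered ones covers the poset definition used here, and the preorder-word encoding giving $\#T_k(j)\le 2^{kj+1}$ is an equally valid and even more elementary shortcut.
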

    \begin{proof}
       See \cite[Lemma 2.2]{forlano2020a} or \cite[Lemma 2.3]{oh2017} for the $k=3$ case.
    \end{proof}
For a fixed $\phi \in \mathscr{F}L^1(\T^d)$, we associate to a given tree $\mathscr{T}\in T_k(j)$, a space-time distribution $\Psi_\phi(\mathscr{T}) \in \mathcal{D}'([0,T]\times \T^d)$ as follows: we replace a non-terminal node by the Duhamel integral operator $\mathscr{I}_{k,\beta}$ with it's $k$ arguments as children and we replace all terminal nodes by the linear propagator $S(t)\phi$. Now we define
\begin{align*}
    \Xi_j(\phi) = \sum_{\mathscr{T}\in T_k(j)}\Phi_\phi(\mathscr{T}).
\end{align*}
    \subsection{Multilinear estimates}
    We will now define some variables that we will fix at the end of the proof of Theorem \ref{Thm:quant_norm_inflation}.
    \begin{definition}[Core variables and notation]\label{Illposedness_definitions}
    We denote,
    \begin{align*}
         e_{1,d} &\coloneqq  (1,0,\dots,0) \in \Z^d & b_d &\coloneqq \left\{\xi \in \Z^d: |\xi| \leq 1\right\}
    \end{align*}
    and define the following variables where a subscript denotes dependance,
     \begin{align*}
        N &\coloneqq N_n \gg 1 &
        A &\coloneqq A_N  \gg 1 &
        R &\coloneqq R_N &
        Q_{A,d} &\coloneqq \left[-\tfrac{A}{2},\tfrac{A}{2}\right)^d \subset \Z^d
        \end{align*}
        defined such that
\begin{align}
    RA \gg \|u_0\|_{\mathscr{F}L^1} \quad \text{and} \quad A \ll N \label{A,d,n,R_rel}.
\end{align}
Lastly, define we define,
\begin{align*}
    f_{s,d}(A) \coloneqq \begin{cases}
1, & \text{if } s < -\tfrac{d}{2}, \\[4pt]
(\log A)^{\tfrac{1}{2}}, & \text{if } s = -\tfrac{d}{2}, \\[4pt]
A^{\tfrac{d}{2} + s}, & \text{if } s > -\tfrac{d}{2}.
\end{cases}
\end{align*}
\end{definition}
With these variables set up we may define a the function we will use at every initial data to prompt an infinite loss of regularity. Define $\psi_n$ so that for any $\xi \in \Z^d$,
\begin{align}
        \widehat{\psi_n}(\xi) &= R\left\{\mathbbm{1}_{-2Ne_{1,d}+Q_{A,d}}(\xi) + \mathbbm{1}_{-Ne_{1,d}+Q_{A,d}}(\xi) + \mathbbm{1}_{Ne_{1,d}+Q_{A,d}}(\xi)+ \mathbbm{1}_{2Ne_{1,d}+Q_{A,d}}(\xi)\right\}\label{phin-bbm-def}.
    \end{align}
With these definitions we have the followign preliminary estimates.
\begin{lemma}
There exists $C>0$ such that  for all $\psi \in \mathscr{F}L^p(\T^d) \cap \mathscr{F}L^1(\T^d)$ with $1\leq p\leq \infty$ and $j\in \N$,
\begin{align}
    \| \Xi_j(\psi)(t)\|_{\mathscr{F}L^1} &\leq C^j t^j\|\psi\|^{(k-1)j+1}_{\mathscr{F}L^1}\label{Xibbm-Fl1-est}\\
    \| \Xi_j(\psi)(t)\|_{\mathscr{F}L^\infty} &\leq C^j t^j \|\psi\|^{(k-1)j-1}_{\mathscr{F}L^1}\|\psi\|^2_{L^2}\label{Xibbm-flinf}\\
    \left\| \Xi_j(u_0 + \psi)(t) - \Xi_j(\psi)(t)\right\|_{\mathscr{F}L^p} &\leq C^j t^j\|u_0\|_{\mathscr{F}L^p}\left(\|u_0\|_{\mathscr{F}L^1}^{(k-1)j+1} + \left\|\psi\right\|^{(k-1)j+1}_{\mathscr{F}L^1}\right) \label{Multi-Xi-est-BBm}.
\end{align}
Furthermore, for $\psi_n$ as defined in \eqref{phin-bbm-def} and $s<0$ we have that,
\begin{align}
         \|\psi_n\|_{H^s} \sim RA^{d/2}N^s \quad \text{and} \quad \|\psi_n\|_{\mathscr{F}L^1}\sim RA^d \label{phi_growth}.
    \end{align}
With  $u_{0,n} = u_0 + \psi_n$ it also holds that
    \begin{align}
         \|u_{0,n} - u_0\|_{H^s} &\lesssim RA^{\tfrac{d}{2}}N^s\label{est: BBM u_0,n - u_0}\\
        \left\|\Xi_0(u_{0,n})\right\|_{H^s}&\lesssim 1+ RA^{\tfrac{d}{2}}N^s\label{est: BBM Xi_0}\\
         \left\|\Xi_1(u_{0,n}) - \Xi_1(\psi_n)(t)\right\|_{H^s} &\lesssim t\|u_0\|_{L^2}R^{k-1} A^{(k-1)d}\label{est: BBM Xi_1 diff}\\
         \|\Xi_j(u_{0,n}^{})(t)\|_{H^s} &\leq  C^jt^j(RA^d)^{(k-1)j}\left(\|u_0\|_{L^2} + Rf_s(A)\right)\label{Xi-Hs-up-bd}.
    \end{align}
Lastly for some fixed constants $c>0$ and $M\geq 1$,
\begin{align}
    \|\langle\xi\rangle^s\|_{\ell^2(B_{cMA}(\xi))} &\lesssim Mf_s(A)\label{large_factor_xi_control}\\
    \|\langle\xi\rangle^s\|_{\ell^2(B_{cMA}(\xi))} &\sim \|\langle\xi\rangle^s\|_{L^2(B_{cMA}(\xi)\setminus b)}\label{eq: annulus_control_xi}.
\end{align}
\end{lemma}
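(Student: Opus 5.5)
The plan is to prove each estimate by working on the Fourier side, using the structure of the Picard trees and the explicit form of $\psi_n$. Throughout, the symbol bound $|\phi_\beta(n)|\le |n|/(1+|n|^\beta)\le 1$ and the unitarity of $S_\beta(t)$ on every $\mathscr{F}L^p$ are used.

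\textbf{Step 1: the tree estimates \eqref{Xibbm-Fl1-est}, \eqref{Xibbm-flinf} and \eqref{Multi-Xi-est-BBm}.} Proceed by induction on $j$ for a single tree $\mathscr{T}\in T_k(j)$.

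For \eqref{Xibbm-Fl1-est}, the algebra property $\|fg\|_{\mathscr{F}L^1}\le\|f\|_{\mathscr{F}L^1}\|g\|_{\mathscr{F}L^1}$ and $|\phi_\beta|\le 1$ show that each Duhamel node costs a time integral and multiplies the $k$ children's norms. The time integrals over the nested simplex give at most $t^j$. Lemma \ref{lem: T_k(j) bound} then sums over the trees at a cost of $C_0^j$.

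For \eqref{Xibbm-flinf}, use Young's inequality $\|f*g\|_{\ell^\infty}\le\|f\|_{\ell^2}\|g\|_{\ell^2}$ at the root. Propagate $\mathscr{F}L^1$ norms through all other factors, except for two terminal nodes, which are placed in $\ell^2=L^2$.

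For \eqref{Multi-Xi-est-BBm}, expand each terminal $S(t)(u_0+\psi)$ multilinearly. Every term in the difference contains at least one $u_0$ leaf. Put that leaf in $\mathscr{F}L^p$ via Young's inequality ($\ell^p*\ell^1\subset\ell^p$), and put the rest in $\mathscr{F}L^1$. Bounding $(a+b)^m\le 2^m(a^m+b^m)$ then gives the stated form.

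\textbf{Step 2: the bounds on $\psi_n$, namely \eqref{phi_growth} and \eqref{est: BBM u_0,n - u_0}--\eqref{est: BBM Xi_0}.} These are direct computations. $\widehat{\psi_n}$ is $R$ times the indicator of four cubes, each with about $A^d$ points, centred at frequencies of size about $N$. Since $A\ll N$, on these cubes $\langle\xi\rangle^s\sim N^s$. This gives $\|\psi_n\|_{H^s}\sim RA^{d/2}N^s$ and $\|\psi_n\|_{\mathscr{F}L^1}\sim RA^d$. Then \eqref{est: BBM u_0,n - u_0} is immediate, and \eqref{est: BBM Xi_0} follows from unitarity of $S$ and the triangle inequality, using $\|u_0\|_{H^s}\lesssim1$ after normalisation.

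\textbf{Step 3: the estimates \eqref{est: BBM Xi_1 diff} and \eqref{Xi-Hs-up-bd}.} For \eqref{est: BBM Xi_1 diff}, use $\|\cdot\|_{H^s}\le\|\cdot\|_{L^2}$ for $s<0$ and apply \eqref{Multi-Xi-est-BBm} with $p=2$ and $j=1$. The relation $RA\gg\|u_0\|_{\mathscr{F}L^1}$ from \eqref{A,d,n,R_rel} lets the $\psi$ terms dominate the $u_0$ terms.

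For \eqref{Xi-Hs-up-bd}, split $\Xi_j(u_{0,n})=\Xi_j(\psi_n)+(\Xi_j(u_{0,n})-\Xi_j(\psi_n))$.
\begin{itemize}
\item The difference is handled by \eqref{Multi-Xi-est-BBm} in $L^2$, which produces the $\|u_0\|_{L^2}$ term.
\item For $\Xi_j(\psi_n)$, each leaf is supported in the union of the cubes $\pm N e_{1,d}+Q_{A,d}$ and $\pm 2Ne_{1,d}+Q_{A,d}$. So the output of a tree of depth $j$ is supported in a union of at most $C^j$ cubes of side about $(k-1)j\,A$ centred at integer multiples of $N$.
\item On each such cube, bound $\|\cdot\|_{H^s}\le \|\widehat{\cdot}\|_{\ell^\infty}\|\langle\xi\rangle^s\|_{\ell^2(\text{cube})}$. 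The $\ell^\infty$ factor is controlled by \eqref{Xibbm-flinf}, giving $C^jt^j(RA^d)^{(k-1)j-1}R^2A^d$. The $\ell^2$ factor is controlled by \eqref{large_factor_xi_control}.
\end{itemize}

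\textbf{Step 4: the lattice-sum bounds \eqref{large_factor_xi_control}--\eqref{eq: annulus_control_xi}.} Compare the sum with the integral $\int_{|\xi|\lesssim MA}\langle\xi\rangle^{2s}\,d\xi$ in the three regimes $2s<-d$, $2s=-d$ and $2s>-d$. These give respectively $O(1)$, $\log(MA)$ and $(MA)^{d+2s}$. Since $d+2s<d$, the last is $\lesssim M^{d}A^{d+2s}$, which gives $Mf_s(A)$ up to a power of $M$ absorbed into $C^j$. The comparison with the integral over the annulus in \eqref{eq: annulus_control_xi} is the standard fact that removing the unit ball only changes the sum by a constant and that $\langle\xi\rangle^{2s}$ is comparable on unit cells.

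\textbf{Main obstacle.} The hardest step is the support bookkeeping in \eqref{Xi-Hs-up-bd}. I need the number of output cubes to grow only like $C^j$, and the low-frequency cube around $0$ must be the one where $f_s(A)$ genuinely enters. I would follow the counting of \cite{forlano2020a} and track the sum of the $\pm1,\pm2$ centre labels, so that the combinatorics is absorbed into the constant $C^j$.
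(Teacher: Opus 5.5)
Your proposal is correct. It is the same argument the paper invokes by citation (\cite[Lemmas 2.3, 3.2]{forlano2020a}, \cite[Lemma 2.6]{oh2017}): tree-by-tree Fourier-side bounds via Young's inequality, then an $\ell^\infty\times\ell^2$ split over the $O(j)$ cubes carrying $\widehat{\Xi_j(\psi_n)}$, with the off-centre cubes dominated by the one at the origin because $\langle\xi\rangle^{s}$, $s<0$, is essentially radially decreasing.

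One correction. Your Step 1 expansion (one $u_0$ leaf in $\mathscr{F}L^p$, the remaining $(k-1)j$ leaves in $\mathscr{F}L^1$) gives $\|u_0\|_{\mathscr{F}L^p}\big(\|u_0\|_{\mathscr{F}L^1}^{(k-1)j}+\|\psi\|_{\mathscr{F}L^1}^{(k-1)j}\big)$, not the exponent $(k-1)j+1$ printed in \eqref{Multi-Xi-est-BBm}. The printed version is an off-by-one slip: $\Xi_j$ is homogeneous of degree $(k-1)j+1$, so the printed bound fails when $u_0,\psi$ are scaled to zero. The $(k-1)j$ version is also what your Step 3 actually needs, since the printed one would only give $t\|u_0\|_{L^2}(RA^d)^{k}$ in \eqref{est: BBM Xi_1 diff}.
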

\begin{proof}
    The proof of \eqref{Xibbm-Fl1-est} and \eqref{Xibbm-flinf} follow almost identically from \cite[Lemma 2.3]{forlano2020a}. \eqref{Multi-Xi-est-BBm} follows from \cite[Lemma 2.6]{oh2017} and \eqref{phi_growth} follows simply from expanding the definition of $\psi_n$. \eqref{est: BBM u_0,n - u_0}, \eqref{est: BBM Xi_0}, \eqref{est: BBM Xi_1 diff} and \eqref{Xi-Hs-up-bd} follow almost identically from \cite[Lemma 3.2]{forlano2020a}. Lastly, \eqref{large_factor_xi_control} and \eqref{eq: annulus_control_xi} follow from evaluating the corresponding sums.
\end{proof}
Using this, we can construct our local solution corresponding to the data $u_{0,n}$ that will exhibit norm inflation.
\begin{lemma}\label{Power-series-convergence}
    Let $u_{0,n}^{} = u_0 + \psi_n$ with corresponding solution to \eqref{Gen-BOBBM-eq} $u_n$. The following power series converges absolutely.
    \begin{align*}
        u_n = \sum_{j=0}^\infty \Xi_j(u_{0,n}^{})
    \end{align*}
    on $[0,T]$ for,
    \begin{align*}
        T \lesssim \left(\|u_0\|_{\mathscr{F}L^1} + RA\right)^{k-1} \sim \left(RA\right)^{1-k}.
    \end{align*}
\end{lemma}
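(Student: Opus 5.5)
The plan is to bound each term of the series in $\mathscr{F}L^1$ and sum a geometric series. Fix $t\in[0,T]$. By \eqref{Xibbm-Fl1-est} applied with $\psi = u_{0,n}$, together with the triangle inequality in $\mathscr{F}L^1$ and \eqref{phi_growth}, we get
\begin{align*}
    \left\|\Xi_j(u_{0,n})(t)\right\|_{\mathscr{F}L^1} \leq C^j t^j \left\|u_{0,n}\right\|_{\mathscr{F}L^1}^{(k-1)j+1} \leq C^j t^j\left(\|u_0\|_{\mathscr{F}L^1} + C'RA^d\right)^{(k-1)j+1}.
\end{align*}
The right-hand side is $K\,q^j$ with $K = \|u_{0,n}\|_{\mathscr{F}L^1}$ and $q = Ct\|u_{0,n}\|^{k-1}_{\mathscr{F}L^1}$. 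Hence $\sum_j \Xi_j(u_{0,n})$ converges absolutely in $C([0,T];\mathscr{F}L^1)$ as soon as $T\|u_{0,n}\|^{k-1}_{\mathscr{F}L^1}\leq c$ for some small $c<1/C$. This gives the stated existence time $T\lesssim (\|u_0\|_{\mathscr{F}L^1}+RA^d)^{1-k}$. Here I read the exponent in the statement as $1-k$ and the $RA$ there as the $\mathscr{F}L^1$ size of $\psi_n$, namely $RA^d$ (equal to $RA$ when $d=1$). The equivalence $\sim (RA)^{1-k}$ then follows from \eqref{A,d,n,R_rel}, since $RA\gg \|u_0\|_{\mathscr{F}L^1}$. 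The combinatorial factor from summing over trees is already absorbed into $C^j$ by Lemma \ref{lem: T_k(j) bound}.

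Next we identify the sum with the solution $u_n$. Write $S_J = \sum_{j=0}^J\Xi_j(u_{0,n})$. The key algebraic identity, which is the tree bijection underlying \eqref{eq: Picard_scheme}, is that $\Xi_j(\phi)$ equals the sum over all $j_1+\dots+j_k = j-1$ of $\mathscr{I}_{\beta,k}[\Xi_{j_1}(\phi),\dots,\Xi_{j_k}(\phi)]$, up to the constant $-i\mu/k$ carried in the Duhamel formula. This follows by splitting a tree at its root into its $k$ subtrees. It gives
\begin{align*}
    S_J - S(t)u_{0,n} + \tfrac{i\mu}{k}\mathscr{I}^k_{\beta,k}[S_J]
\end{align*}
as a sum only over terms in which $j_1+\dots+j_k\geq J$. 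By the $\mathscr{F}L^1$ algebra property and the geometric bound above, these tail terms are $O(\sum_{j>J} K q^j)\to 0$. Passing to the limit, $u_n=\lim S_J$ satisfies the Duhamel formulation in $C([0,T];\mathscr{F}L^1)$. By uniqueness in Theorem \ref{Illposdness:local_FL}, possibly after shrinking $T$ by a constant factor so that it lies within that theorem's existence time, this limit is the solution $u_n$.

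The main obstacle is this identification step. One must check that the tree expansion exactly reproduces the nonlinearity, so that the rearrangement of the multilinear sum is legitimate. This is justified by absolute convergence: the $k$-fold product of absolutely convergent $\mathscr{F}L^1$-valued series may be reordered freely. One must also confirm that the constants from counting trees remain exponential in $j$. Both points are handled by Lemma \ref{lem: T_k(j) bound} and the estimate \eqref{Xibbm-Fl1-est}, following \cite{oh2017,forlano2020a}.
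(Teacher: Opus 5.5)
Your proposal is correct and follows essentially the same route as the paper, which simply defers to \cite[Lemma 2.3]{forlano2020a}: sum the geometric bound obtained from \eqref{Xibbm-Fl1-est} with $\psi = u_{0,n}$ under $T\|u_{0,n}\|_{\mathscr{F}L^1}^{k-1}\ll 1$, then identify the limit with $u_n$ using the root-splitting recursion for $\Xi_j$ and uniqueness in Theorem \ref{Illposdness:local_FL}. You also read the statement correctly: the exponent should be $1-k$, and $RA$ should be the $\mathscr{F}L^1$ size $RA^d$ of $\psi_n$.
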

\begin{proof}
    See \cite[Lemma 2.3]{forlano2020a} for a proof of convergence of a similar power series.
\end{proof}
We now recall a standard inequality for the convolution of cubes.
\begin{lemma}\label{Lem: cube conv ineq}
    Let $d\geq 1$. Then there exists $c_d$ such that
    \begin{align*}
        \mathbbm{1}_{a+Q_{A,d}} * \mathbbm{1}_{b+Q_{A,d}}(\xi) \geq c_d A^d \mathbbm{1}_{a+b+Q_{A,d}}(\xi)
    \end{align*}
    for all $a,b,\xi \in \Z^d$ and $A\geq 1$.
\end{lemma}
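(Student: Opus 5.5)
The lemma concerns the convolution of two indicator functions of cubes in $\Z^d$:
\[
\mathbbm{1}_{a+Q_{A,d}} * \mathbbm{1}_{b+Q_{A,d}}(\xi) = \#\{\eta \in \Z^d : \eta - a \in Q_{A,d},\ \xi-\eta-b \in Q_{A,d}\}.
\]
The plan is to reduce to one dimension by a product structure, then count lattice points by hand.

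\textbf{Step 1 (normalise).} Substitute $\eta' = \eta - a$ and $\xi' = \xi - a - b$. The count becomes
\[
\#\{\eta' \in Q_{A,d} : \xi' - \eta' \in Q_{A,d}\} = \mathbbm{1}_{Q_{A,d}} * \mathbbm{1}_{Q_{A,d}}(\xi').
\]
So it suffices to take $a = b = 0$ and show that this convolution is at least $c_d A^d$ for every $\xi' \in Q_{A,d}$. For $\xi' \notin Q_{A,d}$ the right-hand side of the lemma vanishes, so there is nothing to prove.

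\textbf{Step 2 (factorise).} The cube is a product, $Q_{A,d} = I^d$ with $I = [-\tfrac{A}{2},\tfrac{A}{2}) \cap \Z$. Hence the indicator factorises as $\mathbbm{1}_{Q_{A,d}}(\eta) = \prod_{i=1}^d \mathbbm{1}_I(\eta_i)$. The convolution on $\Z^d$ then factorises as well:
\[
\mathbbm{1}_{Q_{A,d}} * \mathbbm{1}_{Q_{A,d}}(\xi') = \prod_{i=1}^d (\mathbbm{1}_I * \mathbbm{1}_I)(\xi'_i).
\]
It therefore suffices to prove the one-dimensional statement: $\mathbbm{1}_I*\mathbbm{1}_I(x)\ge c_1 A$ for all integers $x \in I$. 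One then takes $c_d = c_1^d$.

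\textbf{Step 3 (one-dimensional count).} Fix $x \in I$. By symmetry, assume $0 \le x < \tfrac{A}{2}$; the case $x<0$ is identical with the endpoints mirrored. We need to count the integers $y$ with $y \in I$ and $x - y \in I$. This set contains every integer $y \in [x-\tfrac{A}{2}+1,\ \tfrac{A}{2})$. Its length is $A - x - 1 > \tfrac{A}{2} - 1$. So for $A \ge 4$ the count is at least about $\tfrac{A}{2} - 2 \ge \tfrac{A}{8}$.

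\textbf{Step 4 (small $A$).} For $1 \le A < 4$, the set $I$ contains at least one integer, namely $0$. Any $x \in I$ satisfies $|x| < 2$. The cube then has only boundedly many points, and one checks directly that each $x \in I$ admits at least one valid $y$ (for instance $y = x$ or $y = 0$, whichever keeps both $y$ and $x-y$ in $I$). This gives a count of at least $1 \ge A/4$.

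\textbf{Main obstacle.} The only genuine difficulty is the bookkeeping of the half-open, integer-truncated endpoints when $A$ is small or not an integer. This is absorbed by shrinking the constant $c_1$.
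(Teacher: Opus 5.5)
Your proof is correct. The paper gives no proof of this lemma; it only ``recalls'' it as a standard fact before using it in the lower bound for the first Picard iterate. Your argument is exactly the routine one that fills this in: translate to $a=b=0$, factorise $\mathbbm{1}_{Q_{A,d}}*\mathbbm{1}_{Q_{A,d}}(\xi)=\prod_{i}(\mathbbm{1}_I*\mathbbm{1}_I)(\xi_i)$, and count lattice points in one dimension.

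There is one numerical slip in Step 3. The inequality $\tfrac{A}{2}-2\ge\tfrac{A}{8}$ is false for $4\le A<\tfrac{16}{3}$. It is easily repaired, because your Step 4 observation holds for every $A\ge 1$, not just $A<4$. Both $y=0$ and $y=x$ are always admissible, since $0,x\in I$, so the count is at least 1. Hence the count is at least $\max(1,\tfrac{A}{2}-2)\ge\tfrac{A}{8}$ for all $A\ge 1$.

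Alternatively, you can describe the admissible sets exactly.
\begin{itemize}
\item For $0\le x<\tfrac{A}{2}$, they are the integers in the open interval $(x-\tfrac{A}{2},\tfrac{A}{2})$, of length $A-x>\tfrac{A}{2}$.
\item For $x<0$, they are the integers in the closed interval $[-\tfrac{A}{2},x+\tfrac{A}{2}]$, of length $A+x\ge\tfrac{A}{2}$.
\end{itemize}
Either way there are more than $\tfrac{A}{2}-1$ admissible points. Together with the count being at least $1$, this gives $c_1=\tfrac14$ directly, and it also confirms that the ``mirrored'' case $x<0$ goes through as you claim despite the half-open endpoints.
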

Using this, we may prove the key lower bound. It follows closely the method of Kishimoto in \cite{kishimoto2019} and Oh in \cite{oh2017}.
\begin{lemma}\label{Lem: LOWERBD}
    Let $\psi_n$ be defined as in \eqref{phin-bbm-def} and $s<0$. For $0<t\ll 1$, we have,
    \begin{align*}
        \left\|\Pi_{Q_{A,d}}\Xi_1^{}(\psi_n)(t)\right\|_{H^s} &\gtrsim  tR^k A^{d(k-1)} f_s(A).
    \end{align*}   
\end{lemma}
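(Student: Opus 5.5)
We expand $\Xi_1(\psi_n)$ on the Fourier side and bound the coefficients of the projected piece $\Pi_{Q_{A,d}}\Xi_1(\psi_n)(t)$ from below pointwise. With $\Xi_1(\psi_n)(t)=-\tfrac{i\mu}{k}\mathscr{I}^k_{\beta,k}[S_\beta(t)\psi_n](t)$, for $\xi\in\Z^d$ we get
\begin{align*}
\widehat{\Xi_1(\psi_n)}(t,\xi)=-\tfrac{i\mu}{k}\,\phi_\beta(\xi)\sum_{\xi_1+\dots+\xi_k=\xi}\int_0^t e^{-i(t-t')\phi_\beta(\xi)}\prod_{l=1}^k e^{-it'\phi_\beta(\xi_l)}\widehat{\psi_n}(\xi_l)\,dt'.
\end{align*}
Since $|\phi_\beta|\le 1$, for $0<t\ll1$ every phase $(t-t')\phi_\beta(\xi)+t'\sum_l\phi_\beta(\xi_l)$ has modulus at most $(k+1)t\ll1$. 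Hence the real part of each phase factor is at least $\tfrac12$, and the time integral has real part $\ge t/2$. This phase control is uniform in $N,A,R$. Because $\widehat{\psi_n}\ge0$, there is no cancellation between frequency configurations, so
\begin{align*}
|\widehat{\Xi_1(\psi_n)}(t,\xi)|\gtrsim t\,|\phi_\beta(\xi)|\,R^k\sum_{\xi_1+\dots+\xi_k=\xi}\prod_{l=1}^k\mathbbm{1}_{\epsilon_l N e_{1,d}+Q_{A,d}}(\xi_l)
\end{align*}
for any fixed choice of signs and magnitudes $\epsilon_l\in\{\pm1,\pm2\}$ with $\sum_l\epsilon_l=0$. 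Such a choice exists for every $k\ge2$: for $k$ even pair $+1,-1$; for $k$ odd use $\{2,-1,-1\}$ together with pairs. Only these high-to-low interactions land in $Q_{A,d}$, up to the harmless overlap that comes from $Q_{A,d}$ being half-open.

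Iterating Lemma \ref{Lem: cube conv ineq} $k-1$ times gives
\begin{align*}
\mathbbm{1}_{\epsilon_1Ne_{1,d}+Q_{A,d}}*\cdots*\mathbbm{1}_{\epsilon_kNe_{1,d}+Q_{A,d}}(\xi)\ge c_d^{k-1}A^{d(k-1)}\mathbbm{1}_{Q_{A,d}}(\xi),
\end{align*}
after replacing $Q_{A,d}$ by a fixed dilate $Q_{cA,d}$ where needed to keep the sumset centred. Therefore on $Q_{cA,d}$ we have
\begin{align*}
|\widehat{\Xi_1(\psi_n)}(t,\xi)|\gtrsim tR^kA^{d(k-1)}|\phi_\beta(\xi)|.
\end{align*}
Taking the $H^s$ norm over $\Pi_{Q_{A,d}}$ then reduces the lemma to
\begin{align*}
\big\|\langle\xi\rangle^s\phi_\beta(\xi)\big\|_{\ell^2(Q_{cA,d})}\gtrsim f_s(A).
\end{align*}

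The main obstacle is the multiplier $\phi_\beta(\xi)=(\xi_1+\dots+\xi_d)/(1+|\xi|^\beta)$. It vanishes at $\xi=0$ and on the hyperplane $\sum_i\xi_i=0$, and it is not bounded below uniformly on $Q_{cA,d}$. I will first remove the degenerate set. Restrict to the cone $\Gamma=\{\xi\in Q_{cA,d}\setminus b_d:\ \xi_1+\dots+\xi_d\ge|\xi|/2\}$, which carries a fixed positive proportion of every dyadic shell. Using \eqref{eq: annulus_control_xi} together with \eqref{large_factor_xi_control} (with $M$ fixed), $\|\langle\xi\rangle^s\|_{\ell^2(\Gamma)}\sim f_s(A)$ in each of the three regimes $s<-\tfrac d2$, $s=-\tfrac d2$, $s>-\tfrac d2$. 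On $\Gamma$ one has $|\phi_\beta(\xi)|\gtrsim|\xi|/(1+|\xi|^\beta)$, which is $\gtrsim1$ on bounded shells.

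For $s\le-\tfrac d2$ the norm $f_s(A)$ is dominated by frequencies of size $O(1)$, respectively by the logarithmic sum over all shells. On bounded shells $|\phi_\beta|\sim1$, so the bound follows directly; at $s=-\tfrac d2$ I must check that the logarithm is not destroyed by the decay of $\phi_\beta$ on large shells. For $s>-\tfrac d2$ the mass of $\|\langle\xi\rangle^s\|_{\ell^2(Q_{cA,d})}$ sits at $|\xi|\sim A$, where $|\phi_\beta|\sim A^{1-\beta}$. This is exactly the step I expect to be delicate, and I would first try to keep the full weight by choosing the constant $c$ in the cube dilation (and hence the shell where the convolution lower bound is used) so that the effective frequencies stay where $\phi_\beta$ is of unit size.

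I am not confident this closes for $s>-\tfrac d2$ and $\beta>1$. If it does not, that case requires either a modification of the data $\psi_n$ or the additional factor $A^{1-\beta}$, rather than a proof of the lemma exactly as stated.
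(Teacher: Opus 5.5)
Your reduction is the paper's own argument: Fourier expansion, smallness of the phase, positivity of $\widehat{\psi_n}$, a zero-sum choice of cubes, and $k-1$ applications of Lemma~\ref{Lem: cube conv ineq}. Your phase bound via $|\phi_\beta|\lesssim 1$ is in fact the correct justification, since the output frequency $\xi\in Q_{A,d}$ is not of size $N$.

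The gap is the last step, and your suspicion is right. The paper closes it by replacing $|\omega(\xi)|/2$ with a constant on $Q_{A,d}\setminus b$, citing only $|\xi|\ge 1$. This fails pointwise on the hyperplane $\sum_i\xi_i=0$ when $d\ge 2$, which your cone fixes. More seriously, it is not uniform in $A$ when $\beta>1$, and no argument can repair that. The same computation gives the matching upper bound $|\widehat{\Xi_1(\psi_n)}(t,\xi)|\lesssim_k tR^kA^{d(k-1)}|\phi_\beta(\xi)|$, because a $k$-fold convolution of indicators of $A$-cubes is at most $A^{d(k-1)}$. Hence
\[
\big\|\Pi_{Q_{A,d}}\Xi_1(\psi_n)(t)\big\|_{H^s}\lesssim tR^kA^{d(k-1)}\big\|\langle\xi\rangle^{s+1-\beta}\big\|_{\ell^2(Q_{A,d})}\sim tR^kA^{d(k-1)}f_{s+1-\beta,d}(A),
\]
and $f_{s+1-\beta,d}(A)/f_{s,d}(A)\to 0$ as $A\to\infty$ whenever $\beta>1$ and $s\ge -d/2$. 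At $s=-d/2$ the logarithm is indeed destroyed, since $\sum_{\xi\ne 0}|\xi|^{-d+2-2\beta}<\infty$. So choosing the dilation constant $c$ cleverly cannot work: for $s>-d/2$ the $\ell^2$ mass of $\langle\xi\rangle^s$ on $Q_{cA,d}$ genuinely sits at $|\xi|\sim A$, where $|\phi_\beta|\sim A^{1-\beta}$.

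The missing observation is that the lemma is only ever used with bounded $A$. For $\beta>1$ and $s\ge -d/2$ it is also only true for bounded $A$. In the proof of Proposition~\ref{Thm:quant_norm_inflation} the paper fixes $A=10$, and all the growth comes from $R=N^{-s/2}$ and $T$. Read the implicit constant as uniform in $N$, $R$, $t$ but allowed to depend on $A$. Then $f_{s,d}(A)\sim 1$, and your reduced inequality $\|\langle\xi\rangle^s\phi_\beta(\xi)\|_{\ell^2(Q_{A,d})}\gtrsim 1$ follows from the single frequency $\xi=e_{1,d}\in Q_{A,d}$, where $|\phi_\beta(e_{1,d})|=\tfrac12$. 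This works for every $s<0$ and $1\le\beta\le 2$, with no cone and no case split.

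Your cone argument is still correct and yields the uniform-in-$A$ statement when $\beta=1$ or $s<-d/2$. For the application, though, you should state the lemma with $A$ fixed and finish in one line. The fallbacks you mention, an extra factor $A^{1-\beta}$ or a modified $\psi_n$, are not needed.
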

\begin{proof}
    Let $\omega(\xi) = \widehat{\phi_\beta}(\xi)$. We recall,
    \begin{align*}
        \Xi_1(\psi_n)(t) = \int_0^t S_\beta(t-t')\psi_\beta(D_x)\left(S_\beta(t') \psi_n(t')\right)^k \, dt'.
    \end{align*}
    Taking the Fourier transform and bounding it's modulus below by it's real component we get that,
    \begin{align}
         &\left|\mathscr{F}\left\{ \Xi_1^{}(\psi_n)(t)\right\}(\xi)\right|\notag\\
          &\hphantom{X}\geq \Big|\omega(\xi)\Big|\sum_{\substack{\xi = \xi_1+\cdots+\xi_k \\ \xi \in \Z^d}} \prod_{i=1}^k \widehat{\psi_n}(\xi_i)\Rea \int_0^t \exp\Big(-it' \Big(\omega(\xi) - \sum_{i=1}^k \omega(\xi_i)\Big)\Big)\, dt'\label{eq:FT_for_BBM_Xi_1}
    \end{align}
   where we have used that $e^{t\omega(\xi)}$ is on the unit circle and each of the $\widehat{\psi_n}$ functions are real valued. We note that given $\xi_i \in \supp \widehat{\psi_n}$, $|\xi_i|\sim N$. Therefore, we have that $|\xi|= \left|\xi_1+\dots+ \xi_k\right| \sim N$ and resultantly for $0\leq t' \ll 1$
    \begin{align*}
        \Big|t' \Big[\omega(\xi) - \sum_{i=1}^{k} \omega(\xi_i)\Big]\Big| \ll 1.
    \end{align*}
    Now, note that when $|x|\leq \tfrac{\pi}{3}$, $\cos(x) \geq \tfrac{1}{2}$. As such, for $0\leq t' \ll 1$,
    \begin{align*}
         \Rea \int_0^t \exp\Big(-it'\Big[\omega(\xi) - \sum_{i=1}^{k} \omega(\xi_i)\Big]\Big) \, dt' &=\int_0^t \cos\Big(-t'\Big[\omega(\xi) - \sum_{i=1}^{k} \omega(\xi_i)\Big]\Big) \, dt'\geq \frac{1}{2}t.
    \end{align*}
    Using this in \eqref{eq:FT_for_BBM_Xi_1},
    \begin{align*}
         \left|\mathscr{F}\left\{ \Xi_1(\psi_n)(t)\right\}(\xi)\right| &\geq t\frac{\left|\omega(\xi)\right|}{2}\sum_{\substack{\xi = \xi_1+\cdots+\xi_k \\ \xi \in \Z^d}} \prod_{i=1}^k \widehat{\psi_n}(\xi_i)
         =   t\tfrac{\left|\omega(\xi)\right|}{2}\big(\widehat{\psi_n}*\dots* \widehat{\psi_n}\big)(\xi).
    \end{align*}
    Given that a convolution of indicator functions will be positive, we may indicate on a nonempty subset of $\Z^d$ to lower bound this value further. As such, we will indicate on $Q_{A,d} \setminus b$ where $b$ is the set defined in Definition \ref{Illposedness_definitions},
    \begin{align*}
        \left|\mathscr{F}\left\{ \Xi_1(\psi_n)(t)\right\}(\xi)\right|
        &\geq \mathbbm{1}_{Q_{A,d}\setminus b}(\xi)t \tfrac{|\omega(\xi)|}{2}\big(\widehat{\psi_n}*\dots* \widehat{\psi_n}\big)(\xi)\\
        &\geq\mathbbm{1}_{Q_{A,d}\setminus b}(\xi)t\big(\widehat{\psi_n}*\dots* \widehat{\psi_n}\big)(\xi).
    \end{align*} 
     Where in the last inequality we recall that $\xi \in Q_{A,d}\setminus b$ implies that $|\xi| \geq 1$. Expanding our definition of $\psi_n$, \eqref{phin-bbm-def} we get a sum over all possible combinations of $k$-convolutions of the indicator functions in $\widehat{\psi_n}$,
    \begin{align*}
         \left|\mathscr{F}\left\{ \Xi_1^{}(\psi_n)(t)\right\}(\xi)\right| \geq \mathbbm{1}_{Q_{A,d}\setminus b}(\xi) tR^k\sum_{j_1, \dots,j_k \in \left\{-2,-1,1,2\right\}}\mathbbm{1}_{j_1Ne_{1,d} + Q_{A,d}}*\dots * \mathbbm{1}_{j_kNe_{1,d} +Q_{A,d}}(\xi).
    \end{align*}
    We must now strategically pick a term from this sum such that we can use Lemma \eqref{Lem: cube conv ineq} to end up with a $\mathbbm{1}_{Q_{A,d}}$ term in a lower bound. For $k$ even, we choose the term that alternates between $-Ne_{1,d}+Q_{A,d}$ and $Ne_{1,d} + Q_{A,d}$ like so,
    \begin{align*}
        |\mathscr{F}\left\{ \Xi_1^{}(\psi_n)(t)\right\}(\xi)| & \geq\mathbbm{1}_{Q_{A,d} \setminus b}(\xi) t R^k\left(\mathbbm{1}_{-Ne_{1,d} + Q_{A,d}}*\dots * \mathbbm{1}_{Ne_{1,d} +Q_{A,d}}\right)(\xi)\\
        &\gtrsim tR^k A^{d(k-1)}\mathbbm{1}_{Q_{A,d} \setminus b}(\xi) 
    \end{align*}
    where we have applied Lemma \ref{Lem: cube conv ineq} $k-1$ times. If $k=3$ we choose the following,
    \begin{align*}
        |\mathscr{F}\left\{ \Xi_1^{}(\psi_n)(t)\right\}|(\xi) &\geq\mathbbm{1}_{Q_{A,d} \setminus b}(\xi) t R^k\left(\mathbbm{1}_{-2Ne_{1,d} + Q_{A,d}}* \mathbbm{1}_{Ne_{1,d} + Q_{A,d}} * \mathbbm{1}_{Ne_{1,d} +Q_{A,d}}\right)\\
        &\gtrsim tR^k A^{d(k-1)}\mathbbm{1}_{Q_{A,d} \setminus b}(\xi) .
    \end{align*}
    Where again, we have applied Lemma \ref{Lem: cube conv ineq} $k-1$ times. We can extend this to the more general $k$ odd case by alternating between $-Ne_{1,d} + Q_{A,d}$ and $Ne_{1,d} + Q_{A,d}$ terms until the $k-2$th term, where we choose as we have in the $k=3$ case,
    \begin{align*}
         \left|\mathscr{F}\left\{ \Xi_1^{}(\psi_n)(t)\right\}(\xi)\right|
         &\gtrsim \mathbbm{1}_{Q_{A,d} \setminus b} t R^kA^{2d}\big(\overbrace{\mathbbm{1}_{-Ne_{1,d} + Q_{A,d}}*\dots * \mathbbm{1}_{Ne_{1,d} +Q_{A,d}}}^{k-3 \text{ terms}} * \mathbbm{1}_{Q_{A,d}}\big)(\xi)\\
        &\gtrsim t R^k A^{d(k-1)}\mathbbm{1}_{Q_{A,d} \setminus b}.
    \end{align*}   
    Thus, for any $k$, we have,
    \begin{align}
         \left|\mathscr{F}\left\{ \Xi_1^{}(\psi_n)(t)\right\}(\xi)\right| \gtrsim t R^k A^{d(k-1)}\mathbbm{1}_{Q_{A,d} \setminus b}.\label{Illposedness:f_t_lower_bound_xi}
    \end{align}
Taking the $H^s$ norm of $\Pi_{Q_{A,d}}\Xi_1^{}(\psi_n)$,
\begin{align*}
    \left\|\Pi_{Q_{A,d}}\Xi_1^{}(\psi_n)(t)\right\|_{H^s} = \bigg(\sum_{\xi\in Q_{A,d}} \langle \xi\rangle^{2s}\mathscr{F}\left\{\Xi_1^{}(\psi_n)(t)\right\}^2 \bigg)^{\tfrac{1}{2}}&\gtrsim  tR^k A^{d(k-1)} \left\|\langle \xi \rangle^{s}\right\|_{\ell^2_{\xi}(Q_{A,d} \setminus b)}
\end{align*}
 where in the second inequality we used our lower bound \eqref{Illposedness:f_t_lower_bound_xi}. Estimating $\|\langle\xi\rangle^s\|_{\ell^2_\xi(Q_{A,d})}$, we find that for any $0<\varepsilon<A/2$,
    \begin{align*}
        \|\langle\xi\rangle^s\|_{\ell^2_\xi(B_{(1-\varepsilon)A/2\setminus b})}\leq\|\langle\xi\rangle^s\|_{\ell^2_\xi(Q_{A,d}\setminus b)}\leq \|\langle\xi\rangle^s\|_{\ell^2_\xi(B_{(1+\varepsilon)A/2}\setminus b)}
    \end{align*}
    where using \eqref{large_factor_xi_control} and \eqref{eq: annulus_control_xi} gives us that $\|\langle\xi\rangle^s\|_{\ell^2_\xi(Q_{A,d} \setminus b)} \sim f_{s,d}(A)$. This yields the desired result.
\end{proof}
\subsection{Proof of infinite loss}
\begin{proposition}\label{THM:Norm_inf_to_prop}
    Proposition \ref{Thm:quant_norm_inflation} is equivalent to the following properties holding for $n\in \N$.
    \begin{enumerate}
        \item[(i)] $RA^{\tfrac{d}{2}}N^s \ll \tfrac{1}{n}$,
        \item[(ii)] $R^{k-1}TA^{d(k-1)} \ll 1$,
        \item[(iii)] $TR^kA^{d(k-1)}f_{s,d}(A) \gg n$,
        \item[(iv)] $T \ll 1$
        \item[(v)]\eqref{phi_growth} and $Rf_{s,d}(A)\gg \left\|u_0\right\|_{L^2}$.
    \end{enumerate}
\end{proposition}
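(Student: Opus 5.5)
Taking \(u_n\) to be the solution with data \(u_{0,n}=u_0+\psi_n\) and \(t_n=T\), I will show that conditions (i)--(v) imply \eqref{integer_norm_inflation}. This is the direction actually used to prove Proposition \ref{Thm:quant_norm_inflation}; the statement is phrased as an equivalence, but the substantive content is this sufficiency. The plan follows \cite[Section 3]{oh2017} and \cite{forlano2020a}.

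The closeness of the data is immediate. By \eqref{est: BBM u_0,n - u_0} and (i),
\[
\|u_{0,n}-u_0\|_{H^s}\lesssim RA^{d/2}N^s\ll \tfrac1n .
\]
Next I need the solution on \([0,T]\). Since \(A\gg 1\), (ii) gives \(T(RA)^{k-1}\le TR^{k-1}A^{d(k-1)}\ll 1\). This is the smallness required by Lemma \ref{Power-series-convergence}, so
\[
u_n(T)=\sum_{j\ge 0}\Xi_j(u_{0,n})(T)
\]
converges in \(\mathscr FL^1\), and the same convergence holds in \(H^s\) by \eqref{Xi-Hs-up-bd}. Condition (iv) keeps \(T=t_n<1/n\), after shrinking if necessary, and it also keeps \(T\) in the range \(0<t\ll1\) where Lemma \ref{Lem: LOWERBD} applies.

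For the inflation I split
\[
u_n(T)=\Xi_0(u_{0,n})+\Xi_1(\psi_n)+\bigl[\Xi_1(u_{0,n})-\Xi_1(\psi_n)\bigr]+\sum_{j\ge2}\Xi_j(u_{0,n}).
\]
The main term is bounded below after projecting: \(\|u_n(T)\|_{H^s}\ge\|\Pi_{Q_{A,d}}u_n(T)\|_{H^s}\), and Lemma \ref{Lem: LOWERBD} gives \(\|\Pi_{Q_{A,d}}\Xi_1(\psi_n)(T)\|_{H^s}\gtrsim TR^kA^{d(k-1)}f_{s,d}(A)\). This is \(\gg n\) by (iii).

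The remaining three pieces must each be \(o\) of this main term.
\begin{itemize}
\item For \(\Xi_0\): \(\widehat{\psi_n}\) is supported near \(|\xi|\sim N\), away from \(Q_{A,d}\) because \(A\ll N\). So \(\Pi_{Q_{A,d}}\Xi_0(u_{0,n})=\Pi_{Q_{A,d}}S(T)u_0\), which has \(H^s\) norm at most \(\|u_0\|_{L^2}\). This is \(\ll Rf_{s,d}(A)\) by (v), and \(Rf_{s,d}(A)\ll TR^kA^{d(k-1)}f_{s,d}(A)\) if the parameters are chosen with (iii) strengthened appropriately.
\item For the difference term: \eqref{est: BBM Xi_1 diff} bounds it by \(T\|u_0\|_{L^2}R^{k-1}A^{(k-1)d}\). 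This equals the main term multiplied by \(\|u_0\|_{L^2}/(Rf_{s,d}(A))\ll1\), by (v).
\item For the tail: by \eqref{Xi-Hs-up-bd}, \(\sum_{j\ge2}C^jT^j(RA^d)^{(k-1)j}(\|u_0\|_{L^2}+Rf_{s,d}(A))\) is a geometric series with ratio \(\ll1\) by (ii). It is therefore \(\lesssim (TR^{k-1}A^{d(k-1)})^2Rf_{s,d}(A)\), which equals the main term times \(TR^{k-1}A^{d(k-1)}\ll1\).
\end{itemize}
Combining these gives \(\|u_n(T)\|_{H^s}\gtrsim TR^kA^{d(k-1)}f_{s,d}(A)\gg n\).

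The step I expect to need the most care is the \(\Xi_0\) bound. Plain \eqref{est: BBM Xi_0} gives \(1+RA^{d/2}N^s\), and the constant \(1\) is only harmless because the main term is \(\gg n\ge1\); so the cleanest route may be to use \eqref{est: BBM Xi_0} directly together with (i) and (iii), rather than the projection argument. I would also verify, as an aside, that the five conditions can be met simultaneously. Take \(T=(RA^d)^{-(k-1)}\delta\) with \(\delta\) small, so that (iii) becomes \(\delta Rf_{s,d}(A)\gg n\), and then choose \(N\) huge to satisfy (i); this is where \(s<0\) is used. The converse direction of the equivalence, that the conditions are necessary for this particular construction, I would not pursue beyond remarking that each estimate above is sharp up to constants.
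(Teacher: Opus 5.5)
Your argument is the paper's. Both prove only the sufficiency direction, by the reverse triangle inequality on the Picard expansion $u_n(T)=\sum_j\Xi_j(u_{0,n})(T)$. Lemma \ref{Lem: LOWERBD} supplies the main term, and \eqref{est: BBM Xi_0}, \eqref{est: BBM Xi_1 diff} and \eqref{Xi-Hs-up-bd} control the three error pieces exactly as you do. Projecting everything onto $Q_{A,d}$ first, as the paper does in the proof of Theorem \ref{InfiniteLossofRegs}, is harmless.

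The one step that fails as written is your first treatment of $\Xi_0$. No strengthening of (iii) can give $Rf_{s,d}(A)\ll TR^kA^{d(k-1)}f_{s,d}(A)$. Condition (ii) forces $TR^kA^{d(k-1)}f_{s,d}(A)=\bigl(TR^{k-1}A^{d(k-1)}\bigr)Rf_{s,d}(A)\ll Rf_{s,d}(A)$, so comparing through (v) goes the wrong way.

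Drop that route and use the fallback you already identify, which is what the paper does. The $\Xi_0$ contribution is $O(1)$, with a constant depending only on $u_0$, while the main term is $\gg n$ by (iii). Your projected bound $\|\Pi_{Q_{A,d}}S(T)u_0\|_{H^s}\le\|u_0\|_{H^s}$ serves equally well for this.

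Similarly, $t_n<1/n$ does not follow from (iv) by ``shrinking'' $T$, since shrinking $T$ works against (iii). It comes from the explicit parameter choice, where $T\to0$ as $N\to\infty$; the paper is equally brief on this point.
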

\begin{proof}
Assuming (i) and using \eqref{est: BBM u_0,n - u_0}  we get,
\begin{align*}
    \big\|u_{0,n}^{} - u_0\big\|_{H^s} \lesssim RA^{\tfrac{d}{2}}N^s \ll \tfrac{1}{n}
\end{align*}
which gives us the first estimate in \eqref{integer_norm_inflation}. Using \eqref{A,d,n,R_rel} and \eqref{phi_growth} we get,
\begin{align*}
    &RA \sim  \|\psi_n\|_{\mathscr{F}L^1}- \|u_{0}\|_{\mathscr{F}L^1} \leq \|u_{0,n}^{}\|_{\mathscr{F}L^1} \leq \|\psi_n\|_{\mathscr{F}L^1}+ \|u_{0}\|_{\mathscr{F}L^1} \sim RA^d\\
    &\hphantom{}\implies \|u_{0,n}^{}\|_{\mathscr{F}L^1}  \sim RA^d.
\end{align*}
Thus, by property (ii) we have that,
\begin{align*}
    T\big\|u_{0,n}^{}\big\|^{k-1}_{\mathscr{F}L^1} \sim TR^{k-1}A^{d(k-1)} \ll 1
\end{align*}
this implies that, $T\lesssim \big\|u_{0,n}^{}\big\|^{1-k}$. Thus, by Theorem \ref{Illposdness:local_FL} we get a local solution $u_n$ to \eqref{Gen-BOBBM-eq} from $[0,T]$ with initial data $u_{0,n}^{}$. Furthermore, by Lemma \ref{Power-series-convergence} we obtain the convergence of the power series on $[0,T]$. By \eqref{est: BBM Xi_0} and property (i) we have that,
\begin{align}
    \big\|\Xi_{0}^{}(u_{0,n}^{})(T)\big\|_{H^s} \lesssim 1+RA^{\tfrac{d}{2}}N^s \sim 1.\label{Illposedness:Xi_0:lower_final}
\end{align}
and by Lemma \ref{Lem: LOWERBD} and property (iv) (iii) we get,
\begin{align}
     \left\|\Xi_1^{}\left(\psi_n\right)(T)\right\|_{H^s} \gtrsim TR^kA^{d(k-1)}f_s(A) \gg n\label{proof of prop: Xi_1}.
\end{align}
We can begin to estimate the rest of the power series using \eqref{Xi-Hs-up-bd} and property (v),
\begin{align*}
    \Big\|\sum_{j=2}^\infty \Xi_j(u_{0,n})(T)\Big\|_{H^s} &\leq \sum_{j=2}^\infty \left\|\Xi_j(u_{0,n})(T)\right\|_{H^s}\lesssim \sum_{j=2}^\infty T^jR^{j(k-1)}A^{dj(k-1)}\left(\left\|u_0\right\|_{L^2} + Rf_{s,d}(A)\right)\\
    &\lesssim \sum_{j=2}^\infty T^jR^{j(k-1)+1}A^{dj(k-1)}f_{s,d}(A).
\end{align*}
Now using property (ii),
\begin{align}
    \Big\|\sum_{j=2}^\infty \Xi_j(u_{0,n})(T)\Big\|_{H^s} &\lesssim TR^{k}A^{d(k-1)}f_{s,d}(A)\sum_{j=1}^\infty T^jR^{j(k-1)}A^{jd(k-1)}\notag\\
    &\lesssim  TR^{k}A^{k-1}f_{s,d}(A).\label{Illposedness:Xi_power_upper:final}
\end{align}
Now, tying these estimates together, we use the reverse triangle inequality on the power series expansion for our solution $u(T)$ who's convergence is guaranteed by Lemma \ref{Power-series-convergence}. Then, we make use of our estimates \eqref{Illposedness:Xi_0:lower_final}, \eqref{proof of prop: Xi_1} and \eqref{Illposedness:Xi_power_upper:final} and \eqref{est: BBM Xi_1 diff} ,
\begin{align*}
    &\left\|u(T)\right\|_{H^s}\\
     &\hphantom{X}\geq \left\|\Xi_1(\psi_n)(T)\right\|_{H^s} - \left\|\Xi_0(u_{0,n})\right\|_{H^s} - \left\|\Xi_1(u_{0,n}(T)) - \Xi_1(\psi_n)(T)\right\|_{H^s}- \Big\|\sum_{j=2}^\infty \Xi_j(u_{0,n}(T))\Big\|_{H^s}\\
    &\hphantom{X}\gtrsim TR^kA^{d(k-1)}f_s(A) - 1+RA^{\tfrac{d}{2}}N^s - T\left\|u_0\right\|_{L^2}R^{k-1}A^{d(k-1)}  \\
    &\hphantom{XXXXXXXXXXXXXXXXXXXXXXXX}- TR^kA^{d(k-1)}f_{s,d}(A)R^{k-1}A^{d(k-1)}.
\end{align*}
With (i) we get, $ RA^{\tfrac{d}{2}}N^s \ll \tfrac{1}{n}$. (ii) and (v) tell us that $T\left\|u_0\right\|_{L^2}R^{k-1}A^{d(k-1)} \ll 1$. Furthermore, (ii) with (iii) yields
\begin{align*}
    TR^kA^{d(k-1)}f_{s,d}(A)R^{k-1}TA^{d(k-1)} \ll TR^kA^{d(k-1)}f_{s,d}(A).
\end{align*}
Thus,
\begin{align*}
    1+RA^{\tfrac{d}{2}}N^s + T\left\|u_0\right\|_{L^2}R^{k-1}A^{d(k-1)}  + TR^kA^{d(k-1)}f_{s,d}(A)R^{k-1}A^{d(k-1)} \ll TR^kA^{d(k-1)}f_{s,d}(A).
\end{align*}
Therefore, by property (iv),
\begin{align*}
    \left\|u(T)\right\|_{H^s} &\gtrsim  TR^k A^{d(k-1)}f_{s,d}(A) \gg n.
\end{align*}
\end{proof}
\begin{proof}[Proof of Proposition \ref{Thm:quant_norm_inflation}]
To prove Proposition \ref{Thm:quant_norm_inflation}, it remains to verify each of the conditions in Proposition \ref{THM:Norm_inf_to_prop} by choosing an $A$, $R$ and $T$ in terms of $N$. We make the following choices,
\begin{equation}
    R = N^{-\tfrac{s}{2}}, \quad T = N^{(k-\tfrac{1}{2})\tfrac{s}{2}}, \quad \text{and} \quad A = 10\notag.
\end{equation}
We now verify the required conditions. For (i) we get,
\begin{align*}
    RA^{\tfrac{d}{2}}N^s \sim RN^s = N^{-\tfrac{s}{2}}N^s = N^{\tfrac{s}{2}} \ll \tfrac{1}{n}.
\end{align*}
For (ii),
\begin{align*}
    R^{k-1}TA^{d(k-1)} \sim R^{k-1}T = N^{-(k-\tfrac{1}{2})\tfrac{s}{2}}N^{(k-1)\tfrac{s}{2}} = N^{\tfrac{s}{4}} \ll 1.
\end{align*}
For (iii),
\begin{align*}
    R^{k}A^{d(k-1)}f_{s,d}(A) \sim R^{k}A^{d(k-1)} = N^{k\tfrac{s}{2}-\tfrac{s}{4} - k\tfrac{s}{2}} = N^{-\tfrac{s}{4}} \gg n.
\end{align*}
Here, (v) and (iv) are satisfied trivially given that $R\gg 1$. 
\end{proof}

\begin{proof}[Proof of Theorem \ref{InfiniteLossofRegs}]
    Pick $R,T$ and $A$ as in the proof of Proposition \ref{Thm:quant_norm_inflation}. First, we make the key observation that for $A \lesssim 1$ and some $f\in H^s\cap H^\sigma$, 
    \begin{align}
        \left\|\langle\xi\rangle^s \mathbbm{1}_{Q_{A,d}}(\xi)\right\|_{\ell^2_\xi} \sim_{s,\sigma} \left\|\langle\xi\rangle^{\sigma} \mathbbm{1}_{Q_{A,d}}(\xi)\right\|_{\ell^2_\xi}\notag.
    \end{align}
    Using this we draw the following bounds,
    \begin{align}
        \left\|u_n(T)\right\|_{H^\sigma} &\geq \left\|\Pi_{Q_{A,d}} u_n(T)\right\|_{H^\sigma}\notag\\
        &\geq C_{s,\sigma}\big\|\mathbbm{1}_{Q_{A,d}}(\xi)\langle\xi\rangle^s\widehat{u_n(T)}(\xi)\big\|_{\ell^2_\xi}= C_{s,\sigma}\left\| \Pi_{Q_{A,d}} u_n(T) \right\|_{H^s}\notag.
    \end{align}
    As in Proposition \ref{THM:Norm_inf_to_prop} we use the reverse triangle inequality to draw a lower bound in terms of the dominating Picard iterate,
    \begin{align*}
        \left\|u_n(T)\right\|_{H^\sigma} &\geq C_{s,\sigma}\bigg(\left\|\Pi_{Q_{A,d}}\Xi_1(\psi_n)(T)\right\|_{H^s} - \left\|\Pi_{Q_{A,d}}\Xi_0(u_{0,n})\right\|_{H^s} \bigg. \\
         &\hphantom{XXXXXX}\bigg.- \Big\|\Pi_{Q_{A,d}}\left(\Xi_1(u_{0,n}(T)) - \Xi_1(\psi_n)(T)\right)\Big\|_{H^s} - \Big\|\Pi_{Q_{A,d}}\sum_{j=2}^\infty \Xi_j(u_{0,n}(T))\Big\|_{H^s}\bigg).
    \end{align*}
    Using Lemma \ref{Lem: LOWERBD}, the fact that $\Pi_{Q_{a,d}}$ is a bounded operator on $H^s$ and Proposition \ref{Thm:quant_norm_inflation} we yield,
    \begin{align*}
        \left\|u_n(T)\right\|_{H^\sigma} \gg n
    \end{align*}
    thereby proving Theorem \ref{InfiniteLossofRegs}.
\end{proof}

\appendix



\section{A fractional Leibniz rule in $F^s_{p,q}(\T)$}
For $\mathcal{M}=\R,\T$, we begin by defining the Triebel-Lizorkin spaces.
\begin{definition}\label{Def:F_space}
    Let $s\in \R$, $0< q\leq\infty$, $0<p<\infty$ and $m:\mathcal{M} \to [0,\infty)$ be a measurable function. We define the \emph{weighted Triebel-Lizorkin spaces} by,
    \begin{align*}
        F^s_{p,q}(\mathcal{M},w) = \left\{f\in D'(\mathcal{M}): \left\|f\right\|_{F_{p,q}^s(\mathcal{M},w)}<\infty\right\}
    \end{align*}
    where $D'(\R) = \mathscr{S}'(\R)$, $D'(\T) = (C^\infty(\T))'$ and\footnote{This definition depends on the choice of the projectors $\mathbf{P}_N$. We will soon assume that $m$ belongs to a space $W$ where the defined norms are equivalent under such a choice. See \cite[Sections 5.1.1, 5.1.2]{triebel1983} for an explaination.}
    \begin{align*}
        \left\|f\right\|_{F_{p,q}^s(\mathcal{M},w)} \coloneqq \Big\|\Big(\sum_{N \text{ dyadic}} N^{qs}|\mathbf{P}_N f|^q\Big)^{\frac{1}{q}}\Big\|_{L^p(\mathcal{M},w)}.
    \end{align*}
    We write $F^s_{p,q}(\mathcal{M}) = F^s_{p,q}(\mathcal{M},\mathbbm{1}_\T)$ for the unweighted Triebel-Lizorkin spaces. We also define the Hardy spaces $h^p(\mathcal{M}) \coloneqq F_{p,2}^0(\mathcal{M})$.
\end{definition}
It is worth noting that for $p>1$, the Hardy spaces are equivalent in norm to the Lebesgue spaces, that is $h^p(\mathcal{M}) = L^p(\mathcal{M})$. We would like to prove the following result:
\begin{proposition}\label{Frac_F_rule}
    Let $0<p<1$, $0<p_1,p_1<\infty$, $s> \min(p,q,1)^{-1}-1$ and $0<q\leq \infty$ such that $\frac{1}{p} = \frac{1}{p_1}+\frac{1}{p_2}$. Then for $u,v \in (C^\infty(\T))'$ there is the estimate,
    \begin{align*}
        \left\|uv\right\|_{F_{p,q}^s(\T)}\lesssim \left\|u\right\|_{F_{p_1,q}^s(\T)}\left\|v\right\|_{h^{p_2}(\T)} + \left\|u\right\|_{h^{p_1}(\T)}\left\|v\right\|_{F^s_{p_2,q}(\T)}.
    \end{align*}
\end{proposition}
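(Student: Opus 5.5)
We transfer the Euclidean result of Naibo and Thomson \cite{naibo2019}, which gives exactly this Leibniz rule in $F^s_{p,q}(\R)$ for $0<p<\infty$ and $s>\min(p,q,1)^{-1}-1$ (the condition $s>\tau_{p,q}$ needed because $F^s_{p,q}$ sits inside $L^1_{\mathrm{loc}}$ only above this threshold for $p<1$). Since $F^s_{p,q}(\T)$ is only quasi-Banach for $p<1$, Fourier-series transference of Grafakos type is unavailable. We therefore use Triebel's localisation framework \cite[Chapter 3]{triebel1983}, which identifies periodic distributions with their restrictions to a fundamental domain, up to a smooth cutoff.

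Fix $\chi\in C^\infty_c(\R)$ with $\chi\equiv1$ on a neighbourhood of $[0,2\pi]$, supported in $[-\pi,3\pi]$. For $f\in (C^\infty(\T))'$ regarded as a $2\pi$-periodic distribution on $\R$, the first step is the two-sided norm equivalence
\begin{align*}
    \left\|f\right\|_{F^s_{p,q}(\T)}\sim \left\|\chi f\right\|_{F^s_{p,q}(\R)},
\end{align*}
valid for all $s\in\R$, $0<p<\infty$, $0<q\leq\infty$. This is the content of \cite[Section 3.5.4 and Chapter 9]{triebel1983}, and for $p<1$ it rests on Peetre maximal functions and Nikol'skij-type inequalities rather than duality. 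The same equivalence holds for $h^{p_j}=F^0_{p_j,2}$, and we also need it for a second cutoff $\tilde\chi$ with $\tilde\chi\equiv1$ on $\supp\chi$.

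With this in hand, the estimate is formal. Writing $\chi uv=(\chi u)(\tilde\chi v)$, we apply \cite{naibo2019} on $\R$ to get
\begin{align*}
    \left\|uv\right\|_{F^s_{p,q}(\T)}\sim\left\|(\chi u)(\tilde\chi v)\right\|_{F^s_{p,q}(\R)}\lesssim \left\|\chi u\right\|_{F^s_{p_1,q}(\R)}\left\|\tilde\chi v\right\|_{h^{p_2}(\R)}+\left\|\chi u\right\|_{h^{p_1}(\R)}\left\|\tilde\chi v\right\|_{F^s_{p_2,q}(\R)}.
\end{align*}
The localisation equivalence with cutoffs $\chi$ and $\tilde\chi$ then converts each factor back to its norm on $\T$.

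Two points need care. The first is justifying the product $uv$. We first prove the estimate for trigonometric polynomials, then extend by density, using that the right-hand side controls the limit. This density step requires $q<\infty$, or weak-$\star$ arguments when $q=\infty$. The second point is independence of the cutoff, which follows from pointwise multiplier results for $F^s_{p,q}(\R)$ with smooth compactly supported multipliers \cite[Section 4.2.2]{triebel1983}. These hold for all $s$ in the relevant range.

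The main obstacle is the localisation equivalence itself in the quasi-Banach regime. The upper bound $\|\chi f\|_{F^s_{p,q}(\R)}\lesssim\|f\|_{F^s_{p,q}(\T)}$ requires comparing the periodic Littlewood--Paley pieces $\mathbf{P}_Nf$, defined by restricting the symbol to $\Z$, with the Euclidean ones applied to $\chi f$. We would handle this through a Peetre maximal function comparison: express $\mathbf{P}_N(\chi f)$ via the Poisson summation kernel, and bound its tails by $\sup_y |\mathbf{P}_{N'}f(x-y)|(1+N'|y|)^{-a}$ with $a>1/\min(p,q)$. Finally, we would invoke the Fefferman--Stein vector-valued maximal inequality on $L^{p/r}(\ell^{q/r})$ for $r<\min(p,q)$.
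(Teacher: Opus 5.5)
Your proposal is correct and follows the paper's route: localise with a compactly supported cutoff, write $\chi uv=(\chi u)(\tilde\chi v)$, apply Naibo--Thompson on $\R$, and undo the localisation factor by factor. The one difference is in justifying $\|f\|_{F^s_{p,q}(\T)}\sim\|\psi f\|_{F^s_{p,q}(\R)}$. You cite it directly and sketch a Peetre-maximal argument, whereas the paper derives it from Triebel's identification $F^s_{p,q}(\T)=F^s_{p,q,\pi}(\R,w)$ with $w=(1+|x|)^{-\kappa}$, $\kappa p>1$, together with the weighted localisation principle, where periodicity makes all translates $\psi_m f$ contribute equally and the weights sum to a finite constant. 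Your extra care on two points, taking $\tilde\chi\equiv1$ on $\supp\chi$ and defining $uv$ via density, tightens details the paper glosses over.
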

To prove this, we will make use of a result of Naibo and Thompson in \cite{naibo2019} of which the following theorem is a special case of.
\begin{theorem}\cite[Theorem 2.5]{naibo2019}\label{Thm_naibo}
    Let $0<p,p_1,p_2<\infty$, $0<q\leq \infty$ and $s> \min(p,q,1)^{-1}-1$ such that $\frac{1}{p} = \frac{1}{p_1}+ \frac{1}{p_2}$ Then it holds for $u,v\in \mathscr{S}'(\R)$
    \begin{align*}
         \left\|uv\right\|_{F_{p,q}^s(\R)}\lesssim \left\|u\right\|_{F_{p_1,q}^s(\R)}\left\|v\right\|_{h^{p_2}(\R)} + \left\|u\right\|_{h^{p_1}(\R)}\left\|v\right\|_{F^s_{p_2,q}(\R)}.
    \end{align*}
\end{theorem}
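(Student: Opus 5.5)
We prove Theorem \ref{Thm_naibo} with the standard Bony paraproduct decomposition, estimating each piece by a Littlewood--Paley square-function argument combined with H\"older's inequality. Write $\mathbf{P}_{<N} = \sum_{M < N/8}\mathbf{P}_M$ and split
\begin{align*}
uv = \sum_{N} \mathbf{P}_N u\, \mathbf{P}_{<N} v + \sum_N \mathbf{P}_{<N}u\, \mathbf{P}_N v + \sum_{N\sim M} \mathbf{P}_N u\, \mathbf{P}_M v =: \Pi_1 + \Pi_2 + \Pi_3.
\end{align*}
It suffices to show $\|\Pi_1\|_{F^s_{p,q}} \lesssim \|u\|_{F^s_{p_1,q}}\|v\|_{h^{p_2}}$ and to bound $\Pi_3$ by the right-hand side. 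The term $\Pi_2$ then follows from the $\Pi_1$ bound by symmetry.

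\emph{Paraproducts.} Each summand $\mathbf{P}_N u\,\mathbf{P}_{<N}v$ has Fourier support in an annulus $\{|\xi|\sim N\}$. We will use the standard lemma, valid for all $s\in\R$ and $0<p<\infty$, $0<q\le\infty$: if $\operatorname{supp}\widehat{f_N}\subset\{|\xi|\sim N\}$, then
\begin{align*}
\Big\|\sum_N f_N\Big\|_{F^s_{p,q}} \lesssim \Big\|\Big(\sum_N N^{sq}|f_N|^q\Big)^{1/q}\Big\|_{L^p}.
\end{align*}
This is proved via the Fefferman--Stein vector-valued maximal inequality applied to Peetre maximal functions with exponent $r<\min(p,q)$. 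Applying it and pointwise bounding $|\mathbf{P}_{<N}v|\le \sup_M|\mathbf{P}_{<M}v| =: \mathcal{M}^*v$, H\"older's inequality with $\tfrac1p=\tfrac1{p_1}+\tfrac1{p_2}$ gives
\begin{align*}
\|\Pi_1\|_{F^s_{p,q}} \lesssim \Big\|\Big(\sum_N N^{sq}|\mathbf{P}_N u|^q\Big)^{1/q}\Big\|_{L^{p_1}}\|\mathcal{M}^*v\|_{L^{p_2}}.
\end{align*}
Finally, $\|\mathcal{M}^*v\|_{L^{p_2}}\lesssim \|v\|_{h^{p_2}}$ by the maximal-function characterisation of the local Hardy space, since $\mathcal{M}^*$ is dominated by a smooth (grand) maximal function. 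No restriction on $s$ is needed here.

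\emph{Resonant term.} Here $\mathbf{P}_N u\,\mathbf{P}_M v$ with $M\sim N$ only has Fourier support in a ball $\{|\xi|\lesssim N\}$, so high--high interactions can land at low output frequency. We will use the ball-support analogue of the lemma above: if $\operatorname{supp}\widehat{f_N}\subset\{|\xi|\lesssim N\}$ and $s>\tfrac{1}{\min(p,q,1)}-1$ (this is $d(\cdot)$ with $d=1$), then the same square-function bound holds. Its proof projects onto an output frequency $K\lesssim N$ and sums the resulting geometric tail $\sum_{N\gtrsim K}(K/N)^{s}$. In the quasi-Banach range this summation is done through the $r$-triangle inequality $\|\sum g_j\|^r\le\sum\|g_j\|^r$ with $r=\min(p,q,1)$, together with Peetre maximal estimates, which is exactly where the threshold on $s$ enters. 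Then $|\mathbf{P}_M v|\lesssim \mathcal{M}^*v$ and the same H\"older step bound $\Pi_3$ by $\|u\|_{F^s_{p_1,q}}\|v\|_{h^{p_2}}$. The inhomogeneous low-frequency block $\mathbf{P}_{\le 1}$ is handled as part of $\Pi_3$ with $N\sim 1$.

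The main obstacle is the resonant lemma for $p<1$. The usual $L^p$-duality and Bernstein arguments are unavailable, so the Peetre maximal function must be controlled uniformly in $N$ at the output scale $K$. Our first attempt will be Triebel's proof \cite[Section 2.3]{triebel1983} of the ball-support lemma, checking that the constants survive with $r=\min(p,q,1)$.
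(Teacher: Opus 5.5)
The paper gives no proof of this statement; it is quoted from \cite{naibo2019}, which treats general Coifman--Meyer multipliers, with the pointwise product as the case of symbol $\equiv 1$. So there is no in-paper argument to compare against. Your proposal is the standard paraproduct proof of this Leibniz rule, and it is correct.
\begin{itemize}
\item The annulus-support summation lemma, valid for all $s$, handles $\Pi_1$ and $\Pi_2$.
\item The ball-support lemma, under $s>\min(p,q,1)^{-1}-1$, handles $\Pi_3$. Here $|\sum_{M\sim N}\mathbf{P}_M v|\lesssim \mathcal{M}^*v$, because each such block is a difference of two $\mathbf{P}_{<\cdot}$ operators.
\item The bound $\|\mathcal{M}^*v\|_{L^{p_2}}\lesssim\|v\|_{h^{p_2}}$ holds because $\mathbf{P}_{<N}v$ is convolution with a dilate, at scale $N^{-1}\le 1$, of a fixed Schwartz function with nonzero integral. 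Goldberg's local maximal characterisation of $h^{p_2}=F^0_{p_2,2}$ then applies.
\end{itemize}

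The one thing to correct is your account of why the ball-support lemma needs the threshold on $s$. For $K\lesssim N$, the block $\mathbf{P}_K f_N$ cannot be controlled by a maximal function at the output scale $K$ uniformly in $N$. If it could, the same geometric summation would work for every $s>0$. For $0<r\le 1$, the fact that $f_N$ is band-limited at scale $N$ only yields
\[
|\mathbf{P}_K f_N(x)|\lesssim \Big(\frac{N}{K}\Big)^{\frac1r-1}\big(M(|f_N|^r)(x)\big)^{\frac1r}.
\]
So the tail to sum is $\sum_{N\gtrsim K}(K/N)^{s-\frac1r+1}$, not $\sum_{N\gtrsim K}(K/N)^{s}$. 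Separately, the Fefferman--Stein step in $L^{p/r}(\ell^{q/r})$ needs $r<\min(p,q)$ strictly, not $r=\min(p,q,1)$. Both requirements can be met precisely because $s>\min(p,q,1)^{-1}-1$ is strict: take $r<\min(p,q,1)$ close enough to it. This loss, not the $r$-triangle inequality, is where the threshold comes from.

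A minor point: $uv$ is not defined for arbitrary $u,v\in\mathscr{S}'(\R)$. You should run the argument for $u,v\in\mathscr{S}(\R)$, where all the paraproduct series converge, and treat rougher data by approximation.
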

Our goal now is to transfer this result to the statement of Proposition \ref{Frac_F_rule}. To do this we will use the localisation method outlined by Triebel in \cite[Chapter 3]{triebel1983} and \cite[Chapter 5]{triebel1992}. This method requires one to localise the $F^s_{p,q}(\T)$ norm in $F^s_{p,q}(\R)$. In this direction, we recall the following result.
\begin{lemma}\cite[Theorem 9.2.1]{triebel1992}\label{FTFRident_lem}
    Let $s\in \R$, $0\leq q <\infty$ and $0<p<\infty$ and $w(x) \coloneqq (1+|x|)^{-\kappa}$ with $\kappa >0$ such that $\kappa p >1$. Then, 
    \begin{align*}
        F_{p,q}^s(\T) = F^s_{p,q,\pi}(\R,w)
    \end{align*}
    where $F^s_{p,q,\pi}(\R,w)$ is the space of periodic distributions on $\R$ restricted by the $F^s_{p,q}(\R,w)$ norm.
\end{lemma}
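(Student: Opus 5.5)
The plan is a direct comparison of norms via periodisation. Here I read the weighted norm as $\|f\|_{L^p(\R,w)}=\|wf\|_{L^p(\R)}$, which is Triebel's convention. A distribution $f\in (C^\infty(\T))'$ is identified with its $2\pi$-periodic extension on $\R$, which is a tempered distribution $f=\sum_{n\in\Z}\widehat f(n)e^{inx}$. The series converges in $\mathscr{S}'(\R)$ since the coefficients grow at most polynomially. Its Fourier transform on $\R$ is therefore $2\pi\sum_n \widehat f(n)\delta_n$.

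\textbf{Step 1.} For each dyadic $N$, the real-line projector acts on this extension by multiplying the Dirac masses by the symbol. This gives
\begin{align*}
\mathbf{P}_N f(x)=\sum_{n\in\Z}\phi_N(n)\widehat f(n)e^{inx}.
\end{align*}
This is exactly the periodic extension of the torus projector $\mathbf{P}_N f$ from Definition~\ref{Def:F_space}, since that projector was defined by restricting the same symbol to $\Z$. Consequently the square function
\begin{align*}
g_f(x)=\Big(\sum_N N^{qs}|\mathbf{P}_Nf(x)|^q\Big)^{1/q}
\end{align*}
is the same whether computed on $\R$ or on $\T$, and it is $2\pi$-periodic. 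Here one should check that each $\mathbf{P}_N f$ is a trigonometric polynomial, so that $g_f$ is a genuine measurable function and no limiting issues arise. This holds because $\phi_N$ has compact support.

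\textbf{Step 2.} Split $\R$ into the translates $[0,2\pi)+2\pi k$ and use periodicity of $g_f$:
\begin{align*}
\|g_f\|_{L^p(\R,w)}^p=\int_{\R} g_f(x)^p(1+|x|)^{-\kappa p}\,dx=\int_{0}^{2\pi} g_f(x)^p\,W(x)\,dx,
\end{align*}
where $W(x):=\sum_{k\in\Z}(1+|x+2\pi k|)^{-\kappa p}$. The hypothesis $\kappa p>1$ is exactly what makes this series converge, and it does so uniformly on $[0,2\pi]$. Thus $W$ is bounded above. It is also bounded below by its $k=0$ term, which is at least $(1+2\pi)^{-\kappa p}$. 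Hence $W\sim 1$ on $\T$, and so
\begin{align*}
\|f\|_{F^s_{p,q}(\R,w)}\sim \|g_f\|_{L^p(\T)}=\|f\|_{F^s_{p,q}(\T)}.
\end{align*}
This holds for the full quasi-norm range $0<p<\infty$, since only the positivity of $g_f$ is used and no triangle inequality is needed.

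\textbf{Step 3.} To obtain equality of the spaces, and not just of norms on a common class, argue both inclusions. If $f$ lies in $F^s_{p,q}(\T)$, its extension is a periodic tempered distribution with finite weighted norm. Conversely, any periodic element of $F^s_{p,q}(\R,w)$ is the extension of a distribution on $\T$, and Step 2 shows it lies in $F^s_{p,q}(\T)$.

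I expect the main obstacle to be the well-definedness caveat in the footnote to Definition~\ref{Def:F_space}, namely that the weighted norm does not depend on the choice of dyadic partition. The weight $(1+|x|)^{-\kappa}$ is of polynomial type (an admissible weight in Triebel's sense), so I would invoke the Peetre maximal function argument of \cite[Sections 5.1.1, 5.1.2]{triebel1983} for independence. Since the same partition is used on both sides of the comparison above, however, the identity itself does not require this independence.
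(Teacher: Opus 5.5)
Your proposal is correct. The paper gives no proof of this lemma: it imports the statement from Triebel, so there is no argument in the text to compare against. Your periodisation computation is a self-contained replacement for the citation, and it is in substance the standard argument behind the cited theorem.

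What your route buys is transparency about why the hypotheses are what they are. The whole content of the lemma is the elementary identity $\int_{\R} g^p w^p\,dx=\int_0^{2\pi} g^p W\,dx$ for $2\pi$-periodic $g\ge 0$, with $W(x)=\sum_{k\in\Z}(1+|x+2\pi k|)^{-\kappa p}$. This shows $\kappa p>1$ is exactly the summability condition. Conversely, for $\kappa p\le 1$ one has $W\equiv\infty$, and the weighted periodic space collapses to $\{0\}$. Your reading of $L^p(\R,w)$ as $\|w\,\cdot\|_{L^p}$ is also the one forced by this hypothesis, and by the factors $w^p$ the appendix uses later.

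You are also right about partition independence. The torus projectors in Definition~\ref{Def:F_space} restrict the very same symbols $\phi_N$ to $\Z$, so your norm equivalence holds for that fixed partition without any Peetre maximal-function input. Independence of the partition is only needed to identify these spaces with Triebel's.

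What the citation buys, by contrast, is consistency with Triebel's general weighted and $n$-dimensional framework. The appendix relies on that framework immediately afterwards for the localisation step. Finally, your computation works verbatim for $q=\infty$. The stated range $0\le q<\infty$ appears to be a typo for $0<q\le\infty$.
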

In this lemma, a distribution $f\in F_{p,q}^s(\T)$ is identified with a distribution $\tilde{f}\in F_{p,q,\pi}^s(\R,w)$ via the natural periodic extension (here we abuse notation and write $\tilde{f} =f$). It will be useful to replace $w$ from Lemma \ref{FTFRident_lem} with the smooth weight $\tilde{w} = (1+x^2)^{-\kappa/2}$. We note that on $\R$, $w \sim \tilde{w}$. Therefore, expanding the definition of the $F^s_{p,q}(\R,w)$ norm it is clear that in the sense of equivalence of norms on $\mathscr{S}_\pi'(\R)$ (the periodic Schwartz distributions on $\R$),
\begin{align}
    F^s_{p,q,\pi}(\R,w) = F^s_{p,q,\pi}(\R,\tilde{w}).\label{norm_equiv_smooth_weight}
\end{align}   
We now define a class of weights $W$ that will be needed to state an important lemma and would like $\tilde{w}$ to be a member of.
\begin{definition}\label{def_admissible_w}
    The class $W$ of admissible weight functions is the collection of all possible $C^\infty(\R)$ functions $w:\R\to [0,\infty)$ satifying the following properties:
    \begin{itemize}
        \item[(i)] For all $\alpha \in \N$, there exists $C_\alpha >0$ such that for all $x\in \R$,
        \begin{align*}
            |\pat_x^\alpha w(x)| \leq C_\alpha w(x).
        \end{align*}
        \item[(ii)] There exists $C>0$ and $\gamma \geq 0$ such that for all $x,y\in \R$,
        \begin{align*}
            0<w(x) \leq Cw(y)(1+|x-y|^2)^{\frac{\gamma}{2}}.
        \end{align*}
    \end{itemize}
\end{definition}
A short computation verifies that $\tilde{w}$ satisfies condition (i) of Definition \ref{def_admissible_w}. To show that $\tilde{w}$ also satisfies (ii) we recall Peetre's inequality for real numbers $x,y\in \R$
\begin{align*}
    1+x^2 \leq 2(1+(x-y)^2)(1+y^2).
\end{align*}
With this we get that
\begin{align}
    \frac{\tilde{w}(x)}{\tilde{w}(y)} \leq 2^{\frac{\kappa}{2}}(1+(x-y)^2)^{\frac{\kappa}{2}}.\label{w_ineqa}
\end{align}
 Rearranging \eqref{w_ineqa}, we see that property (ii) is satisfied. Therefore $\tilde{w}\in W$. The reason why this is desirable is the following lemma.
 \begin{proposition}\cite[Proposition 6.13]{triebel2006}\label{A:local_prop}
    Let $s\in \R$, $0<p<\infty$, $0<q\leq \infty$, $w\in W$ and $\psi\in C^\infty(\R)$ be a non-negative, compactly supported function such that for all $x\in \R$
    \begin{align*}
        \sum_{m\in \Z} \psi_m(x) \sim 1 \quad \psi_m(x) \coloneqq \psi(x-2\pi m) .
    \end{align*}
    Then, the right hand side of the equivalence
    \begin{align*}
        \left\|f\right\|_{F^s_{p,q}(\R,w)}^p \sim \sum_{m \in \Z}w^p(m)\left\|\psi_m f\right\|_{F_{p,q}^s(\R)}^p
    \end{align*}
    is finite.
 \end{proposition}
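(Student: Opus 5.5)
The plan is to reduce the weighted statement to two facts about unweighted Triebel--Lizorkin spaces: the localisation principle and the boundedness of smooth pointwise multipliers. The bridge between the weighted and unweighted norms is the lifting property
\begin{align*}
    \left\|f\right\|_{F^s_{p,q}(\R,w)} \sim \left\|wf\right\|_{F^s_{p,q}(\R)}, \qquad w\in W.
\end{align*}
This is where both conditions of Definition \ref{def_admissible_w} enter. I would prove it by comparing $w\,\mathbf{P}_N f$ with $\mathbf{P}_N(wf)$, in three steps.
\begin{itemize}
    \item Write $\mathbf{P}_N(wf)(x) - w(x)\mathbf{P}_N f(x) = \int \check\phi_N(x-y)\big(w(y)-w(x)\big)f(y)\,dy$.
    \item Expand $w(y)-w(x)$ by Taylor's formula. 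Condition (i) makes each Taylor coefficient a bounded multiple of $w(x)$. Condition (ii) controls the remainder by $w(x)(1+|x-y|)^{\gamma}$.
    \item The resulting terms are bounded pointwise by $w(x)$ times Peetre maximal functions $\sup_y |\mathbf{P}_{N'}f(y)|/(1+N'|x-y|)^{a}$ with $N'\sim N$, plus terms carrying an extra factor $N^{-1}$.
\end{itemize}
For $a>1/\min(p,q)$, the Fefferman--Stein and Peetre maximal inequalities in $L^p(\ell^q)$ then give $\|wf\|_{F^s_{p,q}}\lesssim\|f\|_{F^s_{p,q}(w)}$. The reverse inequality follows by the same argument with $w^{-1}$, which also satisfies (i) and (ii). The $N^{-1}$ terms are absorbed by a shift in $s$ and induction, or simply by working with local means in place of $\mathbf{P}_N$.

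Next I would invoke the unweighted localisation principle. For the uniformly locally finite partition $\{\psi_m\}$ with $\sum_m\psi_m\sim1$, it states
\begin{align*}
    \left\|g\right\|^p_{F^s_{p,q}(\R)} \sim \sum_{m\in\Z} \left\|\psi_m g\right\|^p_{F^s_{p,q}(\R)}.
\end{align*}
This is Triebel's localisation theorem \cite[Chapter 2]{triebel1992}. It rests on the same maximal-function bookkeeping, using the bounded overlap of the supports of $\psi_m$. Applying it with $g=wf$ gives $\|f\|^p_{F^s_{p,q}(w)}\sim\sum_m\|\psi_m w f\|^p_{F^s_{p,q}}$.

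It remains to replace $\psi_m w$ by $w(m)\psi_m$. Choose $\chi\in C_c^\infty$ with $\chi\equiv1$ on $\supp\psi$, and set $\chi_m=\chi(\cdot-2\pi m)$. Then
\begin{align*}
    \psi_m w f = w(m)\,\Big(\chi_m\frac{w}{w(m)}\Big)\,\psi_m f.
\end{align*}
By (ii), $w/w(m)\sim1$ on $\supp\chi_m$, uniformly in $m$. By (i), every derivative of $\chi_m w/w(m)$ is bounded uniformly in $m$. The pointwise multiplier theorem for $F^s_{p,q}(\R)$ states that $C^\rho$ functions with $\rho$ large relative to $|s|$ and $1/\min(p,q,1)$ act boundedly. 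Hence $\|\psi_m wf\|_{F}\lesssim w(m)\|\psi_m f\|_F$. The reverse bound uses $\chi_m w(m)/w$ in the same way. Summing over $m$ gives the stated equivalence. Finiteness of the right-hand side is then equivalent to $f\in F^s_{p,q}(\R,w)$.

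I expect the main obstacle to be the lifting step in the quasi-Banach range $p<1$ or $q<1$. There, no duality or interpolation is available. One must verify carefully that the polynomial loss $(1+|x-y|)^{\gamma}$ from (ii) can be absorbed into the decay of $\check\phi_N$ while keeping the Peetre exponent $a>1/\min(p,q)$. My first attempt would use the local means characterisation with kernels having sufficiently many vanishing moments, so that the commutator terms gain arbitrary negative powers of $N$.
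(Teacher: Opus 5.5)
The paper gives no argument of its own here (the proposition is quoted from Triebel). Your architecture is a sound route: lift $f\mapsto wf$, apply the unweighted localisation principle to $wf$, then freeze the weight on each piece with uniform pointwise multipliers. The lifting theorem is best quoted from the weighted theory. If you re-derive it, note that only the Taylor-polynomial part of $\mathbf{P}_N(wf)-w\,\mathbf{P}_Nf$ is frequency-localised at $N'\sim N$. The remainder term is not frequency-localised and involves every $\mathbf{P}_{N'}f$. For $N'\gg N$ the pointwise Taylor bound loses a factor $(N'/N)^a$ that the gain $N^{-L}$ does not offset, so those interactions must be handled by integrating by parts against the band-limitation of $\mathbf{P}_{N'}f$, or by local means as you suggest.

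The step that genuinely fails is the last one. You claim that (ii) gives $w/w(m)\sim 1$ on $\supp\chi_m$ uniformly in $m$. But $\supp\chi_m=2\pi m+\supp\chi$, where $|x-m|\approx(2\pi-1)|m|$, so (ii) only yields $w(x)\lesssim w(m)(1+|m|)^{\gamma}$. Consequently the multiplier bounds for $\chi_m w/w(m)$ are not uniform in $m$. What (i) and (ii) do give is uniform control of $\chi_m w/w(2\pi m)$ and $\chi_m w(2\pi m)/w$, so your argument proves
\[
\|f\|^p_{F^s_{p,q}(\R,w)}\sim\sum_{m\in\Z}w^p(2\pi m)\,\|\psi_m f\|^p_{F^s_{p,q}(\R)} .
\]
Passing to $w^p(m)$ needs $w(2\pi m)\sim w(m)$, and this does not follow from $w\in W$. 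Let $\tilde d$ be a mollification of $\operatorname{dist}(x,\{2\pi\cdot 10^j\}_{j\geq 1})$. Then $w=(1+\tilde d)^{-\gamma}$ belongs to $W$, while $w(2\pi\cdot 10^j)/w(10^j)\sim 10^{j\gamma}$. Testing on a fixed bump translated to $2\pi\cdot 10^j$ shows that the equivalence with $w^p(m)$ fails. So, with translates by $2\pi m$, the statement is false for general $w\in W$ and this step cannot be repaired; the factor $w(m)$ is the right one only for translates along $\Z$.

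The discrepancy is harmless for the power weight $\tilde w=(1+x^2)^{-\kappa/2}$, the only weight used in Lemma \ref{A:Lem:T_R_loc}, since $\tilde w(2\pi m)\sim\tilde w(m)$. You should either state the result with $w(2\pi m)$, or add this comparability as a hypothesis and check it for $\tilde w$.
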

    Using this, we prove the following lemma.
    \begin{lemma}\label{A:Lem:T_R_loc}
        Let $0<p<\infty$, $0< q\leq \infty$ and $s\in \R$. If $\psi\in C^\infty(\R)$ such that $\psi \equiv 1$ on $[0,2\pi]$ and $\supp \psi  = [-0.1,2\pi +0.1]$. Then
        \begin{align*}
            \left\|f\right\|^p_{F_{p,q}^s(\T)} \sim \left\|\psi f\right\|^p_{F_{p,q}^s(\R)}
        \end{align*}
    \end{lemma}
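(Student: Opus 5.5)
The plan is to combine Lemma \ref{FTFRident_lem}, the smooth-weight identification \eqref{norm_equiv_smooth_weight}, and the localisation Proposition \ref{A:local_prop}, applied with the admissible weight $\tilde{w}=(1+x^2)^{-\kappa/2}\in W$, where $\kappa p>1$. By Lemma \ref{FTFRident_lem} and \eqref{norm_equiv_smooth_weight}, for $f$ identified with its periodic extension,
\begin{align*}
\left\|f\right\|^p_{F^s_{p,q}(\T)} \sim \left\|f\right\|^p_{F^s_{p,q}(\R,\tilde{w})} \sim \sum_{m\in\Z}\tilde{w}^p(m)\left\|\psi_m f\right\|^p_{F^s_{p,q}(\R)},
\end{align*}
where $\psi_m=\psi(\cdot-2\pi m)$. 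Here $\psi$ is the cutoff from the statement: it equals $1$ on $[0,2\pi]$ and is supported in $[-0.1,2\pi+0.1]$. The hypotheses of Proposition \ref{A:local_prop} then hold, because $1\le\sum_m\psi_m\le 2$ pointwise: every point lies in some $[2\pi m,2\pi(m+1)]$, and at most two translates of the support overlap. So $\sum_m\psi_m\sim 1$.

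The key step is translation invariance. Since $f$ is $2\pi$-periodic, $\psi_m f=(\psi f)(\cdot-2\pi m)$. The $F^s_{p,q}(\R)$ quasi-norm is translation invariant, because the Littlewood--Paley projectors $\mathbf{P}_N$ commute with translations and the $L^p(\R)$ quasi-norm is translation invariant. Hence $\left\|\psi_m f\right\|_{F^s_{p,q}(\R)}=\left\|\psi f\right\|_{F^s_{p,q}(\R)}$ for every $m$, and the sum factors as
\begin{align*}
\sum_{m\in\Z}\tilde{w}^p(m)\left\|\psi_m f\right\|^p_{F^s_{p,q}(\R)} = \Big(\sum_{m\in\Z}(1+m^2)^{-\kappa p/2}\Big)\left\|\psi f\right\|^p_{F^s_{p,q}(\R)}.
\end{align*}
The constant is finite and positive precisely because $\kappa p>1$. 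The claimed equivalence follows, with constants depending on $p,q,s,\kappa,\psi$.

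The main obstacle is checking that Proposition \ref{A:local_prop} applies to the given $\psi$ with constants independent of $f$. In particular, the equivalence there is stated for a fixed partition-like family, so one must ensure the statement allows $\sum\psi_m\sim1$ rather than exactly $1$. If an exact partition of unity is needed, I would first prove the equivalence for a genuine partition $\{\varphi_m\}$ built from $\varphi=\psi/\sum_j\psi_j$, which is smooth and compactly supported. I would then compare $\left\|\varphi f\right\|$ with $\left\|\psi f\right\|$ by writing $\varphi f=\varphi\chi\cdot\psi f$, where $\chi\equiv1$ near $\supp\varphi$ and $\chi\psi\equiv\chi$. The reverse direction is $\psi f=\sum_{|j|\le1}\psi\varphi_j f$. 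Both steps use that multiplication by a fixed $C^\infty_c$ function is bounded on $F^s_{p,q}(\R)$ (a pointwise multiplier result valid for all $0<p<\infty$), together with the $p$-triangle inequality for the quasi-norm and translation invariance once more.
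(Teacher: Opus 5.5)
Your proposal is correct and follows the paper's proof exactly: identify $\|f\|_{F^s_{p,q}(\T)}$ with the weighted norm via Lemma \ref{FTFRident_lem} and \eqref{norm_equiv_smooth_weight}, localise with Proposition \ref{A:local_prop} using $\tilde{w}\in W$, use periodicity and translation invariance to replace each $\|\psi_m f\|_{F^s_{p,q}(\R)}$ by $\|\psi f\|_{F^s_{p,q}(\R)}$, and sum $\sum_m(1+m^2)^{-\kappa p/2}<\infty$ since $\kappa p>1$. Your fallback paragraph is not needed, because Proposition \ref{A:local_prop} as stated only asks for a non-negative $\psi$ with $\sum_m\psi_m\sim1$ (non-negativity of $\psi$ is tacitly assumed both by you and by the paper); if you did need it, note that $\chi\equiv1$ near $\supp\varphi=\supp\psi$ is incompatible with $\chi\psi\equiv\chi$, so you should instead write $\varphi f=\big(\sum_j\psi_j\big)^{-1}\psi f$ and use that $\big(\sum_j\psi_j\big)^{-1}$ is a smooth pointwise multiplier with bounded derivatives.
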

    \begin{proof}
        Using Lemma \ref{FTFRident_lem} and \eqref{norm_equiv_smooth_weight} we get that
        \begin{align*}
            \left\|f\right\|_{F^s_{p,q}(\T)}\sim \left\|f\right\|_{F^s_{p,q}(\R ,w)}.
        \end{align*}
        Now using Proposition \ref{A:local_prop} and the fact that $\tilde{w}\in W$ we see that,
        \begin{align}
            \left\|f\right\|_{F^s_{p,q}(\T)}^p \sim \sum_{m\in \Z}(1+x^2)^{-\frac{\kappa p}{2}}\left\|\psi_mf\right\|_{F_{p,q}^s(\R)}^p\label{A:lem_loc:period}
        \end{align}
        where $\psi_m(x) = \psi(x-2\pi m)$. Noting that $f$ is a periodic distribution, it can be seen by the commutativity of Fourier multipliers with translation that
        \begin{align*}
            \left\|\psi_mf \right\|_{F^s_{p,q}(\R)}^p \sim  \left\|\psi f\right\|_{F^s_{p,q}(\R)}^p.
        \end{align*}
        Therefore, returning to \eqref{A:lem_loc:period}
        \begin{align*}
            \left\|f\right\|^p_{F^s_{p,q}(\T)}\sim \left\|\psi f\right\|_{F_{p,q}^s(\R)}^p \sum_{m\in \Z}(1+x^2)^{-\frac{\kappa p}{2}} \sim \left\|\psi f\right\|_{F_{p,q}^s(\R)}^p
        \end{align*}
        where the sum converges as $\kappa p >1$.
        \end{proof}
        With this we may prove Proposition \ref{Frac_F_rule}.
        \begin{proof}[Proof of Proposition \ref{Frac_F_rule}]
            Let $\psi\in C^\infty(\R)$ such that $\psi \equiv 1$ on $[0,2\pi]$ and $\supp \psi =[-0.1,2\pi +0.1]$. By Lemma \ref{A:Lem:T_R_loc} we have that
            \begin{align}
                \left\|uv\right\|_{F^s_{p,q}(\T)} \sim \left\|\psi uv\right\|_{F^s_{p,q}(\R)}.\label{A:main_proof:loc_step}
            \end{align}
            Now defining $\phi \in C^\infty(\R)$ with $\phi \equiv 1$ on $[0,2\pi]$ and $\supp \phi = [-0.2,2\pi + 0.2]$. Therefore,
            \begin{align*}
                \left\|\psi uv\right\|_{F^s_{p,q}(\R)} = \left\|(\psi u)(\phi v)\right\|_{F^s_{p,q}(\R)}.
            \end{align*}
            Returning this to \eqref{A:main_proof:loc_step} and applying Theorem \ref{Thm_naibo} we see that,
            \begin{align}
                \left\|uv\right\|_{F^s_{p,q}(\T)} &\lesssim \left\|\psi u\right\|_{F^s_{p,q}(\R)}\left\|\phi v\right\|_{H^{p_2}} + \left\|\psi u\right\|_{H^{p_1}}\left\|\phi v\right\|_{F^s_{p_2,q}(\R)}\label{A:main_proof:pre_transference}.
            \end{align}
           Now using Lemma \ref{A:Lem:T_R_loc} 
            \begin{align*}
                \left\|\psi u\right\|_{F^s_{p_1,q}(\R)}\sim \left\|u\right\|_{F^s_{p_1,q}(\T)}, \quad \left\|\phi v\right\|_{F^s_{p_2,q}(\R)} \sim \left\|v\right\|_{F^s_{p_2,q}(\T)}
            \end{align*}
            with the same holding for the other terms. Returning to \eqref{A:main_proof:pre_transference} with this, we yield the desired estimate.
        \end{proof}

\textbf{Acknowledgements:} This paper was written during the author's honours year at Monash University. The author would like to thank his supervisor, Justin Forlano for his generosity, patience and support.
\bibliographystyle{abbrvurl}
\bibliography{../Honours_research}

\end{document}